\newtheorem{theorem}[equation]{Theorem}
\newtheorem{proposition}[equation]{Proposition}
\newtheorem{lemma}[equation]{Lemma}
\newtheorem{corollary}[equation]{Corollary}
\newtheorem{property}{Property}
\theoremstyle{definition}
\newtheorem{definition}[equation]{Definition}
\newtheorem{remark}[equation]{Remark}
\newtheorem*{remarkVTEX}{Remark}
\newtheorem*{problem}{Open Problem}
\def\arxivurl#1{\href{https://arxiv.org/abs/#1}{#1}}
\begin{document}

\begin{frontmatter}
\title{Activated Random Walks on $\mathbb{Z}^{d}$\thanksref{t1}}
\runtitle{Activated Random Walks on $\mathbb{Z}^{d}$}
\thankstext{t1}{This is an original survey paper}

\begin{aug}
\author{\fnms{Leonardo T.} \snm{Rolla}\ead[label=e1]{leorolla@dm.uba.ar}}
\address{\printead{e1}}
\runauthor{L. T. Rolla}
\end{aug}

\begin{abstract}
Some stochastic systems are particularly interesting as they exhibit critical
behavior without fine-tuning of a parameter, a phenomenon called self-organized
criticality. In the context of driven-dissipative steady states, one of
the main models is that of Activated Random Walks. Long-range effects intrinsic
to the conservative dynamics and lack of a simple algebraic structure cause
standard tools and techniques to break down. This makes the mathematical
study of this model remarkably challenging. Yet, some exciting progress
has been made in the last ten years, with the development of a framework
of tools and methods which is finally becoming more structured. In these
lecture notes we present the existing results and reproduce the techniques
developed so far.
\end{abstract}

\begin{keyword}[class=MSC]
\kwd[Primary ]{60K35}
\kwd{82C22}
\kwd{82C26}
\end{keyword}
\begin{keyword}
\kwd{Absorbing-state phase transition}
\end{keyword}

\received{\smonth{12} \syear{2019}}

\tableofcontents

\end{frontmatter}
\section{Overview}

In the study of critical phenomena, there is a large class of non-equilibrium
lattice systems that naturally evolve to a critical state, characterized by
power-law distributions for the sizes of relaxation events. A typical example
is the occurrence of huge avalanches caused by small perturbations. In many
cases, such systems are attracted to a stationary critical state without
being specifically tuned to a critical point.

This seems to be the explanation for the emergence of random fluctuations at
a macroscopic or mesoscopic scale, and creation of self-similar shapes in a
variety of growth systems. Among attempts to explain long-ranged
spatial-temporal correlations, the physical paradigm called
\emph{self-organized criticality} is a widely accepted theory, although it is
still very poorly understood from a mathematical point of view.

For non-equilibrium steady states, it eventually became evident that
self-organized criticality is related to conventional critical behavior of a
system undergoing a phase transition. In the case of driven-dissipative
systems, it is related to an \emph{absorbing-state phase transition}. These
are systems whose dynamics drives them towards, and then maintains them at
the edge of stability. The phase transition arises from the conflict between
spread of activity and a tendency for this activity to die out, and the
critical point separates a phase with sustained activity and an absorbing
phase in which the dynamics is eventually extinct in any finite region.

The main stochastic models in this class are the Manna Sandpile Model, its
Abelian variant which we call the Stochastic Sandpile Model, and the
Activated Random Walks (ARW). Due to long-range effects intrinsic to their
conservative dynamics, classical analytic and probabilistic techniques fail
in most cases of interest, making the rigorous analysis of such systems a
major mathematical challenge. Yet some exciting progress has been made in the
last ten years. In particular, a more structured framework of tools and
methods started to develop and emerge.

In these notes we recall the existing results, and describe in a unified
framework the tools and techniques currently available. Most of the material
is devoted to the ARW. In \S \ref{sub:ssm} we mention the Stochastic Sandpile
Model, for which much less is known. The Manna model so far seems
intractable.

In \S \ref{sub:arwdef} we describe the local rules for the ARW evolution, in
\S \ref{sub:asptsoc} we discuss the relation between self-organized
criticality and absorbing-state phase transitions, and in \S
\ref{sub:predictions} we cast some of the physical predictions. Most of them
are far outside the reach of currently known mathematical techniques, as
discussed in \S \ref{sub:challenges}. Then in \S \ref{sub:results} we quote
all the known results (which, as the reader will see, leave most of the
predictions as open conjectures), and in \S \ref{sub:methods} we describe the
main methods developed in the past ten years. Finally, in \S
\ref{sub:structure} we discuss the structure and interdependence of the
remaining sections.

\subsection{Activated Random Walks}\label{sub:arwdef}

The ARW evolution is defined as follows. Particles sitting on the graph
$\mathbb{Z}^{d}$ can be in state $A$ for \emph{active} or $S$ for
\emph{sleeping}. Each active particle, that is, each particle in the $A$
state, performs a continuous-time random walk with \emph{jump rate}
$D_{A}=1$. The walks follow a translation-invariant \emph{jump distribution},
that is, they jump from $x$ to $x+z$ with probability $p(z)$ for some fixed
distribution $p(\cdot )$ on $\mathbb{Z}^{d}$. We assume that the support of
$p$ is restricted to the nearest-neighbors $\pm e_{1} , \dots , \pm e_{d}$
and spans all of $\mathbb{Z}^{d}$.

Several active particles can be at the same site. When a particle is alone,
it may fall asleep, a \emph{reaction} denoted by $A\to S$, which occurs at a
\emph{sleep rate} $0 < \lambda \leqslant \infty $. So each particle carries
two clocks, one for jumping and one for sleeping. Once a particle is
sleeping, it stops moving, \textit{i.e.}\ it has jump rate $D_{S}=0$, and it
remains sleeping until the instant when another particle is present at the
same site. At such an instant the particle which is in the $S$ state flips to
the $A$ state, giving the \emph{reaction} $A+S \to 2A$.

If the clock rings for a particle to sleep while it shares a site with other
particles, the tentative transition $A \to S$ is overridden by the
instantaneous reaction $A+S \to 2A$, so this attempt to sleep has no effect
on the system configuration.

A particle in the $S$ state stands still forever if no other particle ever
visits the site where it is located. When a site has no particles or one
sleeping particle, it is called \emph{stable}, otherwise it has one or more
particles, all active, and is called \emph{unstable}. A stable site stays
stable indefinitely, and can only become unstable if later on it is visited
by an active particle. An \emph{absorbing configuration} is one for which
every site is stable.

We note that, at the extreme case $\lambda =\infty $, when a particle visits
an unoccupied site, it falls asleep instantaneously. This case is equivalent
to \emph{internal diffusion-limited aggregation} with multiple sources.

We have described \emph{local rules} for the system to evolve. In order to
fully describe the system, we need to specify on which subset of
$\mathbb{Z}^{d}$ this dynamics will occur, what are the boundary conditions,
and the initial state at $t=0$. By ``state'' we mean a probability
distribution on the space of configurations, unless it refers to the state
$A$ or $S$ of a particle.

\subsection{Phase transition and self-organized criticality}\label{sub:asptsoc}

We consider two different dynamics which follow the above local rules.

\paragraph{Infinite-volume conservative system.}
On the infinite lattice $\mathbb{Z}^{d}$, at $t=0$ we start from a
translation-ergodic state with average density of particles $\zeta $. It
turns out that, because the dynamics does not create or destroy particles,
this average density $\zeta $ is conserved during the evolution. We say that
this system fixates, or is absorbed, if each site is visited only finitely
many times and eventually becomes stable. Otherwise, if each site is visited
infinitely many times, we say that the system stays active. This model shows
an \emph{ordinary phase transition} in the sense that, for some $\zeta _{c}$,
the dynamics a.s.\ fixates when $\zeta <\zeta _{c}$ and a.s.\ stays active
when $\zeta >\zeta _{c}$. This is called an \emph{absorbing-state phase
transition}.

\paragraph{Driven-dissipative system.}
On a finite box $V_{L}=\{-L,\dots ,L\}^{d}$, we define a system with three
components. At rate one, a new active particle is added to a site $x\in V$
chosen uniformly at random. The ARW dynamics is run with time being
accelerated by a factor of $\kappa >1$. The box has open boundary,
\textit{i.e.}\ particles are killed when they exit $V$.

We first let $\kappa \to \infty $, so that the whole box is stabilized right
after a particle is added, so we obtain a Markov chain on the space of
absorbing configurations called \emph{driven-dissipative dynamics}. We then
let $t\to \infty $ to reach a stationary state $\nu _{s}^{L}$ supported on
absorbing configurations. We finally let $L\to \infty $ to have a state $\nu
_{s}$ on $\mathbb{Z}^{d}$ with mean density $\zeta _{s}$, see \S
\ref{sub:predictions}.

\emph{Self-organized criticality} for the driven-dissipative system loosely
means the following. When the average density $\zeta $ inside the box is too
small, mass tends to accumulate. When it is too large, there is intense
activity and a substantial number of particles are killed at the boundary.
With this mechanism, the model is attracted to a steady state with an average
density given by $0<\zeta _{s}<\infty $, and this state has several features
associated to criticality. The \emph{density conjecture} says that
$\zeta_{c}$ and $\zeta _{s}$ should coincide. Moreover, the critical
exponents of the driven-dissipative system should be related to those of the
infinite-volume one.

\subsection{Predictions}\label{sub:predictions}

Consider a system running on the whole graph $\mathbb{Z}^{d}$. At $t=0$,
sites have an i.i.d.\ Poisson number of active particles with parameter
$\zeta $.

We say that the system \emph{fixates} if, for each site, there is a random
time after which the site is either vacant or contains one sleeping particle.
We say that the system \emph{stays active} if, for each site, there are
arbitrarily large times at which the site has at least one active particle.
There is a \emph{critical density} $\zeta _{c}$, which is non-decreasing in
$\lambda $, such that the system will a.s.\ fixate for $\zeta <\zeta _{c}$
and the system will a.s.\ stay active for $\zeta >\zeta _{c}$.

In this subsection we describe some aspects of the ARW behavior. Some of
these claims have been proved, most of them remain widely open.

\paragraph{Phase space.}
The critical density satisfies $\zeta _{c}<1$ for every $\lambda <\infty $
and $\zeta _{c} \to 0$ as $\lambda \to 0$. That $\zeta _{c} \leqslant 1$
should be obvious from the fact that each site can accommodate at most one
sleeping particle. But the particles should be able to sustain activity even
at densities lower than unit (except for the case $\lambda =\infty $).
Moreover, $\zeta _{c}>0$ for every $\lambda >0$ and $\zeta _{c} \to 1$ as
$\lambda \to \infty $. In particular, $\zeta _{c}=1$ when $\lambda =\infty $.
More generally, $\zeta _{c}$ is continuous and strictly increasing in
$\lambda $. See Figure~\ref{fig:predictions}.

\begin{figure}
\includegraphics{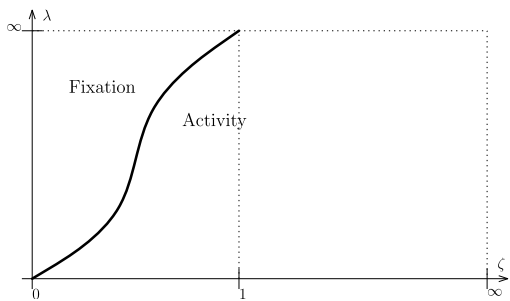}%
\caption{Prediction for the phase space.}\label{fig:predictions}
\end{figure}

\paragraph{Uniqueness of the critical density.}
The value of $\zeta _{c}$ depends on the dimension $d$, on the jump
distribution $p(\cdot )$ and on the sleep rate $\lambda $. But it does not
depend on the choice of i.i.d.\ Poisson for the initial state. More
precisely, for every translation-ergodic active initial state (see next
paragraph) with density $\zeta $, the system a.s.\ fixates if $\zeta <\zeta
_{c}$ and a.s.\ stays active if $\zeta >\zeta _{c}$. At $\zeta =\zeta _{c}$
the system should also stay active (except for the trivial case of $\lambda
=\infty $ and non-random initial condition with density $\zeta =1$).

\paragraph{Invariant measures and convergence.}
For each $\zeta < \zeta _{c}$, all the translation-ergodic stationary
distributions with average density $\zeta $ are \emph{absorbing states}, that
is, they are measures supported on absorbing configurations. Starting from
any state with such density, the evolution a.s.\ converges to an absorbing
configuration having the same density. In general, different initial states
are attracted to different absorbing states.

For each $\zeta >\zeta _{c}$, there is a unique translation-ergodic
stationary \emph{active state} (\textit{i.e.}\ a state which is not
absorbing, which by ergodicity means a positive fraction of the particles are
active) with density $\zeta $. For every translation-ergodic active initial
state with density $\zeta $, the system converges in law to this unique
stationary active state as $t\to \infty $. So in terms of basin of
attraction, the active state is stable and absorbing states are unstable, in
conflict with our terminology for stable and unstable sites at the
microscopic level.

At $\zeta =\zeta _{c}$ and active initial states, the system converges in law
to a unique absorbing state, but the system a.s.\ stays active and
convergence is in law only. So the critical case mixes features from both
phases.

\paragraph{Power laws at and near criticality.}
At $\zeta =\zeta _{c}$, the average density of activity (number of active
particles per site) at time $t$ decays as a power of $t$ as $t \to \infty $.
On the other hand, if $\zeta > \zeta _{c}$, the density of activity seen in
the stationary regime $t = \infty $ decays as a power of $\zeta -\zeta _{c}$
as $\zeta \downarrow \zeta _{c}$. Two-point correlations in space also decay
as a power of the distance $\Delta x$, and same-site time correlations decay
as a power of $\Delta t$. Outside criticality, correlation decays
exponentially with a typical correlation length for space and another one for
time, and the correlation lengths themselves diverge as powers of $|\zeta
-\zeta _{c}|$ as $\zeta \to \zeta _{c}$. More details on critical exponents
can be found in~\cite{DickmanRollaSidoravicius10}.

\paragraph{Driven-dissipative dynamics.}
Consider the driven-dissipative dynamics described in \S \ref{sub:asptsoc}.
Let $\zeta _{s}^{L}$ be the average density of particles in $\nu _{s}^{L}$,
and let $\zeta _{s}=\lim _{L} \zeta _{s}^{L}$. The \emph{density conjecture}
says that $\zeta _{s}=\zeta _{c}$. Moreover, the state $\nu _{s}$ should have
the same two-point correlation decay exponents as conservative
infinite-volume system at criticality.

A stronger version of the density conjecture is the following. Let $\zeta \in
[0,\infty )$. Consider an i.i.d.\ Poisson configuration with density $\zeta $
on the box $V_{L}$, and run the ARW dynamics with open boundary until it
reaches an absorbing configuration. Then the final state has density
concentrated around some value which depends on $\zeta $ and $L$, and this
value tends to $\min \{\zeta ,\zeta _{c}\}$ as $L\to \infty $.

\paragraph{Fixed-energy dynamics.}
Consider the ARW dynamics on a large torus
$\mathbb{Z}_{n}^{d}=(\mathbb{Z}/n\mathbb{Z})^{d}$ instead of
$\mathbb{Z}^{d}$, starting with approximately $\zeta n^{d}$ particles. Almost
surely, this system will eventually fixate if and only if it has fewer than
$n^{d}$ particles, so the question is not \emph{whether} it fixates. The
relevant quantity is \emph{how long} it will take the system to fixate. For
parameters $\lambda $ and $\zeta $ inside the active phase for
$\mathbb{Z}^{d}$ as shown in Figure~\ref{fig:predictions}, it should take a
long time to fixate (exponential in $n^{d}$), with high probability. For
parameters inside the fixating phase, the corresponding system on a large
torus should fixate in a short time (faster than any positive power of $n$).
For parameters on the critical curve, the time to fixate should be in
between. For the critical and near-critical regimes, several quantities
should decay or blow up as power laws, such as space and time correlation
lengths, activity decay, etc.

\subsection{Open problems and challenges}\label{sub:challenges}

In principle, all the statements in \S \ref{sub:predictions} that do not
appear in \S \ref{sub:results} are open problems. But most of them are far
beyond the reach of current techniques. Throughout these lecture notes we
will explicitly mention some more realistic open problems, after the
background needed to properly state each question has been introduced.

The first difficulty in studying the ARW lies in the fact that this system is
not attractive. This is overcome by considering a site-wise kind of
construction, or an explicit construction in terms of a collection of random
walks, rather than the Harris graphical construction. These frameworks allow
for different kinds of arguments which have proven to be very useful.

Another feature of this model --particle conservation-- still causes
tremendous difficulties. This has so far restrained most attempts to apply
arguments of the type ``energy vs.\ entropy.'' These arguments typically go
as follows. One first identifies some structure that is intrinsic to the
occurrence of events that conjecturally should not occur. All possible
structures are then enumerated, and their number is overwhelmed by the high
probabilistic cost needed for them to occur. One then concludes that such
events have vanishing probability. This approach has been very successful in
many branches of statistical mechanics. However, for reaction-diffusion
dynamics, the conservation of particles in the system gives rise to intricate
long-range effects, which makes it difficult to find suitable structures
within the occurrence of events of interest. In \S \ref{sec:netflow} we
present the only case where an approach involving enumeration of events and
compensation by an extreme choice of parameters has been implemented with
some success.

\subsection{Results}\label{sub:results}

We now briefly summarize known results towards the above predictions. The
results will be stated under the common assumption that the jumps are to
nearest neighbors only. In the next sections we will go through the proof of
all the results mentioned here. For bibliographical references, see \S
\ref{sec:extensions}.

\begin{figure}
\includegraphics{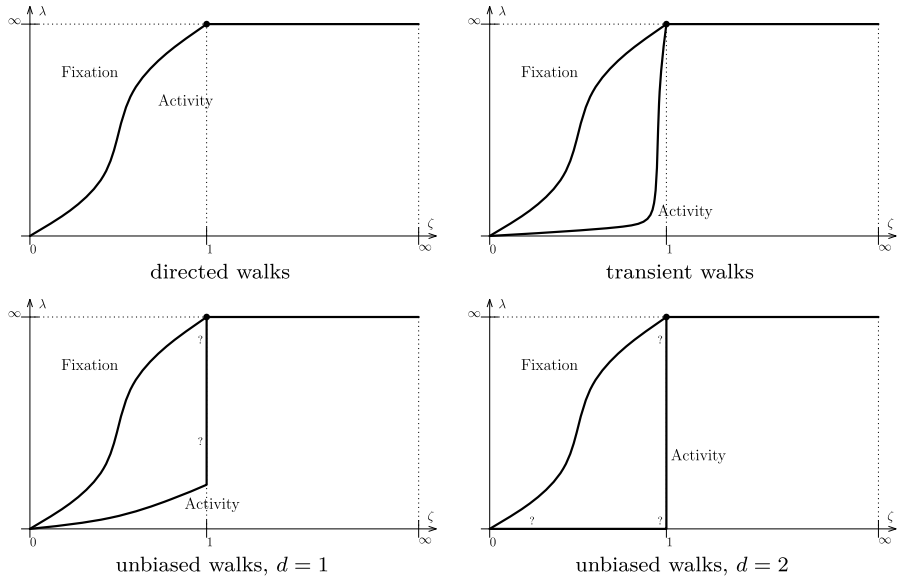}%
\caption{Results about fixation vs.\ activity on the phase space.}\label{fig:results}
\end{figure}

\paragraph{Phase space.}
Results regarding the phase space for the infinite-volume system on
$\mathbb{Z}^{d}$ are summarized in Figure~\ref{fig:results}. It is known that
$\zeta _{c} \geqslant \frac{\lambda }{1+\lambda }$ in general (\S
\ref{sec:traps} and \S \ref{sec:weak}). In particular, there is a fixation
phase for every fixed $\lambda >0$ by taking $\zeta $ small, and there is a
fixation phase for every fixed $\zeta <1$ by taking $\lambda $ large. It is
also known that, for every $\lambda \leqslant \infty $, there is no fixation
at $\zeta =1$ (\S \ref{sec:particlewise}).

In the case of unbiased walks (\textit{i.e.}\ $\sum _{y} y p(y)={
\boldsymbol{0}}$) on $d=2$, this is all we know, and proving existence of a
non-trivial active phase (\textit{i.e.}\ for some $\lambda >0$ and $\zeta
<1$) remains an open problem. On $d = 1$, for every $\zeta >0$ fixed there is
an active phase by taking $\lambda $ small (\S \ref{sec:netflow}), but it
remains open to show existence of a non-trivial active phase for every fixed
$\lambda <\infty $ and $\zeta $ close enough to $1$.

For transient walks, the picture is fairly complete: for every fixed $\zeta
>0$ the system stays active if $\lambda $ is small enough, and for every
fixed $\lambda <\infty $ the system stays active if $\zeta $ is close enough
to $1$. There are different proofs for unbiased (\S \ref{sec:weak}) and
biased (\S \ref{sec:counting}) walks. If the walks are not only biased but
directed (\textit{i.e.}\ $p(-e_{j})=0$ for every $j$), the critical curve can
be described explicitly: $ \zeta _{c} = \frac{\lambda }{1+\lambda } $, and
for $d=1$, there is no fixation at $\zeta =\zeta _{c}$ (\S
\ref{sec:counting}).

\paragraph{Fixed-energy dynamics.}
For the one-dimensional torus (\textit{i.e.}\ the ring $\mathbb{Z}_{n}$), it
has been shown that there is a slow stabilization phase and a fast
stabilization phase. Consider the average activity time $\mathcal{T}$ given
by the sum of the total time each particle is active, divided by $n$. For
every $0<\zeta <1$ fixed, if $\lambda $ is large enough then $
\mathcal{T}\leqslant C \log ^{2} n $, and if $\lambda $ is small enough then
$ \mathcal{T}\geqslant e^{cn} $, with high probability as $n\to \infty $ (\S
\ref{sec:cycle}).

\paragraph{Uniqueness of the critical density.}
The prediction given in \S \ref{sub:predictions} says that
translation-ergodic distributions with average particle density $\zeta >\zeta
_{c}$ stay active as long as they are not supported on absorbing
configurations. The partial result presented in \S \ref{sec:universality}
holds for distributions supported on completely active configurations, that
is, without any sleeping particle. Closing this gap is a major question and
would probably be an important step towards the density conjecture.

\subsection{Methods}\label{sub:methods}

Most proofs rely on the properties of the site-wise representation described
in \S \ref{sub:diaconis}. To study the phase space and establish regions of
fixation and activity, we normally check one of the conditions stated in \S
\ref{sub:conditions}. Uniqueness of the critical density also allows us to
make convenient assumptions about the initial distribution.

In practice, to check one of these conditions we need to describe a toppling
procedure for which probabilistic estimates can be obtained. \emph{Toppling}
is a one-step update of the current configuration according to the dynamics
described in \S \ref{sub:arwdef}, and the \emph{Abelian property} allows us
to choose which site should be toppled ignoring the actual order in the
continuous-time dynamics. A \emph{toppling procedure} is a recipe that
specifies the next site to be toppled, usually (but not necessarily) in terms
of the outcome of previous topplings. The simpler examples of this general
strategy are gathered in \S \ref{sec:counting} and illustrate this principle
well. From \S \ref{sec:traps} to \S \ref{sec:multiscale} all proofs follow
this common setup, each one with its own specific elements.

Another method of analysis is the use of a particle-wise construction. This
construction allows different uses of the mass transport principle, coupling,
resampling, and ergodicity. These arguments are shown in \S
\ref{sec:particlewise}.

\subsection{Structure of these lecture notes}\label{sub:structure}

The main results and some open problems are stated at the beginning of each
section. The reader may want to first have a quick glance at each section to
have a sense of what is going on, then read \S \ref{sec:definitions} skipping
the proofs, and again skim through the other sections. After that, the advice
is to read the rest of the text linearly, maybe skipping computations.

The ordering of sections was decided taking into account relevance,
difficulty and interdependence. Proofs are intended to be self-contained and
have the level of detail of a research article. Although each section is
written assuming that the reader is familiar with the material presented
before it, reading the text linearly is not a strict requirement. To follow a
section in full detail, going through \S \ref{sec:definitions} is mandatory,
and going through \S \ref{sec:counting} is highly recommended for most parts.
The exception is \S \ref{sec:cycle} which uses a result from \S
\ref{sec:netflow} and arguments from \S \ref{sec:traps}.

These notes are organized as follows. In \S \ref{sec:definitions} we describe
the site-wise representation and state the main criteria to study the
absorbing-state phase transition, establishing the main tools used in
subsequent sections. In \S \ref{sec:counting} we provide the simplest
examples of a toppling procedure being used to prove fixation and activity by
verifying the criteria provided in \S \ref{sec:definitions}. In \S
\ref{sec:traps} we give a more sophisticated toppling procedure used to prove
lower bounds for $\zeta _{c}$ on $d=1$. In \S \ref{sec:netflow} we describe a
two-scale toppling procedure and an enumerative argument to prove upper
bounds for $\zeta _{c}$ (or lower bounds for $\lambda _{c}$) for unbiased
walks on $d=1$. In \S \ref{sec:cycle} we combine arguments from \S
\ref{sec:traps} and \S \ref{sec:netflow} plus a new argument based on a
certain urn process to study fast and slow fixation on a large ring. In \S
\ref{sec:weak} we describe a toppling procedure based on the notion of weak
and strong stabilizations to obtain a general lower bound for $\zeta _{c}$
valid in any dimension as well as upper bounds on $\zeta _{c}$ valid in the
transient case $d>2$. In \S \ref{sec:universality} we use a locally-finite
infinite-step parallel-update toppling procedure to show that the value of
$\zeta _{c}$ is independent of the choice of Poisson as the initial state. In
\S \ref{sec:multiscale} we briefly sketch a recursive multi-scale estimate
based on a toppling procedure that uses ideas of decoupling to prove lower
bounds for $\zeta _{c}$ unbiased walks on $d \geqslant 2$. In \S
\ref{sec:particlewise} we depart from the framework of site-wise
representation and use a different type of construction where particles are
labeled. We then devise other properties of the ARW such as mass
conservation, and prove an averaged condition for activity. In \S
\ref{sec:constructions} we prove that the continuous-time evolution is
well-defined and can be constructed explicitly using both the site-wise as
well as particle-wise constructions. We also prove the equivalence between
fixation and a condition on the site-wise representation. We then use a
hybrid construction to prove a comparison lemma used in the proof of the
averaged condition. Finally, in \S \ref{sec:extensions}, we describe how and
when these results were first proved, then comment on some of the arguments
that extend to other graphs, initial conditions and jump distributions, and
mention some of the arguments which have meaningful counter-parts for the
Stochastic Sandpile Model.

\section{Definitions and main tools}\label{sec:definitions}

In this section we define precisely the stochastic process to be studied,
describe the site-wise representation, and give conditions for fixation
and activity. We then state mass conservation and ergodicity properties
used later on, and conclude collecting frequently used notation.

\subsection{The stochastic process and notation}\label{sub:evolution}

We will denote by $\eta _{t}(x)$ the number and type of particles at site $x$
at time $t$, as follows. Let $\mathbb{N}_{0} = \{0,1,2,\dots \}$ and
$\mathbb{N}_{\mathfrak{s}}= \mathbb{N}_{0} \cup \{\mathfrak{s}\}$. The
configuration of the ARW at time $t\geqslant 0$ is given by $\eta _{t} \in
(\mathbb{N}_{\mathfrak{s}})^{\mathbb{Z}^{d}}$. The interpretation is that, at
time $t$, site $x$ contains $\eta _{t}(x)$ active particles if $\eta _{t}(x)
\in \mathbb{N}_{0}$, or one sleeping particle if $\eta _{t}(x)=\mathfrak{s}$.

We turn $\mathbb{N}_{\mathfrak{s}}$ into an ordered set by letting
$0<\mathfrak{s}<1<2<\cdots $. We also let $|\mathfrak{s}|=1$, so $|\eta
_{t}(x)|$ counts the number of particles regardless of their state. To add a
particle to a site, we define $\mathfrak{s}+1=2$, which represents the
$A+S\to 2A$ transition. We also define $1 \cdot \mathfrak{s}= \mathfrak{s}$
and $n \cdot \mathfrak{s}= n$ for $n \geqslant 2$, which represent the
transitions $A \to S$ and $2A \to A+S \to 2A$, respectively.

The process has a parameter $0<\lambda <\infty $ and evolves as follows. For
each site $x$, a clock rings at rate $(1+\lambda ) \eta _{t}(x)
\mathds{1}_{\eta _{t}(x) \ne \mathfrak{s}}$. When this clock rings, the
system goes through the transition $\eta \to \mathfrak{t}_{x\mathfrak{s}}\eta
$ with probability $\frac{\lambda }{1+\lambda }$, otherwise $\eta \to
\mathfrak{t}_{xy}\eta $ with probability $p(y-x)\frac{1}{1+\lambda }$. The
transitions are given by
\begin{equation*}
\mathfrak{t}_{x\mathfrak{s}}\eta (z) =
\begin{cases}
\eta (x) \cdot \mathfrak{s}, & z=x,
\\
\eta (z), & z \ne x,
\end{cases}
\qquad \mathfrak{t}_{xy}\eta (z) =
\begin{cases}
\eta (x)-1, & z=x,
\\
\eta (y)+1, & z=y,
\\
\eta (z), & \mbox{otherwise},
\end{cases}
\end{equation*}
and only occur if $\eta (x) \geqslant 1$. The operator
$\mathfrak{t}_{x\mathfrak{s}}$ represents a particle at $x$ trying to fall
asleep, which will effectively happen if there are no other particles present
at $x$. Otherwise, by definition of $n \cdot \mathfrak{s}$ the configuration
will not change. The operator $\mathfrak{t}_{xy}$ represents a particle
jumping from $x$ to $y$, where possible activation of a sleeping particle
previously found at $y$ is represented by the convention that
$\mathfrak{s}+1=2$. The case $\lambda = \infty $ is left aside until \S
\ref{sub:resampling}.

Given a translation-ergodic distribution $\nu $ on
$(\mathbb{N}_{\mathfrak{s}})^{\mathbb{Z}^{d}}$ with finite mean
\begin{equation*}
\int |\eta ({\boldsymbol{0}})|\nu ({\mathrm{d}}\eta )<\infty ,
\end{equation*}
there exists a process $(\eta _{t})_{t\geqslant 0}$ with the above transition
rates and such that $\eta _{0}$ has law $\nu $, see \S
\ref{sec:constructions} for a proof. We will use $\mathbf{P}^{\nu }$ to
denote the underlying probability measure in a space where this process is
defined.

\subsection{Site-wise representation}\label{sub:diaconis}

The site-wise representation enables us to exploit an algorithmic approach to
fixation. Due to particle exchangeability, this representation extracts
precisely the part of the randomness that is relevant for the absorbing-state
phase transition, focusing on the total number of jumps and leaving aside the
order in which they take place. It is suitable for studying path traces,
total occupation times, and final particle positions. But it precludes the
analysis of quantities for which the order and instant of the jumps do
matter, such as correlation functions, time needed for fixation, invariant
measures, etc.

In this subsection we do not deal with a time evolution, and $\eta $ denotes
simply an element of $(\mathbb{N}_{\mathfrak{s}})^{\mathbb{Z}^{d}}$ on which
one can perform certain operations. We say that site $x$ is \emph{unstable
for the configuration $\eta $} if $\eta (x) \geqslant 1$. An unstable site
$x$ can \emph{topple}, by applying $\mathfrak{t}_{xy}$ or
$\mathfrak{t}_{x\mathfrak{s}}$ to $\eta $.

We consider a \emph{field of instructions}
$\mathcal{I}=(\mathfrak{t}^{x,j})_{x\in \mathbb{Z}^{d}, j\in \mathbb{N}}$.
More precisely, at each site $x$ there is a sequence or \emph{stack} of
instructions $\mathfrak{t}^{x,1},\mathfrak{t}^{x,2},\dots $, such that each
one of the $\mathfrak{t}^{x,j}$ equals either $\mathfrak{t}_{x \mathfrak{s}}$
or $\mathfrak{t}_{xy}$ for some $y$. Later on we will choose $\mathcal{I}$
random, but for now $\mathcal{I}$ denotes a field that is fixed, and $\eta $
denotes an arbitrary configuration.

We also introduce the \emph{odometer field} $h=\big (h(x);x\in
\mathbb{Z}^{d}\big )$ which counts the number of topplings already performed
at each site, usually started from $h\equiv 0$. The toppling operation at $x$
is defined by
\begin{equation*}
\Phi _{x}(\eta ,h)= \big (\mathfrak{t}^{x,h(x)+1}\eta , h+ \delta _{x} \big )
.
\end{equation*}
We say that $\Phi _{x}$ is \emph{legal for $(\eta ,h)$} or simply \emph{legal
for $\eta $} if $\eta (x) \geqslant 1$. We may write $\Phi _{x}\eta $ as a
short for $\Phi _{x}(\eta ,0)$.

Sometimes it will be convenient to topple a site $x$ when it contains any
particle at all, even if $\eta (x)=\mathfrak{s}$. To achieve that, we define
on $\mathbb{N}_{\mathfrak{s}}$ the operations $\mathfrak{s}-1=0$ and
$\mathfrak{s}\cdot \mathfrak{s}=\mathfrak{s}$. We say that toppling $x$ is
\emph{acceptable} if $\eta (x) \geqslant \mathfrak{s}$, and we say that $\Phi
_{x}$ is \emph{acceptable for $(\eta ,h)$} if $\eta (x) \geqslant
\mathfrak{s}$. One should think of this operation as first forcing the
particle to activate and then toppling the site. A legal toppling is also
acceptable. We remark that it is not possible to define the operations
$0\cdot \mathfrak{s}$ and $0-1$ while preserving the local Abelian property
stated below, so these two operations will not be considered as acceptable.

\subsubsection*{Sequences of topplings and local properties}

Let $\alpha =(x_{1},\dots ,x_{k})$ denote a finite sequence of sites, and
define the operator $\Phi _{\alpha }= \Phi _{x_{k}}\Phi _{x_{k-1}}\cdots \Phi
_{x_{1}}$. We say that $\Phi _{\alpha }$ is \emph{legal} for $(\eta ,h)$ if
$\Phi _{x_{j}}$ is legal for $\Phi _{(x_{1},\dots ,x_{j-1})}(\eta ,h)$ for
each $j=1, \dots , k$. In this case we say that $\alpha $ is a \emph{legal
sequence} of topplings for $(\eta ,h)$. We define an \emph{acceptable
sequence} of topplings analogously. When $h\equiv 0$ we may write $\eta $
instead of $(\eta ,h)$. Let $m_{\alpha }=\big (m_{\alpha }(x);x\in
\mathbb{Z}^{d}\big )$ denote \emph{the odometer of $\alpha $}, given by
$m_{\alpha }(x)=\sum _{j}{\mathds{1}}_{x_{j}=x}$, so $m_{\alpha }$ is the
field which counts how many times each site $x$ appears in $\alpha $. We
write $m_{\alpha }\geqslant m_{\beta }$ if $m_{\alpha }(x) \geqslant m_{\beta
}(x)\ \forall \ x$, and $\tilde{\eta } \geqslant \eta $ if $\tilde{\eta }(x)
\geqslant \eta (x)\ \forall \ x$. We also write $(\tilde{\eta },\tilde{h})
\geqslant (\eta ,h)$ if $\tilde{\eta } \geqslant \eta $ and $\tilde{h} = h$.

We now state the four properties that make the ARW an Abelian model. The next
three lemmas are based on these properties alone rather than on specific
details of the ARW. Let $x$ be a site in $\mathbb{Z}^{d}$ and $\eta
,\tilde{\eta }$ be configurations.

\begin{property}[Local Abelian property]\label{property1}
If $\alpha $ and $\beta $ are acceptable sequences of topplings for the
configuration $\eta $, such that $m_{\alpha }=m_{\beta }$, then $\Phi
_{\alpha }\eta = \Phi _{\beta }\eta $.
\end{property}

\begin{property}[Mass comes from outside]\label{property2}
If $\alpha $ and $\beta $ are acceptable sequences of topplings for $\eta $
such that $m_{\alpha }(x) \leqslant m_{\beta }(x)$ and $m_{\alpha }(z)
\geqslant m_{\beta }(z)$ for all $z\ne x$, then $\Phi _{\alpha }\eta (x)
\geqslant \Phi _{\beta }\eta (x)$.
\end{property}

\begin{property}[Monotonicity of stability]\label{property3}
If site $x$ is unstable for the configuration $\eta $, and if $\tilde{\eta
}(x)\geqslant \eta (x)$, then $x$ is unstable for the configuration~$
\tilde{\eta }$.
\end{property}

\begin{property}[Monotonicity of topplings]\label{property4}
If $\tilde{\eta }\geqslant \eta $ and $\Phi _{x}$ is legal for $\eta $, then
$\Phi _{x}$ is legal for $\tilde{\eta }$ and $\Phi _{x} \tilde{\eta
}\geqslant \Phi _{x} \eta $.
\end{property}

\begin{proof}
The last two properties are immediate from the previous definitions. For
convenience, define the operators $n \oplus = n+1$ on
$\mathbb{N}_{\mathfrak{s}}$, as well as $n \ominus = n-1$ and $n\odot = n
\cdot \mathfrak{s}$ on $\mathbb{N}_{\mathfrak{s}}\setminus \{0\}$. With this
notation, whenever $n \ominus $ is acceptable (\textit{i.e.}\ $n\ne 0$) we
have $n \ominus \oplus = n \oplus \ominus $. Analogously, whenever $n \odot $
is acceptable (\textit{i.e.}\ $n\ne 0$) we have $n \odot \oplus = n \oplus
\odot $. Therefore, within any acceptable sequence of operations, replacing
$\ominus \oplus $ by $\oplus \ominus $ and $\odot \oplus $ by $\oplus \odot $
yields an acceptable sequence with the same final outcome.

We first prove Property~\ref{property1} as a warm up. Suppose that $m_{\alpha
}=m_{\beta }$. Notice that $\Phi _{\alpha }\eta (x)$ is given by $\eta (x)$
followed by a sequence of $\oplus $, $\ominus $ and $\odot $'s. The number of
times each operator appears is determined by $\mathcal{I}$ and $m_{\alpha }$
only, hence it is the same number when we write $\Phi _{\beta }\eta (x)$ as
$\eta (x)$ followed by a sequence of $\oplus $, $\ominus $ and $\odot $'s.
Their actual order depends on the sequence, but the internal order of the
$\ominus $'s and $\odot $'s is determined by $(\mathfrak{t}^{x,j})_{j}$ and
is thus the same for both $\Phi _{\alpha }\eta (x)$ and $\Phi _{\beta }\eta
(x)$. As a consequence, we can apply the above identities to move the $\oplus
$'s to the left, yielding then identical sequences for $\Phi _{\alpha }\eta
(x)$ and $\Phi _{\beta }\eta (x)$, proving the claimed property.

For Property~\ref{property2} we make a similar observation. Suppose
$m_{\alpha }(x) \leqslant m_{\beta }(x)$ and $m_{\alpha }(z) \geqslant
m_{\beta }(z)$ for $z\ne x$. Again $\Phi _{\alpha }\eta (x)$ is given by
$\eta (x)$ followed by a number of $\oplus $, $\ominus $ and $\odot $'s. The
number of times that operator $\oplus $ appears depends on $m_{\alpha }(z),
z\ne x$, and is thus bigger in $\Phi _{\alpha }\eta (x)$ than in $\Phi
_{\beta }\eta (x)$. The number of times that operators $\ominus $ and $\odot
$ appear depend on $m_{\alpha }(x)$, and is thus smaller than in $\Phi
_{\beta }\eta (x)$. Pushing the $\oplus $'s to the left as before, we get
that $\Phi _{\alpha }\eta (x)$ is written in the same way as $\Phi _{\beta
}\eta (x)$, perhaps with a few extra $\oplus $'s in the beginning, and a few
missing $\ominus $ and $\odot $'s in the end, so $\Phi _{\alpha }\eta (x)
\geqslant \Phi _{\beta }\eta (x)$.
\end{proof}

\subsubsection*{Stabilization via sequential topplings}

Let $V$ denote a finite subset of $\mathbb{Z}^{d}$. We say that a
configuration $\eta $ is \emph{stable in $V$} if every $x\in V$ is stable for
$\eta $. We say that $\alpha $ is \emph{contained in $V$}, and write $\alpha
\subseteq V$, if every $x$ appearing in $\alpha $ is an element of $V$. We
say that $\alpha $ \emph{stabilizes $\eta $ in $V$} if $\alpha $ is
acceptable for $\eta $ and $\Phi _{\alpha }\eta $ is stable in $V$.

\begin{lemma}\label{lemma:lap}
If $\alpha $ is an acceptable sequence of topplings that stabilizes $\eta $
in $V$, and $\beta \subseteq V$ is a legal sequence of topplings for $\eta $,
then $m_{\beta }\leqslant m_{\alpha }$.
\end{lemma}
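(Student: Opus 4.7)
The statement is a \emph{least action principle} in the style of Diaconis--Fulton. The plan is to argue by induction on the length $k$ of $\beta=(x_{1},\dots,x_{k})$. The base case $k=0$ gives $m_{\beta}\equiv 0\leqslant m_{\alpha}$, which is trivial since $\alpha$ is a (finite) acceptable sequence.

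For the inductive step, let $\beta'=(x_{1},\dots,x_{k-1})$. Any prefix of a legal sequence is legal, and $\beta'\subseteq V$, so by the inductive hypothesis $m_{\beta'}\leqslant m_{\alpha}$ coordinate-wise. Since $m_{\beta}=m_{\beta'}+\delta_{x_{k}}$, it only remains to show $m_{\beta'}(x_{k})<m_{\alpha}(x_{k})$. I would argue by contradiction: assume $m_{\beta'}(x_{k})=m_{\alpha}(x_{k})$ (equality, since strict inequality $>$ is excluded by the inductive hypothesis). Then the two odometers $m_{\alpha}$ and $m_{\beta'}$ satisfy
\begin{equation*}
m_{\alpha}(x_{k})\leqslant m_{\beta'}(x_{k})\qquad\text{and}\qquad m_{\alpha}(z)\geqslant m_{\beta'}(z)\text{ for all }z\ne x_{k}.
\end{equation*}
Both $\alpha$ and $\beta'$ are acceptable for $\eta$ (legal implies acceptable for $\beta'$), so Property~\ref{property2} applied at the site $x_{k}$ yields $\Phi_{\alpha}\eta(x_{k})\geqslant \Phi_{\beta'}\eta(x_{k})$.

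Now I would close the argument using legality of $\Phi_{x_{k}}$ at step $k$ of $\beta$: by definition $\Phi_{\beta'}\eta(x_{k})\geqslant 1$, hence $\Phi_{\alpha}\eta(x_{k})\geqslant 1$, so $x_{k}$ is unstable for $\Phi_{\alpha}\eta$. But $x_{k}\in V$ (since $\beta\subseteq V$), and $\alpha$ stabilizes $\eta$ in $V$, contradicting unstability of $x_{k}$ after $\alpha$. This contradiction forces $m_{\beta'}(x_{k})<m_{\alpha}(x_{k})$, completing the induction.

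I do not expect any serious obstacle: the argument is purely formal once Property~\ref{property2} is in hand, and the only delicate point is to make sure the hypotheses of Property~\ref{property2} are applied with the correct roles (the site played by $x_{k}$, the ``smaller odometer'' role played by $\alpha$, and the ``larger'' by $\beta'$). One should also be mindful that ``unstable'' here means $\eta(x)\geqslant 1$ in the ordering $0<\mathfrak{s}<1<2<\cdots$, so $\Phi_{\alpha}\eta(x_{k})\geqslant 1$ indeed contradicts stability (a sleeping particle, $\mathfrak{s}$, would not suffice). No additional properties beyond Property~\ref{property2} are needed, although Property~\ref{property1} is used implicitly if one wishes to reorder $\alpha$ for convenience.
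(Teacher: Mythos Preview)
Your proof is correct and essentially the same as the paper's: the paper argues by contradiction, taking the largest prefix $\beta^{(\ell)}$ with $m_{\beta^{(\ell)}}\leqslant m_{\alpha}$ and deriving a contradiction at $y=x_{\ell+1}$ via Properties~\ref{property2} and~\ref{property3}, which is exactly your inductive step unwound. The only cosmetic difference is that the paper cites Property~\ref{property3} explicitly to pass from ``$y$ unstable for $\Phi_{\beta^{(\ell)}}\eta$'' to ``$y$ unstable for $\Phi_{\alpha}\eta$'', whereas you phrase this via the inequality $\Phi_{\alpha}\eta(x_k)\geqslant 1$ directly.
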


\begin{proof}
Let $\beta \subseteq V$ be legal and $m_{\alpha }\ngeqslant m_{\beta }$.
Write $\beta =(x_{1},\dots ,x_{k})$ and $\beta ^{(j)}=(x_{1},\dots ,x_{j})$
for $j\leqslant k$. Let $\ell =\max \{ j : m_{\beta ^{(j)}} \leqslant
m_{\alpha } \}<k$ and $y=x_{\ell +1}\in V$. Since $\beta $ is legal, $y$ is
unstable in $\Phi _{\beta ^{(\ell )}}\eta $. But $m_{\beta ^{(\ell
)}}\leqslant m_{\alpha }$ and $m_{\beta ^{(\ell )}}(y)= m_{\alpha }(y)$. By
the Properties~\ref{property2} and~\ref{property3}, $y$ is unstable for $\Phi
_{\alpha }\eta $ and therefore $\alpha $ does not stabilize $\eta $ in $V$.
\end{proof}

Let $V \subseteq \mathbb{Z}^{d}$ be a finite set. We define \emph{the
odometer of $\eta $ in $V$} by
%
\begin{equation}\label{eq:laplower}
m_{V,\eta } = \sup _{\beta \subseteq V\text{ legal}} m_{\beta }
\end{equation}
the supremum being taken over sequences $\beta $ which are legal for
$\eta $ and contained in $V$. Lemma~\ref{lemma:lap} says that
%
\begin{equation}\label{eq:lapupper}
m_{V,\eta } \leqslant m_{\alpha
}\end{equation}
for every acceptable sequence $\alpha $ stabilizing $\eta $ in $V$. These two
together provide very good sources of lower and upper bounds for $m_{V,\eta
}$. Note that the sequence $\alpha $ need not be legal, nor contained in $V$.
This allows us to choose a convenient sequence of topplings, even wake up
some particles if we wish, if we are looking for upper bounds.

\begin{lemma}[Abelian property]
If $\alpha $ and $\beta $ are both legal toppling sequences for $\eta $ that
are contained in $V$ and stabilize $\eta $ in $V$, then $m_{\alpha }=m_{\beta
}=m_{V,\eta }$. In particular, $\Phi _{\alpha }\eta =\Phi _{\beta }\eta $.
\end{lemma}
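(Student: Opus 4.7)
The plan is to derive everything from Lemma~\ref{lemma:lap} and Property~\ref{property1}, without revisiting the stack arguments. The key observation is that a legal sequence is automatically acceptable, so both $\alpha$ and $\beta$ simultaneously qualify as ``acceptable sequences that stabilize $\eta$ in $V$'' and as ``legal sequences contained in $V$'' in the hypotheses of Lemma~\ref{lemma:lap}. That dual role is what will force the equality of the odometers.

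First I would apply Lemma~\ref{lemma:lap} with $\alpha$ in the role of the stabilizing acceptable sequence and $\beta$ in the role of the legal sequence contained in $V$; this yields $m_\beta \leqslant m_\alpha$ coordinatewise. Swapping the roles of $\alpha$ and $\beta$ gives $m_\alpha \leqslant m_\beta$, so $m_\alpha = m_\beta$ as fields on $\mathbb{Z}^d$. Next I would identify this common value with $m_{V,\eta}$: since $\alpha$ is legal and $\alpha \subseteq V$, the definition \eqref{eq:laplower} gives $m_\alpha \leqslant m_{V,\eta}$; conversely, for any legal $\gamma \subseteq V$ the same lemma (with $\alpha$ stabilizing and $\gamma$ legal) gives $m_\gamma \leqslant m_\alpha$, so taking the supremum yields $m_{V,\eta} \leqslant m_\alpha$. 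Hence $m_\alpha = m_\beta = m_{V,\eta}$.

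Finally, to conclude $\Phi_\alpha\eta = \Phi_\beta\eta$, I would invoke the Local Abelian Property (Property~\ref{property1}): both $\alpha$ and $\beta$ are acceptable sequences for $\eta$ with identical odometer fields, so the resulting configurations coincide.

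I do not anticipate a real obstacle here; the only subtlety is the bookkeeping point that ``legal implies acceptable'' is needed to feed $\alpha$ and $\beta$ into Lemma~\ref{lemma:lap} in two different roles, and that the supremum in \eqref{eq:laplower} is coordinatewise, so that the inequalities $m_\beta \leqslant m_\alpha$ and $m_\alpha \leqslant m_{V,\eta}$ can be combined into an equality of fields rather than only of pointwise maxima.
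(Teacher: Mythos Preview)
Your proof is correct and follows essentially the same approach as the paper: both rely on Lemma~\ref{lemma:lap}, the definition~\eqref{eq:laplower}, and Property~\ref{property1}. The paper compresses the argument into a single circular chain $m_{\beta}\leqslant m_{V,\eta}\leqslant m_{\alpha}\leqslant m_{\beta}$, whereas you first establish $m_\alpha=m_\beta$ via two swapped applications of Lemma~\ref{lemma:lap} and then separately identify this with $m_{V,\eta}$; the content is the same.
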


\begin{proof}
Applying~\eqref{eq:laplower},~\eqref{eq:lapupper} and Lemma~\ref{lemma:lap}:
$m_{\beta }\leqslant m_{V,\eta } \leqslant m_{\alpha }\leqslant m_{\beta }$.
\end{proof}

If there is an acceptable sequence $\alpha $ that stabilizes $\eta $ in $V$,
then there is a legal sequence $\beta $ contained in $V$ that also stabilizes
$\eta $ in $V$. Indeed, if one tries to perform legal topplings in $V$
indefinitely, on the one hand one has to eventually stop since $V$ is finite
and there is a finite upper bound $\sum _{x} m_{\alpha }(x)$ for the total
number of topplings, and on the other hand this procedure only stops if there
are no more unstable sites in $V$.

\begin{lemma}[Monotonicity]\label{lemma:monotonicity}
If $V\subseteq \tilde{V}$ and $\eta \leqslant \tilde{\eta }$, then
$m_{V,\eta }\leqslant m_{\tilde{V},\tilde{\eta }}$.
\end{lemma}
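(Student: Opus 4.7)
The plan is to show that every legal toppling sequence $\beta \subseteq V$ for $\eta$ remains legal when viewed as a sequence in $\tilde{V}$ for $\tilde\eta$, and then to pass to the supremum defining $m_{V,\eta}$.

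First I would fix an arbitrary legal sequence $\beta = (x_1,\dots,x_k) \subseteq V$ for $\eta$, and prove by induction on $j \leqslant k$ that $\Phi_{\beta^{(j)}}$ is legal for $\tilde\eta$ and that $\Phi_{\beta^{(j)}} \tilde\eta \geqslant \Phi_{\beta^{(j)}} \eta$, where $\beta^{(j)} = (x_1,\dots,x_j)$. The base case $j=0$ holds by hypothesis $\tilde\eta \geqslant \eta$. For the inductive step, $\Phi_{x_{j+1}}$ is legal for $\Phi_{\beta^{(j)}} \eta$ by legality of $\beta$ for $\eta$, so Property~\ref{property4} applied to the pair $\Phi_{\beta^{(j)}} \eta \leqslant \Phi_{\beta^{(j)}} \tilde\eta$ gives that $\Phi_{x_{j+1}}$ is legal for $\Phi_{\beta^{(j)}} \tilde\eta$ and that the inequality is preserved after toppling. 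Note that legality concerns only $\eta$ and not the odometer field $h$, so the shared odometer component (which simply increments by $\delta_x$ at each step) causes no trouble.

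Next, since $\beta \subseteq V \subseteq \tilde{V}$ is legal for $\tilde\eta$, it belongs to the family over which the supremum in the definition of $m_{\tilde{V},\tilde\eta}$ is taken, and hence $m_\beta \leqslant m_{\tilde{V},\tilde\eta}$ in the pointwise partial order on odometer fields. Taking the supremum over all legal $\beta \subseteq V$ for $\eta$ yields $m_{V,\eta} \leqslant m_{\tilde{V},\tilde\eta}$, which is the desired conclusion.

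I do not expect a real obstacle here: the only subtle point is to verify that legality is a monotone condition under $\eta \leqslant \tilde\eta$, and this is exactly Property~\ref{property4}, iterated along $\beta$. The enlargement from $V$ to $\tilde V$ plays a trivial role — it only enlarges the index set of admissible sequences — so the real content of the lemma is the monotonicity in $\eta$.
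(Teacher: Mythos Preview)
Your proof is correct and follows essentially the same approach as the paper: take a legal sequence $\beta \subseteq V$ for $\eta$, use Property~\ref{property4} iteratively to see it is legal for $\tilde\eta$, observe $\beta \subseteq \tilde V$, and conclude from the definition~\eqref{eq:laplower}. The paper's proof is terser (it cites Properties~\ref{property3} and~\ref{property4} together without spelling out the induction), but the argument is the same.
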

\begin{proof}
Let $\beta \subseteq V$ be legal for $\eta $. By successively applying
Properties~\ref{property3} and~\ref{property4}, $\beta $ is also legal
for $\tilde{\eta }$. Since $\beta \subseteq \tilde{V}$, the inequality follows
directly from definition~\eqref{eq:laplower}.
\end{proof}

By monotonicity, the limit
%
\begin{equation}\label{eq:odometer}
m_{\eta }= \lim _{V \uparrow \mathbb{Z}^{d}} m_{V,\eta }
\end{equation}
exists and does not depend on the particular sequence
$V \uparrow \mathbb{Z}^{d}$. The limit is also given by the supremum of
$m_{V,\eta }$ over finite $V$. A configuration $\eta $ on
$\mathbb{Z}^{d}$ is said to be \emph{stabilizable} if
$m_{\eta }(x)<\infty $ for every $x\in \mathbb{Z}^{d}$. In the next subsection
we relate the above concepts to the question of fixation vs.\ activity
for the continuous-time process.

\subsection{Criteria for fixation and activity}\label{sub:conditions}

Assume the initial configuration $\eta _{0} \in
(\mathbb{N}_{\mathfrak{s}})^{\mathbb{Z}^{d}}$ has a translation-ergodic
distribution denoted by $\nu $. Assume also that the support of the jump
distribution $p(\cdot )$ generates the group $\mathbb{Z}^{d}$ and not a
subgroup.

Consider now the field of instructions $\mathcal{I}$ being random, and
distributed as follows. For each $x\in \mathbb{Z}^{d}$ and $j\in \mathbb{N}$,
choose $\mathfrak{t}^{x,j}$ as $\mathfrak{t}_{xy}$ with probability
$\frac{p(y-x)}{1+\lambda }$ or $\mathfrak{t}_{x\mathfrak{s}}$ with
probability $\frac{\lambda }{1+\lambda }$, independently over $x$ and $j$.
Now sample $\eta _{0}$ according to $\nu $ and independently of
$\mathcal{I}$, and let $\mathbb{P}^{\nu }$ denote the probability measure in
a space where $\mathcal{I}$ and $\eta _{0}$ are defined.

Recall that $\mathbf{P}^{\nu }$ denotes the probability measure in whatever
probability space where the process $(\eta _{t})_{t\geqslant 0}$ described in
\S \ref{sub:evolution} is defined. The following result is proved in \S
\ref{sub:condbu}.

\begin{theorem}\label{thm:equivalence}
$\mathbf{P}^{\nu }\big (\text{fixation of }(\eta _{t})_{t\geqslant 0} \big
)=\mathbb{P}^{\nu }(m_{\eta _{0}}({\boldsymbol{0}})<\infty ) = 0 \text{ or }
1$.
\end{theorem}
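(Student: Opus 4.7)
The plan is to prove three nested statements: first, that the event $\{m_{\eta_0}(\boldsymbol{0}) < \infty\}$ under $\mathbb{P}^\nu$ has probability $0$ or $1$; and second, that the probability of fixation of $(\eta_t)_{t\geqslant 0}$ under $\mathbf{P}^\nu$ coincides with this common value.

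For the zero-one law, I would exploit translation-ergodicity of the joint law of $(\eta_0, \mathcal{I})$ under $\mathbb{P}^\nu$, which holds because $\nu$ is translation-ergodic, $\mathcal{I}$ is i.i.d.\ over sites, and the two are independent. The event $\mathcal{S} = \{m_{\eta_0}(x) < \infty\ \forall\, x \in \mathbb{Z}^d\}$ is translation-invariant and hence has probability $0$ or $1$. It remains to show $\mathbb{P}^\nu(\mathcal{S}) = \mathbb{P}^\nu(m_{\eta_0}(\boldsymbol{0}) < \infty)$, i.e., that up to a null set, finiteness of the odometer at $\boldsymbol{0}$ forces finiteness everywhere. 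I would argue by propagation: if $m_{\eta_0}(y) = \infty$ for some $y$, then by \eqref{eq:odometer} there are legal sequences $\beta$ in larger and larger volumes $V \ni \boldsymbol{0}, y$ with $m_\beta(y)$ arbitrarily large. Among the first $m_\beta(y)$ instructions stacked at $y$, a positive fraction (of order $p(\boldsymbol{0}-y)$, by a law-of-large-numbers argument on the i.i.d.\ random instructions) are jumps toward $\boldsymbol{0}$. Combining this injection of mass at $\boldsymbol{0}$ with Property~\ref{property2} and the irreducibility assumption that $p$ spans $\mathbb{Z}^d$, one propagates along a chain of neighbors from $y$ to $\boldsymbol{0}$ to conclude $m_{\eta_0}(\boldsymbol{0}) = \infty$.

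For the equivalence with fixation, I would invoke the site-wise graphical construction of $(\eta_t)_{t\geqslant 0}$ developed in \S \ref{sec:constructions}, built from the \emph{same} field $\mathcal{I}$: at each site $x$ place an independent exponential clock ringing at rate $(1+\lambda)|\eta_t(x)|\mathds{1}_{\eta_t(x) \ne \mathfrak{s}}$, and whenever it rings consume the next instruction $\mathfrak{t}^{x, h_t(x)+1}$ on the stack at $x$, where $h_t(x)$ counts the cumulative number of topplings at $x$ up to time $t$. By construction every toppling performed along this evolution is legal for $\eta_0$, so applying \eqref{eq:laplower} to any large $V$ gives $h_t(x) \leqslant m_{\eta_0}(x)$ for every $x$ and $t$; letting $t \to \infty$, $h_\infty(x) \leqslant m_{\eta_0}(x)$. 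Conversely, if $(\eta_t)$ fixates, then $h_\infty(x) < \infty$ for every $x$ and the full trajectory of topplings is an acceptable sequence stabilizing $\eta_0$ in every finite $V$, so by \eqref{eq:lapupper}, $m_{V, \eta_0}(x) \leqslant h_\infty(x)$; taking $V \uparrow \mathbb{Z}^d$ yields $m_{\eta_0}(x) \leqslant h_\infty(x) < \infty$. Combined with the zero-one law, these two inequalities prove the theorem.

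The main obstacles are twofold. The delicate step is the propagation argument inside the zero-one law: transferring ``$m_\beta(y)$ unbounded over legal $\beta$'' into ``$m_{\eta_0}(\boldsymbol{0})=\infty$'' requires Property~\ref{property2} applied with an auxiliary acceptable (not necessarily legal) sequence that topples $\boldsymbol{0}$ preemptively to absorb the injected mass, together with an independent LLN on the random instructions at $y$. The second difficulty is foundational: constructing the continuous-time process on the infinite lattice from the stacks and verifying that every clock ring indeed corresponds to consuming the next instruction from a legal toppling, despite the potentially infinite mean density and unboundedly many simultaneous clocks in space; this is precisely the content deferred to \S \ref{sec:constructions}.
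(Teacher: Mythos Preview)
Your propagation argument for the zero-one law has a genuine gap. You claim that if $m_{\eta_0}(y)=\infty$ for some $y$, then one can propagate along a chain of sites from $y$ to $\boldsymbol{0}$, each step using that a positive fraction of the instructions at the current site are jumps toward the next. But the fraction of instructions at $z$ that jump to $z'$ is $\tfrac{p(z'-z)}{1+\lambda}$, which is positive only when $z'-z\in\operatorname{supp} p$. So such a chain from $y$ to $\boldsymbol{0}$ exists only when $\boldsymbol{0}-y$ lies in the \emph{semigroup} generated by $\operatorname{supp} p$. The standing hypothesis is merely that $\operatorname{supp} p$ generates $\mathbb{Z}^d$ as a \emph{group}; for a directed walk ($p(+1)=1$ on $\mathbb{Z}$) the semigroup is $\{1,2,\dots\}$, and your argument cannot reach $\boldsymbol{0}$ from any $y>0$. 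Consequently the a.s.\ equality $\{m_{\eta_0}(\boldsymbol{0})<\infty\}=\mathcal{S}$ does not follow from your reasoning.

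The paper's fix is not to strengthen the propagation but to combine one-sided propagation with ergodicity. It first observes (exactly as you do) that $m_{\eta_0}(\boldsymbol{0})=\infty$ a.s.\ implies $m_{\eta_0}(y)=\infty$ for every $y$ with $p(y)>0$; iterating, $\{z:m_{\eta_0}(z)=\infty\}$ is forward-closed under the semigroup of $\operatorname{supp} p$. A short algebraic fact then says this semigroup contains a translate $w+U$ of an open cone. Now if $\mathbb{P}^\nu(m_{\eta_0}(\boldsymbol{0})=\infty)>0$, translation-ergodicity of $(\eta_0,\mathcal{I})$ guarantees a.s.\ existence of some $z\in-(w+U)$ with $m_{\eta_0}(z)=\infty$, and propagation from that $z$ reaches $\boldsymbol{0}$. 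This is the missing idea.

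Your equivalence argument is in the right spirit and matches the paper's strategy of building $(\eta_t)_t$ from the same stacks $\mathcal{I}$ together with Poisson clocks. Two remarks: first, the inequality $h_t\leqslant m_{\eta_0}$ should be obtained through the finite-volume approximants $h_t^V$ (each of which is the odometer of a finite legal sequence), not by invoking~\eqref{eq:laplower} directly on an infinite trajectory. Second, your ``converse'' step applies~\eqref{eq:lapupper} to the full infinite trajectory, but Lemma~\ref{lemma:lap} is stated for finite $\alpha$; the paper avoids this by proving the distributional identity $\mathbf{P}^\nu(h_\infty(x)\geqslant k)=\mathbb{P}^\nu(m_{\eta_0}(x)\geqslant k)$ via commuting the limits $V\uparrow\mathbb{Z}^d$ and $t\to\infty$, together with the no-blow-up estimate~\eqref{eq:noblowups} you already flagged.
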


Combining Theorem~\ref{thm:equivalence} with~\eqref{eq:odometer} and
Lemma~\ref{lemma:monotonicity} we get the following.
%
\begin{corollary}
If the condition
%
\begin{equation}\label{eq:conditionb}
\sup _{k} \inf _{V \, \text{finite}} \mathbb{P}^{\nu }( m_{V,\eta _{0}}({
\boldsymbol{0}}) \leqslant k ) > 0
\end{equation}
is satisfied, the system a.s.\ fixates. If the condition
%
\begin{equation}\label{eq:conditionu}
\inf _{k} \sup _{V \, \text{finite}} \mathbb{P}^{\nu }( m_{V,\eta _{0}}({
\boldsymbol{0}}) \geqslant k ) > 0
\end{equation}
is satisfied, the system a.s.\ stays active.
\end{corollary}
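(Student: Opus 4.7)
The plan is to translate each of the two conditions into a statement about the full odometer $m_{\eta_0}(\boldsymbol{0})$, and then invoke the zero–one dichotomy of Theorem~\ref{thm:equivalence} to promote positive probability to an almost-sure statement. The engine throughout is the monotonicity of $m_{V,\eta_0}(\boldsymbol{0})$ in $V$ provided by Lemma~\ref{lemma:monotonicity}, which together with \eqref{eq:odometer} gives $m_{V_n,\eta_0}(\boldsymbol{0}) \uparrow m_{\eta_0}(\boldsymbol{0})$ along any exhaustion $V_n \uparrow \mathbb{Z}^d$.

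For the fixation condition, I would fix $k$ and note that the events $\{m_{V,\eta_0}(\boldsymbol{0}) \leqslant k\}$ are non-increasing in $V$. Continuity from above along any increasing exhaustion gives
\begin{equation*}
\mathbb{P}^{\nu}\big(m_{\eta_0}(\boldsymbol{0}) \leqslant k\big) = \inf_{V\text{ finite}} \mathbb{P}^{\nu}\big(m_{V,\eta_0}(\boldsymbol{0}) \leqslant k\big),
\end{equation*}
where the infimum coincides with the one in \eqref{eq:conditionb} by monotonicity. If \eqref{eq:conditionb} holds, then for some $k$ this infimum is strictly positive, whence $\mathbb{P}^{\nu}(m_{\eta_0}(\boldsymbol{0}) < \infty) > 0$. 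Theorem~\ref{thm:equivalence} then forces this probability to equal $1$ and identifies it with $\mathbf{P}^{\nu}(\text{fixation})$, proving almost-sure fixation.

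For the activity condition, the events $\{m_{V,\eta_0}(\boldsymbol{0}) \geqslant k\}$ are non-decreasing in $V$, so continuity from below gives
\begin{equation*}
\mathbb{P}^{\nu}\big(m_{\eta_0}(\boldsymbol{0}) \geqslant k\big) = \sup_{V\text{ finite}} \mathbb{P}^{\nu}\big(m_{V,\eta_0}(\boldsymbol{0}) \geqslant k\big).
\end{equation*}
If \eqref{eq:conditionu} holds, the right-hand side is bounded below by a positive constant uniformly in $k$. A second application of continuity, this time from above along the decreasing events $\{m_{\eta_0}(\boldsymbol{0}) \geqslant k\}$ as $k \to \infty$, produces $\mathbb{P}^{\nu}(m_{\eta_0}(\boldsymbol{0}) = \infty) > 0$. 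Equivalently $\mathbb{P}^{\nu}(m_{\eta_0}(\boldsymbol{0}) < \infty) < 1$, and Theorem~\ref{thm:equivalence} forces this probability to be $0$, giving almost-sure activity.

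I do not expect a genuine obstacle: the zero–one dichotomy has been isolated in Theorem~\ref{thm:equivalence}, and the rest is bookkeeping in measure-theoretic continuity. The only point that warrants care is verifying that the $\inf$ and $\sup$ over all finite $V$ agree with the limits along an exhausting sequence, which is exactly the content of Lemma~\ref{lemma:monotonicity}.
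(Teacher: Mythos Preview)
Your proposal is correct and follows exactly the approach the paper indicates: it combines Theorem~\ref{thm:equivalence} with~\eqref{eq:odometer} and Lemma~\ref{lemma:monotonicity}, spelling out the measure-theoretic continuity arguments that the paper leaves implicit. There is nothing to add.
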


A typical usage of Condition~\eqref{eq:conditionb} is to rely on~\eqref{eq:lapupper}
and use the odometer of an acceptable stabilizing sequence of topplings
as a stochastic upper bound for $m_{V,\eta _{0}}$. Likewise, a typical
usage of Condition~\eqref{eq:conditionu} is to rely on~\eqref{eq:laplower}
and use the odometer of a legal sequence of topplings as a stochastic lower
bound for $m_{V,\eta _{0}}$.

In \S \ref{sub:conditione} we will prove the following sufficient condition
for activity.

\begin{theorem}\label{thm:conditione}
Let $M_{n}$ count the number of particles that jump out of $V_{n} =
\{-n,\dots ,n\}^{d}$ when $V_{n}$ is stabilized via legal topplings, so
particles are ignored after leaving $V_{n}$. If $\eta _{0}$ is i.i.d.\ and
the condition
%
\begin{equation}\label{eq:conditione}
\limsup _{n} \dfrac{\mathbb{E}M_{n}}{|V_{n}|}>0
\end{equation}
is satisfied, then the system a.s.\ stays active.
\end{theorem}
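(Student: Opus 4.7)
The plan is to verify condition~\eqref{eq:conditionu} by contradiction: assuming the system almost surely fixates, so that by Theorem~\ref{thm:equivalence} $m_{\eta_{0}}(\boldsymbol{0})<\infty$ a.s., I will show that $\mathbb{E}M_{n}/|V_{n}|\to 0$, contradicting~\eqref{eq:conditione}. Writing $\hat{\eta}_{n}$ for the stable configuration left inside $V_{n}$ after legal stabilisation with killing, the mass balance $M_{n}=|\eta_{0}|_{V_{n}}-|\hat{\eta}_{n}|_{V_{n}}$ reduces the task to proving that under fixation $\mathbb{E}|\hat{\eta}_{n}|_{V_{n}}/|V_{n}|\to\zeta$, where $\zeta=\mathbb{E}|\eta_{0}(\boldsymbol{0})|$.

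The first step is to construct a pointwise limit $\eta_{\infty}$. Under fixation, by~\eqref{eq:odometer} and Lemma~\ref{lemma:monotonicity}, $m_{V_{n},\eta_{0}}(z)\uparrow m_{\eta_{0}}(z)<\infty$ through integer values, so for each $z$ there is an a.s.\ finite random $N(z)$ with $m_{V_{n},\eta_{0}}(z)=m_{\eta_{0}}(z)$ for all $n\geqslant N(z)$. Applying the Abelian property (Property~\ref{property1}) with the full odometer $m_{\eta_{0}}$ produces a stable limit $\eta_{\infty}\in\{0,\mathfrak{s}\}^{\mathbb{Z}^{d}}$; since $\hat{\eta}_{n}(x)$ is a deterministic function of $\eta_{0}$, of $\mathcal{I}$, and of the values of $m_{V_{n},\eta_{0}}$ on $\{x\}$ together with its nearest neighbours, one has $\hat{\eta}_{n}(x)=\eta_{\infty}(x)$ as soon as $n$ exceeds the maximum of $N(z)$ over this finite set, and hence $\hat{\eta}_{n}(x)\to\eta_{\infty}(x)$ a.s.\ for each fixed $x$. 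Next, the mass conservation property for the ARW (announced at the start of \S\ref{sec:definitions} and derived via the particle-wise construction of \S\ref{sec:particlewise}) yields $\mathbb{E}|\eta_{\infty}(\boldsymbol{0})|=\zeta$, so $\mathbb{E}|\eta_{\infty}|_{V_{n}}/|V_{n}|=\zeta$ by translation invariance.

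The final step is a boundary-layer average. Because $|\hat{\eta}_{n}(x)|,|\eta_{\infty}(x)|\in\{0,1\}$ on $V_{n}$, the discrepancy is controlled by $|D_{n}|$ with $D_{n}=\{x\in V_{n}:\hat{\eta}_{n}(x)\neq\eta_{\infty}(x)\}$, and translation invariance of the joint law of $(\eta_{0},\mathcal{I})$ yields $\mathbb{P}(x\in D_{n})=\mathbb{P}\bigl(\hat{\eta}_{V_{n}-x}(\boldsymbol{0})\neq\eta_{\infty}(\boldsymbol{0})\bigr)$. Given $\varepsilon>0$, I would pick $R_{0}$ so large that this probability stays below $\varepsilon$ for every finite $V\supseteq V_{R_{0}}$; sites $x\in V_{n-R_{0}}$ then satisfy $V_{n}-x\supseteq V_{R_{0}}$ and each contribute at most $\varepsilon$ to $\mathbb{P}(x\in D_{n})$, while the remaining boundary layer $V_{n}\setminus V_{n-R_{0}}$ has size $O(R_{0}n^{d-1})=o(|V_{n}|)$. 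Therefore $\mathbb{E}|D_{n}|/|V_{n}|\leqslant\varepsilon+o(1)$, and letting $\varepsilon\downarrow 0$ gives $\mathbb{E}|\hat{\eta}_{n}|_{V_{n}}/|V_{n}|\to\zeta$ and the desired contradiction.

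The most delicate point will be the uniformity claim that $\mathbb{P}\bigl(\hat{\eta}_{V}(\boldsymbol{0})\neq\eta_{\infty}(\boldsymbol{0})\bigr)$ stays below $\varepsilon$ for \emph{every} finite $V\supseteq V_{R_{0}}$, not merely along the canonical sequence $V_{R_{0}}\subseteq V_{R_{0}+1}\subseteq\cdots$. The plan is to leverage Lemma~\ref{lemma:monotonicity}: enlarging $V$ beyond $V_{R_{0}}$ only pushes $m_{V,\eta_{0}}$ further up towards $m_{\eta_{0}}$, so once $R_{0}$ exceeds the a.s.\ finite stabilisation radius around $\boldsymbol{0}$, the odometer at $\boldsymbol{0}$ and its neighbours already equals $m_{\eta_{0}}$ for every such $V$, which forces $\hat{\eta}_{V}(\boldsymbol{0})=\eta_{\infty}(\boldsymbol{0})$. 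Once this uniformity is secured, everything else reduces to routine mass balance and translation averaging.
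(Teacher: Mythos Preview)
Your argument is correct and reaches the conclusion, but it organizes the ingredients differently from the paper. The paper works in the particle-wise framework throughout: it introduces $M_n^*$, the number of labelled particles that start in $V_n$ and ever exit $V_n$ in the full infinite system, proves $\mathbb{E}M_n\leqslant\mathbb{E}M_n^*$ via a hybrid two-colour construction (Lemma~\ref{lem:monotone_particle}, \S\ref{sub:monotone_particle}), then bounds $\mathbb{E}M_n^*$ by a boundary-layer estimate in terms of the probability that a single labelled particle travels distance $L_n$, and closes with Theorem~\ref{thm:agg}. Your route stays site-wise for as long as possible: the odometer sandwich $m_{V_{R_0},\eta_0}\leqslant m_{V,\eta_0}\leqslant m_{\eta_0}$ gives the uniformity you flagged, and the contradiction then comes from invoking mass conservation (Theorem~\ref{thm:conservation}) as a black box. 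This bypasses the hybrid construction of \S\ref{sub:monotone_particle} entirely, at the cost of importing Theorem~\ref{thm:conservation}, whose proof already requires the particle-wise construction and Theorem~\ref{thm:agg}; so neither approach is more elementary, but yours is shorter once mass conservation is granted, while the paper's is more self-contained and keeps the role of labelled particles visible. One point to tighten: the $\eta_\infty$ you build from $m_{\eta_0}$ and the continuous-time limit $\eta_\infty$ appearing in Theorem~\ref{thm:conservation} must be identified before you can quote $\mathbb{E}|\eta_\infty(\boldsymbol{0})|=\zeta$; this follows from the coupling in \S\ref{sub:swdefined} (where $h_\infty=m_{\eta_0}$ under fixation), but it deserves a sentence. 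You should also note, as the paper does, that $\zeta<\infty$ can be assumed without loss of generality via truncation and Corollary~\ref{cor:muclessone}.
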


Similar to Condition~\eqref{eq:conditionu}, a typical usage of
Condition~\eqref{eq:conditione} is to rely on the Abelian property and use
the expected number of particles exiting $V_{n}$ during some specific legal
sequence of topplings $\beta \subseteq V_{n}$ as a stochastic lower bound for
$\mathbb{E}M_{n}$.

An important property of the ARW is that $\zeta _{c}$ has a sharp definition.

\begin{theorem}[Uniqueness of the critical density]\label{thm:universality}
Given the dimension $d$, jump distribution $p(\cdot )$, and sleep rate
$\lambda $, there is a number $\zeta _{c}$ such that, for every
translation-ergodic distribution $\nu $ supported on
$(\mathbb{N}_{0})^{\mathbb{Z}^{d}}$ with average density $\zeta $, the ARW
dynamics satisfies
\begin{equation*}
\mathbf{P}^{\nu }(\text{system stays active}) =
\begin{cases}
0,& \zeta <\zeta _{c},
\\
1,& \zeta >\zeta _{c}.
\end{cases}
\end{equation*}
\end{theorem}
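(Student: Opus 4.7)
The plan is to single out the Poisson family of initial distributions as a reference and define
\begin{equation*}
\zeta_c := \sup\{\zeta \geq 0 : \text{Poisson}(\zeta) \text{ a.s.\ fixates}\},
\end{equation*}
which is well defined because i.i.d.\ Poisson measures admit pointwise monotone couplings as the parameter varies, so Lemma~\ref{lemma:monotonicity} makes $\zeta \mapsto \mathbb{P}(m_{V,\eta_0}(\boldsymbol{0}) \geq k)$ non-decreasing in $\zeta$ for each finite $V$ and each $k$, and Theorem~\ref{thm:equivalence} promotes this into a genuine $\{0,1\}$-valued phase transition. It then suffices to show that any translation-ergodic $\nu$ supported on $(\mathbb{N}_0)^{\mathbb{Z}^d}$ with mean density $\zeta$ obeys the same dichotomy at this threshold.

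The central tool I would use is the locally-finite, infinite-step parallel-update toppling procedure announced in \S\ref{sec:universality}: iterate rounds in which every currently unstable site is toppled once using the next instruction in its stack. This procedure yields a deterministic odometer $u_\eta$ that is translation covariant in $(\eta,\mathcal{I})$, monotone in $\eta$ by Property~\ref{property4}, and, because $p$ is nearest-neighbor, determined after $k$ rounds by the restriction of $\eta$ to a box of radius $O(k)$. Decomposing each round as a legal sequence of single-site topplings and applying Lemma~\ref{lemma:lap} on a growing box yields $u_\eta \leq m_\eta$, with equality whenever $m_\eta$ is finite. The ergodicity of $\nu$ then translates, via the ergodic theorem applied to the translation-covariant field $u_\eta$, into asymptotic statements about its averages that depend on $\nu$ only through the marginal law of $\eta_0$ on finite windows.

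For fixation when $\zeta < \zeta_c$, I would pick $\zeta' \in (\zeta,\zeta_c)$, couple $\eta_0 \sim \nu$ with an independent Poisson($\zeta'$) field $\tilde\eta_0$, and transfer the a.s.\ finiteness of $u_{\tilde\eta_0}(\boldsymbol{0})$ to $u_{\eta_0}(\boldsymbol{0})$ via the round-by-round local comparison enabled by the bounded-radius dependence of the parallel update; Theorem~\ref{thm:equivalence} then delivers fixation. For activity when $\zeta > \zeta_c$, I would invert the comparison, picking $\zeta' \in (\zeta_c,\zeta)$ and using the parallel odometer as a legal lower bound via~\eqref{eq:laplower} to verify Condition~\eqref{eq:conditionu}. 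The main obstacle is precisely the round-by-round coupling between the ergodic $\nu$ and the Poisson reference: pointwise monotone couplings do not exist in general, so the delicate point will be to absorb the spatial fluctuations of $\eta_0$ at every finite scale using the density gap $|\zeta-\zeta'|>0$, combining the bounded-radius dependence of parallel updates with the ergodic theorem to convert an averaged comparison into a pointwise one on a high-probability event.
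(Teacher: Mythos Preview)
Your proposal identifies the right reference family (Poisson) and the right tool set (monotonicity, the $0$--$1$ law, translation covariance), but it has a genuine gap at the point you yourself flag as ``the main obstacle.'' The parallel update you describe --- topple every unstable site once per round --- is \emph{not} the procedure used in \S\ref{sec:universality}, and your proposed round-by-round comparison does not go through. The bounded-radius dependence holds only for a fixed finite number of rounds; the event $\{m_{\eta_0}(\boldsymbol{0})<\infty\}$ depends on the entire configuration, and the ergodic theorem gives you only that box averages of $\eta_0$ are close to $\zeta$, which does not yield pointwise domination $\eta_0 \leqslant \tilde\eta_0$ on any event of positive probability. Without such domination, Lemma~\ref{lemma:monotonicity} is unavailable, and there is no mechanism in your outline to ``absorb the spatial fluctuations'' --- this phrase is where the actual argument would need to live, and nothing in the proposal supplies it.

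The paper's approach is quite different and sidesteps this obstacle entirely. Given $\eta_0\sim\nu_1$ and an independent $\xi_0\sim\nu_2$ with $\zeta_1<\zeta_2$, one does not try to couple $\eta_0\leqslant\xi_0$ directly. Instead one \emph{uses the dynamics itself} to achieve the domination: repeatedly topple (acceptably, possibly waking sleeping particles) every site $x$ where the current $\eta_k(x)>\xi_0(x)$, obtaining a limit $\eta_0'\leqslant\xi_0$. The proof that the resulting odometer $h_0'$ is a.s.\ finite is the heart of the argument: if it were infinite, ergodicity plus the mass transport principle would force $\liminf_k|\eta_k(\boldsymbol{0})|\geqslant|\xi_0(\boldsymbol{0})|$, contradicting $\zeta_1<\zeta_2$ via Fatou. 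The instructions consumed in this embedding stage are then discarded; the remaining field $\tilde{\mathcal{I}}$ is fresh and independent of $\xi_0$, so the hypothesis that $\nu_2$ fixates applies to $(\xi_0,\tilde{\mathcal{I}})$, and monotonicity transfers this to $\eta_0'$. The key idea you are missing is this dynamical embedding: rather than comparing two static configurations, spend some instructions to rearrange the smaller one until it fits under the larger.
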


This property will be proved in~\S \ref{sec:universality}. Mathematically, it
is useful because every statement about bounds for $\zeta _{c}$ can be proved
assuming an i.i.d.\ initial state with whatever marginal distribution is more
convenient. In fact, we can even take non-i.i.d.\ distributions if that
helps.

We conclude with a monotonicity property.

\begin{theorem}\label{thm:monotone}
The critical density $\zeta _{c}$ is non-decreasing in $\lambda $.
\end{theorem}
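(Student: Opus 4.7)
The plan is to establish this monotonicity by coupling the instruction fields in the site-wise representation for two different sleep rates, and comparing the resulting odometers on a common probability space. By Theorem~\ref{thm:universality} it is enough to fix a convenient translation-ergodic initial law $\nu$ (say i.i.d.\ Poisson of density $\zeta$) and sample $\eta_0\sim\nu$ once for both processes. Then Theorem~\ref{thm:equivalence} identifies fixation at sleep rate $\lambda$ with the event $\{m^{(\lambda)}_{\eta_0}(\mathbf{0})<\infty\}$, so it suffices to show that, for $\lambda_1\leqslant \lambda_2$, one has $\{m^{(\lambda_1)}_{\eta_0}(\mathbf{0})<\infty\}\subseteq \{m^{(\lambda_2)}_{\eta_0}(\mathbf{0})<\infty\}$ in the coupling.

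The coupling is the natural one: for each $(x,j)\in\mathbb{Z}^d\times\mathbb{N}$, draw independent $U_{x,j}\sim\mathrm{Unif}[0,1]$ and $Y_{x,j}\sim p(\cdot)$, and declare
\[
\mathcal{I}^{(\lambda)}_{x,j}=\mathfrak{t}_{x\mathfrak{s}}\ \text{if } U_{x,j}\leqslant \tfrac{\lambda}{1+\lambda},\qquad \mathcal{I}^{(\lambda)}_{x,j}=\mathfrak{t}_{x,\,x+Y_{x,j}}\ \text{otherwise}.
\]
The marginal law of $\mathcal{I}^{(\lambda)}$ is exactly the one prescribed in \S\ref{sub:conditions}, and since $\lambda\mapsto\lambda/(1+\lambda)$ is increasing, $\mathcal{I}^{(\lambda_2)}$ is obtained from $\mathcal{I}^{(\lambda_1)}$ by replacing certain move instructions with sleep instructions at the indices $(x,j)$ with $\lambda_1/(1+\lambda_1)<U_{x,j}\leqslant \lambda_2/(1+\lambda_2)$, while all remaining instructions are identical.

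The central technical step is the deterministic pointwise comparison
\[
m^{(\lambda_2)}_{V,\eta_0}(x)\leqslant m^{(\lambda_1)}_{V,\eta_0}(x)\qquad\text{for every finite }V\subseteq\mathbb{Z}^d\text{ and every }x,
\]
in the coupling. Once this is in place, taking $V\uparrow\mathbb{Z}^d$ in \eqref{eq:odometer} gives $m^{(\lambda_2)}_{\eta_0}(\mathbf{0})\leqslant m^{(\lambda_1)}_{\eta_0}(\mathbf{0})$ almost surely, which via Theorem~\ref{thm:equivalence} yields the required containment of fixation events and hence $\zeta_c(\lambda_1)\leqslant\zeta_c(\lambda_2)$.

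The main obstacle is proving this pointwise odometer comparison. The subtlety is that a legal sequence for $\mathcal{I}^{(\lambda_1)}$ need not be legal (or even acceptable) for $\mathcal{I}^{(\lambda_2)}$: once a flipped instruction makes a lone particle sleep under $\mathcal{I}^{(\lambda_2)}$ that would have moved under $\mathcal{I}^{(\lambda_1)}$, the two trajectories separate, and sites scheduled for further topplings may fail to contain any particle under one of the fields. I would address this by reducing to a one-flip induction: flipping a single move instruction into a sleep instruction is shown to weakly decrease $m_{V,\eta_0}(x)$ at every site. Given a legal stabilizing sequence $\beta$ for the pre-flip field in $V$, the plan is to construct an acceptable stabilizing sequence $\alpha\subseteq V$ for the post-flip field with $m_\alpha\leqslant m_\beta$, by pruning from $\beta$ those downstream topplings that $\beta$ had devoted to transporting the particle which is now trapped asleep at the flip site. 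The Local Abelian property (Property~\ref{property1}), together with Properties~\ref{property2} and~\ref{property3}, is precisely what makes the pruning consistent and ensures no new instability is introduced in $V$, after which \eqref{eq:lapupper} gives $m^{(\lambda_2)}_{V,\eta_0}\leqslant m_\alpha\leqslant m_\beta=m^{(\lambda_1)}_{V,\eta_0}$. The delicate bookkeeping of how the pruning cascades through the remainder of $\beta$ is where the real work lies.
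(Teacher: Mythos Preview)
Your pointwise odometer comparison $m^{(\lambda_2)}_{V,\eta_0}\leqslant m^{(\lambda_1)}_{V,\eta_0}$ under this ``replace'' coupling is false, and no pruning can rescue the one-flip induction. Take $V=\{0\}$, $\eta_0(0)=2$, and suppose under $\mathcal{I}^{(\lambda_1)}$ the first two instructions at $0$ are both jumps, so two topplings empty the site and $m^{(\lambda_1)}_{V,\eta_0}(0)=2$. Flip the first instruction to sleep for $\mathcal{I}^{(\lambda_2)}$: the first toppling applies $2\cdot\mathfrak{s}=2$ and is wasted; the second toppling moves one particle out, leaving one active particle; a third toppling is still required for stability. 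Hence $m^{(\lambda_2)}_{V,\eta_0}(0)\geqslant 3>2$. No acceptable sequence $\alpha$ with $m_\alpha(0)\leqslant 2$ can stabilize $\{0\}$ under $\mathcal{I}^{(\lambda_2)}$, so \eqref{eq:lapupper} cannot give the bound you want. The failure mechanism is generic: whenever a flipped index is reached while two or more particles are present, the sleep instruction is a no-op that nevertheless consumes an index, strictly inflating the odometer there.

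The paper's proof (\S\ref{sub:lambda}) circumvents this by changing both the coupling and the compared quantity. It works in the continuous-time site-wise construction of \S\ref{sub:swdefined}, decomposing the clock at each site as an independent pair: a rate-$1$ jump process $\mathcal{P}^{*}_x$ and a rate-$\lambda$ sleep process $\mathcal{P}^{\lambda}_x$, coupled so that $\mathcal{P}^{\lambda}_x\subseteq\mathcal{P}^{\lambda'}_x$ for $\lambda\leqslant\lambda'$. Increasing $\lambda$ thus \emph{inserts} extra sleep marks without displacing the jump marks from their positions in local time. The monotone object is not the odometer but the occupation time $L_t(x)=\int_0^t \eta_s(x)\,\mathds{1}_{\eta_s(x)\ne\mathfrak{s}}\,\mathrm{d}s$: the same argument that shows $L_t$ is non-decreasing in $\xi$ shows it is non-increasing in $\lambda$, and fixation is equivalent to $L_\infty(\boldsymbol{0})<\infty$. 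The odometer itself is \emph{not} monotone in $\lambda$ under either coupling, precisely because of the wasted-instruction effect above; this is why the paper switches to $L_t$.
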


The proof is given in \S \ref{sub:lambda}.

\begin{problem}
Show that $\zeta _{c}$ is continuous and strictly increasing in
$\lambda $.
\end{problem}

\subsection{Factors, ergodicity and the mass transport principle}\label{sub:mtperg}

We will not use these tools directly until \S \ref{sec:universality}. Let
$f:\mathbb{Z}^{d} \times \mathbb{Z}^{d} \to \mathbb{R}_{+}$ be a
translation-invariant random function, that is, $f(x,y;{\omega })=f(\theta
x,\theta y;\theta {\omega })$ for every translation $\theta $ of
$\mathbb{Z}^{d}$. Here $\omega $ can be any random process with
translation-invariant distribution.

The \emph{mass transport principle} is given by the identity
\begin{equation*}
\mathbb{E}\big [\sum _{y} f(x,y)\big ]=\mathbb{E}\big [\sum _{y} f(y,x)
\big ] .
\end{equation*}
Informally, the mass transport principle says that on average the amount of
mass transmitted from a site $x$ is equal to the amount of mass entering $x$.
It may seem like nothing but an obvious identity, but its strength lies in
its versatility, since it holds for every translation-invariant function. The
proof consists in re-indexing the sum and using translation invariance.
See~\cite[Chapter~8]{LyonsPeres16} for applications and generalizations to
other settings.

\medskip
Suppose a field $\tilde{\omega } = g(\omega )$ is a translation-covariant
function of $\omega $. That is, for all $\omega $ for which $g(\omega )$ is
defined, $g(\theta \omega )$ is also defined and $g(\theta \omega )=\theta
g(\omega )$. Then we say that $\tilde{\omega }$ is a \emph{factor} of $\omega
$. Note that factors inherit properties such as translation invariance and
translation ergodicity.

\medskip
Finally, we mention how pairs of ergodic and mixing fields behave together.
Suppose $\omega ^{1}$ and $\omega ^{2}$ are independent. Suppose $\omega
^{1}$ is mixing (with respect to translations). If $\omega ^{2}$ is mixing,
then the pair $(\omega ^{1},\omega ^{2})$ is mixing. If $\omega ^{2}$ is
ergodic, then $(\omega ^{1},\omega ^{2})$ is ergodic. In particular, the pair
$(\eta _{0},\mathcal{I})$ introduced in the previous subsection is
translation-ergodic. See~\cite[Thm.~2.25]{KerrLi16}.

\subsection{Frequently used notation}

Let $\|\eta \|_{V}$ denote the total number of particles in the box $V$ in
the configuration $\eta $, given by $\|\eta \|_{V} = \sum _{x\in V} |\eta
(x)|$.

The discrete ball of radius $r$ centered at $y$ is denoted $B_{r}^{y}$ with
$B_{r}=B_{r}^{\boldsymbol{0}}$. Any choice of norm to define the balls would
work fine, but for the sake of aesthetics let us fix $B_{r}=\{-r,\dots
,r\}^{d}$.

\medskip
Normally, the letters $\eta $ and $\xi $ will denote configurations, $\zeta $
denotes density, while $\alpha $ and $\beta $ denote sequences of topplings.
The letter $\nu $ usually denotes a probability measure on
$(\mathbb{N}_{\mathfrak{s}})^{\mathbb{Z}^{d}}$ or most often on
$(\mathbb{N}_{0})^{\mathbb{Z}^{d}}$. The letters $L$ and $r$ are usually used
to denote sizes of some domains, whereas $i,j,k$ are generally used to index
sites, events, particles, etc. The letter $n$ may have either of these uses.

The boxes $V_{n} \uparrow \mathbb{Z}^{d}$ are not necessarily the same in
different parts of the text, and may in fact be random. Most of the time they
equal $B_{n}$ but we still use $V_{n}$ to indicate that they are being used
in the context of \S \ref{sub:conditions}.

The letters $M$ and $N$ usually denote random variables defined by counting
how many events of a family occur, such as how many particles do this and
that, how many steps go wrong in a certain procedure, etc. We normally use
$|\cdot |$ to denote the discrete volume of a finite subset of
$\mathbb{Z}^{d}$. The symbol $\#$ rarely appears, technically it also denotes
cardinality but we use it when such cardinality comes from counting random
objects. When we have to name events, we will mostly use the letter
$\mathcal{A}$ and maybe add indices.

Most of the time we deal with constructions based on independent random
variables defined explicitly, and we use the letter $\mathbb{P}$ for
probability (hence $\mathbb{E}$ for expectation). We use $\mathbf{P}$ to
emphasize that certain statements refer to whatever probability space where
the particle system $(\eta _{t})_{t\geqslant 0}$ is defined. We often say
``a.s.'' and ``with high probability'' without bothering about probability
space formalities that are hardly relevant.

The letters $\kappa $, $\delta $ and $\varepsilon $ usually denote a large
fixed number, a small fixed number, and an arbitrarily small number. The
letter $K$ is an integer parameter in some constructions or toppling
procedures, sometimes it is not fixed beforehand but instead made larger and
larger throughout each proof.

\section{Counting arguments}\label{sec:counting}

From the previous section, the Abelian property says that the odometer of a
given configuration in a given region can be obtained through any legal
sequence $\alpha $ of topplings. Note that such a sequence need not be given
beforehand: it can in fact be constructed algorithmically as the
configuration evolves.

This opens the door to proofs consisting of (i) the prescription of a
\emph{toppling procedure} followed by (ii) a probabilistic analysis to check
Conditions~\eqref{eq:conditionb}, \eqref{eq:conditionu}
or~\eqref{eq:conditione} in terms of such procedure. In this section we will
see some simple instances of this approach being applied.

\medskip
As a warm-up example for the use of Condition~\eqref{eq:conditionu} we
prove the following.

\begin{theorem}\label{thm:firstcount}
For $d=1$ and initial state i.i.d.\ with mean $\zeta =1$ and positive
variance, the system a.s.\ stays active.
\end{theorem}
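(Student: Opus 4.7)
The plan is to verify the activity condition~\eqref{eq:conditionu}: given $k$, I aim to exhibit a finite box $V$ such that $\mathbb{P}^{\nu }\bigl(m_{V,\eta _{0}}({\boldsymbol{0}})\geqslant k\bigr)$ is bounded below by a universal constant. First I would stabilize $V_{n}=\{-n,\dots ,n\}$ via any legal toppling sequence, obtaining by the Abelian property the odometer $m_{V_{n},\eta _{0}}$. I denote by $T_{+}(x)$ and $T_{-}(x)$ the number of ``jump to $x+1$'' and ``jump to $x-1$'' instructions consumed at $x$ along this sequence; trivially $T_{\pm }(x)\leqslant m_{V_{n},\eta _{0}}(x)$ since each corresponds to a toppling at $x$.

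The first key step is a flux bound. Since the only way a particle leaves $V_{n}$ is via the jumps counted by $T_{-}(-n)$ and $T_{+}(n)$, summing particle conservation over $V_{n}$ and using that in the stabilized configuration $\eta _{f}$ each site carries at most one particle yields
$$
T_{+}(n)+T_{-}(-n)\;=\;\|\eta _{0}\|_{V_{n}}-\|\eta _{f}\|_{V_{n}}\;\geqslant \;\|\eta _{0}\|_{V_{n}}-(2n+1).
$$
Since $\eta _{0}$ is i.i.d.\ with mean $1$ and positive variance $\sigma ^{2}$, the central limit theorem tells us that for any $k$ one can choose $n=n(k)$ so that $\mathbb{P}^{\nu }\bigl(\|\eta _{0}\|_{V_{n}}\geqslant 2n+1+2k\bigr)\geqslant \tfrac{1}{3}$. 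On this event $\max \{T_{+}(n),T_{-}(-n)\}\geqslant k$, so by a union bound at least one of $\mathbb{P}^{\nu }(m_{V_{n},\eta _{0}}(\pm n)\geqslant k)$ is $\geqslant \tfrac{1}{6}$.

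Finally I would invoke translation invariance of the pair $(\eta _{0},\mathcal{I})$: this gives $m_{V_{n},\eta _{0}}(\pm n)\stackrel{d}{=}m_{V_{n}\mp n,\eta _{0}}({\boldsymbol{0}})$, so relocating the box to place the heavy endpoint at the origin yields $\sup _{V}\mathbb{P}^{\nu }(m_{V,\eta _{0}}({\boldsymbol{0}})\geqslant k)\geqslant \tfrac{1}{6}$ for every $k$, which verifies~\eqref{eq:conditionu}. The step I expect to require the most care is the lack of left/right symmetry when $p$ is biased: the flux bound only controls the sum $T_{+}(n)+T_{-}(-n)$, and for a strongly biased walk one of the two boundaries carries almost all the excess. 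This is exactly what forces us to choose between two different asymmetrically-placed boxes at the end; using translation invariance to shift whichever boundary happens to be ``busy'' to the origin is what makes the argument uniform in the bias of $p$.
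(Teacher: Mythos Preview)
Your proof is correct and follows essentially the same approach as the paper: a particle-count surplus of order $\sqrt{|V|}$ (from the CLT) forces at least that many particles to exit through the two boundary sites, and then a union bound plus translation invariance transfers this to large odometer at the origin. The only cosmetic difference is that the paper passes first to the infinite odometer $m_{\eta_{0}}$ via monotonicity and then invokes translation invariance of $m_{\eta_{0}}$, whereas you shift the finite box itself; both routes are equally valid.
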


We then show the simplest argument that uses a toppling procedure based on
the Abelian property to check Conditions~\eqref{eq:conditionb}
or~\eqref{eq:conditionu}, and prove the following. A \emph{directed walk} on
$d=1$ is the one with $p(+1)=1$.

\begin{theorem}\label{thm:zetacexact}
For $d=1$ and directed walks, $\zeta _{c}=\frac{\lambda }{1+\lambda }$. If
the initial state is i.i.d.\ with critical density $\zeta =\zeta _{c}$ and
positive variance, then the system a.s.\ stays active.
\end{theorem}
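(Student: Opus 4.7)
The plan is to exploit directedness through a left-to-right toppling procedure on $V_L=\{-L,\dots,L\}$, reducing the problem to a reflected random walk analysis. Since jumps only move rightward, toppling site $-L$ until stable, then $-L+1$, and so on, is a legal sequence stabilizing $\eta_0$ in $V_L$, so its odometer equals $m_{V_L,\eta_0}$ by the Abelian property. The core counting observation is that at each site $i$, the instructions are i.i.d.\ ``jump'' (probability $q=\tfrac{1}{1+\lambda}$) and ``sleep'' (probability $p=\tfrac{\lambda}{1+\lambda}$); starting from a mass $m\geq 1$, the sleeps are wasted while the mass is $\geq 2$, and once the mass hits $1$ the next instruction is, by independence, a sleep with probability exactly $p$. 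Writing $\sigma_i\in\{0,1\}$ for the indicator that site $i$ ends sleeping (with $\sigma_i=0$ if the site is never visited) and $M_i^{\mathrm{tot}}:=\eta_0(i)+X_{i-1}$ with $X_{-L-1}:=0$, the rightward flow satisfies
\[
X_i = M_i^{\mathrm{tot}} - \sigma_i\,\mathds{1}_{M_i^{\mathrm{tot}}\geq 1}, \qquad \mathbb{P}(\sigma_i=1\mid M_i^{\mathrm{tot}}\geq 1) = p.
\]

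To turn this into an honest random-walk comparison, I enlarge the probability space with i.i.d.\ Bernoulli$(p)$ variables $\tau_i$ independent of $(\eta_0,\mathcal{I})$ and redraw $\sigma_i := \tau_i\,\mathds{1}_{M_i^{\mathrm{tot}}\geq 1}$, preserving the joint law. A telescoping identity then yields the two-sided comparison $\tilde X_i \leq X_i \leq \hat X_i$, where $\tilde X_i:=\sum_{j=-L}^i(\eta_0(j)-\tau_j)$ is the unreflected walk and $\hat X_i:=\max(\hat X_{i-1}+\eta_0(i)-\tau_i,0)$ is its Lindley reflection. For $\zeta>p$, taking expectations gives $\mathbb{E}[M_n]=\mathbb{E}[X_n]\geq(2n+1)(\zeta-p)>0$, so Condition~\eqref{eq:conditione} yields $\zeta_c\leq p$. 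For $\zeta=p$ with positive variance, $\tilde X_i$ has i.i.d.\ mean-zero increments with positive variance, so the CLT gives $\mathbb{P}(\tilde X_{-1}\geq k)\to\tfrac12$ as $L\to\infty$ for every $k$; since $m_{V_L,\eta_0}(0)\geq X_{-1}\geq \tilde X_{-1}$, Condition~\eqref{eq:conditionu} is verified and activity at $\zeta_c$ follows.

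For $\zeta<p$, the Lindley bound closes the picture: $\hat X_i$ has i.i.d.\ increments of negative mean $\zeta-p$ and finite variance, hence is positive recurrent with $\sup_L\mathbb{E}[\hat X_{-1}]<\infty$. Consequently $M_0^{\mathrm{tot}}\leq \eta_0(0)+\hat X_{-1}$ is uniformly tight in $L$, and so is the number of topplings at $0$ in the left-to-right procedure (whose conditional mean given a visit is $M_0^{\mathrm{tot}}/q$, since each of the $M_0^{\mathrm{tot}}-1$ jumps consumes Geom$(q)$ wasted sleeps). Markov's inequality then gives Condition~\eqref{eq:conditionb} and $\zeta_c\geq p$. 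The main technical obstacle is the critical case, since the expectation bound $\mathbb{E}[X_n]\geq 0$ is degenerate there; the remedy is precisely the honest-random-walk coupling, which reduces activity at criticality to a clean CLT statement and makes the positive-variance hypothesis visible as exactly what is needed for nondegenerate Gaussian fluctuations.
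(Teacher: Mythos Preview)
Your proof is correct and follows essentially the same approach as the paper: the left-to-right toppling procedure reduces the flow $X_i$ to a Lindley (reflected) random walk with i.i.d.\ increments $\eta_0(i)-\tau_i$, and the three regimes are handled by positive recurrence, null recurrence, and transience respectively. The only cosmetic differences are that you invoke Condition~\eqref{eq:conditione} rather than~\eqref{eq:conditionu} in the supercritical case, and at criticality you bound $X_{-1}$ below by the unreflected walk $\tilde X_{-1}$ and apply the CLT directly, whereas the paper simply cites null recurrence of the reflected walk; note also that your sandwich is in fact an equality $X_i=\hat X_i$, and for $\zeta<p$ it is tightness (from positive recurrence) rather than a uniform mean bound that you actually need, since $\sup_L\mathbb E[\hat X_{-1}]<\infty$ would require a second-moment assumption you have not imposed.
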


The proof of the previous theorem breaks down in case the walks are not
totally directed, because even a small probability of jumping left would
require some control on the interaction among particles which wander in the
wrong direction. The following is proved via an argument that uses
Condition~\eqref{eq:conditione} and Theorem~\ref{thm:universality}.

\begin{theorem}\label{thm:taggi}
For $d \geqslant 1$ and biased walks, $\zeta _{c}<1$ for every $\lambda
<\infty $ and $\zeta _{c}\to 0$ as $\lambda \to 0$.
\end{theorem}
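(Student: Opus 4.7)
By Theorem~\ref{thm:universality} I may take the initial distribution to be i.i.d.\ Bernoulli$(\zeta)$, and then by Theorem~\ref{thm:conditione} the task reduces to producing a lower bound $\mathbb{E}M_{n}\geqslant c|V_{n}|$ with $c>0$ independent of $n$, for small $\lambda$ with $\zeta>0$ fixed (which gives $\zeta_{c}\to 0$), and for $\zeta$ close to $1$ with $\lambda<\infty$ fixed (which gives $\zeta_{c}<1$). Assume without loss of generality that the drift is $\vec{\mu}=\mu e_{1}$ with $\mu>0$.

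The toppling procedure I would use on $V_{n}=\{-n,\dots,n\}^{d}$ is particle-wise. Process active particles one at a time: each walker consumes the next instruction $\mathfrak{t}^{x,h(x)+1}$ from the stack at each visited site $x$, continuing until it either exits $V_{n}$ (and is counted towards $M_{n}$) or reads $\mathfrak{t}_{x\mathfrak{s}}$ at a site with no other particle, in which case it becomes a sleeper. If during its walk it lands on an existing sleeper, by the $A+S\to 2A$ rule both become active, and the newly-activated particle is queued. By Property~\ref{property1} (local Abelian property) this is a legal stabilization of $V_{n}$, so that the number of exits along the procedure equals the true $M_{n}$.

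The key effect I would exploit is a cascade. A walker visits $\asymp 1/\lambda$ distinct sites before attempting to sleep, and once the procedure has been partly carried out, each visited site contains a sleeper with probability close to $\zeta$ (essentially because the initial particle there either exited or slept somewhere). Each such encounter activates a new walker, producing a branching structure with mean offspring number $\asymp \zeta/\lambda$. In the regime $\lambda\to 0$ this number is large for any fixed $\zeta>0$, and in the regime of fixed $\lambda<\infty$ it exceeds $1$ once $\zeta$ is close enough to $1$. Combined with the positive drift $\mu$, a supercritical branching random walk with drift pushes a positive fraction of its population out of $V_{n}$ through the $+e_{1}$ face, contributing $\asymp |V_{n}|$ exits.

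The main obstacle is making this cascade comparison rigorous. Walkers share the same site-wise stacks, so their ``offspring'' are correlated and not identically distributed, and the density of sleepers seen by a walker depends on the processing order and on which sites have already been visited. A careful approach is to fix a specific processing order (e.g.\ left-to-right in the $e_{1}$ direction), reveal the instructions on the stacks only as they are consumed, and couple the true cascade with a dominated independent branching random walk on fresh i.i.d.\ instructions at previously-unvisited sites; then check that the dominated process remains supercritical and that the overlap sites contribute a negligible correction. An alternative route is to work directly with the mass identity $M_{n}=\|\eta_{0}\|_{V_{n}}-S_{n}$ together with a lower bound on the total odometer derived from the drift current generated by biased topplings, exploiting that $S_{n}\leqslant|V_{n}|$ and that the flux induced by the drift cannot be absorbed entirely in the bulk.
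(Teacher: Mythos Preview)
Your setup is right: reduce to i.i.d.\ Bernoulli via Theorem~\ref{thm:universality} and check Condition~\eqref{eq:conditione}. But the cascade/branching comparison you propose is not carried through, and the obstacle you yourself flag is real and not minor. Walkers share site-wise stacks, so the ``offspring'' produced by waking sleepers are strongly correlated, and the claim that visited sites contain a sleeper with probability close to $\zeta$ is unjustified (sites visited early may have been emptied, and the order of processing matters). Your suggested fixes --- coupling with a fresh branching random walk on unvisited sites, or a drift-current argument from the mass identity --- are plausible directions but would each require substantial work that you have not done; neither is a routine coupling.

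The paper's proof avoids branching altogether with a much simpler idea. Order the sites of $V_{n}$ as $x_{1},\dots,x_{r}$ by increasing $x\cdot\boldsymbol{v}$, where $\boldsymbol{v}$ is a direction with positive drift component. At step~$i$, if $x_{i}$ is occupied, move that particle until it either exits $V_{n}$, lands on an \emph{unoccupied} site in the forward set $A_{i}=\{x_{i+1},\dots,x_{r}\}$, or falls asleep in the backward set $V_{n}\setminus A_{i}\subseteq x_{i}+\mathcal{H}_{\boldsymbol{v}}$. Inductively each site of $A_{i}$ holds at most one active particle, so the procedure is well-defined, and every particle eventually either exits or is ``left behind'' by sleeping in some backward half-space. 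The probability of the latter is at most $F_{\boldsymbol{v}}(\lambda)$, the probability that a walk killed at rate $\lambda$ inside $\mathcal{H}_{\boldsymbol{v}}$ is ever killed. Hence $\mathbb{E}M_{n}\geqslant(\zeta-F_{\boldsymbol{v}}(\lambda))|V_{n}|$, and since $F_{\boldsymbol{v}}(\lambda)<1$ for $\lambda<\infty$ and $F_{\boldsymbol{v}}(\lambda)\to 0$ as $\lambda\to 0$, both conclusions follow. The point is that particles help each other by filling a forward ``safe zone'' rather than by triggering a cascade; no branching estimate is needed.
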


In proving the above we get a quantitative upper bound for
$\zeta _{c}$ which gives the following corollary.

\begin{corollary}\label{cor:directed}
For directed walks in any dimension $d$, we have $\zeta _{c} \leqslant
\frac{\lambda }{1+\lambda }$.
\end{corollary}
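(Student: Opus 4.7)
The plan is to verify the activity criterion \eqref{eq:conditione} of Theorem \ref{thm:conditione} for any density $\zeta>\tfrac{\lambda}{1+\lambda}$. By Theorem \ref{thm:universality} one can freely choose the initial law, so take $\eta_0$ i.i.d.\ Poisson of mean $\zeta$ and fix $V_n=\{-n,\dots,n\}^d$. Since $p$ is directed, put any linear order on $V_n$ such that $x-e_j\prec x$ whenever $p(e_j)>0$ (for instance the order induced by $\sum_j x_j$), and stabilize $V_n$ by visiting the sites one by one in this order; at each visit, topple the current site repeatedly, consuming instructions from its own stack until it becomes stable.

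Let $N(x)$ be the number of particles present at $x$ when it is visited, and $F(x)$ the number of particles forwarded away from $x$. Reading the i.i.d.\ instruction stack sequentially, the identity $n\cdot\mathfrak{s}=n$ for $n\geq 2$ shows that sleep instructions are wasted until the site reaches one particle; the next instruction then either is a sleep (probability $\tfrac{\lambda}{1+\lambda}$, freezing the site) or is a jump (forwarding the last particle). Since the stack at $x$ is independent of $\eta_0$ and of the instructions used at previously visited sites,
\begin{equation*}
F(x)=N(x)-Z(x),\qquad \mathbb{E}[Z(x)]=\tfrac{\lambda}{1+\lambda}\,\mathbb{P}(N(x)\geq 1)\leq \tfrac{\lambda}{1+\lambda}.
\end{equation*}
Moreover, given the total number of jump instructions consumed at $x$, the direction of each jump is i.i.d.\ with law $p(\cdot)$, so the expected number of particles landing at $x$ from a previously toppled neighbor $x-e_j$ is $p(e_j)\,\mathbb{E}[F(x-e_j)]$.

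Because our order puts every $x-e_j$ before $x$, each particle arriving at $x$ before it is visited comes from such a neighbor, giving $\mathbb{E}[N(x)]=\zeta+\sum_j p(e_j)\,\mathbb{E}[F(x-e_j)]$ with the convention $F(y)=0$ for $y\notin V_n$. Summing over $x\in V_n$ and separating the boundary outflow $q(x)=\sum_j p(e_j)\mathds{1}_{x+e_j\notin V_n}$, the identity $\mathbb{E}[F(x)]=\mathbb{E}[N(x)]-\mathbb{E}[Z(x)]$ telescopes to
\begin{equation*}
\mathbb{E}[M_n]=\sum_{x\in V_n}\mathbb{E}[F(x)]\,q(x)=\zeta|V_n|-\sum_{x\in V_n}\mathbb{E}[Z(x)]\;\geq\;|V_n|\Big(\zeta-\tfrac{\lambda}{1+\lambda}\Big).
\end{equation*}
Hence $\limsup_n \mathbb{E}[M_n]/|V_n|\geq \zeta-\tfrac{\lambda}{1+\lambda}>0$, Condition \eqref{eq:conditione} holds, and $\zeta_c\leq \tfrac{\lambda}{1+\lambda}$. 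The only delicate point is the conditional independence used to assert $\mathbb{E}[F_j(x)]=p(e_j)\mathbb{E}[F(x)]$: this requires checking that the multinomial type-composition of the jump instructions actually consumed is independent of the (random) stopping time of the sequential reading, which holds because that stopping time depends on the instruction types only through the running count of jumps. Once this is in hand, the remaining mass balance is purely algebraic.
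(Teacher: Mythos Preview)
Your argument is correct and follows essentially the same route as the paper: both verify Condition~\eqref{eq:conditione} via a site-by-site toppling procedure ordered along the drift direction, obtaining $\mathbb{E}M_n \geqslant (\zeta-\tfrac{\lambda}{1+\lambda})|V_n|$. The paper packages this through Proposition~\ref{prop:flambda}, which proves the general bound $\zeta_c\leqslant F_{\boldsymbol{v}}(\lambda)$ by the same procedure, and then simply observes that for directed walks the random walk visits $\mathcal{H}_{\boldsymbol{v}}$ only at time zero, whence $F_{\boldsymbol{v}}(\lambda)=\tfrac{\lambda}{1+\lambda}$.

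One remark: the ``delicate point'' you single out is in fact unnecessary. Since every particle either exits $V_n$ or ends up sleeping somewhere in $V_n$, mass conservation gives directly $\mathbb{E}M_n=\zeta|V_n|-\sum_{x\in V_n}\mathbb{E}[Z(x)]$, and the direction-by-direction bookkeeping (and the multinomial independence of jump directions) can be dispensed with entirely.
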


Together with Theorem~\ref{thm:subcritstauffertaggi}, this implies that
$\zeta _{c} = \frac{\lambda }{1+\lambda }$ for directed walks.

\medskip
We now proceed to the proofs.

\begin{proof}[Proof of~Theorem~\ref{thm:firstcount}]
Let $\zeta =1$. By the CLT, the probability that $\eta _{0}$ contains at
least $L+2 \sqrt{L}$ particles in $V_{L}=[0,L]$ is at least $2 \delta >0$,
for all large $L$. On this event, at least $2\sqrt{L}$ particles will visit
either $x={\boldsymbol{0}}$ or $x=L$ when we stabilize $[0,L]$.

Therefore, using a union bound, translation invariance and monotonicity,
\begin{equation*}
2\delta \leqslant \mathbb{P}\big (m_{V_{L},{\eta _{0}}}({
\boldsymbol{0}}) \geqslant \sqrt{L}\big ) + \mathbb{P}\big (m_{V_{L},{
\eta _{0}}}(L) \geqslant \sqrt{L}\big ) \leqslant 2 \mathbb{P}\big (m_{
\eta _{0}}({\boldsymbol{0}}) \geqslant \sqrt{L}\big ).
\end{equation*}
Since this is true for all large $L$, Condition~\eqref{eq:conditionu} is
satisfied and therefore the system a.s.\ stays active.
\end{proof}

\begin{proof}[Proof of~Theorem~\ref{thm:zetacexact}]
Write $V_{L}=\{-L,\dots ,L\}$. We will consider $m_{V_{L},{\eta
_{0}}}({\boldsymbol{0}})$ and see in which cases it satisfies
Condition~\eqref{eq:conditionb} or~\eqref{eq:conditionu} as $L\to \infty $.

We will describe a legal sequence of topplings that stabilizes $V_{L}$ by
exhausting one site after the other, from left to right. The sequence of
topplings is itself random, since it is given by an algorithm which decides
the next site to topple in terms of outcome of the previous topplings.

We start by toppling site $x=-L$ until each of the $\eta _{0}(-L)$ particles
either moves to $x=-L+1$ or falls asleep. All particles but the last one have
to jump (possibly after a few frustrated attempts to sleep). The last one may
sleep or jump. Let $Y_{0}^{L}$ denote the indicator of the event that the
last particle remains sleeping at $x=-L$. Conditioned on $\eta _{0}(-L)$, the
distribution of $Y_{0}^{L}$ is Bernoulli with parameter $\frac{\lambda
}{1+\lambda }$ (in case $\eta _{0}(-L)=0$, sample $Y_{0}^{L}$ independently
of everything else). The number of particles which jump from $x=-L$ to
$x=-L+1$ is given by $N_{1}^{L}:=[\eta _{0}(-L)-Y_{0}^{L}]^{+}$. See
Figure~\ref{figcounting}.

\begin{figure}
\includegraphics{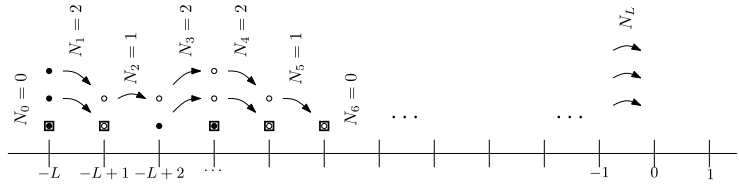}%
\caption{Stabilizing $[-L,0]$ from left to right. Disks represent particles
initially present at each site, and circles represent particles that arrive
from the left. Boxes indicate particles which fell asleep after they were
left alone.}\label{figcounting}
\end{figure}

Note that, after stabilizing $x=-L$, there are $N_{1}^{L}+\eta _{0}(-L+1)$
particles at $x=-L+1$. We now topple site $x=-L+1$ until it is stable, and
denote by $Y_{1}^{L}$ the indicator of the event that the last particle
remains sleeping at $x=-L+1$. The number of particles which jump from
$x=-L+1$ to $x=-L+2$ is given by $N_{2}^{L}:=[N_{1}^{L}+\eta
_{0}(-L+1)-Y_{1}^{L}]^{+}$. By iterating this procedure, the number
$N_{i+1}^{L}$ of particles which jump from $x=-L+i$ into $x=-L+i+1$ after
stabilizing $x=-L,-L+1,\dots ,-L+i$ is given by
\begin{equation*}
N_{i+1}^{L}=[N_{i}^{L}+\eta _{0}(-L+i)-Y_{i}^{L}]^{+},
\end{equation*}
where $N_{0}^{L}=0$. The number of particles which jump into
${\boldsymbol{0}}$ while stabilizing $V_{L}$ equals $N_{L}^{L}$. After that,
we stabilize $x={\boldsymbol{0}}$ and then $\{1,\dots ,L\}$, but the latter
no longer affects $m_{V_{L},{\eta _{0}}}({\boldsymbol{0}})$.

Note that the sequence $(N_{i}^{L})_{i=0,1,\dots ,L}$ is distributed as a
random walk on $\mathbb{N}_{0}$, with independent increments distributed as
${\eta _{0}}(x)-Y$, reflected at $0$. So the relevant quantity is
\begin{equation*}
\mathbb{E}[{\eta _{0}}(-L+k)-Y_{k}^{L}] = \zeta - \frac{\lambda }{1+\lambda }
.
\end{equation*}
If $\zeta < \frac{\lambda }{1+\lambda }$, the walk is positive recurrent.
This implies that the family $\{N_{L}^{L}\}_{L \in \mathbb{N}}$ is
stochastically bounded. Since $m_{V_{L},\eta _{0}}({\boldsymbol{0}})$ is
conditionally distributed as a sum of $N_{L}^{L}+\eta _{0}({\boldsymbol{0}})$
independent geometric variables with parameter $\frac{1}{1+\lambda }$,
Condition~\eqref{eq:conditionb} holds, and thus the system a.s.\ fixates. If
$\zeta > \frac{\lambda }{1+\lambda }$, the walk is transient, and
$\mathbb{P}\big (N_{L}^{L} \geqslant \frac{1}{2}(\zeta - \frac{\lambda
}{1+\lambda })L\big )$ is large for large $L$, so
Condition~\eqref{eq:conditionu} holds and the system a.s.\ stays active. If
$\zeta = \frac{\lambda }{1+\lambda }$, the walk is null-recurrent, so
$N_{L}^{L} \to \infty $ in probability as $L \to \infty $, which again
implies Condition~\eqref{eq:conditionu} and therefore the system a.s.\ stays
active.
\end{proof}

The proof of Theorem~\ref{thm:taggi} uses a toppling procedure in which
particles help each other progressing in the right direction. The idea is to
keep the particles spread so that they form a safe zone where the last
particle can transit without sleeping. This way the particle has a reasonable
chance of catching up with the others without falling asleep outside the safe
zone.

For ${\boldsymbol{v}}\in \mathbb{R}^{d}$, let $\mathcal{H}_{\boldsymbol{v}}=
\{ z \in \mathbb{Z}^{d} : z \cdot { \boldsymbol{v}}\leqslant 0\}$. Consider a
continuous-time random walk which starts at ${\boldsymbol{0}}$, jumps at rate
$1$ according to $p(\cdot )$, and is killed at rate $\lambda $ when it is at
$\mathcal{H}_{\boldsymbol{v}}$. Denote by $F_{\boldsymbol{v}}(\lambda )$ the
probability that this walk is ever killed. If ${\boldsymbol{v}}$ is such that
${\boldsymbol{v}}\cdot \sum _{y} yp(y)>0$, we have that
$F_{\boldsymbol{v}}(\lambda ) < 1$ for every $\lambda <\infty $, and moreover
$F_{\boldsymbol{v}}(\lambda ) \to 0$ as $\lambda \to 0$.
Theorem~\ref{thm:taggi} thus follows from the following proposition.

\begin{proposition}\label{prop:flambda}
For any dimension $d$ and any ${\boldsymbol{v}}\in \mathbb{R}^{d}$, $\zeta
_{c} \leqslant F_{\boldsymbol{v}}(\lambda )$.
\end{proposition}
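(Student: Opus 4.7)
The plan is to apply Theorem~\ref{thm:conditione} via Condition~\eqref{eq:conditione}. Fix $\zeta > F_{\boldsymbol{v}}(\lambda)$. By Theorem~\ref{thm:universality} I may sample $\eta_0$ as i.i.d.\ with a convenient marginal (Poisson or Bernoulli) of mean $\zeta$, so it suffices to show that $\limsup_n \mathbb{E}[M_n]/|V_n| > 0$, where $M_n$ counts the particles leaving $V_n=\{-n,\dots,n\}^d$ under any legal stabilisation.

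To construct a suitable procedure I process the initial particles one at a time, in an order compatible with ${\boldsymbol{v}}$ (say decreasing ${\boldsymbol{v}}\cdot x$, so the forward sites are cleared first). For each particle I run a single-particle walk driven by the site-wise stacks: at its current site $y$, I apply the next instruction $\mathfrak{t}^{y,\cdot}$; a $\mathfrak{t}_{yz}$ advances the particle, a $\mathfrak{t}_{y\mathfrak{s}}$ puts it to sleep iff it is alone at $y$, otherwise the sleep instruction is simply wasted and $y$ is toppled again. The walk ends when the particle either exits $V_n$ or falls asleep inside. Since the instructions are i.i.d.\ with weights $\lambda/(1+\lambda)$ on sleeping and $p(z-y)/(1+\lambda)$ on jumping to $z$, the embedded motion of an \emph{isolated} particle is exactly the jump chain of the continuous-time killed walk defining $F_{\boldsymbol{v}}(\lambda)$; when the particle is not alone sleep attempts fail, which can only help it escape. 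Arranging the ordering so that the walker is alone when it lies in its own backward half-space $x+\mathcal{H}_{\boldsymbol{v}}$ and finds companions otherwise — the informal \emph{safe zone} picture in the excerpt — is what makes the comparison with $F_{\boldsymbol{v}}$ faithful.

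The target estimate is a per-site sleeper bound: the probability that $y$ retains a sleeping particle in the final stabilised configuration is at most $F_{\boldsymbol{v}}(\lambda)+o(1)$, uniformly in the bulk of $V_n$. Since each initial particle either exits or ends as one of at most $|V_n|$ sleepers, this bound yields $\mathbb{E}[M_n] \geq (\zeta - F_{\boldsymbol{v}}(\lambda))|V_n| - o(|V_n|) > 0$, and Theorem~\ref{thm:conditione} concludes. The hard part — indeed the engine of the argument — is the per-site sleeper bound: having a sleeper at $y$ means the last visitor there successfully fell asleep, and bounding the probability of this event by $F_{\boldsymbol{v}}(\lambda)$ requires an explicit coupling between the last visitor's trajectory and a killed random walk, exploiting the processing order and the monotonicity that additional particles at a site only suppress sleep. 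Reconciling this single-particle picture with the wake-up of earlier sleepers, and controlling the $V_n$-boundary loss to an $o(|V_n|)$ term, is where all the technical work lies.
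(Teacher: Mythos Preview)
Your high-level plan (invoke Theorem~\ref{thm:universality}, check Condition~\eqref{eq:conditione} by a one-particle-at-a-time toppling procedure, compare to the killed walk defining $F_{\boldsymbol{v}}$) is exactly right, but the execution has two concrete problems and leaves the actual estimate unproved.

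First, your ordering is reversed. You process in \emph{decreasing} $\boldsymbol{v}\cdot x$; this clears the forward sites first, so when a later walker drifts forward it finds \emph{empty} sites (hence is alone and can sleep there), while the backward half-space still contains unprocessed initial particles (hence companions). This is precisely the opposite of the picture you describe. The paper processes in \emph{increasing} $\boldsymbol{v}\cdot x$: the forward set $A_i=\{x_{i+1},\dots,x_r\}$ still carries its initial Bernoulli particles, so the walker cannot fall asleep there.

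Second, even with the correct ordering, your stopping rule ``exit $V_n$ or fall asleep'' is not enough: at an \emph{empty} forward site the walker is alone and could sleep, which lies outside $x_i+\mathcal{H}_{\boldsymbol{v}}$ and ruins the comparison with $F_{\boldsymbol{v}}(\lambda)$. The paper's key device is a third stopping condition: halt the walker the moment it reaches an \emph{unoccupied} site in $A_i$. This parked particle is not ``left behind''; it will be processed again at a later step $k>i$. With this rule the only way to be left behind is to receive a sleep instruction inside $V_n\setminus A_i\subseteq x_i+\mathcal{H}_{\boldsymbol{v}}$, so the per-\emph{step} probability of being left behind is at most $F_{\boldsymbol{v}}(\lambda)$ directly. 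Moreover the counting becomes exact: parking events are in bijection with extra executed steps, whence (exits)$+$(left-behinds)$=\|\eta_0\|_{V_n}$ and $\mathbb{E}M_n\geqslant(\zeta-F_{\boldsymbol{v}}(\lambda))|V_n|$ with no boundary loss and no $o(1)$.

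Your proposed route via a per-\emph{site} sleeper bound $\mathbb{P}(\eta'_{V_n}(y)=\mathfrak{s})\leqslant F_{\boldsymbol{v}}(\lambda)+o(1)$ may well be true, but it is strictly harder: it asks about the \emph{last} visitor to $y$ in the full stabilisation, which mixes all particles and all wake-ups. You explicitly flag this as ``where all the technical work lies'' and do not carry it out; the paper avoids it entirely.
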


\begin{proof}
By Theorem~\ref{thm:universality} we can assume that the initial state is
i.i.d.\ Bernoulli with parameter $\zeta $. We will show that
Condition~\eqref{eq:conditione} holds if $\zeta > F_{\boldsymbol{v}}(\lambda
)$.

Let $V_{n}=B_{n}$ and consider a labeling $V_{n}=\{x_{1},\ldots ,x_{r}\}$,
where $r=|V_{n}|$ and $x_{1}\cdot {\boldsymbol{v}}\leqslant \cdots \leqslant
x_{r}\cdot { \boldsymbol{v}}$. For $i=0,\ldots ,r$, let
$A_{i}=\{x_{i+1},\ldots ,x_{r}\}\subseteq V_{n}$.

The toppling procedure consists of $r$ steps. For $i=1,\ldots ,r$, Step~$i$
goes as follows. If there is a particle at $x_{i}$ then we do a sequence of
legal topplings, starting by a toppling at $x_{i}$, then at the location
where this toppling pushed the particle to, and so on until this particle
either (i) exits $V_{n}$, (ii) reaches an unoccupied site in $A_{i}$, or
(iii) falls asleep in $V_{n} \setminus A_{i}$. If case~(iii) occurs, we say
that the particle is \emph{left behind}.

By induction we can see that, for $i=1,\ldots ,r$, after Step $i-1$, each
site $x\in A_{i-1}$ (including $x_{i}$) is either unoccupied or contains
exactly one active particle (recall that we are starting with Bernoulli).
Hence, at most one particle is left behind at each step.

Since $V_{n}\setminus A_{i} \subseteq x_{i}+\mathcal{H}_{\boldsymbol{v}}$, at
each step the probability of leaving a particle behind is bounded from above
by $F_{\boldsymbol{v}}(\lambda )$. Moreover, during this procedure each
particle either ends up exiting $V_{n}$ or being left behind. Indeed, if a
given step ends due to condition (ii), the corresponding particle will be
moved again in a later step. Let $N_{n}$ denote the total number of particles
left behind, and notice that $M_{n} \geqslant \|\eta _{0}\|_{V_{n}} - N_{n}$.
Therefore, $\mathbb{E}M_{n} \geqslant \zeta \, |V_{n}| - F_{\boldsymbol{v}}(
\lambda ) \, |V_{n}|$. This completes the proof.
\end{proof}

\begin{proof}[Proof of Corollary~\ref{cor:directed}]
A jump distribution $p(\cdot )$ being directed means that, for some
${\boldsymbol{v}}\in \mathbb{R}^{d}$, we have $p(y)=0$ for every $y$ such
that ${\boldsymbol{v}}\cdot y \leqslant 0$. In this case, a random walk that
starts at ${\boldsymbol{0}}$ and jumps according to $p(\cdot )$ only visits
$\mathcal{H}_{\boldsymbol{v}}$ once (at time zero), and so it follows from
definition of $F_{\boldsymbol{v}}$ that $F_{\boldsymbol{v}}(\lambda
)=\frac{\lambda }{1+\lambda }$. The result then follows from
Proposition~\ref{prop:flambda}.
\end{proof}

\section{Exploring the instructions in advance}\label{sec:traps}

In this section we introduce a toppling procedure to prove phase transition
in dimension $d=1$ by using~\eqref{eq:lapupper} to check
Condition~\eqref{eq:conditionb}. Compared to the previous sections, it
introduces a novel element which will be used in \S \ref{sec:cycle}--\S
\ref{sec:multiscale}: the heavy usage of acceptable topplings as a way to
enforce activity and conveniently displace some particles away from their
current position.

An element which is specific to the argument presented in this section and
which will be absent in \S \ref{sec:weak}--\S \ref{sec:multiscale} is that
the toppling procedure used here is not ``Markovian.'' By this we mean that
in order to decide on the next topplings we use more information than just
the outcome of the previous ones.

\begin{theorem}\label{thm:rs12}
For $d=1$, for every $\lambda >0$, we have $\zeta _{c} \geqslant
\frac{\lambda }{1+\lambda }$.
\end{theorem}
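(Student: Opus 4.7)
By Theorem~\ref{thm:universality} I may restrict attention to a single convenient translation-ergodic initial law of density $\zeta<p:=\frac{\lambda}{1+\lambda}$; take $\eta_{0}$ i.i.d.\ Bernoulli$(\zeta)$, so that $\eta_{0}$ has no sleeping particle. The plan is to verify Condition~\eqref{eq:conditionb}. By~\eqref{eq:lapupper} it suffices to exhibit, for each $V_{L}=\{-L,\dots,L\}$, an acceptable sequence $\alpha$ stabilizing $\eta_{0}$ in $V_{L}$ whose odometer at the origin is stochastically dominated by some random variable $Z$ independent of $L$ with $\mathbb{P}(Z\leqslant k)\to 1$ as $k\to\infty$.

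The novel ingredient, announced in the section title, is to \emph{pre-explore} the instruction stacks. Call $x$ a \emph{trap} if $\mathfrak{t}^{x,1}=\mathfrak{t}_{x\mathfrak{s}}$; the indicators $(T_{x})_{x\in\mathbb{Z}}$ are i.i.d.\ Bernoulli$(p)$ and independent of $\eta_{0}$. Since $\zeta<p$, traps outnumber particles in a law-of-large-numbers sense, and one may greedily pair each particle in $V_{L}$ with a distinct trap: scanning rightwards from ${\boldsymbol{0}}$, send the $j$-th right-of-origin particle to the $j$-th right-of-origin trap, and symmetrically on the left. The one-dimensional input is that the random walk $S_{n}:=\sum_{k=1}^{n}(\eta_{0}(k)-T_{k})$ has drift $\zeta-p<0$, so $\max_{n\geqslant 0}S_{n}$ and $\max_{n\geqslant 0}S_{-n}$ are tight, and the number of matched pairs that straddle the origin is stochastically dominated by a random variable independent of $L$.

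Given this matching I construct $\alpha$ by processing the particles one at a time, transporting each to its assigned trap via acceptable topplings along the connecting interval and finishing with a legal toppling at the trap, whose still-untouched first instruction (a sleep) pins the arriving particle in place. Two sources of overhead arise and must be absorbed: sites hosting several active particles (resolved by stabilizing them internally, which uses a controlled number of instructions locally), and transport routes that pass through a trap already holding a sleeping particle (which wakes that particle via an acceptable non-legal toppling and requires reparking it at a nearby spare trap drawn from the LLN surplus). After all particles are processed the configuration is stable in $V_{L}$. The total number of topplings at the origin is then bounded by the number of transport routes crossing ${\boldsymbol{0}}$, inflated by a bounded multiplicative factor from these detours, and is therefore stochastically controlled uniformly in $L$ by the previous paragraph.

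The main obstacle is the analysis of the cascade of reassignments triggered by these accidental wake-ups: each reassignment draws on the local surplus of traps and could \emph{a priori} snowball. The key point is that the local surplus $\max_{k\leqslant n}S_{k}-S_{n}$ is itself tight thanks to the negative drift $\zeta-p$, which keeps the cascades short; quantifying this is a standard random-walk/renewal estimate for $S$. The procedure is manifestly non-Markovian---the matching is fixed at the outset from the pre-explored field $(T_{x})$---but the Abelian property together with~\eqref{eq:lapupper} legitimize this, since $\alpha$ only needs to be acceptable and stabilizing, not constructed online.
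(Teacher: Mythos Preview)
Your proposal rests on a misconception about what acceptable topplings do. Toppling site $x$ applies the next instruction $\mathfrak{t}^{x,h(x)+1}$ from the fixed stack $\mathcal{I}$; being \emph{acceptable} only means you may do this when $\eta(x)=\mathfrak{s}$. It does not let you choose the direction of movement. So ``transporting'' a particle from $x$ to a pre-chosen trap $y$ means repeatedly toppling the particle's current location, whereupon it performs a random walk governed by $\mathcal{I}$ until it hits $y$. That walk is not confined to the interval $[x,y]$; it may wander past the origin repeatedly, and each visit to a site consumes an instruction there. In particular: (i) the odometer at ${\boldsymbol{0}}$ produced by moving one particle is the number of visits of a random walk excursion, not ``$1$ if the route crosses ${\boldsymbol{0}}$''; and (ii) the first instruction at your designated trap $y$ may already have been consumed by the time the particle arrives, so it no longer pins the particle. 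Your sentence ``the total number of topplings at the origin is then bounded by the number of transport routes crossing ${\boldsymbol{0}}$'' has no justification, and the cascade analysis you flag as the main obstacle never even gets started because the matching itself is not respected by the dynamics.

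The paper's argument solves exactly these problems. Instead of pre-assigning traps from the field $(\mathfrak{t}^{x,1})_{x}$, it launches an \emph{exploration} from each particle position $x_{k}$ that follows the actual instructions until hitting the previous barrier $a_{k-1}$; the trap $a_{k}$ is then chosen \emph{on this revealed path} as the leftmost site where a sleep instruction sits just below the last explored (left-jump) instruction. This guarantees the particle can be settled at $a_{k}$ using precisely the instructions explored, and that all corrupted (revealed-but-unused) instructions lie in $[a_{k-1}+1,a_{k}]$, so successive explorations see fresh stacks and never touch ${\boldsymbol{0}}$. The geometric spacing of the $a_{k}$ then beats the particle spacing when $\zeta<\frac{\lambda}{1+\lambda}$. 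The crucial idea you are missing is that the trap must be discovered along the particle's own random trajectory, not fixed in advance.
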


The above lower bound is a particular case of
Theorem~\ref{thm:subcritstauffertaggi}. We find it instructive to understand
the proof given in this section, in particular because it will be used in \S
\ref{sec:cycle}, it provides Corollary~\ref{cor:depends}, and it can be
adapted to study the ARW on other graphs such as regular trees. We now move
to proving the theorem.

\subsubsection*{General strategy}

The idea is to try and stabilize all the particles from $\eta _{0}$,
following the instructions in $\mathcal{I}$, with the help of acceptable
topplings.

After describing the procedure, we will show that, whenever it is successful,
it implies that $m_{\eta _{0}}({\boldsymbol{0}})=0$. We finally show that the
procedure is successful with positive probability if $\zeta < \frac{\lambda
}{1+\lambda }$, implying Condition~\eqref{eq:conditionb}.

\subsubsection*{Description of the toppling procedure}

We will try to find a trap for one particle at a time. To find the trap, we
launch an exploration that reveals some instructions in $\mathcal{I}$ until
it identifies a suitable trap. To do that, the exploration follows the path
that the particle would perform if we always toppled the site it occupies,
and stop when the trap has been chosen. In the absence of a suitable trap, we
declare the procedure to have failed.

\begin{figure}
\includegraphics{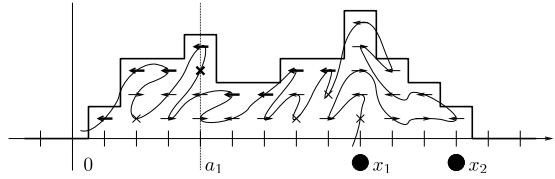}%
\caption{First exploration path. It starts at position $x_{1}$ of the first
particle and stops when it reaches the origin. The horizontal axis represents
the lattice, and above each site $x$ there is a stack of instructions
$(\mathfrak{t}^{x,j})_{j}$. The bold arrows indicate the last jump found at
each site $x \in [ 1,x_{1}-1 ]$, and the bold cross indicates a sleep
instruction found just before the last jump, this being the leftmost such
cross, whose location defines $a_{1}$.}\label{fig1arw}\vspace{-7pt}
\end{figure}

An important issue is that some of the explored instructions are actually not
going to be used by the corresponding particle which will instead remain
sleeping at the trap. In particular, the instructions revealed in one step
could interfere with the conditional distribution of subsequent steps. For
this reason, we will call \emph{corrupted} the sites where instructions have
been revealed but not used.

If there is a particle at ${\boldsymbol{0}}$, we declare the procedure
unsuccessful and stop. Otherwise, label the initial positions of the
particles on $\mathbb{Z}$ by $\cdots \leqslant x_{-3} \leqslant x_{-2}
\leqslant x_{-1} < 0 < x_{1} \leqslant x_{2} \leqslant x_{3} \leqslant \cdots
$.

Let $a_{0}=0$. We now describe the trapping of each particle. Suppose that
the first $k-1$ traps have been successfully set up at positions
$0<a_{1}<a_{2}<\cdots <a_{k-2}<a_{k-1} \leqslant x_{k-1}$, and suppose also
that the interval $[ 0,a_{k-1} ]$ contains all traps and corrupted sites
found in the previous steps.

We now launch an \emph{exploration}. Starting at $x_{k}$, the explorer
examines the next unexplored instruction at its current position, and moves
to the site indicated by such instruction. If it is a sleep instruction, the
explorer does not move. Repeat this indefinitely, and stop upon reaching
$a_{k-1}$.

Next we \emph{set up the trap}. Notice that, during the $k$-th exploration,
a.s.\ each site is visited a finite number of times. Moreover, the explorer
either stops at $a_{k-1}$ or drifts to $+\infty $. In the latter case we
simply take $a_{k}=a_{k-1}$. Suppose the former occurs. In this case, the
explorer visits every site in $D_{k}=[ a_{k-1}+1,x_{k}-1 ]$. Moreover, the
last instruction explored at each site is a jump to the left, see
Figures~\ref{fig1arw}~and~\ref{fig2arw}. Note that, for each $x\in D_{k}$,
the second last instruction may or may not be a sleep instruction. We take
$a_{k}$ as the leftmost site at which the second last instruction explored
was a sleep instruction, and call this second last instruction the $k$-th
trap. If there is no such site in $D_{k}$, we declare the entire procedure
unsuccessful and stop.\eject

\begin{figure}
\includegraphics{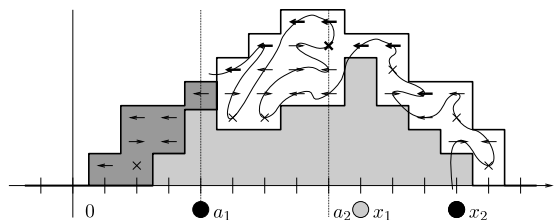}%
\caption{Second exploration path. It starts at position $x_{2}$ of the second
particle and stops when it reaches $a_{1}$. The regions in gray indicate the
instructions already examined by the first explorer. The dark gray contains
instructions examined but not used, whose locations determine the set of
corrupted sites.} \label{fig2arw}
\end{figure}

So the trap is a sleep instruction found immediately before the last
instruction, which is a jump to the left. Hence, we know that the exploration
path has not been to the right of $a_{k}$ after it revealed the instruction
that we now declare as being the trap. Hence, \emph{all the corrupted sites
will be in $[ a_{k-1}+1,a_{k}]$}, see Figure~\ref{fig2arw}. Therefore, this
process can be carried on indefinitely, as long as all the steps are
successful.

If all the previous steps are successful, we repeat a symmetric construction
on the negative half-line.

\subsubsection*{If the procedure is successful, then $m_{\eta _{0}}({\boldsymbol{0}})=0$}

We will show that, following the instructions of $\mathcal{I}$, $\eta _{0}$
is stabilized in $V_{n}=[x_{-n},x_{n}]$ with finitely many acceptable
topplings, without toppling ${\boldsymbol{0}}$.

Let us first stabilize the particle that starts at $x_{1}$. To this end, we
successively topple the sites found by the first explorer, until it reaches
the trap at $a_{1}$. At this moment the particle will fall asleep, and site
$a_{1}$ will be stable. Note that these are acceptable topplings and the
particle is following the same path that the explorer did, even if sometimes
it will be sleeping.

We also know that, after the last visit to $a_{1}$, the explorer did not go
further to the right, so when settling the first particle we use all the
instructions examined so far, except some lying in $[a_{0}+1,a_{1}]$.
Therefore, the second particle can be stabilized in the same way, as it will
find the same instructions that determined the second exploration path.

Notice also that the first particle does not visit ${\boldsymbol{0}}$, and
the second particle neither visits ${\boldsymbol{0}}$ nor $a_{1}$, thus it is
settled without activating the first particle. Likewise, the $k$-th particle
is settled at $a_{k}$, without ever visiting
$\{{\boldsymbol{0}},a_{1},a_{2},\dots ,a_{k-1}\}$, for all $k=1,\dots ,n$.
After settling the $n$ first particles in $\mathbb{Z}_{+}$, we perform the
analogous procedure for the first $n$ particles in $\mathbb{Z}_{-}$.

This means that $\eta _{0}$ can be stabilized in $V_{n}$ with finitely many
acceptable topplings, not necessarily in $V_{n}$, and never toppling the
origin. By~\eqref{eq:lapupper}, $m_{V_{n},\eta _{0}}({\boldsymbol{0}})=0$.
Since it holds for all $n\in \mathbb{N}$ and $V_{n}\uparrow \mathbb{Z}$ as
$n\to \infty $, this gives $m_{\eta _{0}}({\boldsymbol{0}})=0$.

\subsubsection*{The procedure is successful with positive probability}

For each site $x \in D_{1}$, the probability of finding a sleep instruction
just before its last jump equals $\frac{\lambda }{1+\lambda }$, and this
happens independently of the path and independently for each site. Thus,
$a_{1}-a_{0}$ is a geometric random variable with parameter $\frac{\lambda
}{1+\lambda }$ truncated at $x_{1}-a_{0}$.

Since no corrupted sites were left outside $[ a_{0}+1,a_{1} ]$ in the first
exploration, the interdistance $a_{2}-a_{1}$ is independent of $a_{1}$.
Moreover, its distribution is also geometric with parameter $\frac{\lambda
}{1+\lambda }$. The same holds for $a_{3}-a_{2}$, $a_{4}-a_{3}$, etc. By the
law of large numbers, $a_{n} \sim \frac{1+\lambda }{\lambda } n$. On the
other hand, $n \sim \zeta x_{n}$ (again by law of large numbers). Therefore,
if $\zeta < \frac{\lambda }{1+\lambda }$, the event that $a_{k} < x_{k}$ for
all $k$ has positive probability. Finally, occurrence of this event implies
that the procedure is successful, and the proof of Theorem~\ref{thm:rs12} is
finished.

\subsubsection*{Some immediate extensions and corollaries}

In the above estimates, we obtained the following.

\begin{remark}\label{remark:barriersgeometric}
The distances $\{a_{k}-a_{k-1}\}_{k\geqslant 1}$ are i.i.d.\ geometric
variables with parameter $\frac{\lambda }{1+\lambda }$.
\end{remark}

Simple modifications in the choice of the trap would give the following.

\begin{remark}
Let $X=(X_{n})_{n \geqslant 0}$ denote the Doob's $h$-transform of a walk
starting at $0$ and jumping according to $p(\cdot )$. That is, $X$ is the
walk conditioned to be positive for all $n > 0$. Let $N$ be a geometric
variable with parameter $\frac{\lambda }{1+\lambda }$ independent of $X$, and
let $Z = \max \{X_{1},\dots ,X_{N}\}$. The distances
$\{a_{k}-a_{k-1}\}_{k\geqslant 1}$ can be made i.i.d.\ and distributed as
$Z$. With this modification, the above proof gives $\zeta _{c} \geqslant
\frac{1}{\mathbb{E}Z}$ rather than just $\zeta _{c} \geqslant
\frac{1}{\mathbb{E}N}$.
\end{remark}

\begin{corollary}\label{cor:depends}
For $d=1$, if the walks are not directed, we have $\zeta _{c} > \frac{\lambda
}{1+\lambda }$.
\end{corollary}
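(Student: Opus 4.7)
The plan is to combine the refined lower bound $\zeta_{c}\geqslant 1/\mathbb{E}Z$ supplied by the preceding Remark with the strict inequality $\mathbb{E}Z<\mathbb{E}N=\tfrac{1+\lambda}{\lambda}$, which should hold as soon as $p(-1)>0$.

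Since $X$ is nearest-neighbor with $X_{0}=0$, induction on $n$ gives $X_{n}\leqslant n$ for every $n$, and hence $Z\leqslant N$ almost surely. To upgrade this to a strict inequality in expectation, it suffices to exhibit a positive-probability event on which $Z<N$. Pick any harmonic function $h$ on $\mathbb{N}_{0}$ with $h(0)=0$ and $h(x)>0$ for $x\geqslant 1$; such $h$ is easily seen to exist for any nearest-neighbor walk on $\mathbb{Z}$. In the corresponding $h$-transform, the transition $1\to 0$ has probability $p(-1)h(0)/h(1)=0$, while the transition $2\to 1$ has probability $p(-1)h(1)/h(2)>0$ precisely because $p(-1)>0$. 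Hence $X_{1}=1$ and $X_{2}=2$ are forced, while the event $\{X_{3}=1\}$ has positive probability. Combining this with $\mathbb{P}(N\geqslant 3)=(1+\lambda)^{-2}>0$, we obtain a positive-probability event on which $X_{3}=1$; since the walk gains at most one per step, $X_{n}\leqslant n-2$ for every $n\geqslant 3$, whence $Z\leqslant \max(2,N-2)<N$. This yields $\mathbb{E}Z<\mathbb{E}N$, and the corollary follows: $\zeta_{c}\geqslant 1/\mathbb{E}Z>\lambda/(1+\lambda)$.

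The main obstacle is the preceding Remark itself, which merely hints at ``simple modifications in the choice of the trap.'' A rigorous proof would have to describe an exploration that reveals just enough of $\mathcal{I}$ to simulate the $h$-transformed walk run for a geometric sleep time, place the trap at its running maximum, and verify that corruption remains confined to $[a_{k-1}+1,a_{k}]$ so that successive spacings are genuinely i.i.d.\ with the law of $Z$. Granting this construction, the probabilistic comparison above is immediate; alternatively, one can circumvent the refined trap entirely by noting that for non-directed walks the original trap has a strictly positive probability of allowing the particle to take at least one leftward step before reaching the sleep instruction, which by a direct coupling argument strictly enlarges the effective spacing beyond geometric.
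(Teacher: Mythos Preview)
Your argument is correct and follows exactly the route the paper intends: the corollary is stated immediately after the Remark on $Z$ without further proof, so the paper's own ``proof'' is simply an appeal to $\zeta_{c}\geqslant 1/\mathbb{E}Z$ together with the (unstated but obvious) observation that $\mathbb{E}Z<\mathbb{E}N$ when $p(-1)>0$. You have supplied precisely the missing computation, and your honest flag that the Remark's trap construction is only sketched is fair --- the paper does not verify it either.
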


\section{Particle flow between sparse sources}\label{sec:netflow}

In this section we prove the following.

\begin{theorem}\label{thm:subcritbhf}
For $d=1$ and symmetric walks, we have $\zeta _{c} \to 0$ as $\lambda \to 0$.
\end{theorem}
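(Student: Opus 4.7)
The plan is to verify Condition~\eqref{eq:conditione}. By Theorem~\ref{thm:universality} I may take the initial state to be i.i.d.\ Bernoulli with density $\zeta$, and for every fixed $\zeta>0$ I will choose $\lambda$ small enough so that $\limsup_n \mathbb{E} M_n / |V_n| > 0$, where $M_n$ counts the particles that leave $V_n=\{-n,\dots,n\}$ during a legal stabilization.

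The setup uses two scales. Fix a block length $L$ with $1/\zeta \ll L \ll 1/\sqrt{\lambda}$, so that (i) a typical block $B \subseteq V_n$ contains about $\zeta L \gg 1$ source particles, and (ii) a lone particle, whose random walk sleeps after roughly $1/\lambda$ steps, typically travels distance much larger than $L$ before its first attempt to sleep. Partition $V_n$ into consecutive blocks of length $L$. I will describe a legal toppling procedure $\beta \subseteq V_n$ that processes each block separately. Within a block, I pick the source particles in some fixed order and, for each one, I topple the site it currently occupies until either it exits $B$ (and is then passed on to the neighboring block, where it is handled at the appropriate moment), or it falls asleep inside $B$, or it meets a particle previously left inside $B$; in the last case I continue toppling the newly created pair until both particles are either sleeping or gone. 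This defines a legal $\beta \subseteq V_n$, and the corresponding $M_n$ is a lower bound for the odometer-based $M_n$ of Theorem~\ref{thm:conditione} by the Abelian property.

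The heart of the argument is the enumerative bound on the number of particles that remain trapped inside a fixed block $B$ at the end of this procedure. Each trapped particle corresponds to a site in $B$ whose instruction stack produced a sleep instruction at a precise depth, at a moment when the site was occupied by exactly one particle. Enumerating over subsets of potential trap locations inside $B$, together with the combinatorial data needed to specify the internal history (order of arrival, depth in the stack), gives a bound of the shape $\binom{L}{k} C^{L} \bigl(\tfrac{\lambda}{1+\lambda}\bigr)^{k}$ on the probability that $k$ particles are trapped. Choosing $\lambda$ small enough depending on $\zeta$, I can make the probability that more than $\varepsilon \zeta L$ particles are trapped in $B$ smaller than any prescribed constant, uniformly in the block location. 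Since $\sum_{B \subseteq V_n} 1 = O(n/L)$, the expected number of trapped particles in $V_n$ is at most $\varepsilon \zeta |V_n| + o(|V_n|)$, so that $\mathbb{E} M_n \geqslant (1-\varepsilon)\zeta|V_n| - o(|V_n|)$, which proves Condition~\eqref{eq:conditione}.

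The main obstacle, which is why a genuinely two-scale argument is needed, is the interaction with neighboring blocks: particles entering $B$ from outside can wake sleeping particles in $B$ and trigger cascades, and the description above hid the question of in what order to visit the blocks. To keep this coupling manageable I would treat the instructions used by particles that enter $B$ from outside as part of the enumerative cost (each extra sleep instruction revealed costs another factor of $\lambda/(1+\lambda)$, each jump can be bounded by a geometric-type tail using the smallness of $\lambda$ and the scale $L$); this is exactly the kind of ``energy vs.\ entropy'' accounting the introduction identifies as delicate in the presence of mass conservation. Getting the combinatorial factor $C^L$ under control while paying only for genuine sleep events, so that the bound survives when multiplied over all blocks, is the step that will demand the most care.
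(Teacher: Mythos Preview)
Your overall strategy---verify Condition~\eqref{eq:conditione} via a block decomposition and show that at most a small fraction of particles are trapped---matches the paper's, and the scale relation $L \sim 1/\zeta$ with $\lambda \ll L^{-2}$ is the right one. But the proposal has a genuine gap at exactly the place you flag as the ``main obstacle'': the inter-block interaction.

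The difficulty is that the number of particles entering a block $B$ from its neighbors is not bounded a priori; it depends on the entire configuration and can be arbitrarily large. Your enumerative bound $\binom{L}{k} C^{L} (\tfrac{\lambda}{1+\lambda})^{k}$ implicitly assumes the internal history of $B$ has complexity $O(L)$, but once you allow an unbounded inflow $m$ of particles, the relevant stack depths and the number of visits grow with $m$, not with $L$. So there is no fixed $C^{L}$ factor that captures the combinatorics; you would need a bound that is uniform over all possible inflows $m$, and your sketch does not provide one. The closing paragraph acknowledges this but offers no mechanism.

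The paper's solution is structurally different from what you outline. Instead of processing blocks in some order and tracking cascades across boundaries, it introduces a \emph{coloring} (equivalently, a two-layer graph) under which particles entering a block through its source do not interact with particles of the neighboring block's color. This makes each block's output a function of a single integer input $m_i$ (the flow into its source) and its own randomness, independent across blocks. The global coupling is then encoded in \emph{mass balance equations} $m_i = R_{i-1}(m_{i-1}) + L_{i+1}(m_{i+1})$, and one sums over all realizable vectors $\boldsymbol{m}$. The key single-block estimate is not a bound on the probability of $k$ traps for one fixed inflow, but rather
\[
\sup_{\ell}\ \mathbb{E}\Big[\sum_{m} e^{S_i(m)} \, \mathds{1}_{\{L_i(m)=\ell\}}\Big] \leqslant 3,
\]
a bound that holds \emph{simultaneously for all inflows $m$} and is uniform in the output $\ell$. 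This is what allows the sum over $\boldsymbol{m}$ to telescope into a product of block estimates. Without an analogue of this uniform-in-$m$ statement, your enumerative approach cannot close.
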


The theorem is proved by checking Condition~\eqref{eq:conditione}. Define the
region $D_{r}=\{1,\dots ,r-1\} \subseteq \mathbb{Z}^{d}$ with $d=1$, and
denote by $\eta '$ the configuration obtained after legally stabilizing $\eta
_{0}$ in $D_{r}$. The following proposition is more than enough to prove the
theorem, and will also be used in \S \ref{sec:cycle}.

\begin{proposition}\label{prop:fewstay}
For $\rho >0$, there are $\lambda >0$, $c>0$ and $C<\infty $ such that
\begin{equation*}
\mathbb{P}\big ( \|\eta ' \|_{D_{r}} \geqslant \rho r \,\big |\, \eta _{0}
\big ) \leqslant C e^{-c r}
\end{equation*}
for all $r \in \mathbb{N}$ and $\eta _{0} \in (\mathbb{N}_{0})^{D_{r}}$.
\end{proposition}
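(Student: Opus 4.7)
The plan is to prove the proposition by combining a careful choice of legal toppling procedure with an energy-versus-entropy enumerative bound, in the spirit of the trap-finding argument of Section~\ref{sec:traps} but with the roles of trap and particle interchanged. There, a density $\lambda/(1+\lambda)$ of traps sufficed to capture every particle as long as $\zeta < \lambda/(1+\lambda)$; here I would argue, conversely, that no legal stabilization of $D_r$ leaves more than a small fraction of sites hosting a sleeping particle when $\lambda$ is small, uniformly in the initial configuration $\eta_0$. Once this is established, Theorem~\ref{thm:subcritbhf} follows immediately by taking $\rho<\zeta$, since $\mathbb{E}[M_n]\geq (\zeta-\rho)|V_n|$ up to boundary effects, verifying Condition~\eqref{eq:conditione} for i.i.d.\ initial data of density $\zeta$.

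By the Abelian property, we may choose any legal toppling procedure that stabilizes $\eta_0$ in $D_r$. I would select a particle-tracking procedure: maintain a pool of active particles, successively pick one, and perform legal topplings along its trajectory (consuming instructions from $\mathcal{I}$ as needed) until it either exits $D_r$ through $\{0,r\}$, consumes a sleep instruction while alone at an empty site, or encounters another particle (waking it if it was sleeping), after which both are returned to the pool. The procedure terminates because $m_{D_r,\eta_0}$ is finite, and at termination $\|\eta'\|_{D_r}$ equals the number of surviving sleepers.

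The enumerative step is to fix a target sleeper set $S_0\subseteq D_r$ with $|S_0|=k\geq \rho r$ and bound $\mathbb{P}(S\supseteq S_0\mid \eta_0)$, where $S$ denotes the final sleeper set. Revealing each stack $(\mathfrak{t}^{x,j})_j$ only as the procedure consumes it, I would show that placing a sleeper at each prescribed site costs an essentially independent Bernoulli$(\lambda/(1+\lambda))$ factor, yielding $\mathbb{P}(S\supseteq S_0\mid \eta_0)\leq (C\lambda)^{k}$ for small $\lambda$. A union bound over the $\binom{r-1}{k}\leq 2^{r}$ candidate sets would then give
\begin{equation*}
\mathbb{P}\bigl(\|\eta'\|_{D_r}\geq \rho r \,\big|\, \eta_0\bigr)\leq 2^{r}(C\lambda)^{\rho r},
\end{equation*}
which decays exponentially once $\lambda$ is small enough that $2(C\lambda)^{\rho}<1$.

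The main obstacle is justifying the near-independence of the sleep events across the prescribed sites of $S_0$. Travellers visit overlapping regions, and the same stack may be partly consumed before a sleeper ultimately forms there; in particular the sleep instruction that finally settles a particle need not be the first one revealed at that site. As in Section~\ref{sec:traps}, I would design the exploration so that each prospective sleeper is located using instructions revealed during the current traveller's trajectory, confining the "corruption" of stacks to a controlled region around each sleeper and preserving Bernoulli statistics for each new sleep attempt conditional on prior history. Extracting a clean multiplicative estimate while respecting the interactions between travellers is the crux of the argument, and motivates a two-scale implementation: on the coarse scale one tracks flow of particles between already-placed sleepers, and on the fine scale one verifies the per-sleeper Bernoulli cost, the two combining to give the claimed bound.
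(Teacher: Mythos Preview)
Your proposal identifies the right target and the right order of magnitude for the final bound, but the central step --- the claimed inequality $\mathbb{P}(S\supseteq S_0\mid\eta_0)\leqslant (C\lambda)^{|S_0|}$ for a prescribed sleeper set $S_0$ --- is not established, and you correctly flag it as the main obstacle. The difficulty is real: the events $\{x\in S\}$ across $x\in S_0$ are strongly correlated, since a particle that eventually sleeps at one site may first visit (and consume instructions at) many others, and conversely a sleeper at $x$ may have been woken and re-settled several times by travellers heading elsewhere. Your particle-tracking procedure does not localize this corruption; when two particles collide and ``both are returned to the pool,'' their subsequent trajectories reuse stacks already partially revealed, and there is no clean conditional Bernoulli left to extract. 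The analogy with \S\ref{sec:traps} breaks down because there the explorer moves through fresh territory between successive traps, whereas here distinct sleepers can sit in the middle of heavily-used regions. Enumerating over sleeper sets is thus the wrong decomposition.

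The paper's proof takes a genuinely different route, and it is worth seeing why. Fix a spacing $K\geqslant 2\rho^{-1}$, call the sites $iK$ \emph{sources}, and colour each particle by the last source it visited. Declare a frozen particle to be woken only by a particle of the \emph{same} colour; this restricted dynamics gives an upper bound on the number of sleepers and is equivalent to an Abelian ARW on a two-layer graph whose blocks interact only through buffers at the sources. The crucial gain is that blocks become \emph{independent} given their inflows, so the enumeration is over the vector $\boldsymbol{m}=(m_1,\dots,m_n)$ of buffer odometers satisfying the mass-balance equations~\eqref{eq:buffer}, not over sleeper sets. A single-block estimate (Lemma~\ref{lemma:singleblock}) shows that $\sup_\ell \mathbb{E}\bigl[\sum_m e^{S_i(m)}\mathds{1}_{\{L_i(m)=\ell\}}\bigr]\leqslant 3$ when $\lambda$ is small, and the realizability constraint lets one telescope the sum over $\boldsymbol{m}$ block by block, yielding $\mathbb{P}(S^*\geqslant\rho r)\leqslant r\,e^{-\rho r}3^{n}$. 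Your closing sentence about a ``two-scale implementation'' with coarse flow and fine per-sleeper cost is pointing in exactly this direction, but the decomposition into independent coloured blocks and the enumeration over flows rather than sleeper configurations are the ideas that make it work.
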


\begin{proof}[Proof of Theorem~\ref{thm:subcritbhf}]
Let $\zeta >0$. By Theorem~\ref{thm:universality}, we can assume the initial
state is i.i.d.\ with mean $\zeta $. To check
Condition~\eqref{eq:conditione}, we can work with $D_{r}$ instead of $V_{n}$.
Taking $\rho <\zeta $ and $\lambda $ as in Proposition~\ref{prop:fewstay}, we
get that $ \limsup _{r} \frac{\mathbb{E}M_{r}}{r} = \limsup _{r}
\frac{\mathbb{E}\|\eta _{0}\|_{D_{r}} -\mathbb{E}\|\eta '\|_{D_{r}}}{r}
\geqslant \zeta - \rho > 0 $. By Theorem~\ref{thm:conditione}, this implies
a.s.\ activity, which means $\zeta _{c} \leqslant \zeta $, concluding the
proof.
\end{proof}

The remainder of this section is devoted to proving Proposition~\ref{prop:fewstay}.

\subsection{General framework}

Fix some natural $K \geqslant 2 \rho ^{-1}$. Each site of the form $iK$ for
$i\in \mathbb{Z}$ is called a \emph{source}. We can suppose $r=(n+1)K$ for
some $n \in \mathbb{N}$. We can also suppose that $\eta _{0} \in
\{0,1\}^{V_{r}}$, otherwise we simply topple every site containing two or
more particles until there is no longer such a site, and start from the
resulting configuration.

We start presenting two auxiliary dynamics, the r-ARW and t-ARW. We then
analyze how each block of the t-ARW behaves individually for all possible
inputs. Later we consider global constraints given by \emph{mass balance
equations} for the flow of particles between sources, and see how the
proposition follows from these constraints and an estimate involving a single
source. We finally prove the single-source estimate.

\subsubsection*{Restricted ARW}

We introduce a toppling procedure that gives a lower bound for the activity
in the ARW. The \emph{restricted ARW procedure} (r-ARW for short) goes as
follows. We assign a different color to each source. Particles get the color
of the last source they visited (those initially located between two sources
can be assigned any of the two colors). When a particle finds a sleep
instruction, we declare it to be \emph{frozen}. When an unfrozen particle is
found at the same site, it will unfreeze the frozen particle, but \emph{only
if they have the same color}. So a site might contain two frozen particles of
different colors, or even a frozen particle of one color plus several
unfrozen particles of another color. At each step, we topple a site in
$D_{r}$ containing unfrozen particles, and we do this until all particles in
$D_{r}$ are frozen.

\smallskip

We now discuss what this procedure says about the ARW on $D_{r}$.

\smallskip

A frozen particle may be active or sleeping in the ARW. But every unfrozen
particle is also active, thus all topplings performed during this procedure
are legal for the ARW. Hence, the configuration obtained at the end of this
procedure gives an upper bound for $\|\eta '\|_{D_{r}}$.

\subsubsection*{Two-layer ARW}

\begin{figure}
\includegraphics{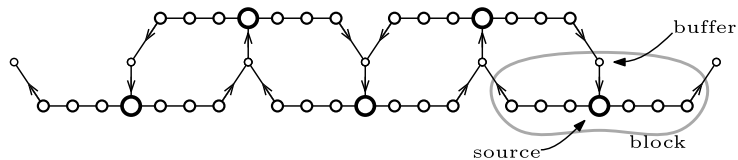}%
\caption{The two-layer graph for the t-ARW with $K=4$ and $n=5$. It has $5$
sources, at (horizontal) distance $4$ from each other, represented by big
sites. Between the two layers there are $5+2$ buffers, represented by tiny
sites. The first and last buffers correspond to sites $x={\boldsymbol{0}}$
and $x=(n+1)K=24$. Directed edges indicate that buffers receive particles
from neighboring blocks and release them to the corresponding sources.}\label{fig:tarw}
\end{figure}

We now introduce the \emph{two-layer ARW dynamics} (t-ARW for short), which
is given by the ARW dynamics on the \emph{two-layer graph} shown in
Figure~\ref{fig:tarw}. Sites are grouped into \emph{blocks} numbered
$i=1,\dots ,n$. Each block contains $2K-1$ regular sites, including a
\emph{source}, plus one \emph{buffer} site. Particles never sleep at the
buffers. For a configuration to be considered stable, all the regular sites
must be either vacant or occupied by a sleeping particle, and each buffer
that is linked to a source must be empty.

Remark that the t-ARW is equivalent to the r-ARW if we identify sites with
the same horizontal coordinate and add up their particles. Given the
horizontal position of a particle, it has two possible colors in the r-ARW,
or equivalently it is on the upper or lower layer in the t-ARW. A sleeping
particle in the t-ARW corresponds to a frozen in the r-ARW. The first and
last buffers will never be toppled, they correspond to sites
${\boldsymbol{0}}$ and $r$.
\smallskip

The t-ARW is more convenient to work with because it is Abelian.
\smallskip

To define the initial configuration $\xi $ on the two-layer graph, we
distribute each particle in configuration $\eta _{0} \in \{0,1\}^{D_{r}}$ to
one of the two layers, according to its color. In the remainder of this
section, we assume that the initial configuration $\xi $ on the two-layer
graph is fixed, and omit it in the notation. The estimates will hold
uniformly with respect to $\xi $.

\subsubsection*{Single-block dynamics}

Consider any sequence of legal topplings performed on the two-layer graph
until the configuration is stable. By Abelianness of the t-ARW, the final
configuration does not depend on the order of topplings. And by the above
considerations, it provides a stochastic upper bound for $\big \|\eta '\big
\|_{D_{r}}$.

Now notice that the interaction between a given block and the other ones is
only through the input of particles from the buffer into its source and the
output of particles from its leftmost and rightmost sites into a neighboring
buffer. In order to analyze the relation between input and output, we fix a
block $i$ and study all possible values of inflow $m=0,1,2,3,\dots $.

For $m\in \mathbb{N}_{0}$, consider the stabilization of the configuration
$\xi + m \delta _{iK}$ inside the $i$-th block. That is, $m$ particles are
added to the source $iK$ and the configuration is toppled until it is stable
in the block. By the Abelian property, it does not matter whether the $m$
particles are all added at the beginning or added one by one with some
topplings being performed in between.

We now define random functions denoted by $ L_{i}(\cdot ) , \ R_{i}(\cdot ) \
\text{and} \ S_{i}(\cdot ) $, illustrated in Figure~\ref{fig:oneblock}. Let
$L_{i}(m)$ count the number of particles that exit the block from the left
when the configuration $\xi + m \delta _{iK}$ is stabilized in the $i$-th
block, let $R_{i}(m)$ count the number of particles that exit the block from
the right, $S_{i}(m)$ the number of particles sleeping in the block. Let
$T_{i}(m)=L_{i}(m)+R_{i}(m)+S_{i}(m)$ be the total number of particles in
$\xi _{i} + m \delta _{iK}$, where $\xi _{i}$ is the restriction of $\xi $ to
the $i$-th block.

\begin{figure}
\includegraphics{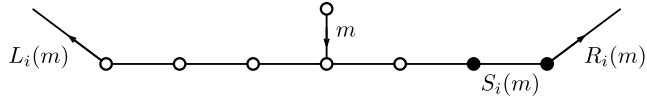}%
\caption{Illustration of the dynamics inside a single block.}\label{fig:oneblock}
\end{figure}

Remark the following about the change of these functions as $m$ increases to
$m'>m$. First, $T_{i}(m)$ equals $m$ plus the number of particles initially
found in the block, so it always increases by $m'-m$. The function $S_{i}$
assumes values on $\{0,\dots ,2K-1\}$, so it can change by at most $2K-1$.
The functions $L_{i}$ and $R_{i}$ are non-decreasing. It follows from these
observations that
%
\begin{equation}\label{eq:boundedincrease}
L_{i}(m') \leqslant L_{i}(m) + m'-m + 2K
\end{equation}
for all $0 \leqslant m < m'$.

Note that these functions are random because they depend on the field of
instructions $\mathcal{I}$, but they are independent across different blocks
$i$.

\subsubsection*{Mass balance equations and proof of active phase}

After globally stabilizing the two-layer graph, the odometer at the internal
buffer sites will be given by $\boldsymbol{m}^{*} = (m_{1}^{*},\dots
,m_{n}^{*})$. Writing $R_{0} \equiv 0$ and $L_{n+1} \equiv 0$, the vector
$\boldsymbol{m}^{*}$ satisfies the \emph{mass balance equations}
%
\begin{equation}\label{eq:buffer}
m_{i} = R_{i-1}(m_{i-1}) + L_{i+1}(m_{i+1}) \qquad \text{ for } i=1, \dots
,n.
\end{equation}
We say that a non-negative integer vector $\boldsymbol{m}$ is
\emph{realizable} if it satisfies the above system of equations. A fixed
deterministic $\boldsymbol{m}$ being realizable is a random event, because
the functions $R_{i}(\cdot )$ and $L_{i}(\cdot )$ are random. Note that the
random odometer $\boldsymbol{m}^{*}$ defined above is always realizable.

We now rewrite the above system as
%
\begin{equation}\label{eq:realizable}
L_{i}(m_{i}) = m_{i-1} - R_{i-2}(m_{i-2})
\end{equation}
for $i=1,\dots ,n+1$, where $R_{-1}\equiv 0$ and $m_{0}$ can be taken as
$L_{1}(m_{1})$.

For a non-negative vector $\boldsymbol{m}$, define
\begin{equation*}
S(\boldsymbol{m}) = \sum _{i=1}^{n} S_{i}(m_{i}) .
\end{equation*}
The total number of particles present in the blocks after global
stabilization of the two-layer graph is given by
\begin{equation*}
S^{*} = S(\boldsymbol{m}^{*}) .
\end{equation*}

\begin{lemma}[Single-block estimate]\label{lemma:singleblock}
If $\lambda $ is small enough depending on $K$, then for every $n\in
\mathbb{N}$, $i=1,\dots ,n$, and initial configuration $\xi $, we have
\begin{equation}
\nonumber
\sup _{\ell }\mathbb{E}\left [ \sum _{m} e^{S_{i}(m)} \cdot
\mathds{1}_{\{L_{i}(m)=\ell \}} \right ] \leqslant 3 .
\end{equation}
\end{lemma}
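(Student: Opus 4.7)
My plan is to exploit the Abelian property and particle conservation within a single block to control the joint behaviour of $(L_i(m), S_i(m))$ as $m$ varies, leveraging the smallness of $\lambda$ to beat the exponential weight.

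First, I would use the conservation identity $L_i(m) + R_i(m) + S_i(m) = m + \|\xi_i\|$ to rewrite the sum. On the event $\{L_i(m) = \ell\}$ this gives $S_i(m) = m - \ell + \|\xi_i\| - R_i(m)$, so
\[
\sum_m e^{S_i(m)} \mathds{1}_{\{L_i(m) = \ell\}} = e^{\|\xi_i\| - \ell} \sum_m e^{m - R_i(m)} \mathds{1}_{\{L_i(m) = \ell\}}.
\]
Since $L_i$ is non-decreasing in $m$, the index set $\{m : L_i(m) = \ell\}$ is a (possibly empty) interval of integers, and on this event $m - R_i(m) \in [\ell - \|\xi_i\|, \ell - \|\xi_i\| + 2K - 1]$.

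Second, I would analyze the block dynamics by adding particles to the source one at a time (using Abelianness) and examining the increments $(\Delta L_m, \Delta R_m, \Delta S_m)$, which satisfy $\Delta L_m + \Delta R_m + \Delta S_m = 1$. Each addition launches an excursion from the source that may wake sleeping particles and trigger a cascade; by the Abelian property these can be resolved sequentially. Since each instruction examined is a sleep with probability $p = \lambda/(1+\lambda)$ independently of the past, for small $\lambda$ excursions overwhelmingly terminate by exit, cascades are short, and the number of sleeping particles stays small.

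Third, I would aim to establish a joint bound of the form $\mathbb{P}(L_i(m) = \ell, S_i(m) = s) \leqslant F_m(\ell) \, q(\lambda, K)^s$, where $F_m(\ell)$ reflects the simple-walk exit geometry (summable over $m$ for fixed $\ell$) and $q(\lambda, K) \to 0$ as $\lambda \to 0$. Summing against $e^s$ yields a geometric series $\sum_s \big(e\, q(\lambda, K)\big)^s$ bounded as soon as $\lambda$ is small enough depending on $K$, and summing the residual factor over $m \in \{m : L_i(m) = \ell\}$ contributes an $O(1)$ term. The bound $3$ then follows by taking $\lambda$ sufficiently small depending on $K$.

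The main obstacle is the cascade analysis: when the added particle wakes a sleeping one, the resulting chain of activations may be intricate, and the increments $(\Delta L_m, \Delta R_m, \Delta S_m)$ depend non-trivially on the current block configuration. Making the bound on $\mathbb{P}(L_i(m) = \ell, S_i(m) = s)$ rigorous likely requires an explicit coupling with a simpler walk process, or a Markov-chain analysis of the block configuration (whose state space is of size at most $2^{2K-1}$), with quantitative perturbation bounds provided by the small-$\lambda$ assumption.
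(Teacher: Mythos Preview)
Your proposal is a plan with an acknowledged gap rather than a proof, and the gap is real. The factorized bound $\mathbb{P}(L_i(m)=\ell,\,S_i(m)=s)\leqslant F_m(\ell)\,q(\lambda,K)^s$ is the crux, and you have not shown how to obtain it. The difficulty is exactly the one you flag: having $s$ sleeping particles after stabilizing $\xi+m\delta_{iK}$ does not factor into $s$ independent ``sleep costs,'' because particles can sleep, be woken by cascades, and sleep again; the dependence on the current block configuration is highly nonlocal in $m$. A Markov-chain analysis on the $2^{2K-1}$-state space is conceivable but would be heavy, and you have not indicated what the transition structure or the spectral/perturbation estimate would be. As written, the argument does not close.

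The paper sidesteps this entirely with a different decomposition. It never bounds $\mathbb{P}(S_i(m)=s)$ pointwise. Instead it writes $\{m:L_i(m)=\ell\}=[\mathcal{T}_3,\mathcal{T}_4)$ and controls two things separately: (i) the \emph{length} $\mathcal{T}_4-\mathcal{T}_3$ has a geometric tail, since each new particle added at the source exits left with probability at least $\tfrac{1-\varepsilon}{2}>\tfrac{4}{9}$; and (ii) on the event $\mathcal{A}^c$ that $S_i(m)=0$ throughout $[\mathcal{T}_3,\mathcal{T}_4)$, the weight $e^{S_i(m)}$ is simply $1$, while on $\mathcal{A}$ one uses only the trivial bound $e^{S_i(m)}\leqslant e^{2K}$. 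The event $\mathcal{A}$ is shown to have probability $\leqslant 6\varepsilon$ by a ``sweeping'' argument: a few steps before reaching level $\ell$, one sends $M_1$ particles through to clear out all sleepers, and then the next $M_2$ particles all pass straight through (for $\lambda$ small) and overshoot $\ell$ before any new sleep occurs. Thus the sum is bounded by $\sum_j\big(e^{2K}\mathbb{P}(\mathcal{A})+\mathbb{P}(\mathcal{T}_4-\mathcal{T}_3>j)\big)$ plus a tail, and numerical choices of $M_0,\varepsilon$ make this $\leqslant 3$. The point is that the cascade analysis you identify as the obstacle is completely avoided: one only needs to show that \emph{with high probability the block is empty of sleepers on the relevant interval}, not to quantify how many sleepers there are when it is not.
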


\begin{proof}[Proof of Proposition~\ref{prop:fewstay}]
Recall that $K \geqslant 2 \rho ^{-1}$ is fixed, and choose $\lambda $
according to the previous lemma. Also recall that $r=(n+1)K$. Finally, recall
that~\eqref{eq:realizable} is satisfied for $i=1,\dots ,n$ when
$\boldsymbol{m}=\boldsymbol{m}^{*}$. Therefore,
\begin{align*}
\mathbb{P}(S^{*} \geqslant \rho r) &= \sum _{\boldsymbol{m}}
\mathbb{P}( S(\boldsymbol{m}) \geqslant \rho r, \ \boldsymbol{m}^{*}
= \boldsymbol{m})
\\
& \leqslant \sum _{\boldsymbol{m}} \mathbb{P}\big ( e^{S(
\boldsymbol{m})} \geqslant e^{\rho r},\ \boldsymbol{m}
\text{ is realizable} \big )
\\
& \leqslant e^{-\rho r} \sum _{\boldsymbol{m}} \mathbb{E}\Big [ e^{S(
\boldsymbol{m})} \mathds{1}_{\{\boldsymbol{m}\text{ is realizable}
\}} \Big ]
\\
& = e^{-\rho r} \sum _{m_{0}} \mathbb{E}\bigg [ \sum _{m_{1}} \dots
\sum _{m_{n}} \prod _{i=1}^{n} e^{S_{i}(m_{i})} \mathds{1}_{\{L_{i}(m_{i})
= m_{i-1} - R_{i-2}(m_{i-2})\}} \bigg ] .
\end{align*}
Now notice that the random functions $S_{n}(\cdot )$ and $L_{n}(\cdot )$ are
independent of the family of random functions $\mathcal{X}_{n} = (L_{j}(\cdot
),R_{j}(\cdot ),S_{j}(\cdot ))_{j =1, \dots n-1}$. Hence,
\begin{align*}
\mathbb{E}\bigg [ & \sum _{m_{1}} \dots \sum _{m_{n}} \prod _{i=1}^{n}
e^{S_{i}(m_{i})} \mathds{1}_{\{L_{i}(m_{i}) = m_{i-1} - R_{i-2}(m_{i-2})
\}} \bigg | \mathcal{X}_{n} \bigg ] =
\\
& = \sum _{m_{1}} \dots \sum _{m_{n-1}} \mathbb{E}\bigg [ \sum _{m_{n}}
e^{S_{n}(m_{n})} \mathds{1}_{\{L_{n}(m_{n}) = m_{n-1} - R_{n-2}(m_{n-2})
\}} \bigg | \mathcal{X}_{n} \bigg ] \times
\\
& \qquad \qquad \qquad \qquad \times \prod _{i=1}^{n-1} \Big [ e^{S_{i}(m_{i})}
\mathds{1}_{\{L_{i}(m_{i}) = m_{i-1} - R_{i-2}(m_{i-2})\}} \Big ] .
\end{align*}
The last conditional expectation is bounded from above by
\begin{equation*}
\sup _{\ell }\mathbb{E}\bigg [ \sum _{m_{n}} e^{S_{n}(m_{n})}
\mathds{1}_{\{L_{n}(m_{n}) = \ell \}} \bigg ] .
\end{equation*}
Hence, regarding the previous chain of inequalities, we can pull the last
term in the product out of the expectation. The same reasoning works for
$i=n-1,n-2,\dots $, giving
\begin{align*}
\mathbb{P}(S^{*} \geqslant \rho r) & \leqslant e^{-\rho r} \sum _{m_{0}}
\mathbb{E}\bigg [ \sum _{m_{1}} \dots \sum _{m_{n}} \prod _{i=1}^{n} e^{S_{i}(m_{i})}
\mathds{1}_{\{L_{i}(m_{i}) = m_{i-1} - R_{i-2}(m_{i-2})\}} \bigg ]
\\
& \leqslant e^{-\rho r} \sum _{m_{0}} \prod _{i=1}^{n} \sup _{\ell
}\mathbb{E}\bigg [ \sum _{m_{i}} e^{S_{i}(m_{i})} \mathds{1}_{\{L_{i}(m_{i})
= \ell \}} \bigg ]
\\
& \leqslant r e^{-\rho r} 3^{n} \leqslant r e^{-c r} .
\end{align*}
Since $\big \|\eta '\|_{D_{r}}$ is stochastically dominated by
$S^{*}$, this concludes the proof.
\end{proof}

\subsection{Single-block estimate}

We now prove Lemma~\ref{lemma:singleblock}. Take
$M_{0}\in \mathbb{N}$ so that
%
\begin{equation}\label{eq:defm0}
e^{2K} (\tfrac{5}{9})^{j} \leqslant (\tfrac{3}{5})^{j} \quad
\text{ for all } j \geqslant M_{0}
\end{equation}
and take $\varepsilon >0$ so that
%
\begin{equation}\label{eq:defeps}
6 \varepsilon e^{2K} + (\tfrac{5}{9})^{j} \leqslant \tfrac{6}{5} (
\tfrac{3}{5})^{j} \quad \text{ for all } j \leqslant M_{0} .
\end{equation}
Now take $M_{1} \in \mathbb{N}$ such that the probability of getting at least
one tail out of $M_{1}$ fair coin tosses is at least $1-\varepsilon $, and
take $M_{2} > 2M_{1}$ such that the probability of getting at least
$M_{1}+2K+2$ tails out of $M_{2}$ fair coin tosses is at least $1-\varepsilon
$. Finally, take $\lambda $ small enough so that, with probability at least
$1-\varepsilon $, $M_{2}$ independent random walks all reach distance $2K$
before sleeping.

We now show how to put all these pieces together.

Let $\ell \in \mathbb{N}_{0}$. For the process
\begin{equation*}
\left ( \big . L_{i}(m),R_{i}(m),S_{i}(m) \right )_{m = 0,1,2,\dots },
\end{equation*}
define the following stopping times:
\begin{align*}
\mathcal{T}_{1} &= \min \{ m: L_{i}(m) \geqslant \ell - M_{1} - 2K - 2
\} ,
\\
\mathcal{T}_{2} &= \mathcal{T}_{1} + M_{1} ,
\\
\mathcal{T}_{3} &= \min \{ m: L_{i}(m) \geqslant \ell \} ,
\\
\mathcal{T}_{4} &= \min \{ m: L_{i}(m) \geqslant \ell +1 \} .
\end{align*}
From the above definitions and~\eqref{eq:boundedincrease}, we have
\begin{equation*}
\mathcal{T}_{1} < \mathcal{T}_{2} < \mathcal{T}_{3} \leqslant
\mathcal{T}_{4} .
\end{equation*}
Consider the event
\begin{equation*}
\mathcal{A}= \big \{ S_{i}(m)>0 \text{ for some } m\in [\mathcal{T}_{3},
\mathcal{T}_{4}) \big \} .
\end{equation*}
Since $S_{i}(m)<2K$ for every $m$, by definition of $\mathcal{A}$ we have,
%
\begin{equation}
\label{eq:expmoment}
\sum _{j \geqslant 0} e^{S_{i}(\mathcal{T}_{3}+j)} \cdot \mathds{1}_{
\{\mathcal{T}_{4}-\mathcal{T}_{3}>j\}} \leqslant \sum _{j
\geqslant 0} e^{2K \mathds{1}_{\mathcal{A}}} \cdot \mathds{1}_{\{
\mathcal{T}_{4}-\mathcal{T}_{3}>j\}} .
\end{equation}
We will show that
%
\begin{equation}
\label{eq:propstop}
\mathbb{P}(\mathcal{T}_{4}-\mathcal{T}_{3} > j) \leqslant (
\tfrac{5}{9})^{j} \quad \text{ for all } j \geqslant 0
\end{equation}
and
%
\begin{equation}
\label{eq:propbad}
\mathbb{P}(\mathcal{A}) \leqslant 6 \varepsilon .
\end{equation}
Let us first see how these imply the lemma.
Using~\eqref{eq:expmoment},~\eqref{eq:propstop},~\eqref{eq:propbad},~\eqref{eq:defeps},
and~\eqref{eq:defm0},
\begin{align*}
\mathbb{E}\left [ \sum _{m} e^{S_{i}(m)} \cdot \mathds{1}_{\{L_{i}(m)=
\ell \}} \right ] & \leqslant \sum _{j \geqslant 0} \mathbb{E}\left [
e^{2K \mathds{1}_{\mathcal{A}}} \cdot \mathds{1}_{\{\mathcal{T}_{4}-
\mathcal{T}_{3}>j\}} \right ]
\\
& \leqslant \sum _{j = 0}^{M_{0}} \left [ \mathbb{P}(\mathcal{T}_{4}-
\mathcal{T}_{3}>j) + e^{2K} \mathbb{P}(\mathcal{A}) \right ] +
\\
& \qquad \qquad \qquad + \sum _{j > M_{0}} \left [ e^{2K} \mathbb{P}(
\mathcal{T}_{4}-\mathcal{T}_{3}>j) \right ]
\\
& \leqslant \sum _{j = 0}^{M_{0}} \left [ 6 \varepsilon e^{2K} + (
\tfrac{5}{9})^{j} \right ] + \sum _{j > M_{0}} \left [ e^{2K} (
\tfrac{5}{9})^{j} \right ]
\\
& \leqslant \sum _{j \geqslant 0} \tfrac{6}{5} (\tfrac{3}{5})^{j} = 3 .
\end{align*}

So let us prove~\eqref{eq:propstop}. Given that $L_{i}(m)=\ell $, and given
all the information revealed when stabilizing $\xi + m\delta _{iK}$ in
the $i$-th block, we claim that the conditional probability of
$L_{i}(m+1)>\ell $ is at least $\frac{1-\varepsilon }{2}$, which is greater
than $\frac{4}{9}$. Estimate~\eqref{eq:propstop} then follows by successive
conditioning. Now to see why the claim holds true, consider the following
toppling procedure for $\xi + (m+1)\delta _{iK}$. We keep the
$(m+1)$-st particle in the buffer and let $\xi + m\delta _{iK}$ stabilize.
We then add said particle to the source, and move it until it either finds
a sleep instruction or exits the block. By the choice of $\lambda $, the
probability of exiting before finding a sleep instruction has probability
at least $1-\varepsilon $, and by symmetry the probability of leaving the
block from the left will be half of it. After that, we stabilize the remaining
active particles, if any.

\medskip
To finish the proof of Lemma~\ref{lemma:singleblock}, it remains to show~\eqref{eq:propbad}.

\medskip
We consider only $\ell \geqslant M_{1} + 2K + 2$. The case of smaller
$\ell $ uses similar but simpler arguments, and will be omitted. We will
indicate with a ``(*)'' some events which occur with conditional probability
at least $1-\varepsilon $, as a consequence of our choices of
$M_{1}$, $M_{2}$ and $\lambda $. This chain of events all together imply
$\mathcal{A}^{c}$.

Denote by $(\xi + \mathcal{T}_{1} \delta _{iK})'$ the configuration obtained
after stabilizing the configuration
$\xi + \mathcal{T}_{1} \delta _{iK}$ in the $i$-th block. We now stabilize
the configuration
$(\xi + \mathcal{T}_{1} \delta _{iK})' + M_{1} \delta _{iK}$ obtained
by adding $M_{1}$ new active particles at the source $iK$. Let each of
these $M_{1}$ new active particles move until it exits the block or finds
a sleep instruction. Suppose none of them finds a sleep instruction before
exiting (*). Suppose at least one of them exits the block from the left
(*) and at least one from the right (*).

In this case, all the sleeping particles present in
$(\xi + \mathcal{T}_{1} \delta _{iK})'$ have been activated. One by one,
let each particle move until it exits the block or finds a sleep instruction.
Suppose none of them finds a sleep instruction (*). When all the above
events occur, $S_{i}(\mathcal{T}_{2})=0$.

So we resume from $\mathcal{T}_{2}$ and assuming
$S_{i}(\mathcal{T}_{2})=0$. Suppose the next $M_{2}$ particles to be added
to the source $iK$ exit the $i$-th block before finding a sleep instruction
(*). Given this event, the conditional probability that at least
$M_{1}+2K+2$ of them exit from the left is also at least
$1-\varepsilon $.

Suppose the latter event also occurs. Then $S_{i}(m)=0$ for
$m=\mathcal{T}_{2},\mathcal{T}_{2}+1,\dots ,\mathcal{T}_{2}+M_{2}$,
and moreover
$L_{i}(\mathcal{T}_{2}+M_{2}) \geqslant L_{i}(\mathcal{T}_{2})+M_{1}+2K+2$.
It remains to check that these two events imply $\mathcal{A}^{c}$. The
last inequality implies that
$L_{i}(\mathcal{T}_{2}+M_{2}) \geqslant \ell +1$, so
$\mathcal{T}_{4} \leqslant \mathcal{T}_{2} + M_{2}$. On the other hand,
as noted after these stopping times were defined, it follows from~\eqref{eq:boundedincrease}
that $\mathcal{T}_{2}<\mathcal{T}_{3}$. Hence, when the above events
occur we have $S_{i}(m)=0$ for
$\mathcal{T}_{3} \leqslant m \leqslant \mathcal{T}_{4}$ and event
$\mathcal{A}$ cannot occur, so its probability is at most
$6 \varepsilon $.

This concludes the proof of~\eqref{eq:propbad} and hence that of Proposition~\ref{prop:fewstay}.

\section{Fast and slow phases for finite systems}\label{sec:cycle}

Consider the ARW model on the ring $\mathbb{Z}_{n} = \mathbb{Z}/ n\mathbb{Z}$
instead of $\mathbb{Z}^{d}$. When a particle jumps, it chooses one of the two
nearest neighbors according to a fair coin. The initial configuration $\eta
_{0}$ is taken as i.i.d.\ Poisson with parameter $\zeta $ and all particles
starting active. Let $\mathcal{T}= \sum _{x} m_{\mathbb{Z}_{n},\eta _{0}}(x)$
denote the total number of topplings performed during stabilization of $\eta
_{0}$.

\begin{theorem}\label{thm:fes}
Let $0<\zeta <1$. If $\lambda $ is small enough, there exist $\delta >0$ and
$\delta '>0$ such that, for all large $n$,
%
\begin{equation}\label{eq:slow}
\mathbb{P}(\mathcal{T}\geqslant e^{\delta ' n}) \geqslant 1-e^{- \delta n} .
\end{equation}
If $\lambda $ is large enough, there exist $\delta >0$ and $\kappa <\infty $
such that, for all large $n$,
%
\begin{equation}\label{eq:fast}
\mathbb{P}(\mathcal{T}\leqslant \kappa n \log ^{2} n) \geqslant 1-n^{- \delta
} .
\end{equation}
\end{theorem}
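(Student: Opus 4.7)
The two bounds~\eqref{eq:slow} and~\eqref{eq:fast} correspond to the active and subcritical regimes of the ARW on $\mathbb{Z}$, and I would prove them by separate arguments that lift, to the ring, the half-line techniques developed in Sections~\ref{sec:traps} and~\ref{sec:netflow}. Both use the Abelian property to select a convenient toppling procedure whose odometer can be bounded probabilistically and then summed to yield $\mathcal{T} = \sum_{x} m_{\mathbb{Z}_n,\eta_0}(x)$.

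For the slow phase estimate~\eqref{eq:slow}, the plan is to transport the single-block analysis of Section~\ref{sec:netflow} to a cyclic setting. Partition $\mathbb{Z}_{n}$ into $n/K$ consecutive blocks of length $K$, with $K$ and $\lambda$ chosen so that Proposition~\ref{prop:fewstay} applies with some $\rho<\zeta$. The mass balance equations~\eqref{eq:buffer} now form a cyclic system whose minimal non-negative solution $\boldsymbol{m}^{*}$ records the odometer at the buffer sites. The key observation is that in the active regime each block exports far more particles than it imports, and closing the flow around a closed cycle forces both input and output at every buffer to be very large. To turn this into an exponential lower bound I would iterate the cyclic balance equations and invoke the uniform moment bound of Lemma~\ref{lemma:singleblock}: because the single-block functions $L_{i},R_{i},S_{i}$ are independent across $i$ and each contributes a factor at most $3$, the product over $n/K$ blocks yields at most $3^{n/K}$ in the analogue of the computation proving Proposition~\ref{prop:fewstay}, and balancing this against the $e^{-\delta' n}$ cost of confining $\zeta n$ particles forces $\sum_{i} m_{i}^{*}\geqslant e^{\delta' n}$ off an event of probability $e^{-\delta n}$. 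Here the \emph{urn process} enters as a way to track the buffer contents around the cycle as the balance equations are solved, linearizing the cyclic dependence.

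For the fast phase estimate~\eqref{eq:fast}, the plan is to adapt the trap procedure of Section~\ref{sec:traps} to $\mathbb{Z}_{n}$. For $\lambda$ large, traps are abundant: by Remark~\ref{remark:barriersgeometric} consecutive traps lie at i.i.d.\ geometric distances of parameter $\frac{\lambda}{1+\lambda} > \zeta$, so a single circuit of the ring produces strictly more traps than there are particles whp. Running the trap procedure particle by particle around $\mathbb{Z}_{n}$ therefore succeeds whp without exhausting the supply of traps, and $\mathcal{T}$ is controlled by the sum of the $\zeta n$ exploration path lengths. Each individual exploration is stochastically dominated by a geometric random variable, so a union bound caps the longest at $O(\log n)$ whp, giving $\mathcal{T}=O(n\log n)$ from the path lengths alone; the extra logarithmic factor in $n\log^{2} n$ absorbs possible revisits during a single exploration. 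The urn process reappears to model the sequential removal of traps as particles are settled, justifying that successive explorations can be treated as nearly independent even though they share the same underlying field $\mathcal{I}$.

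The main obstacle in both parts is the cyclic dependence introduced by $\mathbb{Z}_{n}$, absent from the half-line analyses of Sections~\ref{sec:traps} and~\ref{sec:netflow}. In the slow phase this shows up as correlations in the cyclic mass balance; in the fast phase it shows up as dependence between successive exploration paths that wind around the ring. The urn process is the common device that linearizes these cyclic dependencies and lets the local single-block or single-particle estimates combine cleanly around the loop.
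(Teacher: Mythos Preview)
Your proposal diverges from the paper's argument in both halves, and each half has a real gap.

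\textbf{Slow phase.} Your cyclic mass-balance idea does not work as written. On a closed cycle, total mass is conserved, so the statement that ``each block exports far more particles than it imports'' is impossible; whatever leaves one block enters another. The single-block estimate of Lemma~\ref{lemma:singleblock} bounds how many particles can stay \emph{inside} a block, not how large the flow must be, and a small stable flow (even zero) is perfectly consistent with the cyclic balance equations once the density of sleeping particles is below $\rho$. You have not explained any mechanism that forces $\sum_i m_i^*$ to be exponentially large. The paper instead \emph{breaks the cycle}: it fixes a sink that is never toppled, so each run of the procedure is exactly the interval dynamics $D_{2n}$ to which Proposition~\ref{prop:fewstay} applies directly. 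An alternating source--sink scheme guarantees that after each run most particles pile up at the sink and every site has been visited, so the next run again starts with many alive particles. One toppling per run and an exponentially small stopping probability per run give~\eqref{eq:slow}. No urn process is used here.

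\textbf{Fast phase.} The claim that ``each individual exploration is stochastically dominated by a geometric random variable'' is false. In the trap procedure of \S\ref{sec:traps}, what is geometric is the \emph{distance} $a_k-a_{k-1}$ between successive traps; the exploration \emph{length} is the number of steps a simple random walk takes to hit a barrier, which is quadratic in the distance and has heavy tails. With particles initially spread over all of $\mathbb{Z}_n$, a single exploration can take order $n^2$ steps, so summing $\zeta n$ of them gives no useful bound. The paper avoids this by first using \emph{acceptable} topplings to push every particle to the nearest of $n/\log n$ sources spaced $O(\log n)$ apart (Stage~1), and then stabilizing each arc separately (Stage~2). Within an arc of radius $r=O(\log n)$, each particle walks $O(r^2)=O(\log^2 n)$ steps, giving the $n\log^2 n$ bound. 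The urn process (Lemma~\ref{lemma:urn}) is used precisely in Stage~2, to show that the two-sided trap procedure of Proposition~\ref{prop:arwidla} succeeds inside each arc: barriers are now placed at both ends $\pm r$, and the urn controls the reinforcement between left and right explorations so that neither side collapses before all $\leqslant 2\zeta' r$ particles are trapped.

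In short: for the slow phase you are missing the device that reduces the ring to an interval (the alternating sink), and for the fast phase you are missing the localization step (gathering particles into logarithmic arcs) and you have misidentified what the geometric variable controls. The urn process belongs only to the fast phase, where it handles the two-sided trap procedure inside a single arc, not the cyclic structure of the ring.
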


\begin{problem}
Show that~\eqref{eq:slow} or~\eqref{eq:fast} must hold for every $(\zeta
,\lambda )$ outside the critical curve of Figure~\ref{fig:predictions}.
\end{problem}

\begin{problem}
Improve the estimate~\eqref{eq:fast} to one without the
$\log ^{2} n$ term.
\end{problem}

\begin{problem}
Show that the family $m_{\mathbb{Z}_{n},\eta _{0}}({\boldsymbol{0}})$ is
tight on the fast phase.
\end{problem}

\begin{problem}
Show similar behavior in case of biased jumps.
\end{problem}

\begin{problem}
Study fast to slow transition in higher dimensions.
\end{problem}

\subsection{Slow phase}\label{sub:slow}

We first prove~\eqref{eq:slow}. For simplicity we consider the model on
$\mathbb{Z}_{2n}$ (\textit{i.e.}\ we work with an even number of sites). The
proof is based on a cyclic toppling procedure described as follows. At all
times, particles will be declared to be \emph{alive} or \emph{steady}.
Initially, declare all particles to be \emph{alive}.

\begin{figure}
\includegraphics{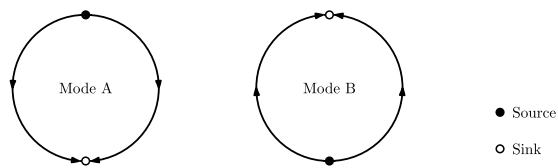}%
\caption{Two alternating modes of the toppling procedure.} \label{fig:modes}
\end{figure}

The toppling procedure consists in alternating between two modes, as
illustrated in Figure~\ref{fig:modes}. In Mode~A, site $x=0$ is called
\emph{source} and $x=n$ is called \emph{sink}. In Mode~B the roles are
reversed. During each mode, we topple all sites containing active alive
particles, except the sink. We keep toppling until there are no more such
sites. At the beginning of each mode, we declare all active particles to be
alive, and all sleeping particles to be steady. Note that particles that are
declared steady at the beginning of a mode will not move during that mode,
but they may be activated in the meantime, in which case they will be alive
in the next mode. Note also that in this process we only use legal topplings.

Fix some $\rho < \frac{\zeta }{5}$, so the expected number of particles in
$\mathbb{Z}_{2n}$ equals $10 \rho n$. If the initial configuration has fewer
than $9 \rho n$ particles, we stop the procedure (before it even starts).
Otherwise we run Mode~A.

At the end of Mode~A, we stop the procedure if~(i) more than $2\rho n$ alive
particles are sleeping, or~(ii) some site of $\mathbb{Z}_{2n}$ is not visited
by any of the alive particles during this mode. Otherwise we switch to
Mode~B. At the end of Mode~B, we stop the procedure if at least one of the
above conditions is met, otherwise we switch to Mode~A. We continue this
indefinitely until a condition to stop is met. The first run of Mode~A is
rather special: it does not start with many particles at the source, but it
starts with all particles alive. For this first run we do not check
condition~(ii).

We now argue that, at the beginning of each run of Mode~A or Mode~B, at least
$7 \rho n$ particles will be active and alive, of which at least $5 \rho n$
will be at the source. Indeed, at the end of the previous mode, no more than
$2 \rho n$ of the alive particles were found sleeping, hence at least $5 \rho
n$ of them finished at the sink. Moreover, all sites of $\mathbb{Z}_{2n}$
have been visited, hence all the previously steady particles were activated,
implying that at least $7 \rho n$ particles were active when the previous
mode ended.
\smallskip

We are ready to make the probabilistic estimates leading to~\eqref{eq:slow}.
\smallskip

A crude bound on the number of topplings is the following: at each mode we
perform at least one toppling. Hence, to get~\eqref{eq:slow} we only need to
prove that the probability (conditioned on the past) of stopping the
procedure at the end of each mode is less than $e^{-cn}$ for some $c$.

Moreover, it is enough to prove this estimate for Mode~B. Indeed, Mode~A
is analogous (and its first run is simpler), and the probability of starting
with fewer than $9 \rho n$ particles decays exponentially fast by Chernoff
bound for sums of i.i.d.\ Poisson variables.

Condition~(i) is the most laborious part. But all the work has been done in
\S \ref{sec:netflow}, and at this point it suffices to choose $\lambda $
according to Proposition~\ref{prop:fewstay}. Indeed, since the sink is never
toppled, this process is the same as the process on $D_{2n} = \{1,\dots
,2n-1\}$ and Proposition~\ref{prop:fewstay} says that condition~(i) is met
with exponentially small probability.

For condition~(ii), label $5 \rho n$ alive particles initially found at $x=0$
and see whether each one of them ends up sleeping, reaches $x=n$ in the
clockwise sense, or reaches $x=n$ in the counter-clockwise sense. For the
particles which end up sleeping, use extra randomness to complete their
tentative path which is stopped only upon reaching $x=n$. By symmetry, each
labeled particle tentatively reaches $x=n$ from either direction with
probability $\frac{1}{2}$, independently of the other labeled particles. Now
if condition~(i) is not met, it implies that at least $3 \rho n$ of these $5
\rho n$ particles perform a random walk which does make it to $x=n$, so they
will perform all their tentative paths. On the other hand, the probability
that $3 \rho n$ or more out of the $5 \rho n$ independent tentative paths
reach $n$ through the same direction is exponentially small by Chernoff bound
for sums of i.i.d.\ Bernoulli variables. This gives the estimate for the
probability of stopping the procedure at each mode, which concludes the proof
of~\eqref{eq:slow}.

\subsection{Fast phase}\label{sub:fast}

We now prove~\eqref{eq:fast}. Let
%
\begin{equation}\label{eq:choicelambda}
\frac{\lambda }{1+\lambda } > \zeta ' > \zeta .
\end{equation}
Letter $K$ denotes a constant which will be chosen large during the proof.
Letters $C$ and $c$ denote constants whose precise values are not crucial for
the arguments and may change from line to line. They depend on $\zeta $,
$\zeta '$ and $\lambda $ but neither on $K$ nor on $n$. Constants $\delta $,
$\delta '$ and $\kappa $ also depend on the choice of $K$.

Let $r = \lfloor K \log n \rfloor $ and split the ring $\mathbb{Z}_{n}$ into
\emph{arcs} containing $2r+1$ sites (we can leave a spare site between some
pairs of consecutive arcs to keep $n$ sites in total). The middle point of
each arc will be called the \emph{source}.

We consider a toppling procedure split into two stages. In Stage~1, we
force each particle to move by means of acceptable topplings, until it
reaches one of the sources. If a source gets more than $2 \zeta ' r$ particles,
we declare the procedure to have failed and stop. In Stage~2, we let the
configuration stabilize normally, by using only legal topplings. If a particle
leaves the corresponding arc during this stage, we declare the procedure
to have failed. Otherwise it is successful.

We will show that the probability of failure is bounded by
$n^{-\delta }$ for large $n$. Moreover, with exponentially high probability,
on the event that the procedure succeeds, the total odometer will be bounded
by $Cnr^{2}$, which proves~\eqref{eq:fast}.

\subsubsection*{Estimate for the total odometer}

We start by stating a crude bound on exit times for random walks. Consider
a collection of $n$ or fewer independent lazy walks, each one stopped upon
reaching distance $3r$. The number of steps each walk makes is stochastically
dominated by $C r^{2} X$, where $X$ is a geometric random variable with
mean $2$. Indeed, by taking $C$ large, the probability of exiting
$[-3r,3r]$ within $C r^{2}$ steps is larger than $\frac{1}{2}$ regardless
of the starting point, so if it fails to exit in $Cr^{2}$ steps we can
start over. Hence, by enlarging $C$ and using Chernoff bound for sums of
i.i.d.\ geometric variables, the probability that the overall total number
of jumps exceeds $C n r^{2}$ is less than $e^{-cn}$.

We can assume that $\|\eta _{0}\| < n$, otherwise the procedure will necessarily
fail at Stage~1. Now during this stage, each particle performs a lazy random
walk stopped upon reaching a source. The laziness comes from the fact that
sleep instructions keep the particle at the same site. So by the previous
paragraph, the total odometer produced during this stage is bounded by
$C n r^{2}$ with probability at least $1-e^{-cn}$.

During Stage~2, each particle performs a random path which is stopped earlier
than reaching distance $r$, unless the procedure fails. In order to compare
the path performed by the particles with a collection of independent lazy
walks, we extend the paths by using extra randomness, so that the resulting
collection of paths is distributed as a collection of independent lazy
symmetric walks. So the same argument applies to the total odometer obtained
at this stage, concluding the estimates on the total odometer.

\subsubsection*{Estimates for Stage~1}

Let $x$ be a given source. Particles which reach $x$ during Stage~1 must
have started at sites between the sources immediately to the left and right
of $x$. Conditioning on the initial configuration, for each site $y$, a
given particle starting at $y$ will, independently of other particles,
reach $x$ before another source with a probability $p_{y}$ which is proportional
to the distance between its initial location and the other source.

Now the i.i.d.\ Poisson assumption about the initial configuration
$\eta _{0}$ simplifies the argument, in that the number of particles reaching
$x$ will be a Poisson variable with parameter
$\sum _{y} \zeta p_{y}$, independently over $y$. Since the probabilities
$p_{y}$ increase linearly from $0$ to $1$ and the nearest sources are at
distance less than $2r+2$, this parameter is less than
$2 \zeta r + 2$. So by Chernoff bound for a Poisson variable with large
parameter, the probability that $x$ gets more than $2 \zeta 'r$ particles
during Stage~1 is bounded by $e^{-cr}$ for large $r$. Summing over all
sources, the probability that some source gets more than
$2 \zeta 'r$ particles is bounded by $\frac{n}{2r+1} e^{-cr}$ for large
$n$. By choosing $K$ large enough, this is less than $n^{-\delta }$ for
all large $n$.

\subsubsection*{Estimates for Stage~2}

The central statement about this stage is the following.

\begin{proposition}\label{prop:arwidla}
For $\zeta '<\frac{\lambda }{1+\lambda }$, consider the symmetric ARW on
$\mathbb{Z}$ starting with $m \leqslant 2 \zeta ' r$ particles at
${\boldsymbol{0}}$ and no other particles elsewhere. Then the probability
that some particle ever leaves the interval $[-r,r]$ is less than $e^{-cr}$
for all large $r$.
\end{proposition}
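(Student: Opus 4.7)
The plan is to show, with probability at least $1-e^{-cr}$, that the starting configuration admits an acceptable stabilization whose topplings are all confined to $[-r,r]$; by~\eqref{eq:lapupper} this forbids every legal stabilization from sending a particle out of the interval, which is exactly the claim. By symmetry between the two half-lines and a union bound, it then suffices to exhibit trap positions $0<a_1<a_2<\cdots<a_{\lceil m/2\rceil}\leqslant r$ on the right at which $\lceil m/2\rceil$ of the $m$ particles can be trapped one at a time by acceptable topplings.

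I would construct the $a_k$ by adapting the trap procedure of \S~\ref{sec:traps}. Inductively, given trapped sleepers at $a_1<\cdots<a_{k-1}$, I push the next right-bound particle out of the origin with acceptable topplings and run an exploration of instructions to the right of $a_{k-1}$, defining $a_k>a_{k-1}$ as the leftmost site past $a_{k-1}$ where the ``sleep followed by jump-left'' pattern from \S~\ref{sec:traps} is revealed. Exactly as there, the actual particle is caught at $a_k$ by the sleep instruction before the explorer's path can double back to $a_{k-1}$, and all sites corrupted in the $k$-th step lie in $(a_{k-1},a_k]$. The computation behind Remark~\ref{remark:barriersgeometric} then applies verbatim, giving that $\{a_k-a_{k-1}\}_{k\geqslant 1}$ are i.i.d.\ geometric with parameter $\frac{\lambda}{1+\lambda}$.

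Since $\zeta'<\frac{\lambda}{1+\lambda}$, each interdistance has mean $\frac{1+\lambda}{\lambda}<\frac{1}{\zeta'}$, so $\mathbb{E}[a_{\lceil m/2\rceil}]\leqslant \zeta' r\cdot\frac{1+\lambda}{\lambda}$ is bounded by a constant strictly less than $r$. A standard Chernoff / Cram\'er estimate for sums of i.i.d.\ geometric variables gives $\mathbb{P}(a_{\lceil m/2\rceil}>r)\leqslant e^{-c'r}$ for some $c'=c'(\zeta',\lambda)>0$; a union bound with the symmetric estimate on the left finishes the proof.

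The main obstacle is the adaptation of \S~\ref{sec:traps}'s construction itself. There the $k$-th particle lives at $x_k>a_{k-1}$ and the explorer walks \emph{leftward} from $x_k$ down to $a_{k-1}$ into fresh instructions; here every particle starts at the origin, which lies to the \emph{left} of every existing trap, so the explorer must traverse the already-corrupted stretch $[1,a_{k-1}]$ before reaching virgin instructions beyond $a_{k-1}$. The delicate point is to show that one can, by acceptable topplings, push the particle past the previously trapped sleepers at $a_1,\ldots,a_{k-1}$ and initiate a fresh exploration beyond $a_{k-1}$ without breaking the independence of the new exploration from the earlier ones; granted that this bookkeeping can be arranged, the renewal structure and the Chernoff step are exactly as in \S~\ref{sec:traps}.
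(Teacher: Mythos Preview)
Your reduction ``by symmetry between the two half-lines and a union bound'' to trapping exactly $\lceil m/2\rceil$ particles on the right is the real gap. Acceptable topplings let you decide \emph{when} to topple a site (and in particular to wake sleepers), but the direction of each jump is dictated by the instruction stack $\mathcal{I}$; you cannot route a prescribed half of the particles rightward. What actually happens is that each exploration, started from $0$, reaches one of two barriers at random, and the side that has been hit more often has its barrier closer to $0$ and is therefore \emph{more} likely to be hit next --- a self-reinforcing bias. The paper's proof confronts this head-on: it places barriers at $a_0=-r$ and $b_0=+r$ that move \emph{inward}, lets each exploration choose a side by whichever barrier it hits first, and then invokes an urn-process estimate (Lemma~\ref{lemma:urn}) to show that with probability $1-e^{-cr}$ neither family of barriers reaches $0$ before all $m\leqslant 2\zeta' r$ particles are trapped. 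The paper explicitly remarks that without this urn step the Chernoff bound would only work under the stronger hypothesis $2\zeta'<\tfrac{\lambda}{1+\lambda}$, which is precisely what your half-splitting is tacitly assuming.

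The obstacle you single out --- pushing the $k$-th particle from $0$ across the earlier traps $a_1<\cdots<a_{k-1}$ without waking those sleepers or spoiling independence --- is in fact a symptom of orienting the traps the wrong way. With the paper's inward-moving barriers every exploration begins at $0$ strictly between the two innermost barriers, so it never meets a previously trapped sleeper, and the \S\ref{sec:traps} bookkeeping (corrupted sites confined between consecutive barriers, i.i.d.\ geometric interdistances as in Remark~\ref{remark:barriersgeometric}) transfers without modification.
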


Note that, unless the procedure failed during Stage~1, there are fewer
than $2 \zeta 'r$ particles at each source. Therefore, applying the proposition
and summing over all arcs we can bound the probability of failure during
Stage~2 by $\frac{n}{2r+1} e^{-cr}$ for large $n$, with a possibly different
constant $c$. By further enlarging $K$, again we can find $\delta $ such
that this bound stays below $n^{-\delta }$ for all large $n$.
\smallskip

It remains to prove the proposition.
\smallskip

First, recall the toppling procedure presented in \S \ref{sec:traps}. Translating
that procedure by $-r$, we initially have a barrier at $a_{0}=-r$, and
a new barrier $a_{j}>a_{j-1}$ is added when an exploration reaches the
previous barrier $a_{j-1}$. A modification that we make in order to prove
Proposition~\ref{prop:arwidla} is to confine the explorations from both
directions. Namely, there is a second barrier initially at
$b_{0}=+r$, and a new barrier $b_{i}<b_{i-1}$ is added each time the exploration
reaches the previous barrier $b_{i-1}$. We can carry this exploration
$m$ times, as long as the condition $a_{j}<0<b_{i}$ is preserved. If some
of the $m$ explorations fails to find a suitable trap, we declare the procedure
to have failed and stop, otherwise it is successful.
\smallskip

We now show that this procedure is successful with high probability.
\smallskip

Recalling Remark~\ref{remark:barriersgeometric}, the distances
$a_{j}-a_{j-1}$ are i.i.d.\ until the moment $j_{*}$ when the procedure
fails due to the explorer hitting $a_{j_{*}-1}$ but being unable to find
a suitable trap. Moreover, even the failure to find the trap can be coupled
to the event that $a_{j_{*}-1}+Y \geqslant x_{0}$ for a geometric random
variable $Y$, where $x_{0}=0$ is the starting position of the explorer.
For convenience, after the procedure is finished we continue sampling independent
geometric variables just so that we get two independent i.i.d.\ sequences
$(a_{j}-a_{j-1})_{j\in \mathbb{N}}$ and
$(b_{i-1}-b_{i})_{i\in \mathbb{N}}$.

Define $J(0)=I(0)=0$. When the $(k+1)$-th explorer starts, the barriers
are at $a_{J(k)}$ and $b_{I(k)}$. If the explorer hits $a_{J(k)}$ before
$b_{I(k)}$, we set $J(k+1)=J(k)+1$ and $I(k+1)=I(k)$, otherwise we set
$I(i+1)=I(k)+1$ and $J(k+1)=J(k)$. This way $J(k)+I(k)=k$ throughout the
whole procedure. The goal then is to show that
%
\begin{equation}\label{eq:urnestimate}
\mathbb{P}(k_{*} \leqslant m ) \leqslant e^{-c r},
\end{equation}
where $k_{*} = \min \{ k : a_{J(k)} \geqslant 0 \text{ or } b_{I(k)}
\leqslant 0 \}$.

\medskip
If we were assuming $\zeta <\frac{1}{2}$, we could choose $\frac{\lambda
}{1+\lambda }>2\zeta '$ in~\eqref{eq:choicelambda}. In this case, the
conclusion of Proposition~\ref{prop:arwidla} would follow immediately from
the analysis made in \S \ref{sec:traps}, with the use of Chernoff bound for
sums of i.i.d.\ geometric variables. However, we want to show a stronger
result that extends to arbitrary $\zeta <1$, and this requires a more
delicate argument. We would like to argue that about half of the particles
will go to each direction. But there is an inconvenient reinforcement here:
if many explorations have chosen left, it increases the chances that the next
explorations will make the same choice. Fortunately, this effect is not
strong enough to produce a macroscopic unbalance between $I$ and $J$, as
shown below.

\medskip
The law of $(-a_{J(k)},b_{I(k)})_{k=0,\dots ,k_{*}}$ can be described as
follows. Consider an urn containing $X_{0} = r$ purple and
$Z_{0} = r$ yellow balls. At each turn $k$, a ball is sampled uniformly
from the urn. The sampled ball is returned and a random number
$Y_{k}$ of balls with opposite color are destroyed, where $Y_{k}$ has geometric
distribution with parameter
$\zeta '' = \frac{\lambda }{1+\lambda } > \zeta '$. That is,
\begin{equation*}
(X_{k},Z_{k})-(X_{k-1},Z_{k-1}) =
\begin{cases}
(0,-Y_{k}) ,& \text{with probability } \frac{X_{k}}{X_{k}+Z_{k}},
\\
(-Y_{k},0) ,& \text{with probability } \frac{Z_{k}}{X_{k}+Z_{k}}.
\end{cases}
\end{equation*}
Finally, the urn process is stopped when one of the colors disappears,
that is at step
$k_{*} = \min \{ k : X_{k} \leqslant 0 \text{ or } Z_{k} \leqslant 0\}$.

\begin{lemma}\label{lemma:urn}
For $0<\zeta '<\zeta ''<1$, we have $\mathbb{P}(k_{*} \leqslant 2 \zeta ' r )
\leqslant e^{-c r}$.
\end{lemma}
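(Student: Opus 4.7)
The plan is to leverage the color-exchange symmetry to reduce to a one-sided tail bound, then use an MGF recursion exploiting that each destruction is sampled fresh independently of the sampling, and finally to bootstrap on the balance of the urn. Since the dynamics is invariant under swapping purple and yellow and the initial state is symmetric, one has $(X_k,Z_k)\stackrel{d}{=}(Z_k,X_k)$ for every $k$. Extend the process past $k_*$ and let $A = r-X_k$ and $B = r-Z_k$ denote the total amounts of each color destroyed by step $k = \lfloor 2\zeta' r\rfloor$. Then $\{k_*\leq 2\zeta'r\}\subseteq\{A\geq r\}\cup\{B\geq r\}$ and by symmetry the two probabilities are equal, so it suffices to prove $\mathbb{P}(A\geq r)\leq e^{-cr}$.

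Write $A = \sum_{j\leq k} Y_j I_j$ with $I_j = \mathds{1}_{\xi_j = Y}$, and recall that at each step the sampling $\xi_j$ is decided from the past together with an independent uniform coin, while $Y_j\sim \mathrm{Geom}(\zeta'')$ is drawn fresh right after, so $Y_j$ is independent of the history $\mathcal{G}_{j-1}$ and of $I_j$. Letting $M(t) = \mathbb{E}[e^{tY_1}]$ and $p_{j-1} = Z_{j-1}/(X_{j-1}+Z_{j-1})$, this independence gives the recursion
\begin{equation*}
\mathbb{E}[e^{tA_j}\mid \mathcal{G}_{j-1}] \;=\; e^{tA_{j-1}}\bigl(1 + p_{j-1}(M(t)-1)\bigr) \;\leq\; e^{tA_{j-1}}\exp\bigl(p_{j-1}(M(t)-1)\bigr).
\end{equation*}
Defining the balance event $E_\alpha = \{\min(X_j,Z_j)\geq \alpha r\text{ for all }j\leq k\}$, on $E_\alpha$ one has $p_{j-1}\leq 1/(1+\alpha)$ and iterating yields $\mathbb{E}[e^{tA_k}\mathds{1}_{E_\alpha}]\leq \exp\bigl(k(M(t)-1)/(1+\alpha)\bigr)$. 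Choosing $\alpha$ slightly above $\max(0,\,2\zeta'/\zeta''-1)$, which is strictly less than $1$ because $\zeta'<\zeta''$, and then optimizing $t\downarrow 0$, Chernoff's inequality delivers $\mathbb{P}(A\geq r,\,E_\alpha)\leq e^{-cr}$ with $c>0$ depending only on $\zeta'$, $\zeta''$ and $\alpha$.

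The remaining task, and what I expect to be the hardest part, is controlling the complementary event $E_\alpha^c$. By monotonicity of $A$ and $B$ in $j$ and color symmetry, $\mathbb{P}(E_\alpha^c) = \mathbb{P}(\max(A,B)>(1-\alpha)r)\leq 2\mathbb{P}(A>(1-\alpha)r)$, which reduces bounding $\mathbb{P}(A\geq r)$ to bounding the same quantity at a strictly smaller threshold; the self-reinforcing bias of the urn (once one color is in the minority, it is sampled less and therefore destroyed more) prevents the naive bound $p\leq 1$ from closing the estimate when $\zeta'$ is close to $\zeta''$. The plan to finish is a short bootstrap in which the above step is iterated $O(\log(\zeta''/\zeta'))$ times with a slowly shrinking threshold, at each stage choosing $\alpha$ so that the Chernoff rate on the corresponding $E_\alpha$ remains positive; since the threshold decreases geometrically while the rate depends only on the gap $\zeta''-\zeta'$, after a bounded number of iterations one reaches a regime in which the direct bound $p\leq 1$ is sufficient, and the telescoping union bound contributes only a constant factor to the final exponential tail.
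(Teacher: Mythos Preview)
Your symmetry reduction to a single color and the MGF recursion conditional on the balance event $E_\alpha$ are both correct, and the Chernoff bound on $\{A\geq r\}\cap E_\alpha$ does give an exponential rate whenever $\alpha>2\zeta'/\zeta''-1$. The gap is in the bootstrap you propose for $E_\alpha^c$.

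Each iteration of your scheme replaces the target $\mathbb{P}(A\geq sr)$ by (a constant times) $\mathbb{P}(A\geq(1-\alpha)r)$ with $1-\alpha<s$: the threshold \emph{shrinks}, and smaller thresholds are strictly harder. The Chernoff rate on $E_{\alpha'}$ at threshold $s$ is positive only when $s(1+\alpha')>2\zeta'/\zeta''$, which forces $\alpha'>2\zeta'/(\zeta''s)-1$; this lower bound exceeds $1$ as soon as $s<\zeta'/\zeta''$. Take for instance $\zeta'=0.49$, $\zeta''=0.5$: at $s_0=1$ you must choose $\alpha_0>0.96$, hence $s_1=1-\alpha_0<0.04$, and then no $\alpha_1\in(0,1)$ satisfies the constraint. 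The regime ``$p\leq 1$ is sufficient'' that you hope to reach corresponds to $s>2\zeta'/\zeta''$, which lies \emph{above} $s_0=1$ whenever $\zeta'>\zeta''/2$, so the iteration moves away from it rather than toward it. The self-reinforcement you correctly flag (the minority color is sampled less and therefore destroyed more) is precisely what prevents the recursion from closing: once the urn tilts, the drift amplifies, and no balance condition weaker than the one you are trying to establish will control it.

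The paper does not give its own proof but cites Section~3.4 of \cite{BasuGangulyHoffmanRichey19}, which sidesteps the reinforcement entirely via a continuous-time decoupling in the style of Kingman--Volkov \cite{KingmanVolkov03}: one assigns independent exponential clocks so that the two colors evolve as independent processes, reducing the question to tail bounds for sums of independent but non-identically-distributed geometric and exponential variables, handled by the estimates of Janson \cite{Janson18}. To repair your line of argument without that machinery you would need a direct concentration estimate for the imbalance $X_k-Z_k$ that does not itself presuppose an exponential bound on $E_\alpha^c$---essentially a Friedman-urn result in its own right.
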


The proof is given in Section~3.4 of~\cite{BasuGangulyHoffmanRichey19} using
a decoupling of the urn process as in~\cite{KingmanVolkov03} and applying
estimates for sums of non-i.i.d.\ geometric and exponential random variables
from~\cite{Janson18}.

Lemma~\ref{lemma:urn} implies~\eqref{eq:urnestimate}, which in turn implies
Proposition~\ref{prop:arwidla}.

\section{Weak and strong stabilization}\label{sec:weak}

In this section we prove the following for the ARW on~$\mathbb{Z}^{d}$.

\begin{theorem}\label{thm:subcritstauffertaggi}
For any jump distribution in any dimension, $ \zeta _{c} \geqslant
\frac{\lambda }{1+\lambda } $.
\end{theorem}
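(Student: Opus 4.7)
The strategy is to verify the fixation criterion \eqref{eq:conditionb} for every $\zeta < \frac{\lambda}{1+\lambda}$. By Theorem~\ref{thm:universality} I may assume that $\eta_{0}$ is i.i.d.\ Bernoulli$(\zeta)$, so every site starts with at most one particle. Say that a configuration is \emph{weakly stable} if no site contains more than one particle (whether active or sleeping), as opposed to the usual \emph{strongly stable} configurations, in which no site contains more than one sleeping particle and no site contains any active particle. The initial Bernoulli configuration is already weakly stable. I will describe an acceptable stabilization of $\eta_{0}$ in each finite box $V$ that preserves weak stability throughout its execution and whose odometer at the origin has bounded expectation uniformly in $V$; combined with \eqref{eq:lapupper} and Markov's inequality this yields \eqref{eq:conditionb}.

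The procedure enumerates the active particles present in $V$ and processes them one at a time. For the current particle I perform acceptable topplings at its site, using the next unused instruction, until one of two events occurs: (i)~the instruction is a sleep, the particle is alone, and hence it falls asleep; or (ii)~the instruction is a jump that lands the particle on an already-occupied site. In case~(ii) I apply further acceptable topplings to restore weak stability, which typically wakes up the previous occupant and appends it to the processing queue. Note that in case~(i) the sleep event occurs only when the particle is alone on its site; otherwise weak stability would have been violated, so case~(ii) applies instead. In isolation, the trajectory of each particle is a random walk of geometric length with parameter $\frac{\lambda}{1+\lambda}$ (mean $\frac{1+\lambda}{\lambda}$), and each collision in case~(ii) spawns at most one ``child'' particle, so the whole procedure is dominated by a branching random walk.

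Because the density of weakly settled particles is bounded by $\zeta$, and each trajectory visits on average $\frac{1+\lambda}{\lambda}$ sites, the mean number of offspring per particle is at most $\zeta \cdot \frac{1+\lambda}{\lambda}$, which is strictly less than $1$ precisely when $\zeta < \frac{\lambda}{1+\lambda}$. In this subcritical regime each initial particle has finite expected total progeny, and by translation invariance (mass transport) the expected number of times these trajectories collectively visit $\boldsymbol{0}$ is bounded uniformly in $V$. Applied to \eqref{eq:lapupper}, this yields the required bound on $\mathbb{E}\, m_{V,\eta_{0}}(\boldsymbol{0})$ and hence \eqref{eq:conditionb}.

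The main obstacle is to carry out this branching estimate rigorously in the face of the feedback inherent to the procedure: the instructions consumed by earlier trajectories constrain those available to later ones, and the set of weakly settled sites is itself a random field built along the way. In the spirit of the ``corrupted sites'' bookkeeping of \S\ref{sec:traps}, one must guarantee that each trajectory explores instructions that are fresh for the current particle, or else couple the dependent cascade to an independent subcritical branching process whose mean offspring matches the $\zeta \cdot \frac{1+\lambda}{\lambda}$ heuristic. Matching this heuristic exactly --- rather than losing constants --- is what pins the bound at the sharp value $\frac{\lambda}{1+\lambda}$, and constitutes the technical heart of the argument.
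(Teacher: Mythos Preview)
Your proposal explicitly stops short of a proof: the last paragraph defers ``the technical heart of the argument,'' the coupling to an independent subcritical branching process with sharp offspring mean. That alone is a genuine gap, and there are further problems. The procedure as described does not reach a stable configuration in the sense required by~\eqref{eq:lapupper}: after case~(ii) you restore weak stability, but the particle left behind at the collision site is active, not sleeping, so when the queue empties you are only weakly stable and Lemma~\ref{lemma:lap} does not apply. The offspring heuristic is also slippery: if a trajectory really halts at its first collision it spawns at most one child, the mean offspring is $\frac{\zeta}{\lambda+\zeta}<1$ for \emph{every} $\zeta$, and a back-of-envelope count gives expected total work $\frac{1+\lambda}{\lambda}$ per initial particle regardless of $\zeta$ --- no phase transition appears, precisely because the procedure has not actually stabilized anything.

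The paper's proof in \S\ref{sub:weakcor} is entirely different and much shorter. Its weak stability is modified only at the origin: declare $\boldsymbol{0}$ w-stable whenever $\eta(\boldsymbol{0})\leqslant 1$, active or not, with ordinary stability elsewhere. First weakly stabilize $\eta_0$ in $V$; if the origin is ever visited this leaves exactly one active particle there, and the very next instruction at $\boldsymbol{0}$ is fresh, hence a sleep with probability $\frac{\lambda}{1+\lambda}$, in which case the configuration is already fully stable. Thus $\mathbb{P}(\eta'_V(\boldsymbol{0})=\mathfrak{s}) \geqslant \frac{\lambda}{1+\lambda}\,\mathbb{P}(m_{V,\eta_0}(\boldsymbol{0})\geqslant 1)$. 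Assuming activity, the right-hand side tends to $\frac{\lambda}{1+\lambda}$; averaging the left-hand side over a large box gives at most $\zeta$ since every sleeping particle was initially present, forcing $\zeta\geqslant\frac{\lambda}{1+\lambda}$. No branching comparison, no cascade bookkeeping, and no uniform odometer bound is needed.
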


\begin{theorem}\label{thm:supcritstauffertaggi}
If $d>2$, then $\zeta _{c}<1$ for every $\lambda <\infty $ and $\zeta _{c}
\to 0$ as $\lambda \to 0$.
\end{theorem}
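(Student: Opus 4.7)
The plan is to verify the activity criterion in Condition~\eqref{eq:conditione} and then invoke Theorem~\ref{thm:conditione}. By Theorem~\ref{thm:universality} I may take the initial configuration $\eta_0$ to be i.i.d.\ Bernoulli of parameter $\zeta$, so $V_n = B_n$ starts with at most one particle per site.

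The procedure mirrors Proposition~\ref{prop:flambda}: fix an ordering $V_n = \{x_1,\dots,x_r\}$, and at step $i$ follow the particle at $x_i$ (if any) by legal topplings until it (i) exits $V_n$, (ii) arrives at an unoccupied site of $A_i = \{x_{i+1},\dots,x_r\}$, or (iii) falls asleep in $V_n\setminus A_i$. The invariant that every $x \in A_i$ holds at most one active particle is preserved by induction, so each step leaves behind at most one particle. If I can bound the probability of outcome (iii) per step by some $F(\lambda)$ with $F(\lambda) < 1$ for every finite $\lambda$ and $F(\lambda) \to 0$ as $\lambda \to 0$, then $\mathbb{E}M_n \geq (\zeta - F(\lambda))|V_n|$ verifies Condition~\eqref{eq:conditione} for all $\zeta > F(\lambda)$, giving $\zeta_c \leq F(\lambda)$ and both conclusions of the theorem.

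For biased walks, Proposition~\ref{prop:flambda} uses an ordering by $\boldsymbol{v}\cdot x$, reducing the sleep analysis to the one-dimensional projected walk, which escapes the half-space $\mathcal{H}_{\boldsymbol{v}}$ almost surely thanks to the drift. For symmetric walks that ordering fails, because the projected walk is recurrent and sleeps almost surely in the half-space for every $\lambda>0$. The replacement I would use for $d>2$ is to split the stabilization into a weak stage, in which one only tracks the number of topplings at each site, and a strong stage that enforces the full ARW rules; by a stochastic comparison the weak stage reduces the analysis of a single processing step to that of an independent simple random walk on $\mathbb{Z}^d$ with a sleep clock of rate $\lambda$, killed on exit from $V_n$. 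In $d > 2$, the expected occupation time of the already-processed region by such a walk is controlled by the finiteness of the Green's function of simple random walk on $\mathbb{Z}^d$, so that $\mathbb{P}(\text{sleep})\leq \lambda\mathbb{E}[T]\leq C\lambda$ for a dimension-dependent constant $C$, which has the required behavior as $\lambda\to 0$ and stays strictly below $1$ for every finite $\lambda$.

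The main obstacle is to select the ordering, together with the weak/strong split, so that a walk started at $x_i$ has expected occupation time of $V_n\setminus A_i$ uniformly bounded in $i$ and $n$. A layered ordering from the inside of $V_n$ outward, combined with careful use of transience, is the natural candidate, and verifying the uniform Green's function bound in this setting is the technical heart of the argument. The dimension restriction $d>2$ enters essentially here, via the finiteness of Green's function on $\mathbb{Z}^d$. In $d=2$ the walk is recurrent, expected occupation times diverge, and this approach cannot work---consistent with the fact that the analogous statement remains an open problem for symmetric walks in $d=2$.
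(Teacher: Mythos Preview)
Your proposal has two genuine gaps.

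First, the bound $\mathbb{P}(\text{sleep})\leqslant C\lambda$ that you extract from the Green's function does \emph{not} stay below $1$ for every finite $\lambda$: it is useless once $\lambda\geqslant 1/C$, so at best your argument would give $\zeta_c\to 0$ as $\lambda\to 0$ and says nothing about $\zeta_c<1$ for large $\lambda$. The paper needs two separate estimates for exactly this reason: one (Proposition~\ref{prop:lowlambda}) gives $\mathbb{P}(\eta'_V({\boldsymbol 0})=\mathfrak s)\leqslant 4G\sqrt{\lambda}$, good for small $\lambda$; the other (Proposition~\ref{prop:alllambda}) gives $\mathbb{P}(\eta'_V({\boldsymbol 0})=\mathfrak s)\leqslant 1-(2+2\lambda)^{-4G}$, strictly below $1$ for every $\lambda<\infty$. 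The second bound requires a delicate independence argument (decoupling the ``sleep vs.\ jump'' coin at the origin from the outcome of weak stabilizations) that has no counterpart in your sketch.

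Second, and more structurally, you acknowledge yourself that the ``main obstacle'' --- choosing an ordering so that the expected occupation time of $V_n\setminus A_i$ is bounded uniformly in $i$ and $n$ --- is not resolved. For an unbiased walk there is no obvious ordering that achieves this: when $i$ is large the processed region $V_n\setminus A_i$ is nearly all of $V_n$, and a walk started inside it need not exit quickly. The paper sidesteps this entirely by abandoning the Proposition~\ref{prop:flambda} template. Instead of ordering sites and tracking individual particles, it fixes the origin, alternates between weak stabilizations and single topplings at ${\boldsymbol 0}$ (Figure~\ref{fig:flowchart}), and shows via Lemmas~\ref{lemma:almostcommute}--\ref{lemma:addg} that the expected number $T_V$ of such rounds is at most $2G$. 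The Green's function enters through the comparison $\bar m^s_{V,\eta}=\bar m^w_{V,\eta^+}$ and the bound on the cost of one extra particle, not through occupation times of a moving processed region. This is a genuinely different mechanism from the one you propose.
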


\begin{problem}
Prove a similar statement for unbiased walks on $\mathbb{Z}^{2}$.
\end{problem}

\begin{problem}
At least prove that $\zeta _{c}<1$ for some $\lambda $.
\end{problem}

We will consider toppling sequences for any finite
$V \subseteq \mathbb{Z}^{d}$, but changing the stability condition at
site ${\boldsymbol{0}}$. In what follows, we will define weak and strong
stabilizations of a configuration $\eta $ in the box $V$.

\subsection{Definitions and first corollaries}\label{sub:weakcor}

We say that ${\boldsymbol{0}}$ is \emph{w-stable} if
$\eta ({\boldsymbol{0}}) \leqslant 1$, and we say that
${\boldsymbol{0}}$ is \emph{s-stable} if
$\eta ({\boldsymbol{0}})=0$. Otherwise we say that
${\boldsymbol{0}}$ is \emph{w-unstable} or \emph{s-unstable}. For
$y \ne {\boldsymbol{0}}$ we say that $y$ is stable, w-stable, and s-stable
if $\eta (y) \leqslant \mathfrak{s}$.

Toppling a site $z$ will be called \emph{w-legal} if $z$ is w-unstable,
and \emph{s-legal} if $z$ is s-unstable (recall from \S
\ref{sub:diaconis} that toppling a site containing a sleeping particle
is a well-defined operation). We say that a sequence $\alpha $ of acceptable
topplings \emph{weakly stabilizes} $\eta $ in $V$ if
$\Phi _{\alpha }\eta $ is w-stable in $V$. We say that it
\emph{strongly stabilizes} $\eta $ in $V$ if $\Phi _{\alpha }\eta $ is s-stable
in $V$.

Let us restate Lemma~\ref{lemma:lap} in this context.
%
\begin{lemma}\label{lemma:lap2}
If $\alpha $ is an acceptable sequence of topplings that w-stabilizes $\eta $
in $V$, and $\beta \subseteq V$ is a w-legal sequence of topplings for $\eta
$, then $m_{\beta }\leqslant m_{\alpha }$. The same holds replacing `w' by
`s'.
\end{lemma}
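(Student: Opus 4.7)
The plan is to mimic the proof of Lemma~\ref{lemma:lap} almost verbatim, after verifying that the notions of w- and s-unstability satisfy the same monotonicity that was used in the original argument (Property~\ref{property3}).

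First I would make the preliminary observation that the sets of configurations that are w-unstable (resp.\ s-unstable) at a given site are upward closed in the partial order on $(\mathbb{N}_{\mathfrak{s}})^{\mathbb{Z}^{d}}$. For $y\ne{\boldsymbol{0}}$ the unstability condition $\eta(y)>\mathfrak{s}$ is the same as in \S\ref{sub:diaconis}, so nothing changes. At $y={\boldsymbol{0}}$, w-unstability reads $\eta({\boldsymbol{0}})\geqslant 2$ and s-unstability reads $\eta({\boldsymbol{0}})\geqslant\mathfrak{s}$; in both cases the defining threshold is upward closed in the ordering $0<\mathfrak{s}<1<2<\cdots$, so the analogue of Property~\ref{property3} holds for w- and s-unstability. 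Property~\ref{property2} only concerns configurations produced by acceptable topplings and therefore carries over unchanged.

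With that in place, I would copy the proof of Lemma~\ref{lemma:lap}. Suppose $\beta=(x_1,\dots,x_k)\subseteq V$ is w-legal for $\eta$ and $m_\alpha\ngeqslant m_\beta$. Writing $\beta^{(j)}=(x_1,\dots,x_j)$, let $\ell=\max\{j:m_{\beta^{(j)}}\leqslant m_\alpha\}<k$ and $y=x_{\ell+1}\in V$. Since $\beta$ is w-legal, $y$ is w-unstable in $\Phi_{\beta^{(\ell)}}\eta$. By choice of $\ell$ we have $m_{\beta^{(\ell)}}\leqslant m_\alpha$ and $m_{\beta^{(\ell)}}(y)=m_\alpha(y)$, so Property~\ref{property2} gives $\Phi_\alpha\eta(y)\geqslant\Phi_{\beta^{(\ell)}}\eta(y)$. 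By upward closure of w-unstability, $y$ is w-unstable in $\Phi_\alpha\eta$, contradicting the hypothesis that $\alpha$ w-stabilizes $\eta$ in $V$. The argument for s-legal and s-stabilization is literally the same, with w replaced by s throughout.

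There is no real obstacle here: the only point that actually needs verification is the monotonicity of the new stability notions, and both are defined by an upward-closed threshold on $\eta({\boldsymbol{0}})$, so the key step of the original proof transfers automatically. The lemma therefore follows immediately from the parallel reasoning described above.
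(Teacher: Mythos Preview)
Your proposal is correct and matches the paper's approach exactly: the paper's entire proof is the one-line remark ``It is the very same proof as that of Lemma~\ref{lemma:lap}.'' You have simply spelled out the details, including the (straightforward) verification that w- and s-unstability are upward closed, which is implicitly what the paper is relying on.
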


\begin{proof}
It is the very same proof as that of Lemma~\ref{lemma:lap}.
\end{proof}

Define
\begin{equation*}
m^{w}_{V,\eta } = \sup _{\beta \subseteq V\text{ w-legal}} m_{\beta },
\qquad m^{s}_{V,\eta } = \sup _{\beta \subseteq V\text{ s-legal}} m_{\beta }.
\end{equation*}
Notice that w-legal topplings are always legal, which are always s-legal,
which in turn are always acceptable. In particular, we have by inclusion
\begin{equation}
\nonumber
m^{w}_{V,\eta } \leqslant m_{V,\eta } \leqslant m^{s}_{V,\eta } .
\end{equation}

Now let $\eta _{0}$ be random and independent of $\mathcal{I}$, as described
in \S \ref{sub:conditions}. Let $\eta '_{V}$ be the resulting configuration
obtained by stabilizing $\eta _{0}$ in $V$ with legal topplings, and
$\eta _{V}^{w}$ be the result of weakly stabilizing $\eta _{0}$ in
$V$ with w-legal topplings. For finite $V$, these are a.s.\ well-defined
since a stable or w-stable configuration is a.s.\ achieved after finitely
many topplings. The Abelian property implies that neither
$\eta '_{V}$ nor $\eta ^{w}_{V}$ depends on the order at which these legal
or w-legal topplings are performed.

\begin{proof}[Proof of Theorem~\ref{thm:subcritstauffertaggi}]
By Abelianness, one way to stabilize $\eta _{0}$ in $V$ is to first weakly
stabilize $\eta _{0}$ in $V$, and then stabilize $\eta ^{w}_{V}$ in
$V$.

With the procedure in mind, we claim that
\begin{equation}
\nonumber
m_{V,\eta _{0}}({\boldsymbol{0}}) \geqslant 1 \quad \Longrightarrow
\quad \eta ^{w}_{V}({\boldsymbol{0}})=1 .
\end{equation}
If there is ever a particle at ${\boldsymbol{0}}$, weak stabilization
does not let this particle leave, and
$\eta ^{w}_{V}({\boldsymbol{0}})=1$. Otherwise, ${\boldsymbol{0}}$ is
never visited during weak stabilization and
$\eta ^{w}_{V}({\boldsymbol{0}})=0$, in which case $\eta ^{w}_{V}$ is
not only w-stable but also stable, so $\eta '_{V}=\eta ^{w}_{V}$ and
$m_{V,\eta _{0}}({\boldsymbol{0}})=0$, which proves the claim. On the
other hand,
\begin{equation}
\nonumber
\mathbb{P}\left ( \big . \eta '_{V}({\boldsymbol{0}}) =
\mathfrak{s}\right ) \geqslant \tfrac{\lambda }{1+\lambda } \,
\mathbb{P}\left ( \big . \eta _{V}^{w}({\boldsymbol{0}})=1 \right ) .
\end{equation}
Indeed, if $\eta ^{w}_{V}({\boldsymbol{0}})=1$ then
${\boldsymbol{0}}$ is the only site in $V$ where $\eta ^{w}_{V}$ is unstable,
so stabilization of $\eta ^{w}_{V}$ starts with a toppling at
${\boldsymbol{0}}$. In this case, with probability
$\frac{\lambda }{\lambda +1}$, stabilization is achieved immediately, with
a particle sleeping at ${\boldsymbol{0}}$.

From these two observations we get
\begin{equation}
\nonumber
\mathbb{P}\left ( \big . \eta '_{V}({\boldsymbol{0}}) =
\mathfrak{s}\right ) \geqslant \tfrac{\lambda }{1+\lambda } \,
\mathbb{P}\left ( \big . m_{V,\eta _{0}}({\boldsymbol{0}})
\geqslant 1 \right ) .
\end{equation}

Now suppose $m_{\eta _{0}}({\boldsymbol{0}}) = \infty $ a.s.\ and let
$\varepsilon >0$. Take $r$ such that
$\mathbb{P}(m_{V,\eta _{0}}({\boldsymbol{0}}) \geqslant 1)
\geqslant 1-\varepsilon $ for every $V \supseteq B_{r}$. Take $R$ such
that $|B_{R-r}| \geqslant (1-\varepsilon ) |B_{R}|$. From the previous
inequality,
$\mathbb{E}\left [ \#\{x \in B_{R-r}:\eta '_{B_{R}}(x) =
\mathfrak{s}\} \right ] \geqslant \frac{\lambda }{1+\lambda }(1-
\varepsilon )^{2}|B_{R}|$. On the other hand, all particles in
$\eta _{B_{R}}'$ were initially present in $B_{R}$, therefore
$\frac{\lambda }{1+\lambda }(1-\varepsilon )^{2}|B_{R}| \leqslant
\mathbb{E}\|\eta _{0}\|_{V} = \zeta |B_{R}|$. Since $\varepsilon $ was
arbitrary, we must have $\zeta \geqslant \frac{\lambda }{1+\lambda }$.
\end{proof}

\subsection{Stabilization via successive weak stabilizations}

Let $d > 2$, and define
\begin{equation*}
1 < G = \mathbb{E}\left [
\text{visits of a random walk to its starting point} \big . \right ] <
\infty .
\end{equation*}
In this subsection we let $V \ni {\boldsymbol{0}}$ and
$\eta \in (\mathbb{N}_{0})^{V}$ be finite and fixed.

\medskip
In the following arguments we consider a toppling procedure for obtaining
\emph{stabilization and strong stabilization via successive weak stabilizations},
shown in Figure~\ref{fig:flowchart}. Let $T_{V}$ and $T_{V}^{s}$ count
the number of rounds needed for stabilization and strong stabilization
to be achieved, respectively (weak stabilization is always achieved in
the first round). From this definition we have
%
\begin{equation}
\label{eq:sssvwscond}
T_{V}=1 \quad \Longleftrightarrow \quad \eta ^{w}_{V}({
\boldsymbol{0}})=0 \quad \Longleftrightarrow \quad T_{V}^{s}=1 .
\end{equation}

\begin{figure}
\includegraphics{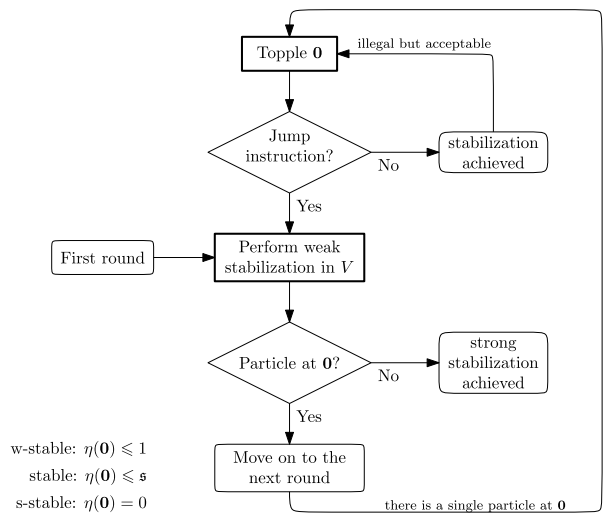}%
\caption{Flow diagram showing a way to obtain stabilization by alternating
between weak stabilizations and legal topplings at
${\boldsymbol{0}}$. If stabilization is achieved with a sleeping particle
at ${\boldsymbol{0}}$, the process can be continued using acceptable topplings
until strong stabilization is also achieved.}\label{fig:flowchart}
\end{figure}

Before we start using this procedure and studying $T_{V}$, we need a couple
of lemmas. We define the ``jump odometer'' $\bar{m}_{V,\eta }$ by counting
only the number of jump instructions performed at each site when $\eta $ is
stabilized in $V$. Define $\bar{m}_{V,\eta }^{s}$ and $\bar{m}_{V,\eta }^{w}$
similarly. Let $\eta ^{+} = \eta + \delta _{{\boldsymbol{0}}}$ denote the
result of adding an active particle at ${\boldsymbol{0}}$ to a configuration
$\eta $.

\begin{lemma}[Strong$-$weak$=$extra particle]\label{lemma:almostcommute}
We have $ \bar{m}^{s}_{V,\eta } = \bar{m}^{w}_{V,\eta ^{+}} $.
\end{lemma}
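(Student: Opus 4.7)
The plan is to exhibit a single toppling sequence $\beta$ that simultaneously s-stabilizes $\eta$ in $V$ and w-stabilizes $\eta^{+}$ in $V$. Once this is in hand, the Abelian property (which transfers verbatim to the w- and s- settings via Lemma~\ref{lemma:lap2}) forces $m_{\beta}=m^{s}_{V,\eta}=m^{w}_{V,\eta^{+}}$. Since $\bar{m}(x)$ is determined by $m(x)$ and the fixed field $\mathcal{I}$ (it counts the jump symbols among the first $m(x)$ instructions of the stack at $x$), the equality $m^{s}_{V,\eta}=m^{w}_{V,\eta^{+}}$ automatically upgrades to $\bar{m}^{s}_{V,\eta}=\bar{m}^{w}_{V,\eta^{+}}$, which is the claim.

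To produce $\beta$, take any s-legal sequence in $V$ that s-stabilizes $\eta$; existence follows from the usual argument (iterate s-legal topplings; the procedure must terminate because Lemma~\ref{lemma:lap2} provides a finite upper bound on the odometer via any acceptable s-stabilizing sequence, e.g.\ a legal stabilizing sequence followed by a few extra topplings at $\boldsymbol{0}$). The substantive step is then to check that this same $\beta$, applied to $\eta^{+}$, is w-legal throughout and terminates at a w-stable configuration.

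I would argue this by induction on the length of a prefix of $\beta$: at every step $k$, the configuration $\eta^{+}_{k}$ obtained by applying the first $k$ topplings to $\eta^{+}$ equals $\eta_{k}+\delta_{\boldsymbol{0}}$, where $\eta_{k}$ is the corresponding configuration starting from $\eta$. The inductive step uses exactly the commutation identities $(n+1)\ominus = (n\ominus)+1$ and $(n+1)\odot = (n\odot)+1$ that were already exploited in the proof of Property~\ref{property1}, together with the conventions $\mathfrak{s}+1=2$, $\mathfrak{s}-1=0$, $1\odot=\mathfrak{s}$, $n\odot=n$ for $n\geqslant 2$. With this invariant, every toppling of $\boldsymbol{0}$ in $\beta$ occurs when $\eta_{k}(\boldsymbol{0})\geqslant\mathfrak{s}$ (s-unstable), so $\eta^{+}_{k}(\boldsymbol{0})\geqslant 2$ (w-unstable); for $y\ne\boldsymbol{0}$ the w- and s-legality conditions coincide and $\eta^{+}_{k}(y)=\eta_{k}(y)$. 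Finally, when $\beta$ finishes, $\eta_{k}$ is s-stable in $V$, i.e.\ $0$ at $\boldsymbol{0}$ and $\leqslant\mathfrak{s}$ elsewhere, so $\eta^{+}_{k}$ is $1$ at $\boldsymbol{0}$ and $\leqslant\mathfrak{s}$ elsewhere, which is precisely w-stable in $V$.

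The only place where care is needed is the inductive step, because the sleep operator $\odot$ behaves differently on $\{1,\mathfrak{s}\}$ than on $\{2,3,\dots\}$; but this is exactly what the conventions $1\odot=\mathfrak{s}$, $\mathfrak{s}+1=2$, $2\odot=2$ are designed to reconcile, and a direct case check shows that whenever $\eta_{k}$ is s-unstable at $\boldsymbol{0}$ the two operations commute with the $+\delta_{\boldsymbol{0}}$ shift. Beyond that, the argument is a short deduction from Lemma~\ref{lemma:lap2} and the definition of the jump odometer.
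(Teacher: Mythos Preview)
Your proof is correct and rests on the same core observation as the paper's: the invariant $\eta^{+}_{k}=\eta_{k}+\delta_{\boldsymbol{0}}$ is preserved under topplings, so a sequence is s-legal for $\eta$ if and only if it is w-legal for $\eta^{+}$. The paper simply states this biconditional in one line and reads off $m^{s}_{V,\eta}=m^{w}_{V,\eta^{+}}$ directly from the supremum definitions, whereas you route through a single stabilizing sequence and invoke Abelianness; your version is more explicit about the invariant but slightly less direct.
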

\begin{proof}
A sequence of topplings $\beta $ is w-legal for $\eta ^{+}$ if and only
if it is s-legal for $\eta $. This proves the lemma.
\end{proof}

\begin{lemma}[Getting rid of the extra particle]\label{lemma:addg}
We have
\begin{equation*}
\mathbb{E}\big [ \bar{m}^{w}_{V,\eta ^{+}}({\boldsymbol{0}}) \big ]
\leqslant G + \mathbb{E}\big [ \bar{m}^{w}_{V,\eta }({\boldsymbol{0}})
\big ] .
\end{equation*}
\end{lemma}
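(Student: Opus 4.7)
The plan is to reduce the inequality to a statement about adding a single active particle to the w-stable configuration $\eta^{w}_{V}$, and then to trace the motion of this extra particle as a random walk.

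First I would apply the Abelian property in the w-legal setting (Lemma~\ref{lemma:lap2}) to decompose a weak stabilization of $\eta^{+} = \eta + \delta_{\boldsymbol{0}}$ into two stages: (i) a w-legal sequence $\alpha$ that weakly stabilizes $\eta$ in $V$, followed by (ii) a w-legal sequence that weakly stabilizes the resulting configuration. Applying $\alpha$ to $\eta^{+}$ instead of $\eta$ produces $\eta^{w}_{V} + \delta_{\boldsymbol{0}}$: the topplings of the origin in $\alpha$ occur only while the origin holds at least two particles, in which regime the sleep instruction $\mathfrak{t}_{\boldsymbol{0}\mathfrak{s}}$ acts trivially on both configurations, while jump instructions subtract one particle at the origin from each. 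Hence
\begin{equation*}
\bar m^{w}_{V,\eta^{+}}(\boldsymbol{0}) = \bar m^{w}_{V,\eta}(\boldsymbol{0}) + \bar m^{w}_{V, \eta^{w}_{V} + \delta_{\boldsymbol{0}}}(\boldsymbol{0}),
\end{equation*}
so the lemma reduces to showing $\mathbb{E}\bigl[\bar m^{w}_{V, \eta^{w}_{V} + \delta_{\boldsymbol{0}}}(\boldsymbol{0})\bigr] \leqslant G$.

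Next I would exhibit an explicit w-legal stabilization of $\eta^{w}_{V} + \delta_{\boldsymbol{0}}$ that traces the motion of a \emph{tagged walker}, initially the extra active particle at the origin. At each visit to a site $x$ we consume the next unused instruction at $x$: sleep instructions issued while $x$ carries at least two active particles act trivially and are skipped, while the first jump instruction used moves the walker to a neighbor $y$. If $y$ held a sleeping particle, the arrival activates it, yielding two active particles at $y$; by indistinguishability and Property~\ref{property1} we identify the walker with the next particle to depart $y$. The walker halts when it either exits $V$ or, being alone at a non-origin site, meets a sleep instruction. By construction the sequence of sites visited by the walker is a random walk on $\mathbb{Z}^{d}$ with step distribution $p(\cdot)$ started at the origin, and each of its visits to the origin accounts for exactly one jump instruction used there. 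Since the walker's trajectory is a prefix of a full unstopped random walk, the number of such visits is stochastically dominated by the total number of returns to the origin of that walk, which has mean $G$ by the definition.

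The main obstacle is the careful bookkeeping of activations. When the walker's arrival at a site $y$ is followed by two consecutive jump instructions (rather than a jump and a sleep), both particles at $y$ leave; the departing companion is not the walker and could, in principle, contribute further visits to the origin on its own. Controlling these secondary contributions so that they remain inside the $G$ bound is the delicate point: it requires either an inductive argument over the set of activated sleeping particles encountered along the walker's path, or a potential-theoretic identity that treats the secondary departures symmetrically with the walker's returns. This is the step I expect to be the crux of the proof.
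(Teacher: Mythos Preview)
Your reduction in the first paragraph is correct: performing a w-legal weak stabilization $\alpha$ of $\eta$ first is also w-legal for $\eta^{+}$, and by the Abelian property for w-stabilization one obtains the splitting
\[
\bar m^{w}_{V,\eta^{+}}(\boldsymbol{0})
= \bar m^{w}_{V,\eta}(\boldsymbol{0})
+ \bigl(\text{jump odometer at }\boldsymbol{0}\text{ for w-stabilizing }\eta^{w}_{V}+\delta_{\boldsymbol{0}}\text{ with the shifted instructions}\bigr).
\]
However, the obstacle you flag in the second stage is genuine and unresolved. After the first stage the configuration $\eta^{w}_{V}$ is full of sleeping particles at sites $y\ne\boldsymbol{0}$; every visit of your tagged walker to such a site produces two active particles, and the ``companion'' that is left behind may itself jump out rather than sleep, spawning a secondary walker, which in turn wakes further particles, and so on. Controlling the expected number of returns to~$\boldsymbol{0}$ of this branching cascade by $G$ is not a matter of bookkeeping---it is a substantive estimate that you have not supplied, and there is no reason for it to follow from the simple random-walk bound you invoke.

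The paper avoids this difficulty completely by \emph{reversing the order} and by using \emph{acceptable} rather than w-legal topplings for the upper bound (which is all Lemma~\ref{lemma:lap2} requires). First force the extra particle to jump until it leaves $V$: topple its current site repeatedly, consuming sleep instructions along the way, until a jump instruction moves it. Because $\eta\in(\mathbb{N}_{0})^{V}$ contains no sleeping particles, the walker never activates anyone, and after it exits $V$ the configuration is exactly~$\eta$ again. The jump odometer at $\boldsymbol{0}$ during this stage is just the number of visits of a simple random walk to its starting point before exiting $V$, which has expectation at most $G$. Then weakly stabilize the remaining configuration $\eta$ with the (fresh, shifted) instructions, contributing $\mathbb{E}\bigl[\bar m^{w}_{V,\eta}(\boldsymbol{0})\bigr]$. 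The concatenated procedure is acceptable and w-stabilizes $\eta^{+}$, so by Lemma~\ref{lemma:lap2} its jump odometer dominates $\bar m^{w}_{V,\eta^{+}}$, giving the lemma in two lines. The key point you missed is that doing the walk \emph{before} any sleeping particles appear eliminates the branching entirely.
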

\begin{proof}
Consider the following toppling procedure. First, make the extra particle
at ${\boldsymbol{0}}$ jump until it leaves $V$. Then, weakly stabilize
the resulting configuration (which is $\eta $). Since the resulting configuration
is weakly stable, by Lemma~\ref{lemma:lap2} the jump odometer of this procedure
gives an upper bound for
$\bar{m}^{w}_{V,\eta ^{+}}({\boldsymbol{0}})$. Now the expected number
of visits to ${\boldsymbol{0}}$ in the first stage is bounded by
$G$ (in fact it tends to $G$ as $V$ increases), proving the inequality.
\end{proof}

From the two previous lemmas, we get the following.

\begin{corollary}
We have $ \mathbb{E}\left [ \bar{m}^{s}_{V,\eta }({\boldsymbol{0}}) -
\bar{m}^{w}_{V,\eta }({\boldsymbol{0}}) \Big . \right ] \leqslant G $.
\end{corollary}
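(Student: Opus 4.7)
The plan is very short: the corollary follows by chaining the two lemmas stated immediately before it.

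First I would invoke the equality from Lemma (Strong$-$weak$=$extra particle). This identity holds pathwise in the random field $\mathcal{I}$, since it is proved by noting that a sequence of topplings is w-legal for $\eta^+$ iff it is s-legal for $\eta$, so the suprema defining $\bar{m}^s_{V,\eta}$ and $\bar{m}^w_{V,\eta^+}$ are taken over the same set of sequences. In particular,
\begin{equation*}
\bar{m}^s_{V,\eta}(\mathbf{0}) = \bar{m}^w_{V,\eta^+}(\mathbf{0}) \quad \text{a.s.}
\end{equation*}

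Taking expectations and applying Lemma (Getting rid of the extra particle) to the right-hand side gives
\begin{equation*}
\mathbb{E}\bigl[\bar{m}^s_{V,\eta}(\mathbf{0})\bigr] = \mathbb{E}\bigl[\bar{m}^w_{V,\eta^+}(\mathbf{0})\bigr] \leqslant G + \mathbb{E}\bigl[\bar{m}^w_{V,\eta}(\mathbf{0})\bigr].
\end{equation*}
Rearranging yields exactly the claimed bound.

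There is no real obstacle; the only thing worth checking is that both expectations on the final line are finite so that the subtraction is legal. This is guaranteed because $V$ is finite and all topplings sequences considered are acceptable, so the jump odometers are a.s.\ finite random variables bounded by a multiple of the total number of topplings used to stabilize $\eta^+$ in $V$ (already shown to be finite a.s.\ in the discussion following Lemma~\ref{lemma:lap}), whose expectation is finite by the same sort of bound used in the proof of Lemma~\ref{lemma:addg}.
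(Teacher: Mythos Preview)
Your proof is correct and matches the paper's approach exactly: the paper simply states ``From the two previous lemmas, we get the following'' without further argument, and your chaining of Lemma~\ref{lemma:almostcommute} with Lemma~\ref{lemma:addg} is precisely what is intended. Your added remark about finiteness of the expectations is a reasonable extra check (it follows, e.g., by iterating the argument of Lemma~\ref{lemma:addg} starting from the empty configuration, since $\eta$ and $V$ are finite).
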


We finally derive estimates for $T_{V}$ and $T^{s}_{V}$.

\begin{lemma}
We have $ \bar{m}^{s}_{V,\eta }({\boldsymbol{0}}) \geqslant \bar{m}^{w}_{V,
\eta }({\boldsymbol{0}}) + T^{s}_{V} - 1 $.
\end{lemma}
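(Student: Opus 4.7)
The approach is to analyze the strong-stabilization procedure of Figure~\ref{fig:flowchart} round by round and count jump instructions at the origin directly from the flow. I would structure the procedure as a sequence of $T^{s}_{V}$ weak-stabilization rounds separated by $T^{s}_{V}-1$ \emph{transitions}, where a transition consists of successive s-legal topplings at the origin performed one after another until the configuration becomes non-weakly-stable---which is precisely what triggers the next weak stabilization. Let $N_{j}$ count the jump instructions used at the origin during the $j$-th weak stabilization; by definition $N_{1}=\bar{m}^{w}_{V,\eta }({\boldsymbol{0}})$.

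The key observation is that each transition contributes exactly one jump at the origin. When a transition begins, the origin holds exactly one particle (otherwise strong stabilization would already have been reached). A sleep instruction at a site holding one particle merely toggles it between the active and sleeping states and leaves the rest of the configuration untouched, so the w-stability of the rest of the box is preserved and no new weak-stabilization round is warranted. A jump instruction, by contrast, empties the origin and deposits a particle at a neighbor, producing a w-unstable site and launching the next round. Hence each of the $T^{s}_{V}-1$ transitions consumes some (possibly zero) sleep instructions followed by exactly one jump instruction at the origin.

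All topplings used in this procedure are s-legal: weak stabilizations use w-legal topplings (which are s-legal), and the extra topplings at the origin are acceptable, which coincides with s-legal at the origin since that site is s-unstable whenever it holds any particle. Since the procedure terminates in an s-stable configuration, applying Lemma~\ref{lemma:lap2} to any competing s-legal stabilizing sequence (both as $\alpha $ and as $\beta $) forces equality of our procedure's odometer with $m^{s}_{V,\eta }$, and in particular the total jump count at the origin equals $\bar{m}^{s}_{V,\eta }({\boldsymbol{0}})$. Therefore
\[
\bar{m}^{s}_{V,\eta }({\boldsymbol{0}}) = \sum _{j=1}^{T^{s}_{V}} N_{j} + (T^{s}_{V}-1) \geq N_{1} + (T^{s}_{V}-1) = \bar{m}^{w}_{V,\eta }({\boldsymbol{0}}) + T^{s}_{V}-1.
\]

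The main obstacle is organizational rather than analytic: one has to arrange the procedure so that sleep-only sub-sequences are transparent to the round counter $T^{s}_{V}$ (otherwise many consecutive sleep instructions between rounds could inflate $T^{s}_{V}$ without contributing to $\bar{m}^{s}-\bar{m}^{w}$), and then one must justify via the Abelian characterization that this hand-crafted procedure's jump count genuinely equals $\bar{m}^{s}_{V,\eta }({\boldsymbol{0}})$. Once these points are settled, the inequality is a pure accounting of the instructions consumed in the various phases.
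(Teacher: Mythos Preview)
Your proof is correct and follows essentially the same approach as the paper: both analyze the strong-stabilization procedure of Figure~\ref{fig:flowchart} round by round and observe that each of the $T^{s}_{V}-1$ rounds after the first necessarily consumes a jump instruction at the origin before strong stabilization is reached. Your write-up is more explicit than the paper's about why a transition contributes exactly one jump (sleep instructions merely toggle the state without triggering a new round) and about the Abelian identification of the procedure's odometer with $m^{s}_{V,\eta}$, but these are details the paper leaves to the figure and context rather than a different argument.
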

\begin{proof}
Consider the strong stabilization of $\eta $ on $V$ via successive weak
stabilizations as shown in Figure~\ref{fig:flowchart}, run until strong
stabilization is achieved. The first round starts in the middle of the
diagram and consists of a weak stabilization. For each of the other
$T_{V}^{s}-1$ rounds, a jump instruction is eventually performed at
${\boldsymbol{0}}$ before strong stabilization is achieved.
\end{proof}

From the two previous statements, we get the following.

\begin{corollary}\label{cor:etvbound}
We have $ \mathbb{E}T_{V} \leqslant \mathbb{E}T^{s}_{V} \leqslant 1 + G
\leqslant 2G $.
\end{corollary}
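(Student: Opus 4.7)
The plan is to chain the last two preceding statements. By the lemma just above, we have the deterministic inequality $\bar{m}^{s}_{V,\eta}({\boldsymbol{0}}) - \bar{m}^{w}_{V,\eta}({\boldsymbol{0}}) \geqslant T^{s}_{V} - 1$. Taking expectations and invoking the preceding corollary (which combined Lemmas~\ref{lemma:almostcommute} and~\ref{lemma:addg} to bound $\mathbb{E}[\bar{m}^{s}_{V,\eta}({\boldsymbol{0}}) - \bar{m}^{w}_{V,\eta}({\boldsymbol{0}})]$ by $G$), we immediately obtain $\mathbb{E}T^{s}_{V} - 1 \leqslant G$, i.e.\ $\mathbb{E}T^{s}_{V} \leqslant 1 + G$.

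The comparison $\mathbb{E}T_{V} \leqslant \mathbb{E}T^{s}_{V}$ I would justify by noting that the stabilization procedure of Figure~\ref{fig:flowchart} also achieves strong stabilization whenever it achieves stabilization with an empty origin, and in the worst case it has to keep running until the origin empties out. More precisely, every round that certifies stabilization (so that the process halts) also certifies strong stabilization provided $\eta^{w}_{V}({\boldsymbol{0}}) = 0$, and otherwise stabilization happens with a sleeping particle, after which the procedure must continue with further acceptable topplings at ${\boldsymbol{0}}$ before strong stabilization is realized; counting rounds, this gives $T_{V} \leqslant T^{s}_{V}$ pathwise. Finally, $1 + G \leqslant 2G$ is just the assumption $G \geqslant 1$ from the definition of $G$.

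There is no real obstacle in the argument: the work has all been carried out in the preceding lemmas. The only point worth double-checking is the pathwise inequality $T_{V} \leqslant T^{s}_{V}$, which relies on the equivalence~\eqref{eq:sssvwscond} to handle the base case and on monotonicity of the rounds (each round of the stabilization procedure is also a round of the strong-stabilization procedure up to the moment stabilization stops early). Once that is verified, the chain $\mathbb{E}T_{V} \leqslant \mathbb{E}T^{s}_{V} \leqslant 1+G \leqslant 2G$ follows in a single line from the lemma plus the corollary.
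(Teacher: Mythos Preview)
Your proposal is correct and follows exactly the approach the paper intends: the paper simply states ``From the two previous statements, we get the following'' without spelling out the argument, and your elaboration (take expectations in the lemma, apply the preceding corollary, use the pathwise inequality $T_V \leqslant T^s_V$ from the flowchart procedure, and invoke $G>1$) is precisely what is meant. The only minor remark is that the pathwise bound $T_V \leqslant T^s_V$ is immediate from the definition of the procedure in Figure~\ref{fig:flowchart}, since the strong-stabilization procedure is a continuation of the stabilization procedure; your justification is correct but could be shortened to that one observation.
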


\subsection{Main estimates}

Theorem~\ref{thm:supcritstauffertaggi} will follow from two propositions,
based on the previous properties of stabilization and strong stabilization
via successive weak stabilizations. We now take the initial configuration
$\eta _{0}$ random and i.i.d.

We use the following decomposition:
%
\begin{equation}\label{eq:decomposition}
\mathbb{P}\left ( \big . \eta _{V}'({\boldsymbol{0}}) = \mathfrak{s}\right )
= \sum _{n=2}^{\infty }\mathbb{P}\left ( \big . \eta
'({\boldsymbol{0}})=\mathfrak{s}, T_{V} = n \right ) ,
\end{equation}
where $n=1$ is excluded by~\eqref{eq:sssvwscond}.

\begin{proposition}\label{prop:lowlambda}
$ \mathbb{P}(\eta '_{V}({\boldsymbol{0}}) = \mathfrak{s}) \leqslant 4 G \,
\sqrt{ \lambda } \text{ for every } V \text{ finite} $.
\end{proposition}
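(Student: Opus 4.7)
The plan is to exploit the decomposition in~\eqref{eq:decomposition} by controlling each term $\mathbb{P}(\eta'_{V}({\boldsymbol{0}}) = \mathfrak{s}, T_{V} = n)$ through a single ``key'' sleep instruction at the origin, and then to optimize a truncation argument against $\mathbb{E}T_{V} \leq 2G$ to recover the $\sqrt{\lambda}$ behavior.

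First I would identify the key instruction. Inspect the flowchart of Figure~\ref{fig:flowchart}: during any weak stabilization, topplings at ${\boldsymbol{0}}$ are triggered only when $\eta({\boldsymbol{0}})\geq 2$, so a weak stabilization cannot produce $\eta({\boldsymbol{0}})=\mathfrak{s}$ unless the origin was already $\mathfrak{s}$ when that weak stabilization started. Consequently, on the event $\{\eta'_{V}({\boldsymbol{0}}) = \mathfrak{s}, T_{V} = n\}$, the $(n-1)$-th legal toppling at ${\boldsymbol{0}}$ performed between rounds (the ``Stage~B'' toppling that moved from round $n-1$ into round $n$) must be a sleep instruction. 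Write $p=\tfrac{\lambda}{1+\lambda}$ and let $K_{n-1}$ be the index of the origin-instruction consumed by that toppling, defined on $\{T_{V}\geq n\}$. The event $\{T_{V}\geq n,\,K_{n-1}=j\}$ is measurable with respect to $(\mathfrak{t}^{{\boldsymbol 0},i})_{i<j}$ together with all the randomness that does not involve origin-instructions of index $\geq j$, while $\mathfrak{t}^{{\boldsymbol 0},j}$ is independent of that $\sigma$-algebra and equals $\mathfrak{t}_{{\boldsymbol 0}\mathfrak{s}}$ with probability $p$. Summing on $j$ yields
\[
\mathbb{P}\bigl(\eta'_{V}({\boldsymbol{0}}) = \mathfrak{s},\, T_{V} = n\bigr) \,\leq\, p\cdot\mathbb{P}(T_{V}\geq n).
\]

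Next I would combine this with Corollary~\ref{cor:etvbound}. For any integer $N\geq 2$, splitting the sum in~\eqref{eq:decomposition} at $n=N$ gives
\[
\mathbb{P}\bigl(\eta'_{V}({\boldsymbol{0}}) = \mathfrak{s}\bigr) \,\leq\, \sum_{n=2}^{N} p\cdot\mathbb{P}(T_{V}\geq n) \;+\; \mathbb{P}(T_{V}>N) \,\leq\, pN + \frac{2G}{N},
\]
using $\mathbb{P}(T_{V}\geq n)\leq 1$ in the first sum and Markov's inequality together with $\mathbb{E}T_{V}\leq 2G$ in the tail. Choosing $N$ close to $\sqrt{2G/p}$ produces $2\sqrt{2Gp}\leq 2\sqrt{2G\lambda}$, and since $G>1$ this is at most $4G\sqrt{\lambda}$, as desired.

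The main technical point will be the conditioning argument in Step~1: one must justify that the random index $K_{n-1}$ behaves like a stopping time for the sequence $(\mathfrak{t}^{{\boldsymbol{0}},j})_{j}$, so that $\mathfrak{t}^{{\boldsymbol{0}},K_{n-1}}$ remains an independent Bernoulli$(p)$ sample even after conditioning on $\{T_{V}\geq n\}$. Once this is in place, everything else is a routine truncation. A pleasant byproduct is that the same argument in fact delivers the sharper bound $\mathbb{P}(\eta'_{V}({\boldsymbol{0}})=\mathfrak{s}) \leq 2Gp$ (without truncation), which is already more than enough for the small-$\lambda$ regime that drives Theorem~\ref{thm:supcritstauffertaggi}.
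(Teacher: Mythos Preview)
Your proof is correct and follows the same overall scheme as the paper: the decomposition~\eqref{eq:decomposition}, identification of the ``key'' sleep instruction at the start of round~$n$, a truncation at some level, and Markov's inequality against $\mathbb{E}T_V\leqslant 2G$ from Corollary~\ref{cor:etvbound}.

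The one difference is in the per-term bound. The paper bounds $\mathbb{P}(\eta'_V({\boldsymbol 0})=\mathfrak{s},\,T_V=n)$ by $\tfrac{\lambda}{1+\lambda}\bigl(\tfrac{1}{1+\lambda}\bigr)^{n-2}$, obtained by also requiring the between-round topplings in rounds $2,\dots,n-1$ to be jumps. You instead stop at the equality $\mathbb{P}(\eta'_V({\boldsymbol 0})=\mathfrak{s},\,T_V=n)=p\,\mathbb{P}(T_V\geqslant n)$; this is precisely identity~\eqref{eq:sharpweak}, which the paper derives later in the proof of Proposition~\ref{prop:alllambda}. Your stopping-time justification for it is correct: the event $\{T_V\geqslant n,\,K_{n-1}=j\}$ is indeed measurable with respect to $\eta_0$, all instructions off the origin, and $(\mathfrak{t}^{{\boldsymbol 0},i})_{i<j}$, so the fresh instruction $\mathfrak{t}^{{\boldsymbol 0},j}$ is still Bernoulli$(p)$. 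Summing your identity over $n\geqslant 2$ gives $\mathbb{P}(\eta'_V({\boldsymbol 0})=\mathfrak{s})=p\,(\mathbb{E}T_V-1)\leqslant 2Gp$, so as you note the truncation is in fact unnecessary and one gets a bound linear in $\lambda$, stronger than the stated $4G\sqrt{\lambda}$.
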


\begin{proof}
We want to control the summand in~\eqref{eq:decomposition}. Given that after
$n-1$ rounds there is an active particle at ${\boldsymbol{0}}$, the
conditional probability that the next instruction at ${\boldsymbol{0}}$ is a
sleep or jump instruction remains unaffected. In case it is a sleep
instruction, the procedure stops at round $n$ and $\eta
_{V}'({\boldsymbol{0}}) = \mathfrak{s}$. In case it is a jump instruction,
the procedure continues and might reach round $n+1$. So by induction on $n$
we get, for $n \geqslant 2$,
\begin{align*}
\mathbb{P}(\eta _{V}'({\boldsymbol{0}}) = \mathfrak{s}, T_{V} = n) &
\leqslant \tfrac{\lambda }{1+\lambda } \left ( \tfrac{1}{1+\lambda }
\right )^{n-2}.
\end{align*}
We now split the sum in~\eqref{eq:decomposition} at an arbitrary point
$n_{0}$ to get
\begin{align*}
\mathbb{P}(\eta _{V}'({\boldsymbol{0}}) = \mathfrak{s}) &
\leqslant \sum _{n=2}^{n_{0}} \tfrac{\lambda }{1+\lambda } \left (
\tfrac{1}{1+\lambda } \right )^{n-2} + \sum _{n=n_{0}+1}^{\infty }
\mathbb{P}(T_{V} = n) .
\end{align*}
By simple Markov inequality, minimization over $n_{0}$ and Corollary~\ref{cor:etvbound},
\begin{align*}
\mathbb{P}(\eta _{V}'({\boldsymbol{0}}) = \mathfrak{s}) &
\leqslant \lambda \cdot n_{0} + \frac{\mathbb{E}T_{V}}{n_{0}+1}
\leqslant 2 \sqrt{\lambda \mathbb{E}T_{V}} \leqslant 2 \sqrt{2 G\,
\lambda } ,
\end{align*}
which completes the proof.
\end{proof}

To boost the previous estimate we need a more careful analysis.
\begingroup\allowdisplaybreaks

\begin{proposition}\label{prop:alllambda}
$ \mathbb{P}(\eta '_{V}({\boldsymbol{0}}) = \mathfrak{s}) \leqslant 1 - (2 +
2\lambda )^{-4G} \text{ for every } V \text{ finite} $.
\end{proposition}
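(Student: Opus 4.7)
The plan is to prove Proposition~\ref{prop:alllambda} by coupling $\mathbb{P}$ with an auxiliary measure that forbids the origin from ever sleeping in a topple-origin step of the strong-stabilization procedure of Figure~\ref{fig:flowchart}, and then combining Markov's inequality on $T^{s}_{V}$ with the independence of the topple-origin sleep/jump outcomes. The starting observation is that $\eta'_{V}(\boldsymbol{0}) = \mathfrak{s}$ can only be produced by a sleep instruction consumed at the origin during one of the legal topplings following a weak stabilization, so $\eta'_{V}(\boldsymbol{0}) = 0$ as soon as every topple-origin instruction at the origin happens to be a jump.

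Let $\mathbb{Q}$ denote the measure obtained from $\mathbb{P}$ by replacing each instruction at the origin consumed in a topple-origin step by an independent jump with direction $p(\cdot)$, leaving all other instructions untouched. Under $\mathbb{Q}$ no origin sleep is ever consumed, so $\eta'^{\mathbb{Q}}_{V}(\boldsymbol{0}) = 0$ almost surely; moreover Lemmas~\ref{lemma:almostcommute}~and~\ref{lemma:addg} and Corollary~\ref{cor:etvbound} transfer verbatim because they rely only on a combinatorial identity and on the Green function of the walk with jump law $p$, both of which are unaffected by the modification. Hence $\mathbb{E}_{\mathbb{Q}} T^{s}_{V} \leqslant 2G$.

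I construct the coupling as follows: take i.i.d.\ pairs $(B_{j}, Z_{j})_{j \geqslant 1}$ with $\mathbb{P}(B_{j} = 1) = \tfrac{1}{1+\lambda}$ and $Z_{j} \sim p(\cdot)$, and declare the $j$-th topple-origin instruction to be the jump $Z_{j}$ under $\mathbb{Q}$, while under $\mathbb{P}$ it is that same jump if $B_{j} = 1$ and a sleep if $B_{j} = 0$. Then $(B_{j})_{j}$ is independent of all the randomness driving the $\mathbb{Q}$-procedure, and in particular of $T^{s,\mathbb{Q}}_{V}$. Setting $K = 4G - 1$ and
\begin{equation*}
A = \{B_{1} = \cdots = B_{K} = 1\} \cap \{T^{s,\mathbb{Q}}_{V} \leqslant K + 1\},
\end{equation*}
on $A$ the $\mathbb{P}$- and $\mathbb{Q}$-procedures consume identical instructions throughout, so $\eta'_{V}(\boldsymbol{0}) = 0$. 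Independence and Markov's inequality then give
\begin{equation*}
\mathbb{P}(A) = (1+\lambda)^{-(4G-1)}\, \mathbb{P}_{\mathbb{Q}}\bigl(T^{s,\mathbb{Q}}_{V} \leqslant 4G\bigr) \geqslant \tfrac{1}{2}(1+\lambda)^{-(4G-1)} \geqslant (2+2\lambda)^{-4G},
\end{equation*}
the last bound holding because $G > 1$ implies $2^{4G-1} \geqslant 2$, and hence $\mathbb{P}(\eta'_{V}(\boldsymbol{0}) = \mathfrak{s}) \leqslant 1 - (2+2\lambda)^{-4G}$.

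The main technical point I expect to need is justifying the coupling: the label ``$j$-th topple-origin instruction'' is defined dynamically in terms of the procedure, so to allocate the $Z_{j}$ and $B_{j}$ consistently I will use the i.i.d.\ structure of the instruction field to argue that one may equivalently split the stack at the origin into two independent stacks, one feeding weak-stabilization topplings and the other feeding topple-origin topplings, without changing the joint law. Once this exchange is in place, the independence of $(B_{j})$ from $T^{s,\mathbb{Q}}_{V}$ and the Markov computation above are routine.
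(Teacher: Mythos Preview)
Your argument is correct and rests on the same key observation as the paper: $T^{s}_{V}$ depends on the origin's instruction stack only through its subsequence of jump instructions, so the sleep/jump outcomes consumed in the topple-origin phase are independent of $T^{s}_{V}$. The paper phrases this as ``inserting or removing sleep instructions on the stack $(\mathfrak{t}^{\boldsymbol{0},j})_{j}$ may only add or remove cycles in the upper part of the flow diagram, but has no effect on the outcome of the lower part,'' and uses it to derive the exact identity
\[
\mathbb{P}(T_{V}\geqslant n+1)=(1+\lambda)^{-(n-1)}\,\mathbb{P}(T^{s}_{V}\geqslant n+1),
\]
which is then substituted into~\eqref{eq:decomposition} and the resulting sum split at $n_{0}=\lfloor 4G\rfloor$. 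You instead package the same independence as a coupling with the auxiliary measure $\mathbb{Q}$ and a single explicit good event. Both routes are short; the paper's buys an exact formula relating $T_{V}$ and $T^{s}_{V}$ along the way, while yours is perhaps more transparent probabilistically.

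Two minor remarks. First, the cleanest justification of $\mathbb{E}_{\mathbb{Q}}T^{s,\mathbb{Q}}_{V}\leqslant 2G$ is not to re-derive the lemmas under $\mathbb{Q}$ but to note that $T^{s,\mathbb{Q}}_{V}$ and $T^{s}_{V}$ have the \emph{same law}: both are the same deterministic function of Stack~A, the instructions off the origin, and the i.i.d.\ sequence of topple-origin jump directions, and this last sequence has law $p^{\otimes\mathbb{N}}$ under both $\mathbb{P}$ and $\mathbb{Q}$. Corollary~\ref{cor:etvbound} then applies directly. Second, take $K$ integer (e.g.\ $K=\lfloor 4G\rfloor -1$); the final inequality still goes through since $\tfrac{1}{2}(1+\lambda)^{-K}\geqslant \tfrac{1}{2}(1+\lambda)^{-(4G-1)}\geqslant (2+2\lambda)^{-4G}$. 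Your two-stack splitting is exactly the predictable-allocation argument that underlies the paper's independence claim, so no additional work is needed there beyond what the paper itself sketches.
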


\begin{proof}
Let $n \geqslant 2$. We start observing that
%
\begin{equation}\label{eq:sharpweak}
\mathbb{P}\left ( \eta _{V}'({\boldsymbol{0}}) = \mathfrak{s}, \big . T_{V} =
n \,\middle |\, T_{V} \geqslant n \right ) = \tfrac{\lambda }{1+\lambda } .
\end{equation}
We want an upper bound for $\mathbb{P}( \eta _{V}'({\boldsymbol{0}}) =
\mathfrak{s}, T_{V} = n)$ which remains smaller than $1$ when summed over
$n$, even for large $\lambda $. For that we will relate stabilization to
strong stabilization in a way that retains independence.

Let $k \geqslant 2$ be fixed. The event $T^{s}_{V} \geqslant k$ is equal
to the event that, on rounds $1,\dots ,k-1$ of strong stabilization via
successive weak stabilizations shown in Figure~\ref{fig:flowchart}, the
answer to ``Particle at ${\boldsymbol{0}}$?'' is ``Yes.''

Now the main observation is that this event is \emph{independent} of the
number of times the upper cycle (the one where ${\boldsymbol{0}}$ is toppled
persistently until a jump instruction is found) is performed at each of
the rounds $1,\dots ,k-1$. Indeed, inserting or removing sleep instructions
on the stack $(\mathfrak{t}^{{\boldsymbol{0}},j})_{j}$ at the origin
may only add or remove cycles in the upper part of the flow diagram, but
has no effect on the outcome of the lower part.

Hence, for $n<k$ we have
$ \mathbb{P}\left ( \big . T_{V} > n \,\middle |\, T_{V} \geqslant n,T^{s}_{V}
\geqslant k \right ) = \tfrac{1}{1+\lambda } $, which corresponds to the
answer to ``Jump instruction?'' being ``Yes'' right at the beginning of
round $n$. Indeed, this ensures that $T_{V}>n$ because
$T_{V}^{s} \geqslant k$ implies that the next question is also being answered
affirmatively. From this identity, we get
\begin{align*}
\mathbb{P}\left ( \big . T_{V} \geqslant n+1 \,\middle |\, T^{s}_{V}
\geqslant k \right ) = \tfrac{1}{1+\lambda } \, \mathbb{P}\left (
\big . T_{V} \geqslant n \,\middle |\, T^{s}_{V} \geqslant k \right ) .
\end{align*}

Iterating the above equality for $n-1,n-2,\dots $ and using~\eqref{eq:sssvwscond}
yields
\begin{align*}
\mathbb{P}\left ( \big . T_{V} \geqslant n + 1 \,\middle |\, T^{s}_{V}
\geqslant k \right ) = \left ( \tfrac{1}{1+\lambda } \right )^{n-1} \,
\mathbb{P}\left ( \big . T_{V} \geqslant 2 \,\middle |\, T^{s}_{V}
\geqslant k \right ) = \left ( \tfrac{1}{1+\lambda } \right )^{n-1} .
\end{align*}

Finally, taking $k=n+1$, and since $T_{V} \geqslant n+1$ implies
$T^{s}_{V} \geqslant k$, the equality becomes
\begin{equation*}
\mathbb{P}\left ( \big . T_{V} \geqslant n + 1 \right ) = \left (
\tfrac{1}{1+\lambda } \right )^{n-1} \mathbb{P}\left ( \big . T^{s}_{V}
\geqslant n+1 \right ) .
\end{equation*}

Substituting this and~\eqref{eq:sharpweak} into~\eqref{eq:decomposition}
gives
\begin{align*}
\mathbb{P}(\eta _{V}'({\boldsymbol{0}}) = \mathfrak{s}) & = \sum _{n=2}^{
\infty } \tfrac{\lambda }{1+\lambda } \, \mathbb{P}\left ( \big . T_{V}
\geqslant n \right )
\\
& = \tfrac{\lambda }{1+\lambda } \sum _{n=0}^{\infty } \left (
\tfrac{1}{1+\lambda } \right )^{n} \mathbb{P}\left ( \big . T^{s}_{V}
\geqslant n + 2 \right ) .
\end{align*}
Splitting the sum at
$n_{0} = \lfloor 4G \rfloor \geqslant 2 \mathbb{E}[T_{V}^{s}-2]$ by Corollary~\ref{cor:etvbound},
we get
\begin{align*}
\mathbb{P}(\eta _{V}'({\boldsymbol{0}}) = \mathfrak{s}) &
\leqslant \tfrac{\lambda }{1+\lambda } \sum _{n=0}^{n_{0}-1} \left (
\tfrac{1}{1+\lambda } \right )^{n} + \tfrac{\lambda }{1+\lambda } \sum _{n=n_{0}}^{
\infty } \tfrac{1}{2} \left ( \tfrac{1}{1+\lambda } \right )^{n}
\\
& = 1 - \tfrac{1}{2} (1+\lambda )^{-n_{0}} \leqslant 1 - (2 + 2
\lambda )^{-4G} ,
\end{align*}
which concludes the proof.
\end{proof}\endgroup

\begin{proof}[Proof of Theorem~\ref{thm:supcritstauffertaggi}]
Assume that $\zeta > 4 G \sqrt{ \lambda }$. The number $N$ of particles that
exit $V$ during stabilization of $V$ equals $\|\eta _{0}\|_{V}$ minus the
number of sites $z$ such that $\eta '_{V}(z) = \mathfrak{s}$. Using
Proposition~\ref{prop:lowlambda}, $EN \geqslant (\zeta - 4 G \sqrt{\lambda
})\times |V|$, so Condition~\eqref{eq:conditione} is satisfied and therefore
the system a.s.\ stays active. The same argument works assuming $\zeta > 1 -
(2 + 2\lambda )^{-4G}$.
\end{proof}

\section{Uniqueness of the critical density}\label{sec:universality}

In this section we prove Theorem~\ref{thm:universality}, or the following
equivalent formulation.

\begin{theorem}\label{thm:equivalent}
Let $d$, $p(\cdot )$ and $\lambda $ be given. Let $\nu _{1}$ and $\nu _{2}$
be two spatially ergodic distributions on
$(\mathbb{N}_{0})^{\mathbb{Z}^{d}}$, with respective densities $\zeta
_{1}<\zeta _{2}$. If the ARW system is a.s.\ fixating with initial state
${\nu _{2}}$, then it is also a.s.\ fixating with initial state ${\nu _{1}}$.
\end{theorem}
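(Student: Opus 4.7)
The plan is to exploit the sitewise representation via a common field of instructions $\mathcal{I}$ shared between two coupled initial configurations. By Theorem~\ref{thm:equivalence}, fixation of $\nu_j$ is equivalent to $m_{\eta_0^{(j)}}({\boldsymbol{0}})<\infty$ almost surely under $\mathbb{P}^{\nu_j}$, and by ergodicity this event has probability $0$ or $1$. So the task reduces to transferring a.s.\ finiteness of $m_{\eta_0}({\boldsymbol{0}})$ from $\nu_2$ down to $\nu_1$. The most direct route would be to build a translation-ergodic coupling of $\eta_0^{(1)}\sim\nu_1$ and $\eta_0^{(2)}\sim\nu_2$ with $\eta_0^{(1)}\leqslant\eta_0^{(2)}$ pointwise: driven by the same $\mathcal{I}$, Lemma~\ref{lemma:monotonicity} would then yield $m_{\eta_0^{(1)}}\leqslant m_{\eta_0^{(2)}}$ and we would be done. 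The obstacle is that no such monotone ergodic coupling need exist for arbitrary ergodic marginals with $\zeta_1<\zeta_2$, so this naive approach fails in general.

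Instead, I would introduce a locally-finite, infinite-step, parallel-update toppling procedure, applied simultaneously to both configurations using the shared $\mathcal{I}$. In each round, every currently unstable site in each configuration topples exactly once according to its next instruction, and $m_{\eta_0}$ is recovered as the monotone limit of the round-by-round partial odometers. Two features make this well suited to a universality argument: first, the round-by-round odometer is a translation-covariant factor of $(\eta_0,\mathcal{I})$, so it inherits translation-ergodicity from \S \ref{sub:mtperg}; second, because $\mathcal{I}$ is shared and rounds proceed in lockstep, exact mass-transport accounting is available per round, site by site, comparing the numbers of topplings executed in the $\nu_1$-run and the $\nu_2$-run.

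The core of the argument is then a mass-transport comparison. The two parallel-update runs use the same instructions and differ only through their initial mass, whose expected per-site difference is the finite quantity $\zeta_2-\zeta_1$; the mass-transport principle turns this bounded density gap into an integrable bound on the amount by which topplings in the $\nu_1$-run can ``overshoot'' those in the $\nu_2$-run at any fixed site, round by round. Iterating across rounds and using the ergodic $0$--$1$ law, almost-sure finiteness of the $\nu_2$-odometer at the origin forces almost-sure finiteness of the $\nu_1$-odometer there, which by Theorem~\ref{thm:equivalence} gives fixation of $\nu_1$.

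The hard part will be this last comparison. Without a pointwise domination of initial configurations, we must convert a purely \emph{averaged} density inequality $\zeta_1<\zeta_2$ into an \emph{almost-sure} statement about the infinite-round odometer. This requires designing the parallel procedure so that (i) each site is toppled only finitely many times per round on a set of full measure (the ``locally-finite'' constraint, which is delicate because ARW is non-attractive and instructions can activate sleeping particles arbitrarily far away), and (ii) the per-round mass-transport bound tracking the excess activity of the $\nu_1$-run over the $\nu_2$-run does not accumulate or blow up across rounds. These are the main technical hurdles of the argument, and I would expect the bulk of \S \ref{sec:universality} to be devoted to setting up the parallel-update procedure with precisely the comparability properties required by this iteration.
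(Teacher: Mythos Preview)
Your proposal identifies the right toolkit (parallel updates, factors, mass transport, ergodicity) but assembles it incorrectly, and the gap is substantive. You want to run \emph{both} configurations in lockstep and compare odometers round by round, hoping that the density gap $\zeta_2-\zeta_1$ bounds how much the $\nu_1$-run can overshoot the $\nu_2$-run. But the two runs are driven by independent initial configurations: locally $\eta_0^{(1)}$ can exceed $\eta_0^{(2)}$, and sharing $\mathcal{I}$ does not give any pointwise or expected inequality between $m^{(1)}_k({\boldsymbol{0}})$ and $m^{(2)}_k({\boldsymbol{0}})$. Mass transport conserves particle density within each run, but there is no natural transport function relating the two odometers, and the vague ``integrable overshoot bound'' you describe has no obvious candidate.

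The paper's proof uses the same ingredients with a different architecture. It keeps $\xi_0\sim\nu_2$ \emph{static} and runs a parallel-update procedure only on $\eta_0\sim\nu_1$, toppling (acceptably, waking particles if needed) every site where $\eta_k(x)>\xi_0(x)$. Mass transport is used not to compare two runs but to show that this embedding odometer $h_0'$ is a.s.\ finite: were it infinite, mass conservation would force $\liminf_k|\eta_k({\boldsymbol{0}})|\geqslant|\xi_0({\boldsymbol{0}})|$ and hence $\zeta_1\geqslant\zeta_2$. The limit $\eta_0'\leqslant\xi_0$ is now a genuine pointwise domination, and the \emph{remaining} instructions $\tilde{\mathcal{I}}$ (shifted by $h_0'$) are fresh and independent of $\xi_0$; so the assumed stabilizability of $\xi_0$ transfers to $\eta_0'$ by monotonicity, and the two odometers add. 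The key idea you are missing is this two-stage decoupling: first manufacture the domination dynamically, then exploit independence of the unused instructions.
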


Below we briefly sketch the proof of Theorem~\ref{thm:equivalent}, and
then give the complete proof in three parts: embedding the initial configuration
into another one with higher density, stabilization of the embedded configuration,
and finally stabilization of the original configuration. An interesting
feature that distinguishes the toppling procedure used here is that it
is not a sequential procedure for ever-growing finite domains as in the
previous sections, but rather a sequence of parallel-update type of operation
that topples infinitely many sites at once, in order to use ergodicity
and mass conservation.

\begin{problem}
Suppose $\nu $ is a translation-ergodic active state (active means $\nu $ is
supported on $(\mathbb{N}_{\mathfrak{s}})^{\mathbb{Z}^{d}} \setminus \{0,
\mathfrak{s}\}^{\mathbb{Z}^{d}}$) with density $\zeta > \zeta _{c}$. Show
that the ARW with initial state $\nu $ a.s.\ stays active.
\end{problem}

\begin{problem}
Suppose $\nu $ is a translation-ergodic absorbing state (absorbing means
supported on $\{0,\mathfrak{s}\}^{\mathbb{Z}^{d}}$) with density
$\zeta > \zeta _{c}$. Show that the ARW with initial state $\nu $ plus
one active particle at the origin stays active with positive probability.
Note that this property is false if the graph is a tree~\cite{JohnsonRolla19}.
\end{problem}

\begin{problem}
Under which conditions besides $\zeta <\zeta _{c}$ does
$\mathbb{E}[m({\boldsymbol{0}})] < \infty $?
\end{problem}

\begin{problem}
When does $\mathbb{P}(m({\boldsymbol{0}}) \geqslant 1)=1$ imply
$\mathbb{P}(m({\boldsymbol{0}}) = \infty )=1$?
\end{problem}

Let us give a brief sketch before moving to the proof.

\medskip
The proof is algorithmic and has two stages, both stages being infinite.
The idea is very simple and is related to what is sometimes called decoupling.
Let $\eta _{0}$ and $\xi _{0}$ be independent and distributed as
$\nu _{1}$ and $\nu _{2}$. In the first stage, we evolve $\eta $ starting
from $\eta _{0}$ until it gives a configuration
$\eta _{0}' \leqslant \xi _{0}$. In the second stage, we use the same set
of instructions to evolve both systems. Since the evolution of
$\xi $ starting from $\xi _{0}$ a.s.\ fixates, so does the evolution of
$\eta $ starting from $\eta _{0}'$, concluding the proof. More precisely,
in the first stage we force each particle in the system $\eta $ to move
(by waking it up if needed) until it meets a particle of $\xi _{0}$; once
they meet, they are paired and will not be moved until the second stage.
Even if it takes infinitely many steps to finish pairing globally, a.s.\ every
particle in the system $\eta $ will eventually be paired, and the resulting
odometer will be a.s.\ finite at every site (if the odometer were infinite
somewhere, by ergodicity it would be infinite everywhere, so every particle
in $\xi _{0}$ would be paired, implying
$\zeta _{2} \leqslant \zeta _{1}$ by mass conservation). This yields a
configuration $\eta _{0}' \leqslant \xi _{0}$. In the second stage, we
simply evolve the system using the remaining instructions. Since they are
independent of $\xi _{0}$, $\eta _{0}$, and of the instructions used in
the first stage, by assumption the remaining instructions a.s.\ stabilize
$\xi _{0}$ leaving a locally-finite odometer. By monotonicity of the final
odometer with respect to the configuration, the same set of remaining instructions
also stabilizes $\eta _{0}'$, again with a locally-finite odometer. Adding
the odometer of both stages would give the final locally-finite odometer
given by stabilization of $\eta _{0}$, except that in the first stage we
have not followed the toppling rules correctly. But it still gives an upper
bound due to Lemma~\ref{lemma:lap}.

\medskip
We now turn to the proof.

\medskip
To make the argument precise we will not exactly move particles as in the
previous sketch, since the embedding requires an infinite number of topplings.
We instead explore the instructions and define a sequence of configurations
in terms of $\eta _{0}$, $\xi _{0}$ and $\mathcal{I}$. We end up concluding
that a.s.\ the result of this exploration implies that $\eta _{0}$ is stabilizable,
which in turn implies the statement of the theorem.

\subsubsection*{Embedding of the smaller configuration}

Without loss of generality, we assume that $\nu _{1}$ or $\nu _{2}$ is
not only ergodic but also mixing (otherwise consider $\nu _{3}$ as i.i.d.\ Poisson
with mean $\frac{\zeta _{1}+\zeta _{2}}{2}$ which is mixing, and apply
the result from $\nu _{2}$ to $\nu _{3}$ and from $\nu _{3}$ to
$\nu _{1}$). So suppose $\nu _{2}$ is mixing. Recall that
$\mathcal{I}$ is an i.i.d.\ field, thus the pair
$(\xi _{0},\mathcal{I})$ is mixing and hence the triple
$\omega =(\eta _{0},\xi _{0},\mathcal{I})$ is ergodic, see \S
\ref{sub:mtperg}.

Let $\eta _{0}$, $\xi _{0}$ and $\mathcal{I}$ be given, and take
$h_{0} \equiv 0$. Fix some $k=0,1,2,3,4,\dots $ and suppose
$\eta _{k}$ and $h_{k}$ have been defined as a factor of $\omega $ (see
\S \ref{sub:mtperg}). Denote by $A_{k}$ the set given by
\begin{equation*}
A_{k} = \{ x : \eta _{k}(x)>\xi _{0}(x) \} ,
\end{equation*}
and consider an arbitrary enumeration
\begin{equation*}
A_{k}=\{x_{1}^{k},x_{2}^{k},x_{3}^{k},\dots \}.
\end{equation*}
Let
%
\begin{equation}\label{eq:etahlimit}
(\eta _{k}^{j},h_{k}^{j}) = \Phi _{(x_{1}^{k},x_{2}^{k},\dots ,x_{j}^{k})}
(\eta _{k},h_{k}) \quad \text{and} \quad (\eta _{k+1},h_{k+1}) = \lim _{j}
(\eta _{k}^{j},h_{k}^{j})
\end{equation}
in case $A_{k}$ is infinite -- in case it is finite, by ergodicity it is
a.s.\ empty in which case we let $(\eta _{k+1},h_{k+1}) = (\eta _{k},h_{k})$.
Note that the condition $\eta _{k}(x)>\xi _{0}(x)$ is also satisfied when
$\eta _{k}(x)=\mathfrak{s}$ and $\xi _{0}(x)=0$, so this operation may
require waking up particles. That is, in going from $(\eta _{k},h_{k})$ to
$(\eta _{k+1},h_{k+1})$, every site in $A_{k}$ is toppled once. These
topplings are legal when $\eta _{k}(x) \geqslant 1$, and they are acceptable
but illegal in case $\eta _{k}(x) = \mathfrak{s}> 0 = \xi _{0}(x)$.

As we go through $j=1,2,3,\dots $ in~\eqref{eq:etahlimit}, for each
$j$ the field $h_{k}^{j}$ is increased by one unit at $x_{j}^{k}$, so
$h_{k+1}$ is well-defined and satisfies
\begin{equation*}
h_{k+1}(x) = h_{k}(x) + \mathds{1}_{A_{k}}(x).
\end{equation*}
To see that $\eta _{k}$ is also well-defined in the limit~\eqref{eq:etahlimit},
observe that, for each site $x$, the sequence
$(\eta _{k}^{j}(x))_{j}$ decreases for at most one value of $j$. In case
it decreases, it may send one particle to another site $z \ne x$. Thus,
by a standard use of the mass transport principle, the configuration at
each site $x$ increases a finite number of times (the expected number of
times is less than one, as can be seen by taking $f(x,y)$ as the indicator
of the event that $x \in A_{k}$ and toppling $x$ sends a particle to
$y$), so the limit $\eta _{k}$ is a.s.\ finite. Since there are a.s.\ finitely
many sites $z \in A_{k}$ that send a particle to $x$ when toppled, it follows
from the local Abelian property that the limit $(\eta _{k},h_{k})$ does
not depend on the enumeration of $A_{k}$, so it is a factor of
$\omega $.

Let $k \to \infty $ and define
\begin{equation*}
h_{0}'(x)=\lim _{k} h_{k}(x).
\end{equation*}
We now prove that, if
$\mathbb{P}(h_{0}'({\boldsymbol{0}})=\infty )>0$ then we must have
$\zeta _{1} \geqslant \zeta _{2}$.

First, we claim that
$\mathbb{P}(h_{0}'({\boldsymbol{0}})=\infty )=0 \text{ or } 1$. Since
$(\eta _{k},h_{k})$ is a factor of $\omega $, it is translation-ergodic.
Moreover, a.s.\ the event that $h_{0}'({\boldsymbol{0}})=\infty $ implies
the event that $h_{0}'(z)=\infty $ for every $z$ such that $p(z)>0$. These
two facts together imply that, either $h_{0}'(x)=\infty $ a.s.\ for every
$x$, or $h_{0}'(x)<\infty $ a.s.\ for every $x$, see \S
\ref{sub:condbu}. This proves the zero-one law.

Suppose $h_{0}'({\boldsymbol{0}})=\infty $ with positive probability.
By the zero-one law we have $h_{0}'({\boldsymbol{0}})=\infty $ a.s., which
means that
$\mathbb{P}(\limsup _{k} \{{\boldsymbol{0}}\in A_{k}\})=1$. But if
${\boldsymbol{0}}\in A_{k_{0}}$ for some $k_{0}$, then necessarily
$\eta _{k_{0}-1}({\boldsymbol{0}}) > \xi _{0}({\boldsymbol{0}})$, thus
$\eta _{k}({\boldsymbol{0}}) \geqslant \xi _{0}({\boldsymbol{0}})$ for
all $k \geqslant k_{0}$ by definition of $A_{k}$, and therefore
$\liminf _{k} |\eta _{k}({\boldsymbol{0}})| \geqslant |\xi _{0}({
\boldsymbol{0}})|$. On the other hand, from the mass transport principle
we have
$\mathbb{E}|\eta _{k}({\boldsymbol{0}})| = \mathbb{E}| \eta _{k-1}({
\boldsymbol{0}}) | = \dots = \mathbb{E}| \eta _{0}({\boldsymbol{0}})
| = \zeta _{1}$ (to show the first identity, we let $f_{k}(x,y)$ be the
indicator that, on step $k$, $x$ sends a particle to $y$, and let
$f_{k}(x,x)$ be the number of particles that were present at $x$ at the
beginning of stage $k$ and stayed at $x$). By Fatou's Lemma,
$\zeta _{1} \geqslant \zeta _{2}$.

Since we are assuming $\zeta _{1}<\zeta _{2}$, we must have
$h_{0}'({\boldsymbol{0}})<\infty $ a.s. Now, as we go through
$k=1,2,3,\dots $, the value of $\eta _{k}({\boldsymbol{0}})$ can decrease
only when ${\boldsymbol{0}}\in A_{k}$, \textit{i.e.}\ only when
$h_{k}({\boldsymbol{0}})$ increases. Hence,
$(\eta _{k}({\boldsymbol{0}}))_{k}$ is a.s.\ eventually non-decreasing,
so it converges. Its limit $\eta _{0}'({\boldsymbol{0}})$ satisfies
$\eta _{0}'({\boldsymbol{0}}) \leqslant \xi _{0}({\boldsymbol{0}})$,
otherwise ${\boldsymbol{0}}$ would be in $A_{k}$ for all large enough
$k$ and $h_{0}'({\boldsymbol{0}})$ would be infinite. By translation invariance,
a.s.\ $h_{0}'(x)<\infty $ and
$\eta _{0}'(x) = \lim _{k}\eta _{k}(x) \leqslant \xi _{0}(x)$ for every
$x$.

\subsubsection*{Stabilization of the original configuration}

In the previous stage we obtained a pair $(\eta _{0}',h_{0}')$ a.s.\ satisfying
$h_{0}'(x)<\infty $ and $\eta _{0}'(x) \leqslant \xi _{0}(x)$ for every
$x \in \mathbb{Z}^{d}$. Let $\tilde{\mathcal{I}}$ be the set of instructions
given by
\begin{equation*}
\tilde{\mathfrak{t}}^{x,j} = \mathfrak{t}^{x,h_{0}'(x)+j}, \qquad x
\in \mathbb{Z}^{d}, j\in \mathbb{N}.
\end{equation*}
that is, the field obtained by deleting the instructions used in the embedding
stage described above. Since the first $h_{0}'(x)$ instructions have been
deleted at each site $x$, stabilizing a system with the instructions in
$\tilde{\mathcal{I}}$ instead of $\mathcal{I}$ is equivalent to starting
with odometer at $h_{0}'$ instead of $h_{0} \equiv 0$.

Now note that the collection of instructions
$\big ( \mathfrak{t}^{x,j} : x\in \mathbb{Z}^{d}, j > h_{0}'(x)
\big )$ played no role in the construction of $\eta _{0}'$ and
$h_{0}'$, so they are independent of $\xi _{0}$ and $h_{0}'$. Hence,
$\tilde{\mathcal{I}}$ is an i.i.d.\ field just like $\mathcal{I}$, and
it is also independent of $\xi _{0}$.

Therefore,
$\mathbb{P}\big (\xi _{0}
\text{ is $\tilde{\mathcal{I}}$-stabilizable}\big ) = \mathbb{P}\big (
\xi _{0} \text{ is $\mathcal{I}$-stabilizable}\big )$, and the latter equals
$1$ by assumption. Since $\eta _{0}' \leqslant \xi _{0}$, we have
$\mathbb{P}\big (\eta _{0}'
\text{ is $\tilde{\mathcal{I}}$-stabilizable}\big ) \geqslant\break
\mathbb{P}\big (\xi _{0}
\text{ is $\tilde{\mathcal{I}}$-stabilizable}\big ) = 1$. This means that
a.s.\ there exists $h_{1}'$ such that, for all finite
$V \subseteq \mathbb{Z}^{d}$ and all $x\in \mathbb{Z}^{d}$,
$m_{V,\eta _{0}';\tilde{\mathcal{I}}}(x) \leqslant h_{1}'(x) <
\infty $.

\subsubsection*{Stabilization of the original configuration}

Given the properties of the two previous stages, we now give the (perhaps
tedious) proof that $\eta _{0}$ is a.s.\ $\mathcal{I}$-stabilizable. More
precisely, we will show that
\begin{equation*}
m_{\eta _{0};\mathcal{I}}(x) \leqslant h_{0}'(x) + h_{1}'(x) <
\infty , \quad \forall \ x \in \mathbb{Z}^{d}.
\end{equation*}

In the first stage, the limits $\eta _{0}'$ and $h_{0}'$, which are determined
by $\eta _{0}$, $\xi _{0}$ and $\mathcal{I}$, almost surely exist and
satisfy $h_{0}'<\infty $ and $\eta _{0}' \leqslant \xi _{0}$. Suppose this
event occurs, and let $V$ be a fixed finite set.

If we start from $(\eta _{0},h_{0})$ and perform all topplings in
$V$ as well as particle additions to $V$ (coming from $V^{c}$), following
the same order as in the first stage, only a finite number of operations
will be performed, and we end up with a configuration that equals
$(\eta _{0}',h_{0}')$ on $V$.

By the local Abelian property, we can add the particles first and then
topple the sites in $V$ as in the first stage, obtaining the same result.
This means that there is some $\bar{\eta }_{V} \geqslant \eta _{0}$ and an
acceptable sequence $\alpha _{V}=(x_{1},\dots ,x_{n})$ for
$(\bar{\eta }_{V},h_{0})$ such that $m_{\alpha _{V}} = h_{0}'$ on $V$ and
\begin{equation*}
\Phi _{\alpha _{V}} (\bar{\eta }_{V},h_{0}) = (\eta _{0}',h_{0}')
\text{ on } V.
\end{equation*}

Now, in the second stage, we showed that a.s.\ there exists
$h_{1}'(x) < \infty $ such that
$m_{V',\eta _{0}';\tilde{\mathcal{I}}}(x) \leqslant h_{1}'(x)$ for every
finite $V'$. Suppose this event occurs.

Notice that
$m_{V,\eta _{0}',h_{0}';{\mathcal{I}}}(x) = m_{V,\eta _{0}';
\tilde{\mathcal{I}}}(x)$, that is, to stabilize $\eta _{0}'$ in $V$ using
the shifted field of instructions is the same as stabilize
$\eta _{0}'$ in $V$ using the original field of instructions and shifted
odometer. Therefore, there exists
$\beta _{V}=(x_{n+1},\dots ,x_{m})$ contained in $V$ such that
$m_{\beta _{V}} \leqslant h_{1}'$ on $V$ and
$\Phi _{\beta _{V}} (\eta _{0}',h_{0}')$ is stable in $V$.

By the above identity,
$ \Phi _{\beta _{V}} \circ \Phi _{\alpha _{V}} (\bar{\eta }_{V},h_{0}) =
\Phi _{\beta _{V}} (\eta _{0}',h_{0}') $, on $V$. Since the latter is
stable in $V$, by Lemma~\ref{lemma:lap} we have
\begin{equation*}
m_{V,\bar{\eta }_{V};\mathcal{I}}(x) \leqslant m_{\alpha _{V}}(x) + m_{
\beta _{V}}(x), \quad \text{ for all } x\in \mathbb{Z}^{d}.
\end{equation*}
Thus, by monotonicity,
\begin{equation*}
m_{V,\eta _{0};\mathcal{I}}(x) \leqslant m_{V,\bar{\eta }_{V};
\mathcal{I}}(x) \leqslant m_{\alpha _{V}}(x) + m_{\beta _{V}}(x)
\leqslant h_{0}'(x) + h_{1}'(x) < \infty .
\end{equation*}
We now note that the above bound does not depend on $V$, so
\begin{equation*}
m_{\eta _{0};\mathcal{I}}(x) = \sup _{V \text{ finite}} m_{V,\eta _{0};
\mathcal{I}}(x) \leqslant h_{0}'(x) + h_{1}'(x) < \infty , \quad
\forall \ x \in \mathbb{Z}^{d},
\end{equation*}
which means that $\eta _{0}$ is stabilizable, concluding the proof of Theorem~\ref{thm:equivalent}.

\section{A recursive multi-scale argument}
\label{sec:multiscale}

In this section we comment on a multi-scale argument used to prove the
following.

\begin{theorem}
If the jumps are unbiased, $\zeta _{c}>0$ for every $\lambda >0$.
\end{theorem}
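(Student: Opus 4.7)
The plan is to verify the fixation Condition~\eqref{eq:conditionb} for small~$\zeta$. By Theorem~\ref{thm:universality}, we may assume $\eta_0$ is Bernoulli with parameter $\zeta$, and by~\eqref{eq:lapupper} it suffices to exhibit an acceptable toppling procedure whose odometer at $\boldsymbol{0}$ admits a uniform tail bound. Introduce scales $L_{k+1} = L_k^\alpha$ for some fixed $\alpha > 1$, a recursively defined density parameter $\zeta_k \searrow 0$, and a boundary-flux parameter $\theta_k \searrow 0$. Partition $\mathbb{Z}^d$ into disjoint boxes of side $L_k$ and call a box $Q$ at scale $k$ \emph{tame} if, after stabilizing inside $Q$ by legal topplings only, it contains at most $\zeta_k |Q|$ sleeping particles and has emitted at most $\theta_k |\partial Q|$ particles through its boundary.

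The base case $k=0$ is established using the weak stabilization machinery of Section~\ref{sec:weak}: for $\zeta$ small enough relative to $\lambda$ and $L_0$ large, a concentration estimate shows that a scale-$0$ box is tame with probability at least $1 - p_0$, with $p_0$ arbitrarily small. The recursion proceeds as follows: to stabilize a scale-$(k+1)$ box, alternate legal stabilizations of its scale-$k$ sub-boxes with transfers of boundary particles between them. If most sub-boxes are tame then fluxes between neighbors are small, each tame sub-box only receives a modest injection, and a key lemma will say that re-stabilizing a tame sub-box after such a small injection leaves it tame with overwhelming conditional probability, up to slightly worse constants absorbed into $\zeta_{k+1}$ and $\theta_{k+1}$. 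The non-tame sub-boxes are controlled by a Peierls-type enumeration: summing over bad configurations of sub-boxes and invoking the inductive bound $p_k$ yields $p_{k+1} \leq (C L_{k+1}/L_k)^d p_k^c$ for some $c > 1$, so an appropriate choice of $\alpha$ forces $p_k \to 0$ fast enough that the origin is contained in a tame box at every scale with positive probability, which through~\eqref{eq:lapupper} yields the required uniform bound on $m_{V,\eta_0}(\boldsymbol{0})$.

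The main obstacle is honest decoupling between sub-boxes. Unbiased walks are recurrent in $d \leq 2$ and only marginally transient in higher dimensions, so particles can return to a previously stabilized sub-box many times, potentially waking frozen particles and destroying tameness. Genuine probabilistic independence between sub-boxes is unavailable; instead one needs a deterministic, algorithmic compartmentalization showing that the topplings inside a given sub-box depend only on a local portion of the field $\mathcal{I}$ and on the incoming flux. A natural route, echoing Sections~\ref{sec:traps} and~\ref{sec:weak}, is to force each tame sub-box to install sleep traps along its inner boundary so that later incoming particles are either absorbed at the boundary layer or transit without disturbing the interior; the delicate point is that the resulting frozen configuration must itself be eligible as a tame initial state at the next scale, and this compatibility must be preserved through the full recursion.
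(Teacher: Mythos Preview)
Your proposal is a sketch rather than a proof, and the gap you yourself flag---decoupling between sub-boxes across scales---is precisely the crux, and you do not resolve it. You write that ``a key lemma will say that re-stabilizing a tame sub-box after such a small injection leaves it tame with overwhelming conditional probability,'' but after a sub-box has been stabilized, its configuration is correlated with the portion of~$\mathcal{I}$ already consumed there, so no such conditional estimate is available for free. Your suggested fix of installing sleep traps along the inner boundary does not address this: the traps themselves depend on~$\mathcal{I}$, and the interior configuration after stabilization is not a fresh sample from any tractable law.

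The paper's sketch (following Sidoravicius--Teixeira) resolves this by a completely different mechanism: a \emph{sieving} step. Before each stabilization attempt at scale~$k$, one uses acceptable topplings to let every particle perform enough jumps to approximately uniformize its position within its level-$k$ box; with a small density \emph{sprinkling} (increasing the target Poisson parameter by a summable amount at each scale), the resulting configuration can be coupled, with high probability, to a genuine i.i.d.\ Poisson field restricted to the inner boxes. This restores independence and allows the inductive estimate~$p_k$ to be invoked cleanly. The recursion is then
\[
p_{k+1} \leqslant \frac{L_{k+1}^{2d}}{L_k^{2d}}\, p_k^{2} + e_{k+1},
\]
where the exponent~$2$ arises because the system is given \emph{two} independent attempts (sieve, stabilize; if some level-$k$ box fails, sieve again into the full box and stabilize once more), and $e_{k+1}$ collects the probability that sieving or balancedness fails. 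Your $p_k^c$ with unspecified mechanism and your Peierls enumeration over bad sub-box patterns do not reproduce this structure.

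A separate issue: invoking the weak-stabilization machinery of \S\ref{sec:weak} for the base case is circular. That section proves $\zeta_c \geqslant \tfrac{\lambda}{1+\lambda}$ directly, which already implies the theorem; the point of the multi-scale argument is that it was historically the first proof for $d\geqslant 2$ and does not rely on weak stabilization at all.
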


We give an overview of the general strategy, referring the reader to~\cite{SidoraviciusTeixeira17}
for the complete argument. Note that the above theorem is a particular
case of Theorem~\ref{thm:subcritstauffertaggi}.

\medskip
The main step is to show that an initial configuration restricted to a
very large box stabilizes within a slightly larger box, with high probability.
This is then used to show Condition~\eqref{eq:conditionu}. This is proved
by recursion on the scale of the box, and in fact the proof does not rely
much on specific details of the actual ARW dynamics. In a sense, this kind
of approach fits to our intuition that no matter how big a defect is, it
will only affect a neighborhood of comparable size.

The box at scale $k$ is a cube $V_{k}$ of side length $L_{k}$, defined
as follows. Let $\delta =\frac{1}{10}$, $L_{0}=10^{4}$ and
\begin{equation*}
L_{k+1} = \lfloor L_{k}^{\delta }\rfloor ^{2} L_{k}.
\end{equation*}
Notice that $L_{k}$ increases as a doubly exponential of $k$. We also define
$R_{k+1} = \lfloor L_{k}^{\delta }\rfloor L_{k}$ as an intermediate scale
between $L_{k}$ and $L_{k+1}$. In Figure~\ref{fig:stbox}, we see an
\emph{inner box} $V_{k+1}'$, an \text{intermediate box}, and a
\emph{full box} $V_{k+1}$ of level $k+1$.

Let $p_{k}$ denote the probability that, starting from a Poisson configuration
in $V_{k}'$, some particle exits $V_{k}$. That $p_{k} \to 0$ fast as
$k\to \infty $ follows from the recursion relation\vspace{-3pt}
\begin{equation*}
p_{k+1} \leqslant \frac{L_{k+1}^{2d}}{L_{k}^{2d}} \ {p_{k}}^{2} + e_{k+1},
\end{equation*}
consisting of a combinatorial term, the probability ${p_{k}}^{2}$ that
stabilization fails twice at scale $k$, and the probability
$e_{k+1}$ that something goes wrong at scale $k+1$. Indeed, if
$p_{k_{0}}$ is small enough and $e_{k} \to 0$ fast enough, then the square
power above beats the $1+2\delta $ power in the definition of
$L_{k}$, and $p_{k}$ vanishes doubly-exponentially fast in $k$.

Let us describe some aspects of this recursion step, depicted in Figure~\ref{fig:renorm}.

\begin{figure}
\includegraphics{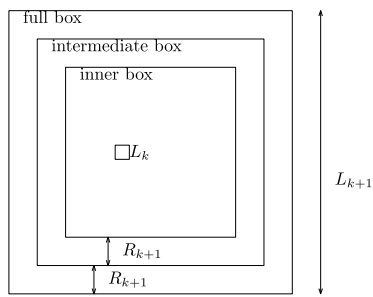}
\caption{Boxes and scales; $L_{k} \ll R_{k+1} \ll L_{k+1}$.}
\label{fig:stbox}\vspace{-9pt}
\end{figure}

Configurations in light gray have Poisson product distribution with the
right density. They are restricted to the inner box of level $k+1$ for
the initial configuration, and the inner boxes of level $k$ for the ``sieved
configurations''. Configurations in dark gray are absorbing configurations,
typically attained by the dynamics. Configurations in gray with a grid
are ``balanced configurations''. Thick arrows represent typical events,
while thin arrows represent events of low probability, either
$e_{k+1}$ or $p_{k}$.

\textbf{Starting fresh.} To let the dynamics run on boxes of the previous
scale and use recursion, it is important to start with a Poisson product
distribution within their inner boxes. This is achieved by a sieving procedure
described below.

\textbf{Worst case scenario.} If the dynamics does not to stabilize all
the $\frac{L_{k+1}^{d}}{L_{k}^{d}}$ boxes of level $k$, the configuration
inside these boxes is no longer i.i.d.\ Poisson. In the absence of any
useful knowledge about the resulting distribution of particles in this
case, we use only the fact that the total number of particles within each
box is still a Poisson random variable, and thus cannot be much larger
than its mean. A \emph{balanced configuration} is such that the number of
particles within each box of level $k$ is appropriately bounded.

\textbf{Sieving procedure.} Starting from a balanced configuration, we let
each particle move for a certain time, so as to uniformize its relative
position within whichever level-$k$ box contains it. After performing all
these jumps, if the particle happens not to be in the inner box of the
level-$k$ box containing it, we repeat the procedure again, as many times
as needed. This \emph{reshuffling} with \emph{sieving} results in a state
that with high probability, can be coupled with an i.i.d.\ Poisson configuration.
This is one of the heaviest parts in~\cite{SidoraviciusTeixeira17}. In
order for this coupling to be possible, a slight increase in the density
is necessary, analogous to the \emph{sprinkling} technique in percolation
(this increase should decay just fast enough to be summable over $k$).

\textbf{The chain of events.} By hypothesis, we start with a Poisson product
measure inside the inner box $V_{k+1}'$. Such a configuration is typically
balanced, that is, each box of level $k$ has an appropriately bounded number
of particles. We then let these particles move around using acceptable
topplings so that their distribution is now close to i.i.d.\ Poisson inside
the inner boxes of level $k$ (the sieving procedure). During this procedure
they cannot exit the intermediate box. We now let the evolution run normally
within each box of level $k$, and typically each box stabilizes nicely
without letting particles leave. It may happen however that some of these
boxes of level $k$ is not stabilized as intended (which is atypical). In
spite of failing to stabilize as intended, the resulting configuration
is still balanced. So we let the particles move around again, now obtaining
a sieved configuration in the full box $V_{k+1}$. Typically, the resulting
configuration is properly sieved. The system is then given a second chance
to stabilize. This second attempt will typically be successful, but may
fail again if some of the level-$k$ boxes does not stabilize as expected.

In the proof there are many aspects to keep under control, and many delicate
statements that we omit here. The above description is not intended to
serve as a sketch of proof, but hopefully gives a general flavor of the
main argument. For all the details, the reader is referred to~\cite{SidoraviciusTeixeira17}.

\begin{figure}
\includegraphics{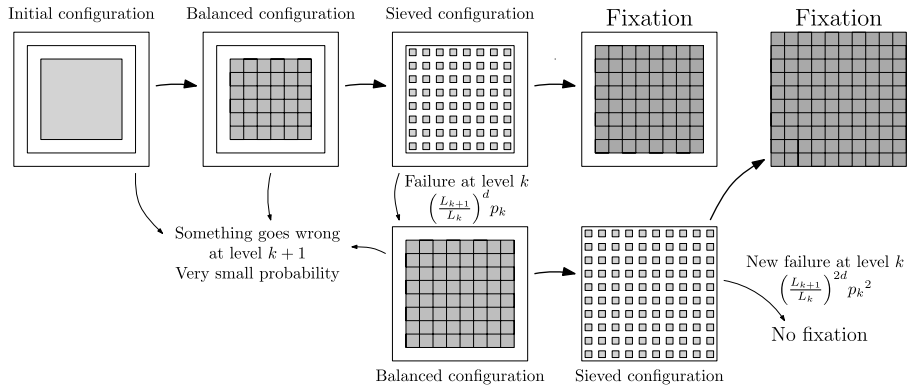}
\caption{Illustrative diagram of events for the recursion relation.}
\label{fig:renorm}
\vspace{-6pt}
\end{figure}

\section{Arguments using particle-wise constructions}
\label{sec:particlewise}

The techniques presented so far used the site-wise representation and its
properties, as described in~\S \ref{sec:definitions}. In the particle-wise
construction, the randomness of the jumps is not attached to the sites,
but to the particles.\eject

In this section we use a particle-wise construction to prove Theorem~\ref{thm:conditione}
and the following ones. We always assume that
$\mathbb{E}|\eta _{0}({\boldsymbol{0}})|<\infty $.

\begin{theorem}[Mass conservation]\label{thm:conservation}
Consider the ARW on $\mathbb{Z}^{d}$ with i.i.d.\ initial configuration
$\eta _{0}$. If the system a.s.\ fixates, then
$\mathbb{E}|\eta _{\infty }({\boldsymbol{0}})| = \mathbb{E}|\eta _{0}({
\boldsymbol{0}})|$, where
$\eta _{\infty }(x) = \lim \limits _{t\to \infty } \eta _{t}(x)$.
\end{theorem}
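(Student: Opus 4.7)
The approach is to use the particle-wise construction of \S\ref{sec:constructions} together with the mass transport principle from \S\ref{sub:mtperg}. In this construction each particle is labelled and carries its own independent clocks and instructions, so one may speak of the continuous-time trajectory $(X_t^{(i)})_{t\geqslant 0}$ of the $i$-th particle. Since $\eta_0$ is i.i.d.\ and the particle-wise randomness is i.i.d.\ across particles, the joint distribution is translation-invariant.

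Granted for now that under fixation each particle a.s.\ has a well-defined final position $X_\infty^{(i)} := \lim_{t\to\infty} X_t^{(i)}$, set
\begin{equation*}
F(x, y) := \#\bigl\{i : X_0^{(i)} = x \text{ and } X_\infty^{(i)} = y\bigr\}.
\end{equation*}
This is a translation-invariant random function of two sites, and the mass transport principle gives
\begin{equation*}
\mathbb{E}\Bigl[\sum_y F(\boldsymbol{0}, y)\Bigr] = \mathbb{E}\Bigl[\sum_y F(y, \boldsymbol{0})\Bigr].
\end{equation*}
The left-hand side is the expected number of particles starting at $\boldsymbol{0}$, which is $\mathbb{E}|\eta_0(\boldsymbol{0})|$ (finite by assumption), and the right-hand side is the expected number of particles ending at $\boldsymbol{0}$, which is $\mathbb{E}|\eta_\infty(\boldsymbol{0})|$. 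The identity then follows.

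The main technical obstacle is the well-definedness of $X_\infty^{(i)}$. Fixation yields $m_{\eta_0}(x) < \infty$ a.s.\ for every $x$ by Theorem~\ref{thm:equivalence} and a countable union, and hence an a.s.-finite time $\tau_y$ after which no activity occurs at $y$. However this does not \emph{a priori} preclude a single particle from visiting infinitely many sites (with $\tau_y$ diverging along the trajectory); ruling this pathology out requires combining the above with the explicit coupling between the odometer-based and particle-based representations constructed in \S\ref{sec:constructions}, together with the fact that an active particle performs jumps at a positive Poisson rate and so cannot be active for infinite total time without producing activity at arbitrarily late times at some single site. Once the settling property is in hand, the remaining steps are pure mass accounting and translation invariance.
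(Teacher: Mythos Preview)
Your overall approach is the same as the paper's: use the particle-wise construction and apply the mass transport principle to the function counting particles that start at $x$ and settle at $y$. The paper's proof is exactly this, with $f(x,y)=\sum_j \mathds{1}_{A(x,j,y)}$.

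The one substantive gap is in your justification that $X_\infty^{(i)}$ exists. The paper handles this by invoking Theorem~\ref{thm:agg} (site fixation $\Leftrightarrow$ particle fixation), whose hard direction $(iv)\Rightarrow(i)$ occupies all of \S\ref{sub:agg} and relies on introducing auxiliary randomness (a uniform choice among the first $n$ sites visited after time $t$) together with mass transport and an approximation of the event ``particle stays active'' by local events. Your sketched argument --- that a particle ``cannot be active for infinite total time without producing activity at arbitrarily late times at some single site'' --- does not work as written: a particle that stays active could drift to infinity, visiting each site only finitely many times, and this is perfectly compatible with every individual site $y$ fixating at some finite $\tau_y$ (with $\tau_y\to\infty$ along the trajectory, exactly the scenario you flagged). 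The coupling in \S\ref{sec:constructions} by itself does not rule this out; you need the ergodicity-based argument of \S\ref{sub:agg}. Once you replace your sketch by a citation of Theorem~\ref{thm:agg}, the proof is complete and identical to the paper's.
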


Now if the system fixates, then
$\eta _{\infty }({\boldsymbol{0}})\in \{0,\mathfrak{s}\}^{\mathbb{Z}^{d}}$,
hence by mass conservation
$\zeta = \mathbb{E}|\eta _{0}({\boldsymbol{0}})| = \mathbb{E}|
\eta _{\infty }({\boldsymbol{0}})| \leqslant 1$. This gives the following
corollary.

\begin{corollary}
\label{cor:muclessone}
$\zeta _{c} \leqslant 1$ for every $\lambda $.
\end{corollary}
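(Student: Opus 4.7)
The plan is to derive the corollary as a direct consequence of Theorem~\ref{thm:conservation}, essentially formalizing the two-line argument sketched just before the statement. First I would invoke Theorem~\ref{thm:universality} to reduce the problem: to prove $\zeta_c \leqslant 1$ it suffices to exhibit, for every $\zeta > 1$, some translation-ergodic initial distribution $\nu$ on $(\mathbb{N}_0)^{\mathbb{Z}^d}$ of density $\zeta$ under which the ARW a.s.\ stays active. The natural choice is $\nu$ i.i.d.\ (for instance Poisson of mean $\zeta$), which is exactly the hypothesis under which Theorem~\ref{thm:conservation} has been stated, and which has finite first moment as required.

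I would then argue by contradiction. Fix $\zeta > 1$ and assume the system with initial distribution $\nu$ fixates almost surely. By the very definition of fixation, every site is eventually vacant or occupied by a single sleeping particle, so the pointwise limit $\eta_\infty(x) = \lim_{t\to\infty} \eta_t(x)$ exists and lies in $\{0,\mathfrak{s}\}$ for every $x$. In particular $|\eta_\infty(\mathbf{0})| \leqslant 1$ almost surely, whence $\mathbb{E}|\eta_\infty(\mathbf{0})| \leqslant 1$. On the other hand, Theorem~\ref{thm:conservation} yields $\mathbb{E}|\eta_\infty(\mathbf{0})| = \mathbb{E}|\eta_0(\mathbf{0})| = \zeta > 1$, a contradiction. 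Hence the system a.s.\ stays active, and since $\zeta > 1$ was arbitrary we conclude $\zeta_c \leqslant 1$.

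In this reduction there is nothing really to fight: the only two ingredients needed are the definition of fixation (each stable site holds at most one particle, worth $|\cdot|=1$) and mass conservation. The actual work sits entirely in the upstream Theorem~\ref{thm:conservation}, which is where I would expect any genuine obstacle. The subtle point there will be to rule out mass escaping to infinity, since a priori particles could accumulate far from the origin without violating pointwise fixation; handling this cleanly is precisely what the particle-wise construction announced at the start of \S\ref{sec:particlewise} is designed to do, via exchangeability and translation invariance. Once mass conservation is in hand, the corollary is immediate.
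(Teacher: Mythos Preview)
Your proposal is correct and follows essentially the same argument as the paper: assume fixation, observe that $\eta_\infty(\mathbf{0})\in\{0,\mathfrak{s}\}$ forces $\mathbb{E}|\eta_\infty(\mathbf{0})|\leqslant 1$, and invoke mass conservation (Theorem~\ref{thm:conservation}) to deduce $\zeta\leqslant 1$. The only difference is that you make explicit the appeal to Theorem~\ref{thm:universality} to justify working with an i.i.d.\ initial state, which the paper leaves implicit.
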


Combining the above with Theorems~\ref{thm:monotone} and~\ref{thm:subcritstauffertaggi}
we get the following.

\begin{corollary}
For $\lambda =\infty $, $\zeta _{c} = 1$.
\end{corollary}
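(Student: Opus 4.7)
The plan is a short sandwich argument combining the three cited results. For the upper bound, Corollary~\ref{cor:muclessone} already gives $\zeta_c \leqslant 1$ for every value of $\lambda$, including $\lambda = \infty$, so no work is needed there. For the lower bound, I would fix an arbitrary finite $\lambda_0 < \infty$ and apply Theorem~\ref{thm:subcritstauffertaggi} to get $\zeta_c(\lambda_0) \geqslant \tfrac{\lambda_0}{1+\lambda_0}$. Theorem~\ref{thm:monotone} says $\zeta_c$ is non-decreasing in $\lambda$, so $\zeta_c(\infty) \geqslant \zeta_c(\lambda_0) \geqslant \tfrac{\lambda_0}{1+\lambda_0}$. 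Letting $\lambda_0 \to \infty$ through finite values yields $\zeta_c(\infty) \geqslant 1$, and combining with the upper bound gives equality.

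In symbols, the whole argument collapses to
\begin{equation*}
1 = \lim_{\lambda_0 \to \infty} \tfrac{\lambda_0}{1+\lambda_0} \leqslant \sup_{\lambda_0 < \infty} \zeta_c(\lambda_0) \leqslant \zeta_c(\infty) \leqslant 1,
\end{equation*}
where the first inequality is Theorem~\ref{thm:subcritstauffertaggi}, the second is monotonicity (Theorem~\ref{thm:monotone}), and the third is Corollary~\ref{cor:muclessone}.

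There is no real obstacle. The only mild subtlety is that the $\lambda = \infty$ process has been deferred to \S\ref{sub:resampling}, so one should be mindful that Theorem~\ref{thm:monotone} and Corollary~\ref{cor:muclessone} are being invoked at the boundary value; but since $\zeta_c(\infty)$ is a well-defined critical density for the model described in \S\ref{sub:arwdef}, and monotonicity immediately propagates the finite-$\lambda$ lower bound to the limit $\lambda \to \infty$, the two-line argument goes through cleanly.
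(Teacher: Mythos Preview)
Your proof is correct and follows exactly the approach the paper intends: the corollary is stated as an immediate combination of Corollary~\ref{cor:muclessone}, Theorem~\ref{thm:monotone}, and Theorem~\ref{thm:subcritstauffertaggi}, and you have spelled out precisely that sandwich argument. The caution you exercise in passing through finite $\lambda_0$ via monotonicity rather than invoking Theorem~\ref{thm:subcritstauffertaggi} directly at $\lambda=\infty$ is harmless and arguably cleaner, since the paper confirms in \S\ref{sub:lambda} that Theorem~\ref{thm:monotone} holds for $\lambda\in[0,\infty]$.
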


The process with $\lambda =\infty $ is discussed in \S
\ref{sub:resampling}, where we also consider an equivalent model to prove
the following.

\begin{theorem}
\label{thm:abactive}
For i.i.d.\ $\eta _{0}$ with density $\zeta =1$ and positive variance,
for all $\lambda \in (0,\infty ]$, the ARW a.s.\ stays active.
\end{theorem}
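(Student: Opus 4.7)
The plan is to argue by contradiction, assuming the ARW a.s.\ fixates. First, by Theorem~\ref{thm:conservation} (mass conservation), $\mathbb{E}|\eta_\infty({\boldsymbol{0}})| = \mathbb{E}|\eta_0({\boldsymbol{0}})| = 1$. Since fixation forces $\eta_\infty \in \{0,\mathfrak{s}\}^{\mathbb{Z}^d}$, so that $|\eta_\infty({\boldsymbol{0}})| \in \{0,1\}$, the only way to reach mean $1$ is $\eta_\infty({\boldsymbol{0}}) = \mathfrak{s}$ almost surely. Thus every site ends up carrying exactly one sleeping particle.

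Viewed through the particle-wise construction, this gives a translation-covariant random bijection between the labeled initial particles and the sites of $\mathbb{Z}^d$: each initial particle is matched with its final sleeping location. Because $\eta_0$ has mean $1$ and positive variance, with strictly positive probability a site is initially empty and with positive probability it carries at least two particles. Hence the induced matching is necessarily nontrivial: surplus particles from over-filled sites must physically travel to deficit sites. This enforced transport is the engine that I will use to force activity.

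For $\lambda = \infty$ the dynamics coincides with internal DLA with multiple sources, and the plan is to invoke the equivalent model announced for \S\ref{sub:resampling}, in which stabilization is reformulated as a sequential procedure where each released particle performs a random walk until it reaches a currently vacant site and settles there. Combining this reformulation with a mass-transport estimate for the displacements in the induced matching, one would show that the total flux of particles through a fixed site is infinite in expectation, contradicting the finite visit count demanded by fixation. The case $\lambda < \infty$ should then follow either by a parallel argument in the site-wise representation built directly on mass conservation, or by a monotonicity coupling in the spirit of Theorem~\ref{thm:monotone}: more sleep instructions only help fixation, so activity at $\lambda = \infty$ should propagate down to every $\lambda < \infty$ under a joint construction of the instruction fields.

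The main obstacle is turning \emph{the induced matching is nontrivial} into the required contradiction with fixation. A naive balance at a single site yields only $\mathbb{E}[J_{\mathrm{in}}({\boldsymbol{0}})] = \mathbb{E}[J_{\mathrm{out}}({\boldsymbol{0}})]$, which is perfectly consistent with fixation, so this single-site mass-transport is not enough. Overcoming it will require a sharper second-moment estimate tied to the positive variance of $\eta_0({\boldsymbol{0}})$, an averaged-type bound in the spirit of Theorem~\ref{thm:conditione}, or a decisive use of the resampling/IDLA model of \S\ref{sub:resampling}, which is presumably the point of introducing that equivalent model at this stage.
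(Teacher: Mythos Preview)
Your setup is correct and matches the paper: reduce to $\lambda=\infty$ via Theorem~\ref{thm:monotone}, assume fixation, and use mass conservation to force $\eta_\infty({\boldsymbol{0}})=\mathfrak{s}$ a.s., i.e.\ $\mathbb{P}({\boldsymbol{0}}\text{ is never visited})=0$. The gap is exactly where you flag it: the induced-matching and flux line does not close. Knowing that the matching is nontrivial, or even that displacements have infinite expected length, does not by itself force infinitely many particles through a fixed site; translation-invariant matchings with arbitrarily heavy tails can coexist with finite local flux, and you already noted that the single-site mass-transport identity is compatible with fixation. No second-moment upgrade or Condition~\eqref{eq:conditione}-style averaging rescues this, because those tools produce activity from a lower bound on outflow, whereas here you are trying to manufacture such a bound from the mere existence of a bijection.

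The paper's argument in \S\ref{sub:resampling} bypasses this entirely via a \emph{resampling coupling} in the particle-hole model. Under fixation, the set of particles that ever visit ${\boldsymbol{0}}$ is a.s.\ finite, so with positive probability it consists of particles starting at some fixed finite list $x_1,\dots,x_k$. Now run two systems $\omega$ and $\tilde\omega$ sharing the same putative trajectories $\mathbf{X}$, with $\eta_0$ and $\tilde\eta_0$ agreeing off $\{x_1,\dots,x_k\}$ and independently resampled on that set. With positive probability (here is where positive variance, specifically $\mathbb{P}(\eta_0(x)=0)>0$, is used) the event above occurs for $\tilde\omega$ \emph{and} $\eta_0(x_1)=\cdots=\eta_0(x_k)=0$. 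On this event, $\omega$ is obtained from $\tilde\omega$ by deleting precisely the particles that visit ${\boldsymbol{0}}$; a deletion-monotonicity property of the particle-hole dynamics (each site retains the first arrival, so removing particles cannot create new visits) then shows that no particle ever visits ${\boldsymbol{0}}$ in $\omega$. Hence $\mathbb{P}({\boldsymbol{0}}\text{ is never visited})>0$, contradicting the first paragraph. The missing idea in your proposal is this local resampling of the initial configuration at the finitely many source sites, which converts ``finitely many visitors'' directly into ``positive probability of no visitors'' without any flux accounting.
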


The requirement of positive variance cannot be waived. Indeed, if
$\lambda =\infty $ and $\eta _{0} \equiv 1$ then the configuration is already
stable at time zero.

\begin{problem}
Prove Theorem~\ref{thm:conservation} without using the particle-wise construction.
\end{problem}

\begin{problem}
Prove Theorems~\ref{thm:conservation} and~\ref{thm:abactive} replacing
the the i.i.d.\ assumption by translation ergodicity.
\end{problem}

\begin{remarkVTEX}
We highlight once more that, for unbiased walks on $\mathbb{Z}^{2}$, Corollary~\ref{cor:muclessone}
is the best bound we have, even for small $\lambda $.
\end{remarkVTEX}

We now give a formal description of the particle-wise construction, and
then move on to proving the above results.

\subsection{Description and basic properties}
\label{sub:pwconstr}

We want to revisit the description of \S \ref{sub:arwdef} and now interpret
that \emph{particles are labeled}. Each existing particle at time
$t=0$ is assigned a label $(x,j)$, where $x\in \mathbb{Z}^{d}$ denotes
its starting position and $j=1,\dots ,|\eta _{0}(x)|$ distinguishes particles
starting at the same site $x$. Let
$Y^{x,j}=(Y^{x,j}_{t})_{t \geqslant 0}$ be given by the position of particle
$(x,j)$ at each time $t$. Let
$\gamma ^{x,j}=(\gamma ^{x,j}(t))_{t \geqslant 0}$ be given by
$\gamma ^{x,j}(t) = 1$ if particle $(x,j)$ is active at time $t$ or
$\gamma ^{x,j}(t) = \mathfrak{s}$ if it is sleeping. Write
$\mathbf{Y}=(Y^{x,j})_{x,j}$ and
$\boldsymbol{\gamma }=(\gamma ^{x,j})_{x,j}$. Then the triple
$\boldsymbol{\eta }=(\eta _{0},\mathbf{Y},\boldsymbol{\gamma })$ describes
the whole evolution of the system.

Whereas the process $(\eta _{t})_{t \geqslant 0}$, given by
\begin{equation*}
\eta _{t}(z)=\sum _{x} \sum _{j \leqslant |\eta _{0}(x)|} \delta _{Y^{x,j}(t)}(z)
\cdot \gamma ^{x,j}(t),
\end{equation*}
only counts the number of particles at a given site at a given time, having
each particle labeled gives a lot more information and allows different
techniques to be employed.

For a system whose initial configuration contains finitely many particles,
the evolution described above is always well-defined. It is a simple continuous-time
Markov chain on a countable space, and many different explicit constructions
will produce $\boldsymbol{\eta }$ with the correct distribution. We now
describe one which can be extended to infinite initial configurations and
has proved particularly useful.

\subsubsection*{The particle-wise construction}

Assign to each particle $(x,j)$ a continuous-time walk
$X^{x,j}=(X^{x,j}_{t})_{t \geqslant 0}$, independently of anything else,
as well as a Poisson clock
$\mathcal{P}^{x,j}\subseteq \mathbb{R}_{+}$ according to which the particle
will try to sleep. $X^{x,j}$ is the path of the particle parameterized
by its \emph{inner time}, which may be slowed down with respect to the system
time, depending on the interaction with other particles (denoting the inner
time of particle $(x,j)$ at time $t$ by $\sigma ^{x,j}(t)$ we have
$Y^{x,j}_{t}=X^{x,j}_{\sigma ^{x,j}(t)}$). So $X^{x,j}$ will be called
the \emph{putative trajectory} of particle $(x,j)$. Write
$\mathbf{X}=(X^{x,j})_{x,j}$ and
$\boldsymbol{\mathcal{P}}=(\mathcal{P}^{x,j})_{x,j}$.

For a deterministic initial configuration $\xi $ containing finitely many
particles, $\boldsymbol{\eta }$ is a.s.\ determined by
$(\xi ,\mathbf{X},\boldsymbol{\mathcal{P}})$ in the obvious way. The
construction for infinite $\eta _{0}$ is done via limits over the sequences
of balls $(B^{y}_{n})_{n\in \mathbb{N}}$, centered at each site
$y\in \mathbb{Z}^{d}$. This family of sequences is countable and translation-invariant.
For $\eta \in (\mathbb{N}_{\mathfrak{s}})^{\mathbb{Z}^{d}}$ and finite
$V \subseteq \mathbb{Z}^{d}$, let
$\eta ^{V} = \eta \cdot \mathds{1}_{V}$ denote the restriction of
$\eta $ to $V$.

\begin{definition}[Well-definedness]
\label{def:pwwd}
We say that the above construction is \emph{well-defined} if: (i) for each
$x,y\in \mathbb{Z}^{d}$, $j\in \mathbb{N}$ and $t>0$, both
$(Y^{x,j}_{s})_{s\in {[0,t]}}$ and
$(\gamma ^{x,j}_{s})_{s\in {[0,t]}}$ are the same in the systems
$(\eta _{0}^{B^{y}_{n}},\mathbf{X},\boldsymbol{\mathcal{P}})$ for all
but finitely many $n$; (ii) the limiting process
$\boldsymbol{\eta }=(\eta _{0},\mathbf{Y},\boldsymbol{\gamma })$ does not
depend on $y$.
\end{definition}

The benefit of requiring the limit not to depend on $y$ is that
$\boldsymbol{\eta }$ is a factor of $\omega $ (see \S \ref{sub:mtperg}).
In particular, the system with labeled particles is translation-ergodic,
satisfies the mass transport principle, and probabilities of local events
can be approximated by finite systems regardless of which construction
is used. This last property implies that different explicit constructions
all yield a process $(\eta _{t})_{t\geqslant 0}$ with the same law.

\begin{theorem}
\label{thm:welldefined}
If $\sup _{x} \mathbb{E}|\eta _{0}(x)| < \infty $, then the above particle-wise
construction is a.s.\ well-defined.
\end{theorem}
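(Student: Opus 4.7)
The plan is to prove a \emph{finite speed of influence} property: for each fixed $(x,j)$ and $t>0$, there exists almost surely a finite random set $V^{\ast}\subseteq\mathbb{Z}^{d}$ such that, for every finite $V\supseteq V^{\ast}$, the trajectory $(Y^{x,j}_{s},\gamma^{x,j}_{s})_{s\in[0,t]}$ produced from the truncated initial condition $\eta_{0}^{V}$ coincides with the one produced from $\eta_{0}^{V^{\ast}}$. This immediately yields both requirements of Definition~\ref{def:pwwd}: property (i), because for each $y$ one has $B_{n}^{y}\supseteq V^{\ast}$ for all large $n$; property (ii), because the limit equals the $y$-independent quantity determined by $V^{\ast}$.

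To construct $V^{\ast}$, attach to each potential particle $(y,k)$ its \emph{reach} $R^{y,k}=\sup_{s\leqslant t}|X^{y,k}_{s}-y|$ and its putative trace $\mathcal{A}^{y,k}=\{X^{y,k}_{s}:s\leqslant t\}\subseteq B_{R^{y,k}}^{y}$. Since the inner time of any particle is non-decreasing and bounded by the real time $t$, the actual path $(Y^{y,k}_{s})_{s\leqslant t}$ is always contained in $\mathcal{A}^{y,k}$, so two particles whose putative traces are disjoint cannot interact on $[0,t]$. Consider the interaction graph on labelled particles where $(y_{1},k_{1})\sim(y_{2},k_{2})$ iff $\mathcal{A}^{y_{1},k_{1}}\cap\mathcal{A}^{y_{2},k_{2}}\neq\emptyset$, let $\mathcal{C}$ be the connected component of $(x,j)$, and set $V^{\ast}=\bigcup_{(y,k)\in\mathcal{C}}B_{R^{y,k}}^{y}$. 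By the contracting-trace observation, adding labelled particles outside this set does not alter the evolution of $(x,j)$ on $[0,t]$.

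The key input is the super-exponential tail estimate: since $X^{y,k}$ is a continuous-time rate-$1$ walk, its number of jumps by time $t$ is Poisson$(t)$, hence
\begin{equation*}
\mathbb{P}(R^{y,k}\geqslant r)\leqslant \mathbb{P}\bigl(\mathrm{Poisson}(t)\geqslant r\bigr)\leqslant e^{-r\log(r/(et))}\quad\text{for }r\geqslant et.
\end{equation*}
Combined with $\sup_{z}\mathbb{E}|\eta_{0}(z)|<\infty$, this gives for every finite $W\subseteq\mathbb{Z}^{d}$
\begin{equation*}
\mathbb{E}\bigl[\#\{(y,k):\mathcal{A}^{y,k}\cap W\neq\emptyset\}\bigr]\leqslant \sum_{y}\mathbb{E}|\eta_{0}(y)|\cdot\mathbb{P}\bigl(R^{y,k}\geqslant \mathrm{dist}(y,W)\bigr)<\infty.
\end{equation*}
I would then build $\mathcal{C}$ by breadth-first exploration from $(x,j)$, taking $W_{0}=\{x\}$ and $W_{n+1}=\bigcup_{(y,k):\mathcal{A}^{y,k}\cap W_{n}\neq\emptyset}\mathcal{A}^{y,k}$; applied along an increasing sequence of balls together with Borel--Cantelli, the bound above shows that almost surely the exploration terminates inside some random finite $B_{L}^{x}$.

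The main obstacle will be this last step: controlling chains of overlapping reaches. A single reach $R^{y,k}$ has extremely light tails, but one must rule out long chains that collectively span large distances. The cleanest route is to bound the expected number of chains $(y_{0},k_{0}),\dots,(y_{m},k_{m})$ with $|y_{0}-x|\leqslant L/2$, $|y_{m}-x|\geqslant L$, and $|y_{i}-y_{i-1}|\leqslant R^{y_{i-1},k_{i-1}}+R^{y_{i},k_{i}}$ by a product of per-step estimates using the independence of the $X^{y,k}$; the $e^{-r\log r}$ decay defeats the polynomial growth of $|B_{r}|$, so the sum over chains (and over $L$) is summable and Borel--Cantelli yields finiteness of $\mathcal{C}$. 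Once $\mathcal{C}$ is finite, property (i) reduces to well-definedness of the ARW dynamics on the finite labelled system $(\eta_{0}^{V^{\ast}},\mathbf{X}|_{V^{\ast}},\boldsymbol{\mathcal{P}}|_{V^{\ast}})$, which is a routine construction of a continuous-time Markov chain on a countable state space, and (ii) follows from the $y$-independence of $V^{\ast}$.
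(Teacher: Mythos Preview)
Your reduction to showing that the connected component $\mathcal{C}$ of the trace-overlap graph is finite is correct, and the observation that a particle's actual path is contained in its putative trace is the right starting point. The gap is in the last step: the component $\mathcal{C}$ is \emph{not} almost surely finite in general.

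Your overlap graph is a Boolean-type percolation model: the vertices are the actually-present particles (density bounded by $\zeta_{\max}$ per site) and two are joined when their random sets $\mathcal{A}^{y,k}\subseteq B^{y}_{R^{y,k}}$ intersect. For fixed $t$, the radii $R^{y,k}$ have a fixed law with positive mean; for large $\zeta_{\max}$ (or large $t$) this model is supercritical and has an infinite cluster with positive probability. In that regime your chain-counting argument cannot succeed: the expected number of neighbours of a given particle, say $\mu=\sum_{y}\zeta_{\max}\,\mathbb{P}(\mathcal{A}^{y,1}\cap\mathcal{A}^{0,1}\neq\emptyset)$, is a finite constant thanks to the $e^{-r\log r}$ tail, but nothing forces $\mu<1$. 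A chain of length $m$ of \emph{typical} reaches already spans distance of order $m$, so there is no ``one large reach'' to pay for, and the combinatorial factor $\mu^{m}$ dominates. The super-exponential tail of $R$ only controls the \emph{rare} long steps; it says nothing about chains of many short steps, which is precisely what percolation provides.

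The paper's proof circumvents this by tracking the \emph{dynamical} propagation of influence rather than the static overlap of putative traces. Removing one particle can only alter the evolution of another at the moment of a reactivation event: either an influenced particle jumps onto a lone sleeper, or a particle attempts to sleep at a site currently covered by influenced particles. This yields a coupling with a branching random walk $(U_{t})_{t\geqslant 0}$ started from a \emph{single} particle, with branching rate $\lambda+2$, so that $\mathbb{P}\bigl(z\in Z^{x}_{t}(\xi)\bigr)\leqslant\mathbb{P}\bigl(U_{t}(z-x)\geqslant 1\bigr)$ uniformly in $\xi$. The crucial point is that this bound does not see the density $\zeta_{\max}$ at all: influence spreads one reactivation at a time regardless of how crowded the system is. Summing $\mathbb{E}[U_{t}(z-x_{n})]$ over $n$ gives $e^{(2+\lambda)t}<\infty$, and Borel--Cantelli finishes. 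Your approach, by contrast, bounds the dynamics by a density-dependent percolation, which is why it breaks.
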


The proof is deferred to \S \ref{sub:particleexistence}.

\subsubsection*{Fixation equivalence and mass conservation}

Fixation as defined in \S \ref{sub:asptsoc} concerns the state of sites.
Now that the particles are being labeled, it makes sense to consider fixation
of particles. We say that \emph{particle $(x,j)$ stays active} if
$|\eta _{0}(x)| \geqslant j$ and $\gamma ^{x,j}(t)$ is not eventually
$\mathfrak{s}$. Likewise, we say that \emph{site $x$ stays active} if
$\eta _{t}(x)$ is not eventually constant.

\begin{theorem}
\label{thm:agg}
Suppose $\eta _{0}$ is i.i.d. The following are equivalent:
\begin{enumerate}[$(i)$]
\item $\mathbf{P}( \text{some site stays active})>0$;
\item $\mathbf{P}( \text{all sites stay active})=1$;
\item $\mathbb{P}( \text{all particles stay active})=1$;
\item
\label{item:none}%
$\mathbb{P}( \text{some particle stays active})>0$.
\end{enumerate}
\end{theorem}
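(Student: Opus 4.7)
The plan is to establish the cyclic chain (ii) $\Rightarrow$ (iii) $\Rightarrow$ (iv) $\Rightarrow$ (i) $\Rightarrow$ (ii), which yields the full equivalence. The implication (iii) $\Rightarrow$ (iv) is trivial, so there are three substantive steps. The implications (ii) $\Rightarrow$ (iii) and (i) $\Rightarrow$ (ii) are essentially pathwise and rely only on elementary features of the dynamics, combined with the strong law of large numbers applied to the i.i.d.\ instruction field $\mathcal{I}$ of \S\ref{sub:diaconis} and the irreducibility assumption on $p$. The substantive step is (iv) $\Rightarrow$ (i), which I would prove by contraposition using mass conservation (Theorem~\ref{thm:conservation}) together with the mass transport principle from \S\ref{sub:mtperg}.

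For (i) $\Rightarrow$ (ii), I argue that whenever some site $x$ undergoes infinitely many transitions, every site does as well. Such an $x$ falls into one of two cases: (a) infinitely many transitions are initiated at $x$, or (b) infinitely many transitions are arrivals from neighbors. In case (a), by SLLN applied to the i.i.d.\ stack $(\mathfrak{t}^{x,j})_{j}$, a positive fraction of the instructions used is a jump in each direction $z$ with $p(z) > 0$, so every neighbor $x+z$ receives infinitely many arrivals and is itself active. In case (b), pigeonhole over the finitely many neighbor directions produces a neighbor that sends infinitely many particles to $x$ and is therefore in case (a). Iterating and using that the support of $p$ generates $\mathbb{Z}^{d}$, activity propagates almost surely from $x$ to every site.

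For (ii) $\Rightarrow$ (iii), suppose some particle $(x,j)$ were eventually sleeping, say $\gamma^{x,j}(t) = \mathfrak{s}$ for all $t \geq T$, at a final resting site $y$. By the reaction $A+S \to 2A$, any active particle arriving at $y$ after time $T$ would instantaneously wake $(x,j)$, contradicting the supposed permanent sleep; hence no such arrival ever occurs, so $\eta_{t}(y) = \mathfrak{s}$ is constant for $t \geq T$, which contradicts (ii).

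The main work is (iv) $\Rightarrow$ (i), which I do by contraposition: assume the system a.s.\ fixates and show no particle stays active. Introduce the translation-covariant random function
\begin{equation*}
f(x,y) = \#\big\{j \leq |\eta_{0}(x)| : \text{particle } (x,j) \text{ eventually sleeps at } y\big\} .
\end{equation*}
Under fixation, $\sum_{y} f(x,y) \leq |\eta_{0}(x)|$ and $\sum_{x} f(x,y) = |\eta_{\infty}(y)| \in \{0,1\}$, so by the mass transport principle and Theorem~\ref{thm:conservation},
\begin{equation*}
\mathbb{E}\sum_{y} f(\mathbf{0},y) = \mathbb{E}\sum_{y} f(y,\mathbf{0}) = \mathbb{E}|\eta_{\infty}(\mathbf{0})| = \zeta = \mathbb{E}|\eta_{0}(\mathbf{0})| .
\end{equation*}
Equality then forces $\sum_{y} f(\mathbf{0},y) = |\eta_{0}(\mathbf{0})|$ almost surely, so every particle born at $\mathbf{0}$ eventually sleeps; translation invariance extends this to every particle in the system, and (iv) fails. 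The main obstacle is exactly this step: a priori nothing prevents a particle from wandering indefinitely through infinitely many sites each of which individually fixates (especially under a transient walk), and mass conservation is precisely what rules this out by matching the total sleeping-particle mass in the final configuration to the initial mass.
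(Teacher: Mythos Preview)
Your implications (i) $\Rightarrow$ (ii) and (ii) $\Rightarrow$ (iii) are fine and essentially match the paper's (which dispatches (i) $\Rightarrow$ (ii) by citing the $0$--$1$ law of Theorem~\ref{thm:equivalence}). The gap is in (iv) $\Rightarrow$ (i): your argument is circular. You invoke mass conservation (Theorem~\ref{thm:conservation}), but in this paper the proof of Theorem~\ref{thm:conservation} explicitly uses Theorem~\ref{thm:agg} --- specifically, the step ``by Theorem~\ref{thm:agg} a.s.\ no particles stay active, whence $\sum_y f(\boldsymbol{0},y) = |\eta_0(\boldsymbol{0})|$'' is precisely the contrapositive of (iv) $\Rightarrow$ (i). This is not an artifact of exposition: the content of mass conservation \emph{is} that under site fixation no particle wanders off to infinity, and that is exactly what you are trying to prove. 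Your own closing paragraph identifies this as ``the main obstacle'' and then resolves it by citing the theorem whose proof depends on having already resolved it.

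The paper's argument for (iv) $\Rightarrow$ (i) is direct. Assuming $a=\mathbb{P}(\mathcal{A}^{\boldsymbol{0}})>0$, one approximates the event $\mathcal{A}^{\boldsymbol{0}}$ by a local event $\mathcal{A}_\varepsilon^{\boldsymbol{0}}$ depending on finitely many coordinates, and then introduces auxiliary randomness: for each candidate particle $(y,1)$, pick $Z^y$ uniformly among the first $n$ distinct sites in its putative trajectory after time $t$. Mass transport shows $\mathbb{E}[Q(\boldsymbol{0})]=\mathbb{P}(\mathcal{A}_\varepsilon^{\boldsymbol{0}})$ where $Q(\boldsymbol{0})=\sum_y q(y,\boldsymbol{0})$, and the spreading bound $q(y,\boldsymbol{0})\leqslant 1/n$ together with finite-range dependence forces $Q(\boldsymbol{0})$ to concentrate. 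A product bound then converts this into $\mathbb{P}(N(\boldsymbol{0})=0)\leqslant e^{-Q(\boldsymbol{0})}$, so with probability bounded away from zero some good candidate has $Z^y=\boldsymbol{0}$, meaning an ever-active particle visits $\boldsymbol{0}$ after time $t$. The auxiliary randomization is the key idea you are missing: without it, mass transport only gives $\liminf_t \mathbb{E}N_t>0$, which does not by itself yield $\limsup_t \mathbb{P}(N_t\geqslant 1)>0$.
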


The proof is given in \S \ref{sub:agg}. We are ready to show mass conservation.

\begin{proof}[Proof of Theorem~\ref{thm:conservation}]
We use the construction provided by Theorem~\ref{thm:welldefined}. If particle
$(x,j)$ fixates, then $\sigma ^{x,j}(t)$ and $Y^{x,j}_{t}$ are eventually
constant, and we say that particle $(x,j)$ fixates at site
$Y^{x,j}_{\infty }$. Let $A(x,j,y)$ denote the event that particle
$(x,j)$ fixates at site $y$, and let
$f(x,y)=\sum _{j} \mathds{1}_{A(x,j,y)}$.

Assume that a.s.\ all sites fixate. Note that
$\eta _{\infty }({\boldsymbol{0}})=\mathfrak{s}$ if and only if we have
$\sum _{y} f(y,{\boldsymbol{0}})=1$, otherwise
$\eta _{\infty }({\boldsymbol{0}})=0$ and
$\sum _{y} f(y,{\boldsymbol{0}})=0$. On the other hand, by Theorem~\ref{thm:agg}
a.s.\ no particles stay active, whence
$\sum _{y} f({\boldsymbol{0}},y)$ equals
$|\eta _{0}({\boldsymbol{0}})|$. Applying the mass transport principle
concludes the proof.
\end{proof}

\subsection{Averaged condition for activity}
\label{sub:conditione}

In this subsection we prove Theorem~\ref{thm:conditione}. We are assuming
that $\eta _{0}$ is i.i.d. We can moreover assume that
$\mathbb{E}|\eta _{0}({\boldsymbol{0}})| < \infty $ (otherwise truncate
$\eta _{0}$ and use Corollary~\ref{cor:muclessone} combined with Lemma~\ref{lemma:monotonicity}).

The variable $M_{n}$ in Condition~\eqref{eq:conditione} refers to the site-wise
representation of a finite system restricted to $V_{n}$. This is equivalent
to a particle-wise construction with particles being killed when they exit
$V_{n}$. To show that Condition~\eqref{eq:conditione} implies non-fixation,
we consider a different variable $M_{n}^{*}$ which counts how many labeled
particles start in $V_{n}$ and ever visit $V_{n}^{c}$ during the evolution
of the infinite system without killing. More precisely, we consider the
system with labeled particles as provided by Theorem~\ref{thm:welldefined}.

\begin{lemma}
\label{lem:monotone_particle}
$\mathbb{E}M_{n} \leqslant \mathbb{E}M_{n}^{*}$.
\end{lemma}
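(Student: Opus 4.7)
The plan is to establish $M_n \leqslant M_n^*$ pathwise via a coupling of the finite killed system with the infinite labeled system, and then to take expectations. The obstruction is that $M_n$ arises from an unlabeled site-wise procedure while $M_n^*$ is defined in terms of labeled trajectories, so the bridge has to be a \emph{hybrid construction} combining the representations of \S \ref{sub:diaconis} and \S \ref{sub:pwconstr}; the detailed setup is the one postponed to \S \ref{sec:constructions}.

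First I would describe the hybrid: attach labels to particles as in Theorem~\ref{thm:welldefined}, but drive the dynamics from the site-wise field $\mathcal{I}$ instead of from individual putative trajectories. When a site $x$ is toppled, the next unused instruction $\mathfrak{t}^{x,h(x)+1}$ is consumed and one designated particle at $x$ (picked by a fixed rule, e.g.\ the one with smallest label) performs the indicated jump or sleep attempt. The site-wise Abelian property of \S \ref{sub:diaconis} continues to govern the odometer $h$, and each toppling now also moves a specific labeled particle, so both the configuration and the labels are determined by $\eta_0$, $\mathcal{I}$, and the chosen toppling order.

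Next, I would couple the two systems by sharing $\eta_0$ and $\mathcal{I}$. In the infinite labeled system, by the Abelian property I can perform the topplings in any legal order, so I would first carry out a legal sequence stabilizing $V_n$ using only sites of $V_n$. During this phase, the configuration inside $V_n$ evolves identically to that of the finite killed system, so exactly $M_n$ jumps out of $V_n$ occur. Crucially, the $M_n$ labeled particles crossing the boundary are \emph{distinct}: once a labeled particle leaves $V_n$ during this phase, it cannot be selected by any subsequent toppling in $V_n$, because no site outside $V_n$ is toppled and the particle therefore remains at its landing site in $V_n^c$. Continuing the evolution by toppling sites outside $V_n$ (and any sites of $V_n$ reactivated by incoming particles) can only enlarge the set of labels that have ever visited $V_n^c$, since this event is monotone in the sequence of topplings performed. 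Hence $M_n \leqslant M_n^*$ almost surely, and the inequality in expectation follows.

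The main obstacle I expect is making the coupling rigorous. The site-wise Abelian property applies to toppling sequences, not directly to the continuous-time labeled dynamics of \S \ref{sub:pwconstr}; one must verify that the hybrid produces, for the quantity ``does label $(x,j)$ ever leave $V_n$'', the same distribution as the genuine particle-wise construction used to define $M_n^*$. This identification of laws---including that the ``perform-$V_n$-first'' reordering is legitimate and compatible with Theorem~\ref{thm:welldefined}---is precisely the content of the hybrid construction announced in \S \ref{sec:constructions}, and once it is in place the present lemma is a direct consequence of the coupling above.
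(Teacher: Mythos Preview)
Your outline has a genuine gap, and the paper's actual hybrid construction is not the one you describe.

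The core difficulty is your appeal to the Abelian property for a \emph{labeled} system. Abelianness guarantees that the odometer and the unlabeled configuration are order-independent, so you are right that a legal phase-1 sequence stabilizing $V_n$ produces exactly $M_n$ jumps across $\partial V_n$. But the identity of the particle that performs a given toppling, under any tie-breaking rule, \emph{does} depend on the order of topplings; hence the quantity ``the set of labels that have ever visited $V_n^c$'' is not preserved under reordering. Consequently, from ``$M_n$ distinct labels exit during phase 1 and more can only be added later'' you obtain $M_n \leqslant (\text{exit count in your phase-1-first hybrid})$, but you have no bridge from that hybrid count to $M_n^*$, which is defined via the particle-wise construction. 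Your hope that \S\ref{sec:constructions} supplies this identification is unfounded: the hybrid of \S\ref{sub:monotone_particle} is a different object, and the paper even notes explicitly that its bi-color construction is \emph{not} Abelian.

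What the paper actually does is a two-color site-wise construction: particles that started in $V_n$ and have not yet exited are purple, all others are yellow, and crucially each color has its own independent stack of instructions. The only interaction between colors is through shared activity at a site. The killed system corresponds to the initial configuration $\eta_0^{{}_\square}$ (no yellow particles), while the infinite system is approximated by $\eta_0^V$ (yellow particles in $V\setminus V_n$). The key step is a monotonicity: since yellow particles can only \emph{activate} purple ones (they never consume purple instructions), the purple local time $L_t^{\mathrm p}$ is larger for $\eta_0^V$ than for $\eta_0^{{}_\square}$, by the same argument as for $L_t$ in \S\ref{sub:swdefined}. Hence more purple particles turn yellow (i.e., exit $V_n$) in the larger system, giving $M_t^{{}_\square}\leqslant M_t^V$ in the coupling, and one concludes by taking $V\uparrow\mathbb Z^d$ and $t\to\infty$. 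This sidesteps both full labels and any Abelian claim about labeled trajectories.
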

The proof of Lemma~\ref{lem:monotone_particle} is deferred to \S
\ref{sub:monotone_particle}. Define
\begin{equation*}
\widetilde{V}_{n}=V_{n-L_{n}},
\end{equation*}
where $L_{n}$ is an integer sequence (e.g.~$\lfloor \log n\rfloor $) such
that
\begin{equation*}
L_{n} \to \infty \qquad \text{ but } \qquad
\frac{|V_{n}\setminus \widetilde{V}_{n}|}{|V_{n}|}\to 0.
\end{equation*}

For $n\in \mathbb{N}$, introduce the event
\begin{equation*}
\mathcal{A}_{n} = \text{``} \textstyle \sup _{t} |Y^{{\boldsymbol{0}},1}_{t}|
\geqslant L_{n}\text{''} =
\text{``particle $({\boldsymbol{0}},1)$ reaches distance $L_{n}$''},
\end{equation*}
where the requirement that
$|\eta _{0}({\boldsymbol{0}})|\geqslant 1$ is implicit.

Let $\widetilde{M}_{n}^{*}$ be the number of labeled particles starting
in $\widetilde{V}_{n}$ which ever exit $V_{n}$. By translation invariance
and particle exchangeability, for every $K$,
\begin{align*}
\mathbb{E}\widetilde{M}_{n}^{*} & = \sum _{x\in \widetilde{V}_{n}}
\sum _{i\in \mathbb{N}} \mathbb{P}(
\text{particle $Y^{x,i}$ exits $V_{n}$})
\\
& \leqslant \sum _{x\in \widetilde{V}_{n}} \sum _{i\in \mathbb{N}}
\mathbb{P}(\eta _{0}(x)\geqslant i
\text{ and particle $Y^{x,i}$ reaches distance $L_{n}$ from $x$})
\\
& = |\widetilde{V}_{n}|\sum _{i\in \mathbb{N}}\mathbb{P}(\eta _{0}({
\boldsymbol{0}})\geqslant i
\text{ and particle $Y^{{\boldsymbol{0}},i}$ reaches distance
$L_{n}$ from ${\boldsymbol{0}}$})
\\
& = |\widetilde{V}_{n}|\sum _{i\in \mathbb{N}}\mathbb{P}(\eta _{0}({
\boldsymbol{0}})\geqslant i
\text{ and particle $Y^{{\boldsymbol{0}},1}$ reaches distance
$L_{n}$ from ${\boldsymbol{0}}$} )
\\
& \leqslant |\widetilde{V}_{n}|\sum _{1\leqslant i\leqslant K}
\mathbb{P}(\mathcal{A}_{n})+|\widetilde{V}_{n}|\sum _{i> K}
\mathbb{P}(\eta _{0}({\boldsymbol{0}})\geqslant i)
\\
& = |\widetilde{V}_{n}| K\, \mathbb{P}(\mathcal{A}_{n})+|
\widetilde{V}_{n}|\,\mathbb{E}[(|\eta _{0}({\boldsymbol{0}})|-K)^{+}]
\end{align*}
\noindent%
Hence,
\begin{multline*}
\mathbb{E}M_{n}^{*} \leqslant \mathbb{E}\widetilde{M}_{n}^{*} +
\zeta \, |V_{n}\setminus \widetilde{V}_{n}| \leqslant
\\
\leqslant K \, |\widetilde{V}_{n}| \, \mathbb{P}(\mathcal{A}_{n}) + |
\widetilde{V}_{n}| \, \mathbb{E}[(|\eta _{0}({\boldsymbol{0}})|-K)^{+}]
+ \zeta \, |V_{n}\setminus \widetilde{V}_{n}| ,
\end{multline*}
and using Lemma~\ref{lem:monotone_particle},
\begin{multline*}
\mathbb{P}(\text{particle $({\boldsymbol{0}},1)$ stays active}) =
\lim _{n} \mathbb{P}(\mathcal{A}_{n}) \geqslant
\\
\geqslant \frac{1}{K}\Big (\limsup _{n}
\frac{\mathbb{E}M_{n}}{|\widetilde{V}_{n}|}-\mathbb{E}[(\eta _{0}({
\boldsymbol{0}})-K)_{+}]\Big )
\end{multline*}
which is positive provided $K$ is chosen large enough.

From Theorem~\ref{thm:agg}, we conclude that a.s.\ all sites stay active,
which finishes the proof of the theorem.

\subsection{Resampling}
\label{sub:resampling}

In this subsection we prove Theorem~\ref{thm:abactive}.

By Theorem~\ref{thm:monotone}, we can assume $\lambda =\infty $. In this
case, the sleep Poisson clocks $\boldsymbol{\mathcal{P}}$ play no role
in the previous construction: a particle is sleeping if and only if there
are no other particles at the same site.

We consider the following dynamics instead of the ARW.
\smallskip

\emph{The particle-hole model.} Particles perform continuous-time random
walks independently of each other. Sites not containing any particle are
called \emph{holes}. When a particle is alone at some site, it
\emph{settles} there forever, filling the corresponding hole. After the
hole has been filled, the site becomes available for other particles to
go through. If a site is occupied by several particles at $t=0^{-}$, we
choose one of them uniformly to fill the hole at $t=0$, and the other particles
remain free to move. This is well-defined as in Theorem~\ref{thm:welldefined},
with the same proof.

This model is very similar to the ARW with $\lambda =\infty $. In both
models, once a site has at least one particle, it will always retain one
particle. The differences are (i) sites with $n \geqslant 2$ particles
are toppled at rate $n-1$ instead of $n$ and (ii) each site retains forever
the first particle to arrive there, whereas in the ARW the particles can
take turns replacing each other. Nevertheless, both models have the same
site-wise representation (described in \S \ref{sub:lambda}) and same fixation
properties in the sense of Theorem~\ref{thm:equivalence}.
\smallskip

We will show that, under the assumption of site fixation,
\begin{equation*}
\mathbb{P}( {\boldsymbol{0}}\mbox{ is never visited} ) > 0.
\end{equation*}
This implies that $\zeta <1$ by Theorem~\ref{thm:conservation}, therefore
proving Theorem~\ref{thm:abactive}.

Now assuming site fixation, necessarily there exists
$k\in \mathbb{N}$ such that
\begin{equation*}
\mathbb{P}\big (\text{the number of particles which ever visit } {
\boldsymbol{0}}\text{ equals }k\big )>0 .
\end{equation*}
Hence, there exist $x_{1},\dots ,x_{k}\in \mathbb{Z}^{d}$ such that
$\mathbb{P}(\mathcal{A})>0$, where
\begin{equation*}
\mathcal{A}=
\text{``the particles which ever visit ${\boldsymbol{0}}$ are initially
at the sites $x_{1},\dots ,x_{k}$.'' }
\end{equation*}

Consider two systems $\omega $ and $\tilde{\omega }$, coupled as follows.
We take $\tilde{\mathbf{X}}=\mathbf{X}$, and
$\tilde{\eta }_{0}(x)=\eta _{0}(x)$ for
$x \not \in \{x_{1},\dots ,x_{k}\}$. For
$x \in \{x_{1},\dots ,x_{k}\}$, we sample $\tilde{\eta }_{0}$ and
$\eta _{0}$ independently. Now notice that
\begin{multline*}
\mathbb{P}( \mathcal{A}\mbox{ occurs for } \tilde{\omega },
\mbox{ and } \eta _{0}(x_{1})=\cdots =\eta _{0}(x_{k})=0 ) =
\\
= \mathbb{P}( \mathcal{A}\mbox{ occurs for } \tilde{\omega } )
\times \mathbb{P}( \big . \eta _{0}(x_{1})=\cdots =\eta _{0}(x_{k})=0
\mbox{ for } \omega ) >0 .
\end{multline*}
To conclude we claim that, on the above event, no particle ever visits
${\boldsymbol{0}}$ in the system $\omega $. Indeed, on the above event,
the initial configuration of $\omega $ is the same as that of
$\tilde{\omega }$ except for the deletion of the particles present in
$\{x_{1},\dots ,x_{k}\}$. In particular, all the particles which visit
the origin in $\tilde{\omega }$ are deleted in $\omega $. Recalling that
$\omega $ and $\tilde{\omega }$ share the same putative trajectories, by
following how the effect of deleting such particles propagates in the system
evolution, one can see that in the system $\omega $ no particle can possibly
visit ${\boldsymbol{0}}$, finishing the proof.

\subsection{Fixation equivalence}
\label{sub:agg}

In this subsection we prove Theorem~\ref{thm:agg}. We use the construction
provided by Theorem~\ref{thm:welldefined}. Three implications are immediate:
$(i) \Rightarrow (ii)$ by the $0$-$1$ law in Theorem~\ref{thm:equivalence},
$(ii) \Rightarrow (iii)$ because if some particle fixates then it has to
fixate at some site, $(iii) \Rightarrow (iv)$ is trivial, so we only have
to show $(iv) \Rightarrow (i)$.

Let $\mathcal{A}^{x,j}$ denote the event that particle $(x,j)$ stays active,
and write\break $\mathcal{A}^{x}=\mathcal{A}^{x,1}$. Assuming $(iv)$ holds,
$a:=\mathbb{P}(\mathcal{A}^{\boldsymbol{0}})>0$. Indeed, $(iv)$ implies
that $\mathbb{P}(\mathcal{A}^{{\boldsymbol{0}},j})>0$ for some
$j$ and, by interchangeability of particles, we have
$\mathbb{P}(\mathcal{A}^{{\boldsymbol{0}},j}) = \mathbb{P}(
\mathcal{A}^{{\boldsymbol{0}}}, |\eta _{0}({\boldsymbol{0}})|
\geqslant j) \leqslant \mathbb{P}( \mathcal{A}^{{\boldsymbol{0}}})$.

We make a side remark before giving more details. By the mass transport
principle, the number $N_{t}$ of particles which stay active and are present
at site ${\boldsymbol{0}}$ at time $t$ satisfies
$\mathbb{E}N_{t} \geqslant a$, hence
$\liminf _{t} \mathbb{E}N_{t} > 0$. But to show site activity we need
$\limsup _{t} \mathbb{P}(N_{t} \geqslant 1)>0$ instead. The idea is to
introduce extra randomness so as to spread out the effect of these particles.

\medskip
Since the system $\boldsymbol{\eta }$ is a measurable function of the randomness
$\omega $, for each $\varepsilon >0$ there is $k\in \mathbb{N}$ such that
the event $\mathcal{A}^{\boldsymbol{0}}$ can be $\varepsilon $-approximated
by some event $\mathcal{A}_{\varepsilon }^{\boldsymbol{0}}$ that depends
only on
$\left ( \eta _{0}(x),X^{x},\mathcal{P}^{x} \right )_{\|x\|
\leqslant k}$. Let $\mathcal{A}_{\varepsilon }^{x}$ denote the corresponding
translation of the event
$\mathcal{A}_{\varepsilon }^{\boldsymbol{0}}$. When
$\mathcal{A}_{\varepsilon }^{x}$ occurs, we say that particle $(x,1)$ is
a \emph{candidate}. It is a \emph{good candidate} if
$\mathcal{A}^{x}$ also occurs, otherwise it is a
\emph{bad candidate}.

Fix $t>0$. Let $n\in \mathbb{N}$ be a large number. The trick is to add
more randomness to the system by choosing $Z^{y}$ uniformly among the first
$n$ different sites in the putative trajectory $X^{y,1}$ after time
$t$, independently over $y$. Define $\mathcal{C}(y,x)$ as the event that
$\mathcal{A}_{\varepsilon }^{y}$ occurs and $Z^{y}=x$. Let
\begin{equation*}
q(y,x)=\mathbb{P}\left ( \mathcal{C}(y,x) \,\middle |\,\big . \omega
\right ) \qquad \text{and} \qquad Q(x)=\sum _{y} q(y,x).
\end{equation*}
By the mass transport principle,
\begin{equation*}
\mathbb{E}[Q({\boldsymbol{0}})] = \sum _{y} \mathbb{P}(
\mathcal{C}(y,{\boldsymbol{0}}) ) = \sum _{y} \mathbb{P}(
\mathcal{C}({\boldsymbol{0}},y) ) = \mathbb{P}(\mathcal{A}_{\varepsilon }^{{\boldsymbol{0}}}) =: b > a - \varepsilon .
\end{equation*}
Notice that $q(y,x) \leqslant \frac{1}{n}$. Notice also that
$q(y,x)$ and $q(z,x)$ are independent if $\|y-z\| > 2k$. Using these two
facts, it follows that $ \mathbb{V}[ Q({\boldsymbol{0}}) ] $ becomes
small when $n$ is large, and thus $Q({\boldsymbol{0}})$ converges to
$b$ in distribution.

Let $N(x)=\sum _{y} \mathds{1}_{\mathcal{C}(y,x)}$ count the number of
candidates for which $Z^{y}=x$. Then
\begin{equation*}
\mathbb{P}\left ( N({\boldsymbol{0}})=0 \,\big .\middle |\, \omega
\right ) = \prod _{y} (1-q(y,{\boldsymbol{0}})) \leqslant e^{-Q({
\boldsymbol{0}})} \to e^{-b}
\end{equation*}
in probability as $n \to \infty $. Also, let
$\tilde{N}(x) = \sum _{y} \mathds{1}_{\mathcal{C}(y,x) \setminus
\mathcal{A}^{y}}$ count the number of bad candidates for which
$Z^{y}=x$. Then, using the mass transport principle,
\begin{equation*}
\mathbb{E}[\tilde{N}({\boldsymbol{0}})] = \sum _{y} \mathbb{P}({
\mathcal{C}(y,{\boldsymbol{0}}) \setminus \mathcal{A}^{y}}) = \sum _{y}
\mathbb{P}({\mathcal{C}({\boldsymbol{0}},y) \setminus \mathcal{A}^{{
\boldsymbol{0}}}}) = \mathbb{P}( \mathcal{A}_{\varepsilon }^{{
\boldsymbol{0}}} \setminus \mathcal{A}^{{\boldsymbol{0}}} )
\leqslant \varepsilon .
\end{equation*}
Let $\mathcal{D}^{x}$ denote the event that there exists a good candidate
$(y,1)$ such that $Z^{y}=x$. Using the two last estimates we get
\begin{equation*}
\mathbb{P}(\mathcal{D}^{{\boldsymbol{0}}}) \geqslant \mathbb{P}(N({
\boldsymbol{0}}) \geqslant 1) - \mathbb{P}(\tilde{N}({
\boldsymbol{0}}) \geqslant 1 ) \geqslant 1 - e^{-a+\varepsilon } -
\delta _{n} - \varepsilon ,
\end{equation*}
where $\delta _{n} \to 0$ as $n\to \infty $. Choosing $\varepsilon $ small
and $n$ large, we have
$\mathbb{P}(\mathcal{D}^{\boldsymbol{0}})>\frac{a}{2}$.

To conclude, notice that, on the event
$\mathcal{D}^{\boldsymbol{0}}$, there is a particle $(y,0)$ which stays
active, and some inner time $s>t$ such that
$X^{y,1}_{s} = {\boldsymbol{0}}$, implying that site
${\boldsymbol{0}}$ is visited by an active particle after time $t$. Letting
$t\to \infty $, we get
$\mathbb{P}(\text{site } {\boldsymbol{0}}\text{ stays active})
\geqslant \frac{a}{2} > 0$, concluding the proof that
$(iv) \Rightarrow (i)$.

\section{Analysis of explicit constructions}
\label{sec:constructions}

An evolution $(\eta _{t})_{t\geqslant 0}$ starting with only finitely many
particles can be constructed explicitly in innumerous ways, using Poisson
processes, exponential variables, random walks, tossing some coins, etc.
Then we want to say that a system starting from an infinite random configuration
$\eta _{0}$ exists and can be approximated in distribution by finite ones.
Namely, denoting by $\mathbf{P}^{\nu }_{V}$ the law of the process starting
from the finite truncation $\eta _{0} \cdot \mathds{1}_{V}$ for finite
$V \subseteq \mathbb{Z}^{d}$, we wonder whether
%
\begin{equation}
\label{eq:approx}
\mathbf{P}^{\nu }\Big ((\eta _{t})_{t\geqslant 0} \in \mathcal{A}\Big ) =
\lim _{V\uparrow \mathbb{Z}^{d}} \mathbf{P}^{\nu }_{V} \Big ((\eta _{t})_{t
\geqslant 0} \in \mathcal{A}\Big )
\end{equation}
for every \emph{local event} $\mathcal{A}$, \textit{i.e.}\ every event
$\mathcal{A}$ whose occurrence is determined by
$(\eta _{s}(x))_{x\in B_{k}, s \in [0,t]}$ for some finite $t$ and
$k$.

Assume the limit on the right-hand side exists for some construction which
is consistent with the rates specified in \S \ref{sub:evolution}. Then
the limit is obviously the same for any other construction consistent with
\S \ref{sub:evolution}. So if there exists a process
$(\eta _{t})_{t\geqslant 0}$ on a space $\mathbf{P}^{\nu }$ whose distribution
satisfies~\eqref{eq:approx}, then its distribution is unique.

There are at least three ways to show existence of a
$\mathbf{P}^{\nu }$ satisfying~\eqref{eq:approx}. One is to consider a certain
norm on a subset of $(\mathbb{N}_{\mathfrak{s}})^{\mathbb{Z}^{d}}$ and
use abstract theory of generators and semigroups adapted to non-compact
spaces. Such a norm has to be more restrictive than product topology (indeed,
one can always make many particles visit ${\boldsymbol{0}}$ in short time
by placing enough particles far away), but it still gives~\eqref{eq:approx}
with a good level of generality on~$\nu $. Another way is to consider the
particle-wise construction described in \S \ref{sub:pwconstr}, which is
well-defined as we prove in \S \ref{sub:particleexistence}. The third way
is to add Poisson clocks to the site-wise representation of \S
\ref{sub:conditions}, which is done in \S \ref{sub:swdefined}. These constructions
work under the assumption that
$\int |\eta (x)|\nu ({\mathrm{d}}\eta ) \leqslant \zeta _{\mathrm{max}}$
for some finite $\zeta _{\mathrm{max}}$ uniformly over $x$.

\subsection{Conditions for fixation and activity}
\label{sub:condbu}

In this subsection we prove Theorem~\ref{thm:equivalence} assuming that
an explicit site-wise construction of the process satisfies~\eqref{eq:fixationstabilizable},
and that~\eqref{eq:noblowups} holds.

We start with the $0$-$1$ law. For almost every $\mathcal{I}$, if
$m_{\eta }({\boldsymbol{0}})=\infty $ for a given configuration
$\eta $, then $m_{\eta }(y)=\infty $ for all $y$ with $p(y)>0$. Write
$W_{p}=\{z:p(z)>0\}\subseteq \mathbb{Z}^{d}$. Let us omit the tedious
proof of the following fact: if the elements of a set $W$ generate the
group $(\mathbb{Z}^{d},+)$, then as a semigroup they generate a set that
contains some $w+U\cap \mathbb{Z}^{d}$, where
$U\subseteq \mathbb{R}^{d}$ is a cone with non-empty interior. By the
previous remark, if $z\in -(w+U)$ and $m_{\eta _{0}}(z)=\infty $, then
$m_{\eta _{0}}({\boldsymbol{0}})=\infty $. Assume that
$\mathbb{P}^{\nu }\big (m_{\eta _{0}}({\boldsymbol{0}})=\infty \big )>0$.
As $-w-U$ contains balls of arbitrarily large radius, since
$(\eta _{0},\mathcal{I})$ is translation-ergodic, the
$\mathbb{P}^{\nu }$-probability of finding a site $z\in -(w+U)$ with
$m_{\eta _{0}}(z)=\infty $ is equal to $1$, and therefore
$\mathbb{P}^{\nu }\big (m_{\eta _{0}}({\boldsymbol{0}})=\infty \big )=1$.

\medskip
We now prove that
$\mathbf{P}^{\nu }\big (\text{fixation of }(\eta _{t})_{t\geqslant 0}
\big )=\mathbb{P}^{\nu }(m_{\eta _{0}}({\boldsymbol{0}})<\infty )$. Let
$h_{t}(x)$ denote the number of topplings at site $x$ during the time interval
$[0,t]$, meaning any action performed at $x$, including unsuccessful attempts
to sleep. Write $h_{\infty }(x)=\lim _{t\to \infty }h_{t}(x)$. This limit
exists as $h_{t}(x)$ is non-decreasing in $t$.

The core of the proof is to add some Poisson clocks to
$\mathbb{P}^{\nu }$ and use it to construct $\mathbf{P}^{\nu }$ explicitly,
so that
%
\begin{equation}
\label{eq:fixationstabilizable}
\mathbf{P}^{\nu }\big (h_{\infty }(x) \geqslant k\big ) = \mathbb{P}^{\nu
}\big (m_{\eta _{0}}(x)\geqslant k\big ) \qquad \mbox{for each} \quad k>0
\end{equation}
and use it to show that
%
\begin{equation}
\label{eq:noblowups}
\mathbf{P}^{\nu }\big (h_{t}(x) \geqslant k \big )\to 0 \quad
\mbox{ as }\quad k\to \infty \qquad \mbox{for each fixed }t,
\end{equation}
which is done in the next subsection.

Let us show that these imply the theorem. Assume
$\mathbb{P}^{\nu }\big (m_{\eta _{0}}(x)<\infty \big )=1$. It follows from~\eqref{eq:fixationstabilizable}
that
$\mathbf{P}^{\nu }\big (h_{t}(x)\mbox{ eventually constant}\big )=1$, thus
$x$ is eventually stable in $\eta _{t}$ and in particular
$\eta _{t}(x)$ remains bounded for large $t$. But $\eta _{t}(x)$ can only
decrease when $x$ is unstable, so
$\mathbf{P}^{\nu }\big (\eta _{t}(x)\mbox{ converges}\big )=1$.

Otherwise, $\mathbb{P}^{\nu }\big ( m_{\eta _{0}}(x)=\infty \big )=1$ by the
$0$-$1$ law, then~\eqref{eq:fixationstabilizable} gives
$\mathbf{P}^{\nu }\big (h_{t}(x)\to \infty \mbox{ as }t\to \infty \big )=1$.
Now by~\eqref{eq:noblowups} we know that
$\big (h_{t}(x)\big )_{t\geqslant 0}$ cannot blow up in finite time, whence
for each $x$, the value of $\eta _{t}(x)$ changes for arbitrarily large
times, and the system stays active.

\subsection{The site-wise construction}
\label{sub:swdefined}

In this subsection we provide a coupling~$\mathbb{P}^{\nu }$ that produces
$\mathbf{P}^{\nu }$ and all $\mathbf{P}^{\nu }_{V}$ on the same probability
space, and show that~\eqref{eq:approx} holds. We also show that this coupling
satisfies~\eqref{eq:fixationstabilizable} and~\eqref{eq:noblowups}. The
translation-invariant distribution $\nu $ is fixed and will be omitted
in the notation.

Start by adding Poisson clocks to the site-wise representation. More precisely,
sample $\mathcal{I}$ following the distribution described in \S
\ref{sub:conditions}, sample $\eta _{0}$ according to the distribution
$\nu $, and sample an i.i.d.\ collection of Poisson point processes with
intensity $(1+\lambda ){\mathrm{d}}t$, all independently. Let
$\mathbb{P}$ denote the underlying probability.

\medskip
For a finite deterministic initial configuration $\xi $, the evolution
is constructed as follows. At $t=0$, let $L_{0}(x)=0$ for all $x$. Fix
$\eta _{t}'(x)=\xi (x)$ for all small $t$, and let $L_{t}(x)$ increase
by
$\tfrac{{\mathrm{d}}}{{\mathrm{d}}t}L_{t}(x) = \big (\eta _{t}(x)\big )
\mathds{1}_{\eta _{t}(t) \ne \mathfrak{s}}$. Denote the Poisson point
process at each site $x$ by $(T_{n}(x))_{n}$ where $T_{0}=0$ and
$T_{n+1}-T_{n}$ are i.i.d.\ exponentials with parameter $1+\lambda $. Writing
$h_{t}'(x)=\max \{n\in \mathbb{N}_{0}:L_{t}(x)\geqslant T_{n}(x)\}$ for
all $t$, let $\eta _{t}'(x)$ remain constant until the moment
$t_{1}$ of the first jump of $h_{t}'$, which happens a.s.\ at a unique
site $y_{1}$ that must be unstable for $\xi $. At this point, take
$\alpha _{1} = (y_{1})$ and $\eta _{t_{1}}'=\Phi _{y_{1}}\xi $. Notice
that $h_{t_{1}}'=m_{\alpha _{1}}$. Continue evolving $L_{t}$ with the same
rule, keeping $\eta _{t}'=\eta _{t_{1}}'$, until the moment $t_{2}$ of
the next jump of $h_{t}'$, which happens a.s.\ at a unique site
$y_{2}$, that again must be unstable for $\eta _{t_{1}}'$. As before, take
$\alpha _{2} = (y_{1},y_{2})$ and
$\eta _{t_{2}}'=\Phi _{y_{2}}\eta _{t_{1}}'=\Phi _{\alpha _{2}}\xi $. Again
$h_{t_{2}}'=m_{\alpha _{2}}$. Carry this procedure until
$\eta _{t}'(x)=0 \text{ or } \mathfrak{s}$ for all $x$. After this time,
$L_{t}$ will be constant and the configuration will no longer change.

In this construction, at each time $t \geqslant 0$, $\eta _{t}'$ is given
by $\Phi _{\alpha _{j}}$ for some $j$, $\alpha _{j}$ is a legal sequence
of topplings for $\xi $, and $m_{\alpha _{j}}=h_{t}$. Hence, by the local
Abelian property, $\eta _{t}'$ can be read from $\xi $,
$\mathcal{I}$ and $h_{t}'$. Moreover, the occupation times
$L_{t}(x)$, and thus the toppling counter $h_{t}'(x)$, are increasing in
the initial configuration $\xi $ (proof below). We use
$\eta _{t}^{V}$ and $h_{t}^{V}$ to denote the processes obtained by taking
$\xi = \eta _{0}^{V} = \eta _{0} \cdot \mathds{1}_{V}$. Then for each
fixed $x$ and $t$ the counter $h_{t}^{V}(x)$ will be increasing in
$V$, so it has a limit $h_{t}(x)$ that does not depend on the particular
increasing sequence $V\uparrow \mathbb{Z}^{d}$.

Below we will show that, denoting by $N_{t}^{V}(x)$ the number of times
that a particle jumps from some $z \ne x$ into $x$ in the process
$(\eta _{s}^{V})_{s \in [0,t]}$, we have
$\mathbb{E}[N_{t}^{V} (x)] \leqslant \zeta _{\mathrm{max}} \times t$.

Hence, the set of sites $z$ such that $\mathfrak{t}^{z,k}=x$ for some
$k \leqslant h_{t}(z)$ is finite and $h_{t}(x)$ is also finite. Now for
each $z$ in this set, $h_{t}^{V}(z)$ eventually equals $h_{t}(z)$ as
$V \uparrow \mathbb{Z}^{d}$, and the same holds for $h_{t}^{V}(x)$. It
thus follows from the local Abelian property that $\eta _{t}^{V}(x)$ will
also be eventually constant, and we take $\eta _{t}(x)$ as the limit. Hence,
almost surely, convergence holds simultaneously for all
$x\in \mathbb{Z}^{d}$ and $t\in \mathbb{Q}_{+}$, and taking
$\eta _{s}(x) = \lim _{t \downarrow s,t\in \mathbb{Q}} \eta _{t}(x)$ gives~\eqref{eq:approx}.

\medskip
We now move on to the proof of~\eqref{eq:fixationstabilizable}. First,
for every $k$,
\begin{equation}
\nonumber
\mathbb{P}\big (h^{V}_{t}(x)\geqslant k\big ) \mathop{\longrightarrow }_{V
\uparrow \mathbb{Z}^{d}} \mathbb{P}\big (h_{t}(x)\geqslant k\big )
\mathop{\longrightarrow }_{t\to \infty } \mathbb{P}\big (h_{\infty }(x)
\geqslant k\big ).
\end{equation}
We now show, on the other hand, that
\begin{equation}
\nonumber
\mathbb{P}\big (h_{t}^{V}(x)\geqslant k\big ) \mathop{\longrightarrow }_{t
\to \infty } \mathbb{P}\big (m_{\eta _{0}^{V}}(x)\geqslant k\big )
\mathop{\longrightarrow }_{V \uparrow \mathbb{Z}^{d}} \mathbb{P}
\big (m_{\eta _{0}}(x)\geqslant k\big ) .
\end{equation}
The second limit follows from
$ m_{\eta _{0}} \geqslant m_{\eta _{0}^{V}} \geqslant m_{\eta _{0},V}
\to m_{\eta _{0}} $. Let us prove the first limit. Since finite configurations
are a.s.\ stabilized after a finite number of topplings, there is some
$t_{*}\geqslant 0$ such that $h_{t}^{V}=h_{t_{*}}^{V}$ for all
$t\geqslant t_{*}$, and moreover $\eta _{t_{*}}^{V}$ is stable. Since
$h_{t}^{V}$ counts the number of topplings performed at each site up to
time $t$, all of which are legal for $\eta _{0}^{V}$, we have
$h_{t_{*}}^{V} = m_{\eta _{0}^{V}}$ by the Abelian property. By monotonicity
the above limits commute, proving~\eqref{eq:fixationstabilizable}.

\medskip
Now the tedious proof that $L_{t}(x)$ is non-decreasing in $\xi $. Let
$\xi _{1} \leqslant \xi _{2}$ be finite configurations. In order to show
that these yield $L^{1}_{t}(x)$ and $L^{2}_{t}(x)$ satisfying
$L^{1} \leqslant L^{2}$, we will show that the set
$ G = \big \{ t \geqslant 0 : L_{s}^{1} \leqslant L_{s}^{2}
\mbox{ for all } s\leqslant t \big \} \ni 0 $ is both open and closed on
$[0,\infty )$. Since $t\mapsto L_{t}$ is continuous, $G$ is closed. Let
$t\in G$ and fix $x\in \mathbb{Z}^{d}$. If
$L_{t}^{1}(x)<L_{t}^{2}(x)$, by continuity there is some
$\varepsilon _{x}>0$ such that $L_{s}^{1}(x)<L_{s}^{2}(x)$ for all
$s\leqslant t+\varepsilon _{x}$. Otherwise
$L_{t}^{1}(x)=L_{t}^{2}(x)$, which means $h_{t}^{1}(x)=h_{t}^{2}(x)$. At
the same time, $h_{t}^{1}(z) \leqslant h_{t}^{2}(z)$ for $z \neq x$. Hence,
by Property~\ref{property2} in \S \ref{sub:diaconis},
$\eta _{t}^{1}(x) \leqslant \eta _{t}^{2}(x)$. Since
$t \mapsto \eta _{t}^{1}(x)$ and $t \mapsto \eta _{t}^{2}(x)$ are piecewise
constant and right-continuous, there is some $\varepsilon _{x}>0$ such
that, for all $s \in (t, t+\varepsilon _{x}]$,
$\eta _{s}^{1}(x) \leqslant \eta _{s}^{2}(x)$, hence
$\frac{{\mathrm{d}}}{{\mathrm{d}}s}L^{1}_{s}(x) \leqslant
\frac{{\mathrm{d}}}{{\mathrm{d}}s}L^{2}_{s}(x)$, and thus
$L_{s}^{1}(x) \leqslant L_{s}^{2}(x)$. Finally, since there are a.s.\ finitely
many sites $x$ such that $L_{s}^{1}(x)>0$ for some $s$, we can take the
$\varepsilon >0$ as the smallest $\varepsilon _{x}$ over all such
$x$, so that $t+\varepsilon \in G$.

\medskip
The last missing statement is that
$\mathbb{E}[N_{t}^{V} (x)] \leqslant \zeta _{\mathrm{max}} \times t$. This
was used in the proof of~\eqref{eq:approx} and it also implies~\eqref{eq:noblowups}.
Instead of making a rather convoluted argument, we resort to the finite
particle-wise construction of \S \ref{sub:pwconstr} in terms of walks
$X$ and $Y$, which produces a process
$(\eta _{t}^{V})_{t \geqslant 0}$ with the same distribution. Using translation
invariance and re-indexing sums,
\begin{align*}
\mathbb{E}\big [N_{t}^{V} (x)\big ] &= \mathbb{E}\bigg [ \sum _{y}
\sum _{j=1}^{\eta _{0}(y)} \mbox{number of jumps of } Y^{y,j}
\mbox{ into } x \mbox{ during } [0,t] \bigg ]
\\
&\leqslant \mathbb{E}\bigg [ \sum _{y} \sum _{j=1}^{\eta _{0}(y)}
\mbox{number of jumps of } X^{y,j} \mbox{ into } x \mbox{ during } [0,t]
\bigg ]
\\
&= \textstyle \sum _{y}\mathbb{E}|\eta _{0}(y)| \times \mathbb{E}
\Big [ \mbox{jumps of } X^{y,1} \mbox{ into } x \mbox{ during } [0,t]
\Big ]
\\
&\leqslant \textstyle \zeta _{\max } \, \sum _{y} \mathbb{E}\Big [
\mbox{number of jumps of } X^{y,1} \mbox{ into } x \mbox{ during } [0,t]
\Big ]
\\
&= \textstyle \zeta _{\max } \, \mathbb{E}\Big [ \sum _{y}
\mbox{number of jumps of } X^{x,1} \mbox{ into } y \mbox{ during } [0,t]
\Big ]
\\
&= \zeta _{\max } \times t,
\end{align*}
which concludes the proof.

\subsection{Well-definedness of the particle-wise construction}
\label{sub:particleexistence}

In this subsection we prove Theorem~\ref{thm:welldefined}. The triple
$(\eta _{0},\mathbf{Y},\boldsymbol{\gamma })$ describes the system by specifying
the location and state of each labeled particle. An equivalent description
is to specify the labels and states of all particles present at each site.

Consider a realization of the walks $\mathbf{X}$ and clocks
$\boldsymbol{\mathcal{P}}$. For a finite initial configuration
$\xi $, we consider the evolution obtained from
$(\xi ,\mathbf{X},\boldsymbol{\mathcal{P}})$, and define
\begin{equation*}
\bar{\eta }_{t}(z;\xi ) = \Big \{ (x,j,i)\in \mathbb{Z}^{d}\times
\mathbb{N}\times \{1,\mathfrak{s}\}\,:\, j=1,\dots ,|\xi (x)|, Y^{x,j}_{t}=z,
\gamma ^{x,j}_{t}=i \Big \} .
\end{equation*}
Dependence of $Y$'s and $\gamma $'s on $\xi $ is omitted in the notation.
Our goal is to show that, almost surely, for each
$z,y\in \mathbb{Z}^{d}$ and $t>0$,
$(\bar{\eta }_{s}(z;\eta _{0}^{B_{n}^{y}}))_{s \in {[0,t]}}$ is the same
for all but finitely many $n$, and that the limiting process
$(\bar{\eta }_{t})_{t \geqslant 0}$ does not depend on $y$. Since each walk
$X^{x,j}$ only visits finitely many sites by time $t$, this implies well-definedness
in the sense of Definition~\ref{def:pwwd}.

\medskip
A convenient observation is the following. It suffices to prove that, for
every sequence of finite sets $W_{n} \uparrow \mathbb{Z}^{d}$, a.s.\ the
process $(\bar{\eta }_{s}(z;\eta _{0}^{W_{n}}))_{s \in {[0,t]}}$ is the
same for all but finitely many $n$. Indeed, assuming this holds true, a.s.\ it
will be the case simultaneously for all the sequences
$(B^{y}_{n})_{n}$ as well as some deterministic increasing sequence
$(W_{n})_{n}$ that has infinitely many terms in common with each one of
them. This in turn implies that the limit
$(\bar{\eta }_{s})_{s \in [0,t]}$ is a.s.\ the same for each of them, for
every $t>0$.

\medskip
To prove this a.s.\ eventually constant limit, we study how the ``influence''
of a particle addition propagates through the system by time $t$. We show
that the set of sites $z$ for which the configuration
$\bar{\eta }_{t}(z)$ is affected for some $s \in [0,t]$ by a particle addition
at $x$ is stochastically dominated by a branching random walk started with
a single particle at $x$.

We make this precise now. If $\xi (x)>0$, define the event
$ x \overset{\xi ,t}{\leadsto }z $ that
\begin{equation*}
\bar{\eta }_{s}(z;\xi -\delta _{x},\mathbf{X},
\boldsymbol{\mathcal{P}}) \ne \bar{\eta }_{s}(z;\xi ,\mathbf{X},
\boldsymbol{\mathcal{P}}) \text{ for some } s\in [0,t].
\end{equation*}
Define the random set
\begin{equation*}
Z^{x}_{t}(\xi )= \big \{z\in \mathbb{Z}^{d} \,:\, x
\overset{\xi ,t}{\leadsto }z \big \},
\end{equation*}
which is the set of sites influenced during $[0,t]$ by the removal of the
last particle at $x$ from configuration $\xi $. We will later on prove
that
%
\begin{equation}
\label{eq:brwdom}
\mathbb{P}\big ( z \in Z^{x}_{t}(\xi ) \big ) \leqslant \mathbb{P}
\big ( U_{t}(z-x) \geqslant 1 \big )
\end{equation}
for all $z,x,t,\xi $, where $(U_{t})_{t \geqslant 0}$ denotes the following
branching process.

At $t=0$, let $U_{0} = \delta _{\boldsymbol{0}}$ be the configuration
with a single particle at ${\boldsymbol{0}}$. For each $t>0$, a transition
$U_{t} \to U_{t} + \delta _{x}$ occurs at rate $\lambda U_{t}(x)$, and
a transition $U_{t} \to U_{t} + 2 \delta _{y}$ occurs at rate
$\sum _{x} U_{t}(x) p(y-x)$. In words, each particle produces at rate
$\lambda $ a new copy at the same site, and at rate $1$ two new copies
at a site chosen at random. Particles never disappear in
$(U_{t})_{t}$.

\medskip
Before showing~\eqref{eq:brwdom}, let us derive Theorem~\ref{thm:welldefined}
by proving the a.s.\ eventual constant limit as
$W_{n} \uparrow \mathbb{Z}^{d}$. We can assume that $|W_{n}|=n$ and write
$W_{n}=\{x_{1},\dots ,x_{n}\}$. We want to rule out that, for infinitely
many $n$,
%
\begin{equation}
\label{eq:differ}
\bar{\eta }_{s}(z ; \eta _{0}^{W_{n}}) \ne \bar{\eta }_{s}(z ; \eta _{0}^{W_{n-1}})
\text{ for some } s \in [0,t].
\end{equation}
For each $n$, occurrence of the above event implies the occurrence of
\begin{equation}
\nonumber
z \in Z^{x_{n}}_{t}(\eta _{0}^{W_{n-1}} + k \delta _{x_{n}})
\text{ for some } k = 1,\dots , \eta _{0}(x_{n})
\end{equation}
in case $\eta _{0}(x_{n})\in \mathbb{N}_{0}$, or
$z \in Z^{x_{n}}_{t}(\eta _{0}^{W_{n-1}} + \delta _{x_{n}} \cdot
\mathfrak{s})$ if $\eta _{0}(x_{n})=\mathfrak{s}$. For simplicity we
ignore the $\mathfrak{s}$ case. The implication is true because
$\eta _{0}^{W_{n-1}}$ can be obtained from $\eta _{0}^{W_{n}}$ by removing
all the $|\eta _{0}(x_{n})|$ particles at $x$, one by one, and in case
none of them affects the configuration at site $z$ by time $t$, the event
in~\eqref{eq:differ} cannot occur. Now the bound~\eqref{eq:brwdom} holds
for each fixed $\xi $, whence
\begin{equation*}
\mathbb{P}\Big ( z \in Z^{x_{n}}_{t}(\eta _{0}^{W_{n-1}} + k \delta _{x_{n}})
\,\Big |\, |\eta _{0}(x_{n})| \geqslant k \Big ) \leqslant \mathbb{P}
\big ( U_{t}(z-x_{n}) \geqslant 1 \big ) .
\end{equation*}
Thus,
\begin{align*}
\mathbb{E}\Big [\#\big \{n\in \mathbb{N}\,:\, & z \in Z^{x_{n}}_{t}(
\eta _{0}^{W_{n-1}} + k \delta _{x_{n}}) \text{ for some } k
\leqslant |\eta _{0}(x_{n})| \big \} \Big ]
\\
& \leqslant \sum _{n}\sum _{k} \mathbb{P}\big ( |\eta _{0}(x_{n})|
\geqslant k , z \in Z^{x_{n}}_{t}(\eta _{0}^{W_{n-1}} + k \delta _{x_{n}})
\big )
\\
& \leqslant \zeta _{\mathrm{max}} \, \sum _{n} \mathbb{E}\big [ U_{t}(z-x_{n})
\big ]
\\
& = \textstyle \zeta _{\mathrm{max}} \, \mathbb{E}\big [ \sum _{y} U_{t}(y)
\big ] = \zeta _{\mathrm{max}} \times e^{(2+\lambda )t} < \infty .
\end{align*}
Hence, by Borel-Cantelli, a.s.\ the event in~\eqref{eq:differ} occurs for
finitely many $n$, concluding the proof of Theorem~\ref{thm:welldefined}.

\medskip
In the rest of this subsection we prove~\eqref{eq:brwdom}. We aim at bounding
the set of particles influenced before time $t$ by the presence or absence
of an extra particle.

Recall that particles behave independently except when one of them gets
reactivated by another, including the case where it is prevented from deactivating.
Hence, the influence only propagates in these cases, and only if the reactivation
may not be attributed to the presence of a third, uninfluenced particle.
Thus, a particle is influenced by the presence of an extra particle during
$[0,t]$ if either it is the extra particle itself, or if it was reactivated
at some time $s \leqslant t$ by sharing the same site with particles that
had been themselves influenced before time $s$.

Once a particle is influenced, its inner time becomes uncertain. In order
to keep track of every possibility and bound the presence of influenced
particles, the idea is to consider all potential paths simultaneously.
In some sense, we let each such particle both stay put at its current site
as well as jump to the next one, by means of replicating such particles.

Suppose $\xi ^{+}$ and $\xi ^{-}$ are finite and differ by the presence
of a single particle at a given site. By translation invariance we can
assume that this particle is $({\boldsymbol{0}},j^{*})$, so
$\xi ^{-}({\boldsymbol{0}})=j^{*}-1$ and
$\xi ^{+}({\boldsymbol{0}})=j^{*}$ (for simplicity, we omit the case where
$j^{*}=1$ and $\xi ^{+}({\boldsymbol{0}})=\mathfrak{s}$). We write
$Y_{t}^{x,j,\pm }$, $\gamma _{t}^{x,j,\pm }$ and $\bar{\eta }_{t}^{\pm }$ to
denote the processes obtained from
$(\xi ^{\pm },\mathbf{X},\boldsymbol{\mathcal{P}})$.

Initially, the set of potentially affected particles is
$R^{\times }_{0}=\{({\boldsymbol{0}},j^{*})\}$ and the set of unaffected
particles is $R^{\circ }_{0}=\{(x,j):j=1,\dots ,|\xi ^{-}(x)|\}$. Let
$\mathcal{T}^{x,j}$ denote the time when particle $(x,j)$ is removed from
$R^{\circ }_{t}$ and added to $R^{\times }_{t}$. From this time on, clock rings
from $\mathcal{P}^{x,j}$ will be ignored, this particle will jump normally
according to $X^{x,j}$ and also leave a copy behind each time it jumps,
so we can safely ignore its state $\gamma ^{x,j}$. More precisely, we define
$\sigma ^{x,j,\times }(s) = s - \mathcal{T}^{x,j} + \sigma ^{x,j,\pm }(
\mathcal{T}^{x,j})$ and the increasing sets
\begin{equation*}
D^{x,j}_{t} = \Big \{ z \in \mathbb{Z}^{d} : X^{x,j}_{\sigma ^{x,j,
\times }(s)} = z \text{ for some } s \in [\mathcal{T}^{x,j},t] \Big \} .
\end{equation*}
Finally, let $D_{t}$ denote the union of $D^{x,j}_{t}$ over all
$(x,j) \in R_{t}^{\times }$.

On the other hand, each unaffected particle $(x,j)\in R^{\circ }_{t}$ evolves
normally and interacts with other particles
$(x',j') \in R^{\circ }_{t}$ normally, until the first time
$\mathcal{T}^{x,j}$ when it becomes affected. This will occur when (i)
particle $(x,j)$ is sleeping at some site $z$ and site $z$ is added to
$D_{t}$, or (ii) the clock rings for particle $(x,j)$ to sleep at some
site $z \in D_{t}$ and there are no other particles
$(x',j') \in R^{\circ }_{t}$ with $Y^{x',j'}_{t} = z$. So case (ii) is triggered
by a sleep clock ring of particle $(x,j)$ itself, whereas case (i) is triggered
by the jump of an affected particle.

We then define
\begin{equation*}
\bar{\eta }^{\times }_{t}(z) = \big \{ (x,j,i) : (x,j) \in R^{\times }_{t}, i
\in \{1,\sigma \}, z \in D_{t}^{x,j} \big \}
\end{equation*}
as well as
\begin{equation*}
\bar{\eta }^{\circ }_{t}(z) = \{(x,j,i) \in \eta ^{+}_{t}(z) : (x,j)
\in R_{t}^{\circ }\} = \{(x,j,i) \in \eta ^{-}_{t}(z) : (x,j) \in R_{t}^{\circ }\} ,
\end{equation*}
and note that
\begin{equation}
\bar{\eta }^{\circ }_{t}(z) \subseteq \bar{\eta }^{\pm }_{t}(z) \subseteq
\bar{\eta }^{\circ }_{t}(z) \cup \bar{\eta }^{\times }_{t}(z) .
\nonumber
\end{equation}
The above inclusions imply that
$Z^{\boldsymbol{0}}_{t}(\xi ) \subseteq D_{t}$.

Hence, defining
\begin{equation*}
\tilde{U}_{t}(z) = \# \big \{ (x,j) \in R_{t} \,:\, z \in D_{t}^{x,j}
\big \} ,
\end{equation*}
to get~\eqref{eq:brwdom} it suffices to show that
%
\begin{equation}
\label{eq:domlaw}
\big (\tilde{U}_{t}\big )_{t \geqslant 0} \leqslant \big ({U}_{t}\big )_{t
\geqslant 0} \text{ in law}.
\end{equation}

To prove~\eqref{eq:domlaw}, it is enough to show that the transition rates
of $(\tilde{U}_{t})_{t}$ are always dominated by those of
$({U}_{t})_{t}$. The process $(\tilde{U}_{t})_{t}$ can increase in two
situations. First, when only one particle $(x,j) \in R^{\circ }_{t}$ is present
at some site $z \in D_{t}$ and its sleep clock rings. This transition causes
$\tilde{U}_{t}(z)$ to increase by $1$. The rate of this transition is
$\lambda $ at such sites and $0$ elsewhere, so it is bounded by
$\lambda \tilde{U}_{t}(z)$. Second, when a particle
$(x,j) \in R^{\times }_{t}$ jumps, in which case it may add both a new site
to $D^{x,j}_{t}$ and add a new particle $(x',j')$ to $R^{\times }_{t}$. This
transition causes $\tilde{U}_{t}(z)$ to increase by at most $2$, and it
occurs at rate bounded by $\sum _{x} \tilde{U}_{t}(x) p(z-x)$. This concludes
the proof of~\eqref{eq:domlaw}, thus of~\eqref{eq:brwdom} and hence of
Theorem~\ref{thm:welldefined}.

\subsection{Monotonicity and the case of infinite sleep rate}
\label{sub:lambda}

In this subsection we discuss how to adapt the site-wise construction of
\S \ref{sub:swdefined} to produce processes with different values of
$\lambda $. We use this to prove Theorem~\ref{thm:monotone}. We also describe
the site-wise representation for the case $\lambda =\infty $ as mentioned
in \S \ref{sub:resampling}.

Recall the construction of \S \ref{sub:swdefined}. At each site $x$, we
have a sequence $(\mathfrak{t}^{x,j})_{j\in \mathbb{N}}$ and a Poisson
point process which we now denote
$\mathcal{P}_{x} \subseteq [0,\infty )$. The pair
$(\mathcal{P}_{x} , \mathfrak{t}^{x,\cdot })$ can be seen as a marked
Poisson point process where $\mathfrak{t}^{x,j}$ is the mark of the
$j$-th point in $\mathcal{P}_{x}$. Alternatively, we can obtain this marked
process by merging two Poisson processes, $\mathcal{P}^{*}_{x}$ for jump
and $\mathcal{P}^{\lambda }_{x}$ for sleep. In this construction, different
values of $\lambda $ can be coupled in a standard way, so that
$\mathcal{P}^{\lambda }_{x} \subseteq \mathcal{P}^{\lambda '}_{x}$ for
$\lambda ' \geqslant \lambda $.\looseness=1

Similarly to the proof that $L_{t}(x)$ is non-decreasing in $\xi $, one
can show that $L_{t}(x)$ is also non-increasing in $\lambda $ (details
omitted). Since fixation is a.s.\ equivalent to
$L_{t}({\boldsymbol{0}})$ remaining bounded as $t \to \infty $,
$\zeta _{c}$ is non-decreasing in $\lambda $.
\smallskip

We conclude by discussing the case of $\lambda =\infty $.
\smallskip

As we increase $\lambda $, the sleep clock rings more and more often. In
the limiting case $\lambda =\infty $, it is ringing permanently and
$\mathcal{P}^{\infty }_{x}$ is a.s.\ dense on $[0,\infty )$. So the spontaneous
transition $A \to S$ happens immediately, and only the process
$\mathcal{P}_{x}^{*}$ is used in the construction. If other particles
are present, the reaction $A+S \to 2A$ has priority and it overrides this
urgency of particles to fall asleep. That is, $A \to S$ only occurs when
an $A$-particle is alone. Sites with $n \geqslant 2$ particles send a particle
away at rate $n$, and when the second last particle jumps out, the remaining
particle falls asleep immediately. In order to define a site-wise representation
with stacks $(\mathfrak{t}^{x,j})_{j\in \mathbb{N}}$ as in \S
\ref{sub:diaconis}, we can consider the stacks with only the jump instructions,
and declare particles to be sleeping whenever they are alone. The properties
mentioned above still hold in this case, and in particular the proofs of
Theorems~\ref{thm:equivalence} and~\ref{thm:monotone} work for
$\lambda \in [0,\infty ]$.

\subsection{A hybrid construction}
\label{sub:monotone_particle}

In this subsection we prove Lemma~\ref{lem:monotone_particle}. The inequality
may seem obvious, and our intuition says it should be a trivial consequence
of monotonicity properties in the spirit of the site-wise construction.
However, in the unlabeled system it is not possible to distinguish the
particles that have exited and re-entered $V_{n}$ from the particles which
have met them after their re-entrance. To solve this, in we will introduce
a two-color site-wise construction.

Recall that $n$ is fixed. For finite $V \supseteq V_{n}$, define
\begin{equation*}
\eta _{0}^{{}_{\square }}(x) =
\begin{cases}
\eta _{0}(x) ,& x \in V_{n},
\\
- \infty ,& x \not \in V_{n},
\end{cases}
\qquad \eta _{0}^{V}(x) =
\begin{cases}
\eta _{0}(x) ,& x \in V,
\\
0 ,& x \not \in V.
\end{cases}
\end{equation*}
Let $M_{t}^{{}_{\square }}$ and $M_{t}^{V}$ count the number of labeled particles
that exit $V_{n}$ by time $t$ in the system with initial configuration
$\eta _{0}^{{}_{\square }}$ and $\eta _{0}^{V}$. Lemma~\ref{lem:monotone_particle}
then becomes\looseness=1
\begin{equation*}
\mathbb{E}\lim _{t\to \infty } M_{t}^{{}_{\square }} \leqslant
\mathbb{E}\lim _{t\to \infty } \lim _{V \uparrow \mathbb{Z}^{d}} M_{t}^{V}
\end{equation*}
for some fixed sequence of finite boxes $V \uparrow \mathbb{Z}^{d}$, where
the limit in $V$ exists by Theorem~\ref{thm:welldefined}. Since
$M_{t}^{V}$ is bounded by $\| \eta _{0} \|_{V}$, using dominated convergence
theorem it is enough to show that $M_{t}^{{}_{\square }}$ is stochastically
dominated by $M_{t}^{V}$ for each $t>0$ and $V \supseteq V_{n}$ fixed.
This will be shown with an explicit coupling.

In this construction, particles which started in $V_{n}$ and have not yet
exited $V_{n}$ are colored purple, and all other particles are colored
yellow. There are two stacks of instructions at each site, one for each
color. This way one can distinguish the particles which have not yet exited
$V_{n}$ from those who have, and use this distinction to define
$M_{t}^{V}$ without the need to look at individual labels.\looseness=1

More precisely, we consider a two-colored particle system, where we are
only interested in particle counts color by color: the configuration at
time $t$ is
$\eta _{t}=(\eta ^{\mathrm{p}}_{t},\eta ^{\mathrm{y}}_{t})\in (
\mathbb{N}_{\mathfrak{s}}\times \mathbb{N}_{\mathfrak{s}})^{
\mathbb{Z}^{d}}$. Initially, purple particles are the particles that start
on $V_{n}$, and yellow particles are the particles that start outside
$V_{n}$. Purple particles become yellow when they exit $V_{n}$. Sample
two independent families $\mathcal{I}^{\mathrm{p}}$ and
$\mathcal{I}^{\mathrm{y}}$ of instructions, to be respectively used by
purple particles and yellow particles (so we never use
$\mathcal{I}^{\mathrm{p}}$ outside $V_{n}$). Sample two collections
${\mathcal{P}}^{\mathrm{p}}$ and ${\mathcal{P}}^{\mathrm{y}}$ of independent
Poisson point processes attached to each site, and use them to trigger
purple and yellow topplings, respectively.

Similarly to the construction of \S \ref{sub:swdefined}, this enables to
construct the process $(\eta ^{\mathrm{p}},\eta ^{\mathrm{y}})$ from a
finite initial configuration, with the difference that two clocks now run
at each site, at speeds given by the numbers of purple and yellow particles,
and each clock triggers a toppling of the respective color. Furthermore,
purple topplings have the additional effect of changing the color of the
jumping particle if it jumps out of $V_{n}$. Note that topplings affect
the state of particles of both colors because yellow and purple at same
site share activity: a yellow particle can prevent a purple particle from
falling asleep, and vice-versa, etc. This construction yields a natural
coupling between systems with any finite initial configuration, each system
alone having the same distribution as the one obtained from the particle-wise
construction.

We remark that this bi-color construction is not Abelian and the local
times $L_{t}^{\mathrm{p}}(x)$ are not generally monotone with respect to
the initial configuration $\xi $. Nevertheless, since each color uses a
different stack of instructions, adding yellow particles has the only effect,
regarding purple particles, of enforcing activation of some of them at
some times. The proof that $L_{t}(x)$ is non-decreasing in $\xi $ given
in \S \ref{sub:swdefined} can be reproduced with nearly no modifications
to show that $L_{t}^{\mathrm{p}}(x)$ is larger with initial configuration
$\eta _{0}^{V}$ than with $\eta _{0}^{{}_{\square }}$. Since yellow topplings
do not change the total number of purple particles present in the system,
and purple topplings can only make that number decrease, the total number
of purple particles present in the system after stabilization will be lower
with initial configuration $\eta _{0}^{V}$ than with
$\eta _{0}^{{}_{\square }}$. Hence, the number of particles that become yellow
(\textit{i.e.}\ the particles that ever exit $V_{n}$) is higher for
$\eta _{0}^{V}$. So in this coupling
$M_{t}^{V} \geqslant M_{t}^{{}_{\square }}$ for every
$V \supseteq V_{n}$, concluding the proof of Lemma~\ref{lem:monotone_particle}.

\section{Historical remarks and extensions}
\label{sec:extensions}

We conclude these lecture notes by making a historical account of how the
main results in this field appeared in the literature, giving appropriate
credit for the arguments presented in previous sections. We then comment
on the extent at which these results can be generalized to other graphs,
initial distributions, jump distributions etc. We finally mention some
of these arguments that have meaningful counter-parts for the Stochastic
Sandpile Model.

\subsection{Historical remarks}

Conditions~\eqref{eq:conditionb} and~\eqref{eq:conditionu} for the ARW
were first used in~\cite{RollaSidoravicius12}. They rely on a construction
of the evolution which is based on the particle-wise discrete representation
introduced in~\cite{DiaconisFulton91,Eriksson96}, formalized in~\cite{RollaSidoravicius12}
and completed in~\cite{RollaTournier18}. Uniqueness of the critical density
was proved in~\cite{RollaSidoraviciusZindy19}, on which \S
\ref{sec:universality} is based.

Results in dimension $d=1$ appeared earlier. The case of directed (\textit{i.e.}\ totally
asymmetric) walks marks a special case. It was shown by~\cite{HoffmanSidoravicius04}
and published in~\cite{CabezasRollaSidoravicius14} that
$\zeta _{c} = \frac{\lambda }{1+\lambda }$, and the proof of Theorem~\ref{thm:zetacexact}
is based on~\cite{CabezasRollaSidoravicius14}. The first result on fixation
for general jump distributions appeared in~\cite{RollaSidoravicius12},
showing that $\zeta _{c} \geqslant \frac{\lambda }{1+\lambda }$. In terms
of the phase space, $\zeta _{c}>0$ for all $\lambda $, and
$\zeta _{c}\to 1$ as $\lambda \to \infty $. Although it can be obtained
via a much simpler argument as in \S \ref{sec:weak}, we give the original
proof in \S \ref{sec:traps} because: the proof can be adapted to more general
graphs such as regular trees, it is used to study the fixed-energy dynamics
in~\S \ref{sec:cycle}, and with simple modifications it gives
$\zeta _{c} > \frac{\lambda }{1+\lambda }$ unless the walks are directed
(Corollary~\ref{cor:depends}).

Still in $d=1$, using Condition~\eqref{eq:conditionu} it is easy to show
that there is no fixation at $\zeta =1$, so in particular
$\zeta _{c} \leqslant 1$ for every $\lambda \leqslant \infty $, see Theorem~\ref{thm:firstcount}.
It was shown in~\cite{Taggi16} that, when the jump distribution is biased,
$\zeta _{c}<1$ for every $\lambda <\infty $, and $\zeta _{c}\to 0$ as
$\lambda \to 0$. The toppling procedure was significantly simplified and
extended to higher dimensions by the introduction of Condition~\eqref{eq:conditione}
in~\cite{RollaTournier18}, and further simplified using Theorem~\ref{thm:universality}.
The proof of Theorem~\ref{thm:taggi} is adapted from~\cite{RollaTournier18},
which was in turn adapted from~\cite{Taggi16}.

For the unbiased case, it was shown in~\cite{BasuGangulyHoffman18} that
$\zeta _{c} \to 0$ as $\lambda \to 0$, and the proof given in \S
\ref{sec:netflow} is adapted from~\cite{BasuGangulyHoffman18}. Existence of
slow and fast regimes for the fixed-energy model on $\mathbb{Z}_{n}$ was
proved in~\cite{BasuGangulyHoffmanRichey19}. The proof given in \S
\ref{sec:cycle} is adapted from~\cite{BasuGangulyHoffmanRichey19}, whose
analysis is substantially based on results and arguments
from~\cite{BasuGangulyHoffman18,Janson18,KingmanVolkov03,RollaSidoravicius12}.

In dimensions $d \geqslant 2$, it was shown that
$\zeta _{c} \leqslant 1$ by~\cite{Shellef10}, using the technique of ghost
walks previously introduced in the context of Internal DLA by~\cite{LawlerBramsonGriffeath92}.
This technique was later used in~\cite{Taggi16} who considers biased jump
distributions and shows that $\zeta _{c}<1$ for small $\lambda $. Ghost
walks are usually useful for introducing independence in order to control
variance and hence bootstrap from a certain counter having high expectation
to being large in probability. The introduction of Condition~\eqref{eq:conditione}
dispenses the use of ghost walks on $\mathbb{Z}^{d}$ or other amenable
graphs, but this technique is still useful in the non-amenable setting,
as used in~\cite{Shellef10,StaufferTaggi18}. The use of ghost walks in
\cite{Shellef10,Taggi16} is surveyed in~\href{https://arxiv.org/abs/1507.04341}{arXiv:1507.04341}.

An alternative proof that $\zeta _{c}\leqslant 1$ was given
in~\cite{AmirGurel-Gurevich10}, where the inequality follows from the mass
transport principle after establishing the property that site fixation is
equivalent particle fixation, under the assumption that the particle-wise
construction of the process is well-defined. This assumption was proved to
hold true in~\cite{RollaTournier18}, where moreover the equivalence property
was used to obtain Condition~\eqref{eq:conditione}. As an application, it is
shown that $\zeta _{c} < 1$ for all $\lambda <\infty $ and $\zeta _{c} \to 0$
as $\lambda \to 0$ for biased walks in any dimension. The construction in \S
\ref{sub:pwconstr} as well as the arguments of \S \ref{sub:conditione} and \S
\ref{sub:particleexistence} are adapted from~\cite{RollaTournier18}. The
proofs in \S \ref{sub:resampling} and \S \ref{sub:agg} are adapted
from~\cite{CabezasRollaSidoravicius14,CabezasRollaSidoravicius18}
and~\cite{AmirGurel-Gurevich10}, respectively. The presentation and proofs in
\S \ref{sub:condbu} and \S \ref{sub:swdefined} are adapted
from~\cite{RollaSidoravicius12} following an important observation
from~\cite{RollaTournier18}.

The technique of weak stabilization was introduced in~\cite{StaufferTaggi18},
where it was shown that, in $d \geqslant 3$, $\zeta _{c} < 1$ for
$\lambda $ small enough, and $\zeta _{c} \to 0$ as $\lambda \to 0$. This
result was strengthened in~\cite{Taggi19} where it was shown that
$\zeta _{c}<1$ for all $\lambda < \infty $. The idea of strong stabilization
was already implicit in~\cite{StaufferTaggi18} but was formalized and better
exploited in~\cite{Taggi19} under a different name. The proofs in \S
\ref{sec:weak} are adapted from these two articles.

The first proof of fixation for $d \geqslant 2$ appeared in~\cite{Shellef10},
who showed that $\zeta _{c}>0$ when $\lambda =\infty $ using results from~\cite{Martin02}
about greedy lattice animals. This was extended in~\cite{CabezasRollaSidoravicius14,CabezasRollaSidoravicius18}
who also consider the extreme case $\lambda =\infty $ and show that
$\zeta _{c} = 1$. In~\cite{SidoraviciusTeixeira17}, it was shown that when
the jump distribution is unbiased, $\zeta _{c}>0$ for every
$\lambda >0$. This was significantly extended in~\cite{StaufferTaggi18},
where the idea of weak stabilization was used to show that
$\zeta _{c} \geqslant \frac{\lambda }{1+\lambda }$ in general.

Many of the problems listed in~\cite{DickmanRollaSidoravicius10} are still
open. From the enormous gap between the description given in \S
\ref{sub:predictions} and the results compiled in \S
\ref{sub:results}, the reader can find countless challenging open problems
in this topic. Besides the problems already listed throughout these notes,
we remark that more direct or otherwise insightful alternatives to some
of the arguments presented here are certainly welcome.

\subsection{Extension to other graphs and settings}
\label{sub:extensions}

Properties of the site-wise representation stated in \S
\ref{sub:diaconis} are deterministic and hold for any graph.

Many proofs about fixation and activity involved re-indexing a sum and
thus rely on translation invariance. A more general context is that of
jump distributions invariant under a transitive unimodular graph automorphism
subgroup with infinite orbits, or \emph{unimodular walks} for short. Translation-invariant
walks on $\mathbb{Z}^{d}$ and uniform nearest-neighbor walks on regular
trees or other Cayley graphs are good examples. Ergodicity in this case
should be understood with respect to this same subgroup.

Well-definedness in the sense of~\eqref{eq:approx} as provided through
\S \ref{sub:swdefined} and \S \ref{sub:particleexistence} still works for
unimodular walks. Theorems~\ref{thm:equivalence},~\ref{thm:universality}
and~\ref{thm:agg} hold with the same or essentially the same proofs. Theorem~\ref{thm:conditione}
remains true~\cite{RollaTournier18} but requires the graph to be amenable,
and it is false on trees.

Theorem~\ref{thm:firstcount} remains true for unimodular walks through
the proof of Theorem~\ref{thm:abactive}. The proof of Theorem~\ref{thm:taggi}
can also be extended directly to unimodular walks on amenable graphs~\cite{RollaTournier18}
and partially to non-amenable graphs using ghost walks~\cite{StaufferTaggi18}.

The argument shown in \S \ref{sec:traps} can be adapted to trees~\cite{StaufferTaggi18}.
The argument for the universal bound
$\zeta _{c} \geqslant \frac{\lambda }{1+\lambda }$ shown in \S
\ref{sub:weakcor} works on any amenable graph, and it is possible to extended
it to non-amenable graphs. Proofs of Theorems~\ref{thm:subcritstauffertaggi}
and~\ref{thm:supcritstauffertaggi} are given in~\cite{StaufferTaggi18,Taggi19}
assuming uniform nearest-neighbor walks, but they also work for unimodular
walks.

In \S \ref{sec:cycle} we use Poisson thinning which shortens the argument
a bit, but other non-Poisson initial distributions with good concentration
inequalities should suffice with little extra work.

The proof that $\zeta _{c} \geqslant 1$ given in~\cite{Shellef10} in fact
shows activity for deterministic initial configurations having empirical
average larger than unit, and does not even assume any symmetries on the
graph besides having bounded degree.

The assumption of nearest-neighbor jump simplified the exposition, but
it is really needed only in \S \ref{sec:traps}, \S \ref{sec:netflow} and
\S \ref{sec:cycle}. For these sections, dropping such assumption would
decrease the range of parameters for which the proofs work, or make the
constructions more complicated, or both. The argument briefly outlined
in \S \ref{sec:multiscale} extends directly to a bounded-range or very
light-tail jump distribution, as long as it remains unbiased. The arguments
shown in the other sections do not require any assumption on the jump distribution
besides translation invariance (or unimodularity). In this case, the notion
of a walk being directed is the one stated in the proof of Corollary~\ref{cor:directed}.

\subsection{Stochastic Sandpile Model}
\label{sub:ssm}

For the Stochastic Sandpile Model (SSM), there is a continuous-time process
with properties analogous to those of \S \ref{sub:diaconis}, see~\cite{RollaSidoravicius12}.
Theorem~\ref{thm:equivalence} holds with a similar proof. The proof of
Theorems~\ref{thm:conservation},~\ref{thm:welldefined}~and~\ref{thm:equivalent},
and hence that of Theorem~\ref{thm:conditione}, can be adapted to the SSM
without substantial changes.

In one dimension, it has been shown that
$\zeta _{c} \geqslant \frac{1}{4}$ using a toppling procedure similar to
the one shown in \S \ref{sec:traps}, see~\cite{RollaSidoravicius12}. Also,
it should not be too complicated to combine the toppling procedure from
Section~6 of~\cite{RollaSidoravicius12} with a urn process similar to Lemma~\ref{lemma:urn}
to show fast fixation for $\zeta <\frac{1}{4}$.

For higher dimensions, the multi-scale argument described in \S
\ref{sec:multiscale} works without any modification to show that
$\zeta _{c}>0$ for the SSM with unbiased walks, see~\cite{SidoraviciusTeixeira17}.
The proof of Theorem~\ref{thm:subcritstauffertaggi} can be partially adapted
to show that $\zeta _{c}>0$ for more general jump distributions.

To the best of our knowledge, this is all that is known rigorously about
the SSM, as far as predictions in the spirit of \S
\ref{sub:predictions} are concerned.

\section*{Acknowledgments}

These lecture notes developed throughout the years in parallel with the
literature, and were used in courses given at Institute for Advanced Study
of Aix-Marseille University, University of Buenos Aires, Chinese Academy
of Sciences, Henan University, National University of Singapore, and Peking-Tsinghua-\break Beijing
Normal Universities joint seminar. Much of the current narrative originated
from discussions with mathematicians and physicists, especially Deepak
Dhar and P~K~Mouhanti during the workshop Universality in Random Structures:
Interfaces, Matrices, Sandpiles, held on the International Centre for Theoretical
Sciences, Bangalore, 2019 (Code: ICTS/urs2019/01). These events were supported
by A*MIDEX grant ANR-11-IDEX-0001-02 and the National Natural Science Foundation
of China grant 11688101.

The author's current understanding of Activated Random Walks and related
models is the result of discussions and exchanges with many colleagues
throughout more than a decade, it would be hard to name them all. Let us
just mention Vladas Sidoravicius, Ronald Dickman, Manuel Cabezas, Lionel
Levine and Laurent Tournier among those who had the deepest impact.

While these notes were being finalized, Shirshendu Ganguly and Moumanti
Podder contributed many remarks that improved the clarity and readability.
Laurent Tournier and Vittoria Silvestri went through most of these notes
and made a large number of keen remarks that substantially improved the
presentation. Special thanks to Alexandre Stauffer and Jacob Richey who
read the entire material thoroughly and made thoughtful comments and suggestions,
without which the quality of these notes would not have been the same.
All the mistakes, omissions or obscure explanations that remain are the
author's responsibility.

\section*{Update}

Some of the open problems mentioned in these notes or similar ones have
been solved in~\cite{AsselahSchapiraRolla19,CabezasRolla20,HoffmanRicheyRolla20,PodderRolla20,Taggi20}.






\end{document}